\patchcmd{\ttlh@hang}{\parindent\z@}{\parindent\z@\leavevmode}{}{}
\patchcmd{\ttlh@hang}{\noindent}{}{}{}
\newcommand\eqdef{\coloneqq}
\newcommand\nbd{\nobreakdash-\hspace{0pt}}
\newcommand\incl{\hookrightarrow}
\newcommand\incliso{\stackrel{\sim}{\hookrightarrow}}
\newcommand\restr[2]{{#1}{\raisebox{0pt}{$|_{#2}$}}}
\newcommand\set[1]{\left\{ {#1} \right\}}
\newcommand\size[1]{\left|{#1}\right|}
\newcommand\cat[1]{\mathbf{#1}}
\newcommand\fun[1]{\mathsf{#1}}
\newcommand\order[2]{#2^{(#1)}}
\newcommand\ogpos{\cat{ogPos}}
\newcommand\poscat{\cat{Pos}}
\newcommand\hasse[1]{\mathscr{H}{#1}}
\newcommand\hasseo[1]{\mathring{\mathscr{H}}{#1}}
\newcommand\clos[1]{\mathrm{cl}\,#1}
\newcommand\clset[1]{\mathrm{cl}\set{#1}}
\newcommand\maxel[1]{\mathscr{M}\!\textit{ax}\,#1}
\newcommand\grade[2]{#2_{#1}}
\newcommand\lydim[1]{\mathrm{lydim}\, {#1}}
\newcommand\frdim[1]{\mathrm{frdim}\, {#1}}
\newcommand\bound[2]{\partial_{#1}^{#2}}
\newcommand\faces[2]{\Delta_{#1}^{#2}}
\newcommand\cofaces[2]{\nabla_{#1}^{#2}}
\newcommand\cp[1]{\,{\scriptstyle\#}_{#1}\,}
\newcommand\cpiso[2]{\,{\scriptstyle\#}^{#2}_{#1}\,}
\newcommand\celto{\Rightarrow}
\newcommand\compos[1]{\langle#1\rangle}
\newcommand\submol{\sqsubseteq}
\newcommand\subs[3]{#1[#2/#3]}
\newcommand\flow[2]{\mathscr{F}_{#1}{#2}}
\newcommand\maxflow[2]{\mathscr{M}_{#1}{#2}}
\newcommand\layerings[2]{\mathscr{L}\!\textit{ay}_{#1}{#2}}
\newcommand\orderings[2]{\mathscr{O}\!\textit{rd}_{#1}{#2}}
\newcommand\lto[2]{\fun{m}_{#1, #2}}
\newcommand\thearrow[1]{{#1}\data{arrow}}
\newcommand\prech{\preceq_0}
\newcommand\precv{\preceq_2}
\newcommand\edges[2]{\grade{#1}{\mathscr{E}}#2}
\newcommand\data[1]{\mathsf{#1}}
\newtheoremstyle{ittheorem}
  {\topsep}   % ABOVESPACE
  {\topsep}   % BELOWSPACE
  {\itshape}  % BODYFONT
  {0pt}       % INDENT (empty value is the same as 0pt)
  {\itshape \bfseries} % HEADFONT
  {.}         % HEADPUNCT
  {5pt plus 1pt minus 1pt} % HEADSPACE
  {}          % CUSTOM-HEAD-SPEC
\newtheoremstyle{itdfn}
  {\topsep}   % ABOVESPACE
  {\topsep}   % BELOWSPACE
  {}  % BODYFONT
  {0pt}       % INDENT (empty value is the same as 0pt)
  {\bfseries} % HEADFONT
  {.}         % HEADPUNCT
  {5pt plus 1pt minus 1pt} % HEADSPACE
  {}          % CUSTOM-HEAD-SPEC
  \renewcommand\@upn{\textit}
\theoremstyle{ittheorem}
\newtheorem{thm}{Theorem}
\newtheorem{prop}[thm]{Proposition}
\newtheorem{cor}[thm]{Corollary}
\newtheorem{lem}[thm]{Lemma}
\theoremstyle{itdfn}
\newtheorem{dfn}[thm]{}
\theoremstyle{remark}
\newtheorem{rmk}[thm]{Remark}
\newtheorem{exm}[thm]{Example}
\begin{document}

% title and abstract

\title{Higher-Dimensional Subdiagram Matching}

\author{
\IEEEauthorblockN{Amar Hadzihasanovic}
\IEEEauthorblockA{$^1$ Department of Software Science\\
Tallinn University of Technology\\
$^2$ Quantinuum, 17 Beaumont Street\\
Oxford OX1 2NA, United Kingdom}
\and
\IEEEauthorblockN{Diana Kessler}
\IEEEauthorblockA{Department of Software Science\\
Tallinn University of Technology}
}

\IEEEoverridecommandlockouts
%\IEEEpubid{\makebox[\columnwidth]{979-8-3503-3587-3/23/\$31.00~
%\copyright2023 IEEE \hfill} \hspace{\columnsep}\makebox[\columnwidth]{ }}
\IEEEpubid{\makebox[\columnwidth]{Accepted for LICS 2023 \hfill} \hspace{\columnsep}\makebox[\columnwidth]{ }}

\maketitle 

\begin{abstract}
Higher-dimensional rewriting is founded on a duality of rewrite systems and cell complexes, connecting computational mathematics to higher categories and homotopy theory: the two sides of a rewrite rule are two halves of the boundary of an $(n+1)$\nbd cell, which are diagrams of $n$\nbd cells.
We study higher-dimensional diagram rewriting as a mechanism of computation, focussing on the matching problem for rewritable subdiagrams within the combinatorial framework of diagrammatic sets.
We provide an algorithm for subdiagram matching in arbitrary dimensions, based on new results on layerings of diagrams, and derive upper bounds on its time complexity.
We show that these superpolynomial bounds can be improved to polynomial bounds under certain acyclicity conditions, and that these conditions hold in general for diagrams up to dimension 3.
We discuss the challenges that arise in dimension 4.
\end{abstract}

% end title and abstract

\section*{Introduction}

Higher-dimensional rewriting \cite{burroni1993higher, guiraud2019rewriting} is founded, as a field, on the observation that different varieties of rewrite systems are  instances of \emph{directed cell complexes} in different dimensions.
These, in turn, can be seen as presentations of higher or monoidal categories and groupoids, situating objects traditionally associated with syntactic and quantitative aspects of computation in the same universe as objects traditionally associated with semantic and logical aspects.

Informally, a directed cell complex is an object assembled from ``directed $n$\nbd cells'', models of topological $n$\nbd balls whose boundary, an $(n-1)$\nbd sphere, is subdivided into an \emph{input} and an \emph{output} half, modelling the left and right-hand sides of a rewrite rule.
A 1\nbd dimensional directed cell complex is a directed graph, which is the same as an abstract rewrite system up to interpretive nuances.
A 2\nbd dimensional directed cell complex is the presentation with ``oriented equations'' of a category; when the category has a single object, it is the presentation of a \emph{monoid}, also known as a string rewrite system.
One dimension higher, we find presentations with oriented equations of monoidal categories, which subsume, \emph{via} functorial semantics \cite{lawvere1963functorial}, term rewrite systems such as presentations of algebraic theories.

The duality of rewrite systems and higher structures has, so far, been leveraged mostly on the side of higher algebra, for example in the study of homotopically coherent presentations of algebras \cite{gaussent2015coherent} or the development of proof assistants for homotopy theory and higher category theory \cite{reutter2019high}.
We would like to argue that, also \emph{as a mechanism of computation}, higher-dimensional rewriting has some unique characteristics which may be significant in fundamental computer science, yet have remained underexplored.

One of these is a kind of \emph{uniformity of data and computations}.
In most models, computations or executions are objects of different nature from the data that they manipulate: for example, sequences of configurations as opposed to terms or strings of characters.
There is no \emph{internal} way to manipulate computations as data, that does not require encoding in some external ``reference'' machine, to which the same consideration applies.
In higher-dimensional rewriting, on the other hand, data is given in the form of a \emph{diagram} (sometimes called a \emph{pasting diagram}) of $n$\nbd cells, and a computation is then embodied by \emph{a diagram of $(n+1)$\nbd cells}, which is naturally the data of a computation one dimension above.
This suggests that higher-dimensional rewriting may, for example, be a natural framework for the formal study of simulations of machines by other machines.

Another point of interest is that a higher-dimensional rewrite system presents not only the admissible data and computations, but also \emph{the space in which computations happen}.
The possibility of parallelism, or whether the data is accessed as a stack or in free order, become internal topological features rather than externally imposed constraints.
This is particularly interesting in relation to quantum models where the topological features themselves drive the computation \cite{freedman2003topological}.

This article is an effort to establish some fundamental facts about higher-dimensional rewriting as a mechanism of computation.
In particular, we try to answer the following question: \emph{is a machine that operates by higher-dimensional rewriting a ``reasonable'' machine, according to the standards of computational complexity theory?}
More precisely: is the obvious cost model that attributes constant cost to each rewrite step a ``reasonable'' cost model?

The basic computational step of any such machine may be described as follows.
The machine has a list $(r_i)_{i=1}^m$ of rewrite rules, which are $(n+1)$\nbd cells, and whose input boundaries $(\bound{}{-}r_i)_{i=1}^m$ and output boundaries $(\bound{}{+}r_i)_{i=1}^m$ are $n$\nbd dimensional diagrams.
Given an $n$\nbd dimensional diagram $t$ as input, the machine tries to match one of the input boundaries with a rewritable \emph{subdiagram} of $t$.
If it finds a match with $\bound{}{-}r_i$ for some $i \in \set{1, \ldots, m}$, it substitutes $\bound{}{+}r_i$ for the match in $t$; otherwise it stops.

Evidently, our question is answered in the affirmative for such a machine if and only if the \emph{subdiagram matching problem} is feasible in dimension $n$, which we read as: \emph{admits a (preferably low-degree) polynomial-time algorithm with respect to a reasonable size measure for diagrams}.
Since a cognate problem such as subgraph matching is notoriously $\fun{NP}$-complete, it is not at all obvious that this should be true.

With this in mind, in this article we study the higher-dimensional subdiagram matching problem.
As a first step, we must fix a particular model of higher-dimensional diagrams.
We adopt the \emph{diagrammatic set} model defined by the first author \cite{hadzihasanovic2020diagrammatic} after a combinatorial approach to diagrams started by Steiner \cite{steiner1993algebra}.
This model combines expressiveness with the property of \emph{topological soundness}: diagrams admit a functorial interpretation as homotopies in CW complexes.
Furthermore, it uses inductive data structures that are suitable for computation; we began a study of their algorithms and complexity in \cite{hadzihasanovic2022data}.

Our main contribution is an algorithm for subdiagram matching in arbitrary dimension, relying on new combinatorial results on \emph{layerings} of diagrams (decompositions into layers containing each a single cell of highest dimension).
Only one stage in this algorithm takes superpolynomial time in the worst 
case.
We show that this can be avoided under certain acyclicity conditions on diagrams, which hold in general up to dimension 3.
We derive that subdiagram matching is feasible up to dimension 3, which should cover most current applications of higher-dimensional rewriting.
We then discuss the case of dimension 4 and higher, showing through a counterexample why there is no obvious patch to the algorithm.
We leave the question of feasibility in higher dimensions open.

\subsection*{Related work}

Complexity-theoretic aspects of higher-dimensional rewriting in the proper sense were considered by Bonfante and Guiraud in \cite{bonfante2009polygraphic}, but this work focussed only on a particular class of 2\nbd dimensional rewrite systems.

Other works focus on \emph{string diagram rewriting}, which is connected to higher-dimensional rewriting in that both of them have semantics in higher and monoidal categories, but the flavour is altogether different: models of string diagram rewrite systems are typically 2\nbd dimensional but have extra structure which makes the diagrams ``graph-like'', and it can be convenient to reflect that in the data structures.
Works in this vein include the series by Bonchi, Gadducci, Kissinger, Sobocinski, and Zanasi \cite{bonchi2022stringI, bonchi2022string, bonchi2021stringIII} with a focus on rewriting-theoretic questions, and articles by Delpeuch and Vicary \cite{delpeuch2021word, vicary2022normalization} with a more complexity-theoretic focus.

\subsection*{Structure of the article}

Section \ref{sec:structures} presents the combinatorial framework together with the data structures used to represent diagrams.
It also provides an improved complexity upper bound for the diagram isomorphism problem.
Section \ref{sec:matching} defines rewritable subdiagrams and their matching problem, which can be split into subproblems.
It then deals with the first subproblem, which is to match the shape of a diagram in another diagram, irrespective of whether it is a subdiagram.
Section \ref{sec:rewritable} deals with the second subproblem, which is to recognise which matches are, in fact, rewritable subdiagrams.
It presents a theory of \emph{layerings} and \emph{orderings} of diagrams as a way to an algorithm solving this problem in any dimension.
This algorithm has only a superpolynomial upper bound, but it is shown that it can be improved to a polynomial bound under certain acyclicity conditions.
Section \ref{sec:lowdim} shows that these acyclicity conditions hold automatically up to dimension 3, and the algorithms can be further simplified in low dimensions.
It then discusses an example in dimension 4 which highlights why the strategies that work in low dimensions do not have obvious extensions to higher dimensions.

Proofs of combinatorial results are attached in the appendix.
A full development will be presented in a forthcoming technical monograph \cite{hadzihasanovicpasting}.
\section{The data structures} \label{sec:structures}

\begin{dfn}
This section is for the largest part a recap of \cite{hadzihasanovic2022data}.
We refer the reader there for more details.
\end{dfn}

\begin{dfn}
In the framework of diagrammatic sets, a diagram $t$ is specified by the data of
\begin{enumerate}
    \item its \emph{shape} $U$,
    \item a labelling $t\colon U \to \mathbb{V}$ in a set of variables.
\end{enumerate}
The shape of a diagram records its cells, together with the information of which $(n-1)$\nbd dimensional cells are located in the input or output half of the boundary of an $n$\nbd dimensional cell.
This is similar to the data of an abstract polytope or polytopal complex, but comes with additional orientation data.
We present these data in the form of an \emph{oriented graded poset}.
\end{dfn}

\begin{dfn}[Covering relation]
Let $P$ be a finite poset with order relation $\leq$.
Given elements $x, y \in P$, we say that $y$ \emph{covers} $x$ if $x < y$ and, for all $y' \in P$, if $x < y' \leq y$ then $y' = y$. 
\end{dfn}

\begin{dfn}[Hasse diagram]
Let $P$ be a finite poset.
The \emph{Hasse diagram} of $P$ is the directed acyclic graph $\hasse{P}$ whose \begin{itemize}
    \item set of vertices is the underlying set of $P$, and 
    \item for all vertices $x, y$, there is an edge from $y$ to $x$ if and only if $y$ covers $x$ in $P$.
\end{itemize}
\end{dfn}

\begin{dfn}[Graded poset]
Let $P$ be a finite poset.
We say that $P$ is \emph{graded} if, for all $x \in P$, all maximal paths starting from $x$ in $\hasse{P}$ have the same length.
\end{dfn}

\begin{dfn}[Dimension of an element]
Let $P$ be a graded poset and $x \in P$.
The \emph{dimension} of $x$ is the length $\dim{x}$ of a maximal path starting from $x$ in $\hasse{P}$.
For each $U \subseteq P$ and $n \in \mathbb{N}$, we write $\grade{n}{U} \eqdef \set{ x \in U \mid \dim x = n }$.
\end{dfn}

\begin{dfn}[Oriented graded poset]
Let $P$ be a finite poset.
An \emph{orientation} on $P$ is an edge-labelling of $\hasse{P}$ with values in $\set{ +, - }$.
An \emph{oriented graded poset} is a graded poset $P$ together with an orientation on $P$.
\end{dfn}

\begin{dfn}[Faces and cofaces]
Let $P$ be an oriented graded poset, $x \in P$, $\alpha \in \set{ +, - }$.
The set of \emph{input} ($\alpha = -$) or \emph{output} ($\alpha = +$) \emph{faces} of $x$ is
\begin{equation*}
    \faces{}{\alpha} x \eqdef \set{ y \in P \mid \text{$x$ covers $y$ with orientation $\alpha$}  }.
\end{equation*}
The set of \emph{input} ($\alpha = -$) or \emph{output} ($\alpha = +$) \emph{cofaces} of $x$ is
\begin{equation*}
    \cofaces{}{\alpha} x \eqdef \set{ y \in P \mid \text{$y$ covers $x$ with orientation $\alpha$}  }.
\end{equation*}
We let $\faces{}{} x \eqdef \faces{}{-} x \cup \faces{}{+} x$ and $\cofaces{}{} x \eqdef \cofaces{}{-} x \cup \cofaces{}{+} x$.
\end{dfn}

\begin{dfn}[Oriented Hasse diagram]
Let $P$ be an oriented graded poset.
The \emph{oriented Hasse diagram of $P$} is the directed graph $\hasseo{P}$ whose 
\begin{itemize}
    \item set of vertices is the underlying set of $P$, and 
    \item for all vertices $x, y$, there is an edge from $y$ to $x$ if and only if $y \in \faces{}{-}x$ or $x \in \faces{}{+}y$.
\end{itemize}
\end{dfn}

\begin{dfn} \label{data:ogposet}
To represent an oriented graded poset $P$, we linearly order $\grade{n}{P}$ in each dimension $n$, so that each element $x \in P$ is uniquely identified by a pair of integers $(n, k)$, where $n \eqdef \dim{x}$ and $k$ is the position of $x$ in the linear order on $\grade{n}{P}$.
Then we represent $P$ as a pair $(\data{face\_data}, \data{coface\_data})$ of arrays of arrays of pairs of sets of integers, where
\begin{enumerate}
	\item $j \in \data{face\_data}[n][k][i]$ iff $(n-1, j) \in \faces{}{\alpha(i)}(n, k)$,
	\item $j \in \data{coface\_data}[n][k][i]$ iff $(n+1, j) \in \cofaces{}{\alpha(i)}(n, k)$;
\end{enumerate}
here the index $i \in \set{0, 1}$ is used to encode pairs, $\alpha(0) \eqdef -$ and $\alpha(1) \eqdef +$.
Sets of integers may be implemented as any data type supporting binary search in logarithmic time.
Note that this representation is redundant: $\data{face\_data}$ and $\data{coface\_data}$ can be reconstructed from each other.

This is essentially an adjacency list representation of $\hasse{P}$, with vertices separated according to their dimension, and incoming and outgoing edges separated according to their label.
It is not unique: any permutation of the dimension-wise linear orders produces an equivalent representation.
\end{dfn}

\begin{exm} \label{exm:whisker}
The following are representations of the same 2\nbd dimensional diagram shape as
\begin{itemize}
    \item a typical drawing of a pasting diagram;
    \item an oriented Hasse diagram, with input edges in pink, and dimension increasing from bottom to top;
    \item the pair of $\data{face\_data}$ and $\data{coface\_data}$ (rows are outer array indices, increasing from top to bottom, and columns inner array indices, increasing from left to right).
\end{itemize}
\[
\begin{tikzcd}[sep=small]
	{{\scriptstyle 0}\;\bullet} && {{\scriptstyle 2}\;\bullet} && {{\scriptstyle 3}\;\bullet} \\
	& {{\scriptstyle 1}\;\bullet}
	\arrow[""{name=0, anchor=center, inner sep=0}, "3", curve={height=-18pt}, from=1-1, to=1-3]
	\arrow["2", from=1-3, to=1-5]
	\arrow["0"', curve={height=6pt}, from=1-1, to=2-2]
	\arrow["1"', curve={height=6pt}, from=2-2, to=1-3]
	\arrow["0", shorten <=3pt, shorten >=6pt, Rightarrow, from=2-2, to=0]
\end{tikzcd}
\begin{tikzpicture}[xscale=4, yscale=4, baseline={([yshift=-.5ex]current bounding box.center)}]
\draw[->, draw=magenta] (0.125, 0.19999999999999998) -- (0.125, 0.4666666666666667);
\draw[->, draw=magenta] (0.19999999999999998, 0.19999999999999998) -- (0.8, 0.4666666666666667);
\draw[->, draw=magenta] (0.375, 0.19999999999999998) -- (0.375, 0.4666666666666667);
\draw[->, draw=magenta] (0.625, 0.19999999999999998) -- (0.625, 0.4666666666666667);
\draw[->, draw=black] (0.15, 0.4666666666666667) -- (0.35, 0.19999999999999998);
\draw[->, draw=magenta] (0.16249999999999998, 0.5333333333333333) -- (0.4625, 0.7999999999999999);
\draw[->, draw=black] (0.4, 0.4666666666666667) -- (0.6, 0.19999999999999998);
\draw[->, draw=magenta] (0.3875, 0.5333333333333333) -- (0.4875, 0.7999999999999999);
\draw[->, draw=black] (0.65, 0.4666666666666667) -- (0.85, 0.19999999999999998);
\draw[->, draw=black] (0.85, 0.4666666666666667) -- (0.65, 0.19999999999999998);
\draw[->, draw=black] (0.5375, 0.7999999999999999) -- (0.8375, 0.5333333333333333);
\node[text=black, font={\scriptsize \sffamily}, xshift=0pt, yshift=0pt] at (0.125, 0.16666666666666666) {0};
\node[text=black, font={\scriptsize \sffamily}, xshift=0pt, yshift=0pt] at (0.375, 0.16666666666666666) {1};
\node[text=black, font={\scriptsize \sffamily}, xshift=0pt, yshift=0pt] at (0.625, 0.16666666666666666) {2};
\node[text=black, font={\scriptsize \sffamily}, xshift=0pt, yshift=0pt] at (0.875, 0.16666666666666666) {3};
\node[text=black, font={\scriptsize \sffamily}, xshift=0pt, yshift=0pt] at (0.125, 0.5) {0};
\node[text=black, font={\scriptsize \sffamily}, xshift=0pt, yshift=0pt] at (0.375, 0.5) {1};
\node[text=black, font={\scriptsize \sffamily}, xshift=0pt, yshift=0pt] at (0.625, 0.5) {2};
\node[text=black, font={\scriptsize \sffamily}, xshift=0pt, yshift=0pt] at (0.875, 0.5) {3};
\node[text=black, font={\scriptsize \sffamily}, xshift=0pt, yshift=0pt] at (0.5, 0.8333333333333333) {0};
\end{tikzpicture}
\]
{\small \begin{tabular}{l l l l}
		\multicolumn{4}{l}{$\data{face\_data}$:} \\
		$(\set{}, \set{})$ & $(\set{}, \set{})$ & $(\set{}, \set{})$ & $(\set{}, \set{})$ \\
		$(\set{0}, \set{1})$ & $(\set{1}, \set{2})$ & $(\set{2}, \set{3})$ & $(\set{0}, \set{2})$ \\
		$(\set{0, 1}, \set{3})$ & & & \\
		\multicolumn{4}{l}{$\data{coface\_data}$:} \\
		$(\set{0, 3}, \set{})$ & $(\set{1}, \set{0})$ & $(\set{2}, \set{1, 3})$ & $(\set{}, \set{2})$ \\
		$(\set{0}, \set{})$ & $(\set{0}, \set{})$ & $(\set{}, \set{})$ & $(\set{}, \set{0})$ \\
		$(\set{}, \set{})$ & & &	
\end{tabular}}
\end{exm}

\begin{dfn}[Closure of a subset]
Let $P$ be a poset, $U \subseteq P$.
The \emph{closure of $U$} is the subset $\clos{U} \eqdef \set{ x \in P \mid \exists y \in U \, x \leq y }$.
We say that $U$ is \emph{closed} if $U = \clos{U}$.
\end{dfn}

\begin{dfn}[Dimension of a subset]
Let $U$ be a closed subset of a graded poset.
The \emph{dimension} of $U$ is the integer 
\begin{equation*}
    \dim U \eqdef
    \begin{cases}
        \max \set{ \dim x \mid x \in U } & \text{if $U$ is inhabited,} \\
        -1 & \text{if $U$ is empty.}
    \end{cases}
\end{equation*}
\end{dfn}

\begin{dfn}[Input and output boundaries]
Let $U$ be a closed subset of an oriented graded poset.
For all $\alpha \in \set{ +, - }$ and $n \in \mathbb{N}$, let
\begin{equation*}
    \faces{n}{\alpha} U \eqdef 
    \set{ x \in U_n \mid \cofaces{}{-\alpha} x \cap U = \varnothing  }.
\end{equation*}
Note that, if $\maxel{U}$ is the set of maximal elements of $U$, then
\begin{equation*}
    \faces{n}{-}U \cap \faces{n}{+}U = \grade{n}{(\maxel{U})}.
\end{equation*}
The \emph{input} ($\alpha = -$) and \emph{output} ($\alpha = +$) \emph{$n$\nbd boundary} of $U$ is the closed subset
\begin{equation*}
    \bound{n}{\alpha} U \eqdef \clos{(\faces{n}{\alpha} U)} \cup \bigcup_{k < n} \clos{\grade{k}{(\maxel{U})}}.
\end{equation*}
We omit the subscript when $n = \dim{U} - 1$, and for $n \in \set{-1, -2}$, we let $\faces{n}{\alpha}U = \bound{n}{\alpha}U \eqdef \varnothing$.
\end{dfn}

\begin{dfn}
We use the following notations, for $x$ an element in an oriented graded poset, $U$ a closed subset, $n \in \mathbb{N}$, and $\alpha \in \set{ +, - }$: 
$\bound{n}{\alpha} x \eqdef \bound{n}{\alpha} \clset{ x }$,
$\bound{n}{} U \eqdef \bound{n}{-} U \cup \bound{n}{+} U$,
$\faces{n}{} U \eqdef \faces{n}{-} U \cup \faces{n}{+} U$.
\end{dfn}

\begin{lem} \label{lem:boundary_inclusion}
Let $U$ be a closed subset of an oriented graded poset, $n \in \mathbb{N}$, and $\alpha \in \set{ +, - }$.
Then
\begin{enumerate}
    \item $\bound{n}{\alpha} U \subseteq U$,
    \item $\bound{n}{\alpha}U = U$ if and only if $n \geq \dim{U}$.
\end{enumerate}
\end{lem}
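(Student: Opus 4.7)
The proof rests on one monotonicity fact: in a graded poset, $y > x$ implies $\dim y > \dim x$. This follows because gradedness forces every cover to increase dimension by exactly one (any maximal path from $y$ through a face $y'$ has length $1 + \dim y'$, so $\dim y' = \dim y - 1$ for every face), and any strict order $y > x$ decomposes into a chain of covers. I would record this monotonicity as a preliminary observation.

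For part (1), each of the two pieces defining $\bound{n}{\alpha} U$ is the closure of a subset of $U$: $\faces{n}{\alpha} U \subseteq \grade{n}{U} \subseteq U$ by construction, and $\grade{k}{(\maxel U)} \subseteq \maxel U \subseteq U$. Since $U$ is closed, downward closure stays inside $U$, and the union of two subsets of $U$ is a subset of $U$.

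For part (2), I would prove the two directions separately. For the \emph{if} direction, assume $n \geq \dim U$ and take any $x \in U$; by finiteness, extend $x$ to a maximal $y \in \maxel U$ with $y \geq x$. Since $\dim y \leq \dim U \leq n$, split on whether $\dim y = n$ or $\dim y < n$. In the first case, maximality of $y$ in $U$ gives $\cofaces{}{-\alpha} y \cap U = \varnothing$, so $y \in \faces{n}{\alpha} U$, and $x \leq y$ places $x$ in $\clos{(\faces{n}{\alpha} U)}$. In the second case, $y \in \grade{\dim y}{(\maxel U)}$ with $\dim y < n$, and $x \leq y$ places $x$ in the second union. Either way, $x \in \bound{n}{\alpha} U$, so combined with part (1) we get equality.

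For the \emph{only if} direction, I would argue contrapositively. If $n < \dim U$, pick any $x \in U$ with $\dim x = \dim U$; by the monotonicity observation such an $x$ is maximal in $U$, as any $y > x$ would have $\dim y > \dim x = \dim U$. Now no element of $\faces{n}{\alpha} U$ (all of dimension $n < \dim x$) nor of $\grade{k}{(\maxel U)}$ for $k < n < \dim x$ can dominate $x$, again by monotonicity; hence $x$ avoids every term of the union defining $\bound{n}{\alpha} U$, and $\bound{n}{\alpha} U \subsetneq U$. The whole argument involves no real obstacle beyond correct case bookkeeping; the sole structural input is the monotonicity of dimension under $\leq$, which is immediate from gradedness.
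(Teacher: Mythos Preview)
Your proof is correct. The paper states this lemma without proof, treating it as a routine consequence of the definitions; your argument supplies exactly the straightforward case analysis one would expect, and the monotonicity of dimension under the order relation is indeed the only structural fact needed.
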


\begin{exm}
Let $U$ be the oriented graded poset of Example \ref{exm:whisker}.
Then $\bound{2}{-}U = \bound{2}{+}U = U$, and
\begin{align*}
	\bound{1}{-}U & = \set{(0, 0), (0, 1), (0, 2), (0, 3), (1, 0), (1, 1), (1, 2)}, \\
	\bound{1}{+}U & = \set{(0, 0), (0, 2), (0, 3), (1, 2), (1, 3)}, \\
	\bound{0}{-}U & = \set{(0, 0)}, \quad \quad \bound{0}{+}U = \set{(0, 3)}.
\end{align*}
\end{exm}

\begin{dfn}[Map of oriented graded posets]
Let $P, Q$ be oriented graded posets.
A \emph{map} $f\colon P \to Q$ is a function of their underlying sets that satisfies
\begin{equation*}
    f(\bound{n}{\alpha} x) = \bound{n}{\alpha}f(x)
\end{equation*}
for all $x \in P$, $n \in \mathbb{N}$, and $\alpha \in \set{ +, - }$.
There is a category $\ogpos$ whose objects are oriented graded posets and morphisms are maps.
\end{dfn}

\begin{dfn}[Inclusion of oriented graded posets]
An \emph{inclusion} is an injective map of oriented graded posets.
\end{dfn}

\begin{lem} \label{lem:properties_of_inclusions}
Let $\imath\colon P \incl Q$ be an inclusion of oriented graded posets.
Then
\begin{enumerate}
    \item $\imath$ is both order-preserving and order-reflecting, that is, $\imath(x) \leq \imath(y)$ if and only if $x \leq y$;
    \item $\imath$ preserves dimensions, that is, $\dim{\imath(x)} = \dim{x}$ for all $x \in P$;
    \item $\imath$ preserves the covering relation and orientations, that is, if $y$ covers $x$ in $P$ with orientation $\alpha$, then $\imath(y)$ covers $\imath(x)$ in $Q$ with orientation $\alpha$;
    \item for all closed $U \subseteq P$, $n \in \mathbb{N}$ and $\alpha \in \set{ +, - }$, $\imath$ maps $\bound{n}{\alpha}U$ isomorphically onto $\bound{n}{\alpha}\imath(U)$.
\end{enumerate}
\end{lem}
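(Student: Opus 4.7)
The plan is to establish the four claims in the order (2), (3), (1), (4). Dimension preservation is what unlocks the covering-relation analysis, and those two together allow a clean treatment of closures and boundaries. Throughout, the principal tool is the map condition $\imath(\bound{n}{\alpha}x) = \bound{n}{\alpha}\imath(x)$ combined with Lemma \ref{lem:boundary_inclusion}, which characterises $\dim x$ as the least $n$ for which $\bound{n}{\alpha}x$ coincides with $\clset{x}$.

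For (2), I would rule out both strict inequalities by injectivity. If $\dim \imath(x) > \dim x$, setting $n \eqdef \dim \imath(x) - 1 \geq \dim x$ makes $\bound{n}{\alpha}\imath(x)$ a proper subset of $\clset{\imath(x)}$ not containing $\imath(x)$; yet $\bound{n}{\alpha}\imath(x) = \imath(\bound{n}{\alpha}x) = \imath(\clset{x})$ does contain $\imath(x)$, a contradiction. If instead $\dim \imath(x) < \dim x$, the symmetric choice $n \eqdef \dim x - 1$ (or $n = -1$ when $\dim x = 0$) gives $\bound{n}{\alpha}\imath(x) = \clset{\imath(x)}$, which combined with a separate check that $\imath(\clset{x}) = \clset{\imath(x)}$ (valid as soon as $\dim \imath(x) \leq \dim x$) forces $\bound{n}{\alpha}x = \clset{x}$ by injectivity, violating Lemma \ref{lem:boundary_inclusion}. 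For (3), let $y$ cover $x$ with orientation $\alpha$ and set $n \eqdef \dim x = \dim y - 1$. Unfolding the definitions shows $x \in \faces{n}{\alpha}\clset{y}$: the only $(n+1)$-element of $\clset{y}$ is $y$ itself, whose covering of $x$ is labelled $\alpha \neq -\alpha$, and $\maxel{\clset{y}} = \set{y}$ contributes nothing below dimension $n+1$. Hence $\imath(x) \in \imath(\bound{n}{\alpha}y) = \bound{n}{\alpha}\imath(y)$, and the same unfolding carried out in $Q$, with $\imath(y)$ in place of $y$, reads off that $\imath(y)$ covers $\imath(x)$ with orientation $\alpha$.

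For (1), $x \leq y$ is equivalent to $x \in \clset{y}$; (2) promotes the boundary identity at any $n \geq \dim y$ to $\imath(\clset{y}) = \clset{\imath(y)}$, from which order preservation is immediate and order reflection uses injectivity to pull back $\imath(x) \leq \imath(y)$ to a $z \leq y$ with $\imath(z) = \imath(x)$. For (4), I would check that $\imath$ commutes with each ingredient of $\bound{n}{\alpha}(-)$: with $\maxel{\cdot}$ by (1), with $\grade{k}{\cdot}$ by (2), with $\clos{\cdot}$ via $\imath(\clset{y}) = \clset{\imath(y)}$, and with $\faces{n}{\alpha}(-)$ by (2), (3), and reflection of coverings. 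Substituting these into the defining formula yields $\imath(\bound{n}{\alpha}U) = \bound{n}{\alpha}\imath(U)$, and injectivity together with (3) upgrades this set-theoretic equality to an isomorphism of oriented graded posets. The only real obstacle is (2), where dimension preservation must be coaxed out of the interplay between the map condition, Lemma \ref{lem:boundary_inclusion}, and injectivity, with a little care at the edge case $\dim x = 0$ where one appeals directly to the convention $\bound{-1}{\alpha}U = \varnothing$; after that, the remaining items are essentially bookkeeping.
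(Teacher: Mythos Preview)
Your argument is correct. The order you choose---first establishing dimension preservation from the interplay of the map condition $\imath(\bound{n}{\alpha}x) = \bound{n}{\alpha}\imath(x)$ with Lemma~\ref{lem:boundary_inclusion} and injectivity, then deriving the covering, order, and boundary statements---is sound, and each step checks out. One small remark: in the case $\dim\imath(x) < \dim x$ you flag $\dim x = 0$ as an edge case requiring the convention $\bound{-1}{\alpha}U = \varnothing$, but in fact that case is vacuous since dimensions are non-negative, so no special handling is needed there.

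As for comparison with the paper: the paper does not actually give a proof of this lemma, deferring instead to \cite[Lemma~1.9 and Lemma~1.11]{hadzihasanovic2020diagrammatic}. Your proposal supplies a self-contained argument where the paper only cites, so there is no competing approach in the paper to weigh yours against.
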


\begin{rmk}
Every closed subset of an oriented graded poset inherits an orientation by restriction, and its inclusion as a subset is an inclusion of oriented graded posets.
The fact that inclusions preserve all boundaries justifies us in identifying an oriented graded poset with its image through an inclusion, which we will often do implicitly.
\end{rmk}

\begin{prop} \label{prop:ogpos_limits_and_colimits}
The category $\ogpos$ has
\begin{enumerate}
    \item a terminal object $1$,
    \item an initial object $\varnothing$,
    \item pushouts of inclusions.
\end{enumerate}
\end{prop}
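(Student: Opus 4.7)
The plan is to exhibit each of the three universal objects and verify the corresponding universal property explicitly. For the initial object I propose the empty OGP $\varnothing$: the unique map $\varnothing \to P$ is the empty function, which preserves all boundaries vacuously. For the terminal object I propose the one-point OGP $1 \eqdef \set{*}$ with $\dim * = 0$; the only candidate $f \colon P \to 1$ sends every element to $*$, and boundary preservation reduces to checking that, for each $x \in P$ and each $n, \alpha$, both $f(\bound{n}{\alpha}x)$ and $\bound{n}{\alpha}(*)$ are the same singleton (for $n \geq 0$) or both empty (for $n < 0$), using the fact that the boundary of a cell in an OGP is nonempty in each relevant dimension.

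For pushouts along inclusions $\imath \colon R \incl P$ and $\jmath \colon R \incl Q$, the candidate pushout is the set-theoretic pushout $S \eqdef (P \sqcup Q)/{\sim}$, where $\imath(r) \sim \jmath(r)$ for all $r \in R$, endowed with the order, covering relation, and orientation inherited dimension-by-dimension from $P$ and $Q$. By Lemma \ref{lem:properties_of_inclusions}, both $\imath$ and $\jmath$ preserve dimensions, the covering relation, orientations, and boundaries, so the data inherited from the two sides agree on the identified part $[R]$, making $S$ a well-defined OGP; in particular, antisymmetry of the generated order on $S$ follows because any chain in $S$ with both endpoints in one component can be contracted into that component using that $\imath$ and $\jmath$ are order-reflecting. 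Gradedness of $S$ is inherited from $P$ and $Q$: since dimensions are preserved across the gluing and every edge of $\hasse{S}$ decreases dimension by one, any two maximal paths from a given $x \in S$ have the same length $\dim x$.

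It remains to verify the universal property. Given any OGP $X$ and maps $f \colon P \to X$, $g \colon Q \to X$ with $f \circ \imath = g \circ \jmath$, the set-theoretic pushout yields a unique function $h \colon S \to X$ extending both, and the substance is to show $h(\bound{n}{\alpha}x) = \bound{n}{\alpha}h(x)$ for every $x \in S$. The main obstacle, and the place where Lemma \ref{lem:properties_of_inclusions}(4) is essential, is confirming that, for $x$ in the identified part $[R]$, the closure $\clset{x}$ in $S$ --- and hence each $\bound{n}{\alpha}x$ --- coincides with that of $x$ computed in whichever of $P$ or $Q$ we consider: since inclusions map boundaries of elements isomorphically onto boundaries of their images, every face of $\imath(r)$ in $P$ already lies in $\imath(R)$ and is identified with the corresponding face of $\jmath(r)$ in $Q$, so the gluing introduces no new faces. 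Once this is established, boundary preservation by $h$ reduces, element by element, to the corresponding preservation by $f$ or $g$ in the component of origin, closing the proof.
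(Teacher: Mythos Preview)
Your proof is correct and follows essentially the same approach as the paper's: both construct the pushout at the level of underlying posets and then lift the oriented graded structure using that inclusions have downward-closed image (your invocation of Lemma~\ref{lem:properties_of_inclusions} plays exactly the role of the paper's observation that the pushout legs in $\poscat$ are closed embeddings). If anything, you are more explicit than the paper about verifying that the comparison map out of the pushout preserves boundaries, which the paper leaves implicit once the orientation on the pushout is fixed.
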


\begin{dfn}
Not all oriented graded posets describe well-formed diagram shapes.
In our framework, the well-formed shapes form an inductive subclass, the \emph{regular molecules}, generated by the following constructions.
\end{dfn}

\begin{dfn}[Pasting construction]
Let $U, V$ be oriented graded posets, $k \in \mathbb{N}$, and let $\varphi\colon \bound{k}{+}U \incliso \bound{k}{-}V$ be an isomorphism.
The \emph{pasting of $U$ and $V$ at the $k$\nbd boundary along $\varphi$} is the oriented graded poset $U \cpiso{k}{\varphi} V$ obtained in $\ogpos$ as the pushout
\[\begin{tikzcd}
	\bound{k}{+}U & \bound{k}{-}V & V \\
	U && U \cpiso{k}{\varphi} V.
	\arrow["\varphi", hook, from=1-1, to=1-2]
	\arrow[hook, from=1-2, to=1-3]
	\arrow[hook', from=1-1, to=2-1]
	\arrow[hook, from=2-1, to=2-3]
	\arrow[hook', from=1-3, to=2-3]
	\arrow["\lrcorner"{anchor=center, pos=0.125, rotate=180}, draw=none, from=2-3, to=1-1]
\end{tikzcd}\]
\end{dfn}

\begin{dfn}[Rewrite construction]
Let $U, V$ be oriented graded posets of the same dimension $n$, and suppose $\varphi\colon \bound{}{}U \incliso \bound{}{}V$ is an isomorphism restricting to isomorphisms $\varphi^\alpha\colon \bound{}{\alpha} U \incliso \bound{}{\alpha} V$ for each $\alpha \in \set{ +, - }$.
Construct the pushout
\[\begin{tikzcd}
	\bound{}{}U & \bound{}{}V & V \\
	U && \bound{}{}(U \celto^\varphi V)
	\arrow["\varphi", hook, from=1-1, to=1-2]
	\arrow[hook, from=1-2, to=1-3]
	\arrow[hook', from=1-1, to=2-1]
	\arrow[hook, from=2-1, to=2-3]
	\arrow[hook', from=1-3, to=2-3]
	\arrow["\lrcorner"{anchor=center, pos=0.125, rotate=180}, draw=none, from=2-3, to=1-1]
\end{tikzcd}\]
in $\ogpos$. 
The \emph{rewrite of $U$ into $V$ along $\varphi$} is the oriented graded poset $U \celto^\varphi V$ obtained by adjoining a single $(n+1)$\nbd dimensional element $\top$ to $\bound{}{}(U \celto^\varphi V)$ such that $\faces{}{-}\top \eqdef U_n$ and $\faces{}{+}\top \eqdef V_n$.
\end{dfn}

\begin{lem} \label{lem:boundaries_of_rewrite}
Let $U, V$ be oriented graded posets and suppose $U \celto^\varphi V$ is defined.
Then
\begin{enumerate}
    \item $\bound{}{-}(U \celto^\varphi V)$ is isomorphic to $U$,
    \item $\bound{}{+}(U \celto^\varphi V)$ is isomorphic to $V$.
\end{enumerate}
\end{lem}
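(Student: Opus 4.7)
The strategy is to show that the canonical inclusion $U \hookrightarrow U \celto^\varphi V$, coming from the pushout construction of $\bound{}{}(U \celto^\varphi V)$, restricts to an isomorphism $U \iso \bound{}{-}(U \celto^\varphi V)$; statement (2) then follows by a symmetric argument exchanging $U$ with $V$ and $-$ with $+$.

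First I would carry out a coface calculation. Since $\dim(U \celto^\varphi V) = n+1$, we have $\bound{}{-}(U \celto^\varphi V) = \bound{n}{-}(U \celto^\varphi V)$, which by definition equals $\clos{\faces{n}{-}(U \celto^\varphi V)} \cup \bigcup_{k<n}\clos{\grade{k}{(\maxel{(U \celto^\varphi V)})}}$. The defining conditions $\faces{}{-}\top = U_n$ and $\faces{}{+}\top = V_n$ imply that each $x \in U_n$ has $\top \in \cofaces{}{-}x$ and no output coface in $U \celto^\varphi V$, while each $x \in V_n$ has $\top \in \cofaces{}{+}x$. Hence $\faces{n}{-}(U \celto^\varphi V) = U_n$, and the first summand of the boundary formula reduces to $\clos{U_n}$, which sits inside the image of $U$ in $U \celto^\varphi V$ (since $U$ is closed).

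Next I would describe the maximal elements of $U \celto^\varphi V$. The cell $\top$ is maximal; the elements of $U_n \cup V_n$ are not, being covered by $\top$; and because $\top$ covers only $n$\nbd dimensional elements, every element of dimension $<n$ is maximal in $U \celto^\varphi V$ iff it is maximal in the pushout $\bound{}{}(U \celto^\varphi V)$. Writing $M_U \eqdef \maxel{U} \setminus U_n$ and $M_V \eqdef \maxel{V} \setminus V_n$ for the strictly-lower-dimensional maxima, the key sub-claim is that $\varphi$ restricts to a bijection $M_U \cong M_V$, and that each identified pair yields a dim\nbd$<n$ maximal element of the pushout. For the top stratum $k = n-1$ this is immediate: a $y \in U_{n-1}$ is maximal iff $y \in \faces{n-1}{-}U \cap \faces{n-1}{+}U$, and since $(\bound{}{\alpha}U)_{n-1} = \faces{n-1}{\alpha}U$, the isos $\varphi^\alpha$ restrict to bijections of these strata with $\faces{n-1}{\alpha}V$, giving $y \in M_U$ iff $\varphi(y) \in M_V$. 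For $k < n-1$ the analogous correspondence must be set up by descending induction, peeling off each outer stratum of $\bound{}{\alpha}U$ and jointly exploiting both restrictions $\varphi^-$ and $\varphi^+$.

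Granted this bijection, the second summand of the boundary formula equals $\clos{M_U}$, so $\bound{}{-}(U \celto^\varphi V) = \clos{U_n} \cup \clos{M_U} = \clos{\maxel{U}} = U$ as subsets of $U \celto^\varphi V$. Lemma~\ref{lem:properties_of_inclusions} then upgrades this set-theoretic equality to an isomorphism of oriented graded posets, since the canonical inclusion is dimension-preserving, order-reflecting, and orientation-preserving. The main obstacle I expect is the inductive bijection of maximal elements at dimensions $k < n-1$: a priori, such a maximal $y \in U$ could cease to be maximal in the pushout if its image in $V$ acquired an ``interior'' coface in $V \setminus \bound{}{}V$, and ruling this out requires leveraging the joint constraint that $\varphi$, $\varphi^-$ and $\varphi^+$ are all isomorphisms, probably by way of an auxiliary characterisation of the maximal elements of $U$ at each dimension in terms of the successive strata of $\bound{}{\alpha}U$.
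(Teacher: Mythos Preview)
Your approach is essentially the paper's: compute $\faces{n}{-}(U \celto^\varphi V) = U_n$, then show that the maximal elements of dimension $<n$ in the pushout coincide with those of $U$ (equivalently, of $V$), handling $k = n-1$ via $\grade{n-1}{(\maxel U)} = \faces{}{-}U \cap \faces{}{+}U$ exactly as the paper does.

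You overestimate the difficulty of the case $k < n-1$, though: no descending induction is needed. Directly from the definition of $\bound{n-1}{\alpha}U$ one has $\grade{k}{(\maxel{(\bound{}{\alpha}U)})} = \grade{k}{(\maxel{U})}$ for every $k < n-1$ (this is the paper's Lemma~\ref{lem:maximal_in_boundary}: a $k$\nbd dimensional maximal element of $\bound{}{\alpha}U$ must equal the generator whose closure contains it, which cannot lie in $\faces{n-1}{\alpha}U$ for dimension reasons, so lies in $\grade{k}{(\maxel U)}$; conversely any such element stays maximal in the subset $\bound{}{\alpha}U$). A single application of either isomorphism $\varphi^\alpha$ then gives $\grade{k}{(\maxel{U})} \cong \grade{k}{(\maxel{V})}$ in one step. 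Your ``interior coface'' worry dissolves immediately: once $M_U$ and $M_V$ are identified by $\varphi$, each such element is maximal in both $U$ and $V$, hence maximal in $U \cup V$ (the paper packages this as Lemma~\ref{lem:faces_of_union}). So the case you flagged as the main obstacle is in fact the easier one.
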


\begin{rmk} \label{rmk:boundary_of_rewrite_notation}
The notation $\bound{}{}(U \celto^\varphi V)$ for the pushout in the rewrite construction is \emph{a posteriori} justified, that is, the pushout indeed constructs the boundary of $U \celto^\varphi V$.
\end{rmk}

\begin{dfn}[Roundness] \index{oriented graded poset!round}
Let $U$ be an oriented graded poset.
We say that $U$ is \emph{round} if, for all $n < \dim{U}$, $\bound{n}{-}U \cap \bound{n}{+}U = \bound{n-1}{}U$.
\end{dfn}

\begin{exm} \label{exm:round}
The shape of a pasting diagram is round when, intuitively, the diagram is shaped as a topological ball of the appropriate dimension.
For example, the oriented graded poset of Example \ref{exm:whisker} is not round, since
\begin{align*}
	\bound{0}{}U & = \set{(0, 0), (0, 3)} \\
    \subsetneq \bound{1}{+}U \cap \bound{1}{-}U & = \set{(0, 0), (0, 2), (1, 2), (0, 3)},
\end{align*}
and, indeed, the pasting diagram is shaped as the wedge of a 2\nbd ball (disc) with a 1\nbd ball (interval).
However, the following pasting diagram is round:
\[
    \begin{tikzcd}[sep=small]
	& {{\scriptstyle 3}\;\bullet} \\
	{{\scriptstyle 0}\;\bullet} &&& {{\scriptstyle 2}\;\bullet} \\
	&& {{\scriptstyle 1}\;\bullet}
	\arrow["2", curve={height=-6pt}, from=2-1, to=1-2]
	\arrow[""{name=0, anchor=center, inner sep=0}, "4", curve={height=-12pt}, from=1-2, to=2-4]
	\arrow[""{name=1, anchor=center, inner sep=0}, "0"', curve={height=12pt}, from=2-1, to=3-3]
	\arrow["1"', curve={height=6pt}, from=3-3, to=2-4]
	\arrow["3", from=1-2, to=3-3]
	\arrow["0", curve={height=-6pt}, shorten <=7pt, shorten >=3pt, Rightarrow, from=1, to=1-2]
	\arrow["1"', curve={height=6pt}, shorten <=3pt, shorten >=7pt, Rightarrow, from=3-3, to=0]
    \end{tikzcd}
\begin{tikzpicture}[xscale=5, yscale=4, baseline={([yshift=-.5ex]current bounding box.center)}]
\draw[->, draw=magenta] (0.1225, 0.19999999999999998) -- (0.10250000000000001, 0.4666666666666667);
\draw[->, draw=magenta] (0.16249999999999998, 0.19999999999999998) -- (0.4625, 0.4666666666666667);
\draw[->, draw=magenta] (0.3675, 0.19999999999999998) -- (0.30750000000000005, 0.4666666666666667);
\draw[->, draw=magenta] (0.8575, 0.19999999999999998) -- (0.7175, 0.4666666666666667);
\draw[->, draw=magenta] (0.8775, 0.19999999999999998) -- (0.8975000000000001, 0.4666666666666667);
\draw[->, draw=black] (0.1275, 0.4666666666666667) -- (0.34750000000000003, 0.19999999999999998);
\draw[->, draw=magenta] (0.115, 0.5333333333333333) -- (0.23500000000000001, 0.7999999999999999);
\draw[->, draw=black] (0.3325, 0.4666666666666667) -- (0.5925, 0.19999999999999998);
\draw[->, draw=magenta] (0.34500000000000003, 0.5333333333333333) -- (0.7050000000000001, 0.7999999999999999);
\draw[->, draw=black] (0.5375, 0.4666666666666667) -- (0.8375, 0.19999999999999998);
\draw[->, draw=black] (0.6675000000000001, 0.4666666666666667) -- (0.4075, 0.19999999999999998);
\draw[->, draw=magenta] (0.7050000000000001, 0.5333333333333333) -- (0.745, 0.7999999999999999);
\draw[->, draw=black] (0.8725, 0.4666666666666667) -- (0.6525, 0.19999999999999998);
\draw[->, draw=black] (0.275, 0.7999999999999999) -- (0.475, 0.5333333333333333);
\draw[->, draw=black] (0.295, 0.7999999999999999) -- (0.655, 0.5333333333333333);
\draw[->, draw=black] (0.765, 0.7999999999999999) -- (0.885, 0.5333333333333333);
\node[text=black, font={\scriptsize \sffamily}, xshift=0pt, yshift=0pt] at (0.125, 0.16666666666666666) {0};
\node[text=black, font={\scriptsize \sffamily}, xshift=0pt, yshift=0pt] at (0.375, 0.16666666666666666) {1};
\node[text=black, font={\scriptsize \sffamily}, xshift=0pt, yshift=0pt] at (0.625, 0.16666666666666666) {2};
\node[text=black, font={\scriptsize \sffamily}, xshift=0pt, yshift=0pt] at (0.875, 0.16666666666666666) {3};
\node[text=black, font={\scriptsize \sffamily}, xshift=0pt, yshift=0pt] at (0.1, 0.5) {0};
\node[text=black, font={\scriptsize \sffamily}, xshift=0pt, yshift=0pt] at (0.30000000000000004, 0.5) {1};
\node[text=black, font={\scriptsize \sffamily}, xshift=0pt, yshift=0pt] at (0.5, 0.5) {2};
\node[text=black, font={\scriptsize \sffamily}, xshift=0pt, yshift=0pt] at (0.7000000000000001, 0.5) {3};
\node[text=black, font={\scriptsize \sffamily}, xshift=0pt, yshift=0pt] at (0.9, 0.5) {4};
\node[text=black, font={\scriptsize \sffamily}, xshift=0pt, yshift=0pt] at (0.25, 0.8333333333333333) {0};
\node[text=black, font={\scriptsize \sffamily}, xshift=0pt, yshift=0pt] at (0.75, 0.8333333333333333) {1};
\end{tikzpicture}
\]
\end{exm}

\begin{dfn}[Pure subset]
Let $U$ be a closed subset of a graded poset, $n \eqdef \dim{U}$.
We say that $U$ is \emph{pure} if all the maximal elements of $U$ have dimension $n$, that is, $\maxel{U} = \grade{n}{U}$.
\end{dfn}

\begin{lem} \label{lem:round_is_pure}
If $U$ is round, then it is pure.
\end{lem}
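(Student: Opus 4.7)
Let $n \eqdef \dim{U}$. The plan is to argue by contradiction: I suppose that $U$ has a maximal element $x$ of dimension $k < n$, and use roundness to derive a dimensional inequality that rules this out. In effect, I will show that any such $x$ would have to lie in a boundary strictly below its own dimension.

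First, because $x$ is maximal in $U$, we have $\cofaces{}{}x \cap U = \varnothing$, and in particular $\cofaces{}{-\alpha} x \cap U = \varnothing$ for both $\alpha \in \set{+, -}$. By the definition of $\faces{k}{\alpha}U$, this means $x \in \faces{k}{-}U \cap \faces{k}{+}U$, and therefore $x \in \bound{k}{-}U \cap \bound{k}{+}U$ using the inclusion of $\clos{\faces{k}{\alpha}U}$ into $\bound{k}{\alpha}U$. Since $k < n = \dim{U}$, roundness applies at level $k$ and yields
\begin{equation*}
    \bound{k}{-}U \cap \bound{k}{+}U = \bound{k-1}{}U.
\end{equation*}

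The second step is to observe that $\bound{k-1}{}U$ consists only of elements of dimension at most $k - 1$: by definition it is the union of $\clos{\faces{k-1}{}U}$, whose elements have dimension $\leq k-1$, with $\bigcup_{j < k-1} \clos{\grade{j}{\maxel{U}}}$, whose elements have dimension $\leq k - 2$. (The edge case $k = 0$ is immediate since $\bound{-1}{}U = \varnothing$ by convention.) Thus $x \in \bound{k-1}{}U$ would force $\dim{x} \leq k - 1$, contradicting $\dim{x} = k$.

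It follows that every maximal element of $U$ has dimension $n$, i.e.\ $\maxel{U} \subseteq \grade{n}{U}$. The reverse inclusion is automatic, as no element of $U$ has dimension greater than $n$, so $\grade{n}{U} \subseteq \maxel{U}$. Hence $\maxel{U} = \grade{n}{U}$ and $U$ is pure. The only step requiring minor care is the observation on the dimension of $\bound{k-1}{}U$, which is essentially unpacking the definition; no genuine obstacle appears.
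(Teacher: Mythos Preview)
Your proof is correct and follows essentially the same approach as the paper's: both show that a maximal element $x$ of dimension $k < \dim U$ lies in $\bound{k}{-}U \cap \bound{k}{+}U$, and then contrast this with the fact that $\bound{k-1}{}U$ has dimension at most $k-1$. The paper phrases it as a contrapositive and compares dimensions directly, while you frame it as a contradiction and unpack the definition of $\bound{k-1}{}U$, but the argument is the same.
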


\begin{dfn}[Regular molecule]
The class of \emph{regular molecules} is the inductive subclass of oriented graded posets closed under isomorphisms and generated by the following clauses.
\begin{enumerate}
    \item (\textit{Point}). The terminal oriented graded poset $1$ is a regular molecule.
    \item (\textit{Paste}). If $U$, $V$ are regular molecules, $\varphi\colon \bound{k}{+}U \incliso \bound{k}{-}V$ is an isomorphism with $k < \min \set{ \dim U, \dim V }$, then $U \cpiso{k}{\varphi} V$ is a regular molecule.
    \item (\textit{Atom}). If $U$, $V$ are \emph{round} regular molecules of the same dimension and $\varphi\colon \bound{}{}U \incliso \bound{}{}V$ is an isomorphism restricting to $\varphi^\alpha\colon \bound{}{\alpha} U \incliso \bound{}{\alpha} V$ for each $\alpha \in \set{ +, - }$, then $U \celto^\varphi V$ is a regular molecule.
\end{enumerate}
\end{dfn}

\begin{dfn}
We summarise the essential properties of regular molecules.
The second point of Proposition \ref{prop:molecule_properties} allows us to write $U \cp{k} V$ and $U \celto V$ instead of $U \cpiso{k}{\varphi} V$ and $U \celto^\varphi V$ when the latter are defined and $U, V$ are regular molecules.
\end{dfn}

\begin{prop} \label{prop:molecule_properties}
Let $U, V$ be regular molecules, $k \in \mathbb{N}$.
Then
\begin{enumerate}
    \item if $U$ and $V$ are isomorphic, they are isomorphic in a unique way;
    \item if $U \cpiso{k}{\varphi} V$ or $U \celto^\varphi V$ is defined, it is defined for a unique $\varphi$;
    \item for all $n \in \mathbb{N}$, $\alpha \in \set{+, -}$, $\bound{n}{\alpha}U$ is a regular molecule;
    \item $U$ is globular, that is, for all $k, n \in \mathbb{N}$ and $\alpha, \beta \in \set{ +, - }$, if $k < n$ then $\bound{k}{\alpha}(\bound{n}{\beta}U) = \bound{k}{\alpha}U$;
    \item if $U$ is round, for all $n \in \mathbb{N}$, $\alpha \in \set{+, -}$, $\bound{n}{\alpha}U$ is round.
\end{enumerate}
\end{prop}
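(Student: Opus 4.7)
The plan is to establish all five statements by a single simultaneous induction on the generating clauses (point, paste, atom), because they are genuinely entangled: Part 2 follows from Part 1 applied to boundaries, which must first be known to be regular molecules (Part 3); the identification of these boundaries in the paste case relies on globularity (Part 4) at lower dimensions; and preservation of roundness (Part 5) is needed to re-enter the atom clause at later stages of the induction.

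The base case is the terminal oriented graded poset $1$, for which every statement holds trivially since it has a single element and no non-trivial boundaries.

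For the inductive step, I would first derive boundary formulas. For a paste $W = U \cpiso{k}{\varphi} V$, analysing the defining pushout should yield: for $n > k$, $\bound{n}{\alpha} W \cong \bound{n}{\alpha} U \cp{k} \bound{n}{\alpha} V$, with the pasting well-defined by globularity of $U$ and $V$; for $n = k$, $\bound{k}{-} W \cong \bound{k}{-} U$ and $\bound{k}{+} W \cong \bound{k}{+} V$; and for $n < k$, all the relevant boundaries agree after identification along $\varphi$. Parts 3, 4, and 5 in this case then reduce to the inductive hypothesis applied to $U$ and $V$. For an atom $W = U \celto^\varphi V$, Lemma \ref{lem:boundaries_of_rewrite} directly gives $\bound{}{-} W \cong U$ and $\bound{}{+} W \cong V$, while lower-dimensional boundaries coincide with those of $U$, equivalently $V$ via $\varphi$; Parts 3 through 5 are then essentially bookkeeping.

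The main obstacle is rigidity (Part 1) in the paste case. An isomorphism $f\colon W \iso W'$ must preserve dimension and orientation by Lemma \ref{lem:properties_of_inclusions}, but to split it into a compatible pair $U \iso U'$, $V \iso V'$ on which to apply the inductive hypothesis, one must show that the decomposition $W = U \cup V$ with $U \cap V \cong \bound{k}{+} U$ is canonically recoverable from $W$ alone. A natural strategy is to characterise the elements of $U$ of dimension greater than $k$ in terms of reachability in $\hasseo{W}$ from $\bound{}{-}W$, and symmetrically for $V$ from $\bound{}{+}W$, then take downward closures; care is required at the interface, where globularity is needed to ensure consistency. For the atom case rigidity is cleaner: $\top$ is the unique element of dimension $\dim W$, so $f(\top) = \top'$, and $f$ then restricts to isomorphisms between input and output boundaries, unique by induction. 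Part 2 is a corollary of Parts 1 and 3: if $\varphi, \varphi'$ both make $U \cpiso{k}{\varphi} V$ or $U \celto^\varphi V$ defined, then they are isomorphisms between the same pair of regular molecules $\bound{k}{+}U, \bound{k}{-}V$ (or $\bound{}{}U, \bound{}{}V$), hence equal by rigidity.
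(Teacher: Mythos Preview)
Your overall architecture is sound, and Parts 3--5 go through essentially as you describe: the boundary formulas for $U \cpiso{k}{\varphi} V$ are the right ones, and Part 5 really does fall out of Part 4 directly, since for $n < m$ globularity gives $\bound{n}{\beta}(\bound{m}{\alpha}U) = \bound{n}{\beta}U$, so the roundness condition on $\bound{m}{\alpha}U$ is literally the one on $U$. One small correction on Part 2: in the rewrite case, $\bound{}{}U = \bound{}{-}U \cup \bound{}{+}U$ is in general \emph{not} a regular molecule, so you cannot invoke Part 1 on $\varphi$ itself; apply it instead to each restriction $\varphi^\alpha\colon \bound{}{\alpha}U \incliso \bound{}{\alpha}V$, which together determine $\varphi$.

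The genuine gap is your strategy for Part 1 in the paste case. The proposed characterisation of the image of $U$ inside $W = U \cpiso{k}{\varphi} V$ via reachability in $\hasseo{W}$ from $\bound{}{-}W$ does not work: already for two arrows pasted at a point, \emph{every} element of $W$ is reachable from the input vertex along $\hasseo{W}$. More fundamentally, the paste decomposition of a regular molecule is not unique---associativity and interchange (Propositions \ref{prop:associativity_of_pasting} and \ref{prop:interchange_of_pasting}) yield genuinely different factorisations of the same $W$---so \emph{no} intrinsic description of this particular $U$ inside $W$ can exist, and there is no way to show that an arbitrary automorphism of $W$ restricts to one of $U$.

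The fix is to abandon induction on the construction tree for Part 1 and induct on $\size{U}$ (or on $\dim U$) instead. Given isomorphisms $f, g\colon U \to V$, they agree on $\bound{}{-}U$ by the inductive hypothesis, since this is a strictly smaller regular molecule once $\dim U > 0$ (using Part 3). Then propagate to the top dimension: if $f(z) = g(z)$ for some $z \in \faces{}{-}x$ with $x \in \grade{n}{U}$, Lemma \ref{lem:codimension_1_elements} forces $f(x) = g(x)$ as the unique element of $\cofaces{}{-}f(z)$ in $V$. Iterating along $\flow{n-1}{U}$---whose acyclicity (Corollary \ref{cor:flow_acyclic_in_codimension_1}) can be proved by structural induction without appealing to rigidity---fixes every $x \in \grade{n}{U}$; then $f$ and $g$ agree on each atom $\clset{x}$ by the lower-dimensional inductive hypothesis on its boundaries, and every element of $U$ lies either in some such $\clset{x}$ or already in $\bound{}{-}U$. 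The paper's own proof simply defers to \cite{hadzihasanovic2020diagrammatic}, where the argument runs along these lines rather than through the generating clauses.
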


\begin{exm} \label{exm:generation}
Let
\begin{align*}
    \data{arrow} & \eqdef 1 \celto 1, \\
    \data{binary} & \eqdef (\data{arrow} \cp{0} \data{arrow}) \celto \data{arrow}, \\
    \data{cobinary} & \eqdef \data{arrow} \celto (\data{arrow} \cp{0} \data{arrow}).
\end{align*}
Then the shape of Example \ref{exm:whisker} is a regular molecule constructed as    $\data{binary} \cp{0} \data{arrow}$, while the shape of Example \ref{exm:round} is a regular molecule constructed as 
\begin{equation*}
    (\data{cobinary} \cp{0} \data{arrow}) \cp{1} (\data{arrow} \cp{0} \data{binary}).
\end{equation*}
\end{exm}

\begin{dfn}
The following results imply, together, that pasting of regular molecules satisfies the equations of strict $\omega$\nbd categories up to unique isomorphism.
In particular, the associativity result allows us to write multiple pastings in the same dimension without bracketing.
\end{dfn}

\begin{prop} \label{prop:associativity_of_pasting}
Let $U, V, W$ be regular molecules and let $k \in \mathbb{N}$ such that $U \cp{k} V$ and $V \cp{k} W$ are both defined.
Then $(U \cp{k} V) \cp{k} W$ and $U \cp{k} (V \cp{k} W)$ are both defined and uniquely isomorphic.
\end{prop}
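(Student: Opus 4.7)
The plan is to factor the statement into a \emph{boundary cancellation lemma} for $k$-pasting, after which associativity up to unique isomorphism is a categorical consequence of the universal property of pushouts.

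The lemma I would prove first is: for regular molecules $U, V$ with $U \cp{k} V$ defined and $k < \min\set{\dim U, \dim V}$, the canonical inclusions $U, V \hookrightarrow U \cp{k} V$ restrict to identifications $\bound{k}{-}(U \cp{k} V) = \bound{k}{-}U$ and $\bound{k}{+}(U \cp{k} V) = \bound{k}{+}V$. For the $k$-faces, I would unfold the definition: an element $x$ of dimension $k$ lies in $\faces{k}{+}(U \cp{k} V)$ exactly when $\cofaces{}{-}x = \varnothing$ in the pushout. For $x$ strictly inside $U$, cofaces are inherited from $U$; since $\faces{k}{+}U \subseteq \bound{k}{+}U$, no such $x$ can lie in $\faces{k}{+}U$. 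For $x$ on the interface $\bound{k}{+}U \cong \bound{k}{-}V$, the existing absence of $-$\nbd cofaces in $U$ reduces the membership condition to $x \in \faces{k}{+}V$. For $x$ strictly inside $V$, the condition is again equivalent to $x \in \faces{k}{+}V$. The contributions from lower-dimensional maximal elements can be aligned using the fact that $\varphi$ preserves the covering relation and orientations (Lemma \ref{lem:properties_of_inclusions}(3)), which forces maximal cells below dimension $k$ on the two sides of the interface to correspond; the equality then follows by combining with the closure formula for $\bound{k}{+}$.

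Given the lemma, definedness of both iterated pastings is automatic: to form $(U \cp{k} V) \cp{k} W$ I need $\bound{k}{+}(U \cp{k} V) \cong \bound{k}{-}W$, and by the lemma this reduces to $\bound{k}{+}V \cong \bound{k}{-}W$, which holds because $V \cp{k} W$ is defined. The symmetric bracketing is analogous. For the isomorphism, both iterated pastings realise in $\ogpos$ the colimit of the zigzag
\begin{equation*}
    U \hookleftarrow \bound{k}{+}U \xrightarrow{\sim} \bound{k}{-}V \hookrightarrow V \hookleftarrow \bound{k}{+}V \xrightarrow{\sim} \bound{k}{-}W \hookrightarrow W,
\end{equation*}
assembled as an iterated pushout of inclusions (well-defined by Proposition \ref{prop:ogpos_limits_and_colimits}). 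The standard pasting lemma for pushouts yields a canonical isomorphism between the two bracketings, and uniqueness is immediate from Proposition \ref{prop:molecule_properties}(1), since both sides are regular molecules by the Paste clause applied twice.

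The main obstacle is the cancellation lemma. The $\faces$-level part is essentially definition-chasing, but verifying that the low-dimensional maximal elements of $U \cp{k} V$ match up exactly with those of $\bound{k}{+}V$ (respectively $\bound{k}{-}U$) requires careful bookkeeping, leaning on the fact that the inductive structure of regular molecules together with the coface-preservation property of $\varphi$ prevents asymmetric \emph{whiskering} at dimensions below $k$. Once this is in hand, the rest of the proof is routine category theory.
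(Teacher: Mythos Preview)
Your approach is correct and essentially matches the paper's: the paper's proof is a one-line citation to \cite[Proposition 1.23]{hadzihasanovic2020diagrammatic}, which establishes precisely your boundary cancellation lemma (that $\bound{k}{-}(U \cp{k} V) = \bound{k}{-}U$ and $\bound{k}{+}(U \cp{k} V) = \bound{k}{+}V$) as part of showing that regular molecules satisfy the strict $\omega$\nbd category axioms, after which associativity is the usual pushout-pasting argument together with uniqueness of isomorphisms from Proposition~\ref{prop:molecule_properties}(1). One small gap to close: your lemma is stated under $k < \min\set{\dim U, \dim V}$, but the proposition carries no such hypothesis; you should dispatch the degenerate cases (where $k \geq \dim U$ or $k \geq \dim V$, so one factor equals its own $k$\nbd boundary and the pasting collapses via Lemma~\ref{lem:boundary_inclusion}) separately, which is immediate.
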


\begin{prop} \label{prop:unitality_of_pasting}
Let $U$ be a regular molecule and $k \in \mathbb{N}$.
Then $U \cp{k} \bound{k}{+}U$ and $\bound{k}{-}U \cp{k} U$ are both defined and uniquely isomorphic to $U$.
\end{prop}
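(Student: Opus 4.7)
The plan is to reduce both pastings to pushouts in which one of the legs is the identity, and then apply the earlier uniqueness lemmas.

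First I observe that $\bound{k}{+}U$ is a regular molecule by Proposition \ref{prop:molecule_properties}(3), and from the definition of $\bound{k}{+}U$ every element has dimension at most $k$, so $\dim \bound{k}{+}U \leq k$. By Lemma \ref{lem:boundary_inclusion}(2) applied to the closed subset $\bound{k}{+}U$, this gives $\bound{k}{-}(\bound{k}{+}U) = \bound{k}{+}U$. Therefore the identity map is an isomorphism $\bound{k}{+}U \incliso \bound{k}{-}(\bound{k}{+}U)$, and by Proposition \ref{prop:molecule_properties}(2) it is the unique such isomorphism. This shows $U \cp{k} \bound{k}{+}U$ is defined; the mirror argument, using $\bound{k}{+}(\bound{k}{-}U) = \bound{k}{-}U$, shows $\bound{k}{-}U \cp{k} U$ is defined.

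Next I compute the pushout defining $U \cp{k} \bound{k}{+}U$:
\[
\begin{tikzcd}
\bound{k}{+}U \ar[r, "\idd{}", hook] \ar[d, hook] & \bound{k}{+}U \ar[d] \\
U \ar[r] & U \cp{k} \bound{k}{+}U
\end{tikzcd}
\]
Because the top leg is the identity (hence an isomorphism), the pushout is computed by $U$ itself, with the left vertical map being $\idd{U}$ and the right vertical map being the inclusion $\bound{k}{+}U \incl U$; this universal property is immediate since any cocone out of the span factors uniquely through $U$ along the left leg. Hence $U \cp{k} \bound{k}{+}U \cong U$, and symmetrically $\bound{k}{-}U \cp{k} U \cong U$. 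Uniqueness of the isomorphism follows from Proposition \ref{prop:molecule_properties}(1).

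The only real step to be careful about is verifying that $\bound{k}{+}U$ has dimension at most $k$, so that Lemma \ref{lem:boundary_inclusion}(2) actually forces $\bound{k}{-}(\bound{k}{+}U)$ to coincide with $\bound{k}{+}U$ on the nose; after that the argument is a routine ``pushout along an identity is trivial'' observation combined with the uniqueness clauses of Proposition \ref{prop:molecule_properties}. I do not expect a substantive obstacle here; the content of the proposition lies in having the right definitions in place, not in a combinatorial difficulty.
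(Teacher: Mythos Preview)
Your argument is correct. The paper itself does not give an independent proof of this proposition: it simply cites \cite[Proposition 1.23]{hadzihasanovic2020diagrammatic} together with the uniqueness of isomorphisms from Proposition \ref{prop:molecule_properties}. Your direct computation is therefore strictly more informative than what the paper records.

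A couple of minor remarks. First, your appeal to Proposition \ref{prop:molecule_properties}(2) to pin down $\varphi$ as the identity is not actually needed to show that the pasting is defined; existence of \emph{some} isomorphism suffices, and you have it immediately once $\bound{k}{-}(\bound{k}{+}U) = \bound{k}{+}U$. Second, note that the (\textit{Paste}) clause in the inductive definition of regular molecules requires $k < \min\{\dim U, \dim V\}$, which fails here since $\dim \bound{k}{+}U \leq k$; so you cannot conclude that $U \cp{k} \bound{k}{+}U$ is a regular molecule directly from that clause. But this is harmless: you show it is isomorphic to $U$, hence a regular molecule by closure under isomorphism, and then Proposition \ref{prop:molecule_properties}(1) gives uniqueness of the isomorphism. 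You handle this correctly, but it is worth being explicit about it.
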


\begin{prop} \label{prop:interchange_of_pasting}
Let $U, U', V, V'$ be regular molecules and $k < n \in \mathbb{N}$ such that $(U \cp{n} U') \cp{k} (V \cp{n} V')$ is defined.
Then $(U \cp{k} V) \cp{n} (U' \cp{k} V')$ is defined and uniquely isomorphic to $(U \cp{n} U') \cp{k} (V \cp{n} V')$.
\end{prop}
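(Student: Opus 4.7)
The plan is to exhibit both sides as colimits in $\ogpos$ of a single $3 \times 3$ grid diagram of inclusions, and to conclude via the pasting lemma for iterated pushouts.

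First, I would check that the hypothesis also entails that $U \cp{k} V$ and $U' \cp{k} V'$ are defined. The molecules $U \cp{n} U'$ and $V \cp{n} V'$ are defined by assumption, and the outer pasting at level $k$ supplies an isomorphism $\bound{k}{+}(U \cp{n} U') \incliso \bound{k}{-}(V \cp{n} V')$. Since $k < n$, globularity (Proposition~\ref{prop:molecule_properties}(4)) gives $\bound{k}{+}U = \bound{k}{+}(\bound{n}{+}U) = \bound{k}{+}(\bound{n}{-}U') = \bound{k}{+}U'$ under the identifications of the pushout defining $U \cp{n} U'$, and this common object coincides with $\bound{k}{+}(U \cp{n} U')$; the analogous statement holds on the $V$-side. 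Consequently the given isomorphism restricts to isomorphisms $\bound{k}{+}U \incliso \bound{k}{-}V$ and $\bound{k}{+}U' \incliso \bound{k}{-}V'$, so by clause (\textit{Paste}) of the definition both $U \cp{k} V$ and $U' \cp{k} V'$ are regular molecules.

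Second, I would construct in $\ogpos$ the $3 \times 3$ diagram of inclusions whose four corner objects are $U, U', V, V'$; whose four edge objects are $\bound{n}{+}U \cong \bound{n}{-}U'$, $\bound{n}{+}V \cong \bound{n}{-}V'$, $\bound{k}{+}U \cong \bound{k}{-}V$, $\bound{k}{+}U' \cong \bound{k}{-}V'$ (using the unique isomorphisms from Proposition~\ref{prop:molecule_properties}(1)); and whose centre object is $\bound{k}{+}(\bound{n}{+}U)$, identified by globularity with each of $\bound{k}{+}(\bound{n}{-}U')$, $\bound{k}{-}(\bound{n}{+}V)$, $\bound{k}{-}(\bound{n}{-}V')$. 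All eight maps are inclusions of a boundary into its ambient molecule, and commutativity of each of the four small squares is then a direct consequence of the naturality properties of inclusions (Lemma~\ref{lem:properties_of_inclusions}). By Proposition~\ref{prop:ogpos_limits_and_colimits}, all the pushouts of this diagram exist.

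Third, by the standard pasting lemma for pushouts applied iteratively to this $3 \times 3$ grid, computing the colimit rows-first then columns-first yields $(U \cp{n} U') \cp{k} (V \cp{n} V')$, while computing columns-first then rows-first yields $(U \cp{k} V) \cp{n} (U' \cp{k} V')$. Both iterated constructions therefore represent the same colimit, so in particular the latter pasting is defined and canonically isomorphic to the former; uniqueness of the isomorphism follows from Proposition~\ref{prop:molecule_properties}(1).

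The main obstacle is the careful bookkeeping in the second step: one has to apply globularity repeatedly to identify the centre object across its four natural descriptions, and use the uniqueness of boundary isomorphisms to check that every square of the grid genuinely commutes on the nose (as opposed to up to some further isomorphism). Once the diagram has been set up as a commuting diagram in $\ogpos$, the interchange is a formal consequence of the uniqueness of colimits.
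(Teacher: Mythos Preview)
Your colimit approach is correct in spirit and genuinely different from the paper, which dispatches Propositions~\ref{prop:associativity_of_pasting}--\ref{prop:interchange_of_pasting} in a single sentence by citing \cite[Proposition~1.23]{hadzihasanovic2020diagrammatic} (which packages the fact that regular molecules with $\cp{k}$ form a strict $\omega$\nbd category). Your $3\times 3$ grid of inclusions and the iterated-pushout lemma do organise the argument nicely.

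There is, however, a real gap in your third step. The pasting lemma tells you only that both orders of iteration compute the \emph{same} colimit; it does not by itself identify either iterated pushout with the specified double pasting. For rows-first, the middle-row pushout collapses to $\bound{k}{+}U$ (both legs are isomorphisms by globularity), and you then need that the induced map $\bound{k}{+}U \hookrightarrow U \cp{n} U'$ has image exactly $\bound{k}{+}(U \cp{n} U')$; you assert this in step~1 but do not prove it. For columns-first, the middle-column pushout is $\bound{n}{+}U \cp{k} \bound{n}{+}V$, and you need that its induced inclusion into $U \cp{k} V$ has image $\bound{n}{+}(U \cp{k} V)$; you do not address this at all. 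These two boundary formulas---$\bound{k}{\alpha}(U \cp{n} U') = \bound{k}{\alpha}U$ for $k<n$, and $\bound{n}{\alpha}(U \cp{k} V) = \bound{n}{\alpha}U \cp{k} \bound{n}{\alpha}V$---are precisely the content of the external Proposition~1.23 that the paper cites. So your argument is not independent of that result; it unpacks the interchange law into a colimit diagram, but the identification of the legs with boundaries still rests on the same cited lemma. If you want a fully self-contained proof, you would have to derive those two formulas directly from the pushout description and Lemma~\ref{lem:faces_of_union}, which is where the actual work lies.
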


\begin{dfn}[Atom]
An \emph{atom} is a regular molecule with a greatest element.
\end{dfn}

\begin{dfn}[Merger of a round regular molecule]
Let $U$ be a round regular molecule of dimension $> 0$.
The \emph{merger} of $U$ is the atom $\compos{U} \eqdef \bound{}{-}U \celto \bound{}{+}U$.
\end{dfn}

\begin{exm} \label{exm:merger}
The following pair of pasting diagrams depicts a round regular molecule $U$ and its merger $\compos{U}$, an atom.
\[\begin{tikzcd}[column sep=tiny,row sep=small]
	&&& \bullet &&&& {} & \bullet \\
	\bullet && \bullet && \bullet & \bullet &&&& \bullet \\
	& \bullet &&& U && \bullet & {} && {\compos{U}}
	\arrow[curve={height=6pt}, from=2-1, to=3-2]
	\arrow[""{name=0, anchor=center, inner sep=0}, from=3-2, to=2-3]
	\arrow[""{name=1, anchor=center, inner sep=0}, from=2-3, to=2-5]
	\arrow[""{name=2, anchor=center, inner sep=0}, curve={height=12pt}, from=3-2, to=2-5]
	\arrow[""{name=3, anchor=center, inner sep=0}, curve={height=-12pt}, from=2-1, to=1-4]
	\arrow[curve={height=-6pt}, from=1-4, to=2-5]
	\arrow[from=2-3, to=1-4]
	\arrow[curve={height=-12pt}, from=2-6, to=1-9]
	\arrow[curve={height=-6pt}, from=1-9, to=2-10]
	\arrow[curve={height=6pt}, from=2-6, to=3-7]
	\arrow[curve={height=12pt}, from=3-7, to=2-10]
	\arrow[shorten <=6pt, shorten >=3pt, Rightarrow, from=3-8, to=1-8]
	\arrow[shorten <=4pt, Rightarrow, from=2, to=2-3]
	\arrow[shorten <=3pt, Rightarrow, from=1, to=1-4]
	\arrow[curve={height=-6pt}, shorten <=6pt, shorten >=6pt, Rightarrow, from=0, to=3]
\end{tikzcd}\]
\end{exm}

\begin{prop} \label{prop:atom_properties}
Let $U$ be a regular molecule.
Then
\begin{enumerate}
    \item $U$ is an atom if and only if it was produced by (\textit{Point}) or (\textit{Atom});
    \item for all $x \in U$, $\clset{x}$ is an atom;
    \item if $U$ is an atom, then $U$ is round, and if $\dim{U} > 0$ then $U$ is isomorphic to $\compos{U}$.
\end{enumerate}
\end{prop}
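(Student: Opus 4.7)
My plan is to prove the three points in order, each by structural induction on the construction of regular molecules, exploiting the boundary identifications already established in the paper so as to keep the case analysis short.

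For (1), the forward direction is immediate: in (\textit{Point}) the terminal $1$ has a unique (hence greatest) element, and in (\textit{Atom}) the adjoined $\top$ in $V \celto^\varphi W$ is by construction greater than every other element. For the converse, I would show that a molecule produced by (\textit{Paste}), $V \cpiso{k}{\varphi} W$ with $k < \min \set{\dim V, \dim W}$, has at least two maximal elements. Indeed, each of $V$ and $W$ contains a maximal element of dimension $> k$, and the pushout identifies only elements of dimension $\leq k$ (those in $\bound{k}{+}V \cong \bound{k}{-}W$), so these remain distinct and, by Lemma \ref{lem:properties_of_inclusions}, still maximal in the pasting.

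For (2), I would again argue by induction on the construction of $U$, trivially in the base case. For $U = V \cp{k} W$, every $x \in U$ originates in $V$ or in $W$, and the closure $\clset{x}$ in $U$ coincides with its closure in the relevant constituent, hence is an atom by the inductive hypothesis. For $U = V \celto^\varphi W$, the same argument handles $x \neq \top$; for $x = \top$, I would invoke Lemma \ref{lem:round_is_pure} to see that $V$ and $W$ are pure, so every element of $U$ lies below some top-dimensional element of $V$ or $W$, each of which is in $\faces{}{}\top$; hence $\clset{\top} = U$, itself an atom.

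For (3), Part (1) reduces the analysis to $U = 1$ (vacuously round, with $\dim U = 0$) and $U = V \celto^\varphi W$ with $V, W$ round of common dimension $n$, so $\dim U = n + 1$. Roundness at $k = n$ follows from Lemma \ref{lem:boundaries_of_rewrite}, which identifies $\bound{n}{-}U$ with $V$ and $\bound{n}{+}U$ with $W$; their intersection inside $U$ is the common glued boundary $\bound{}{}V \cong \bound{}{}W$, which matches $\bound{n-1}{}U$ via the pushout description of $\bound{}{}(U \celto^\varphi W)$. For $k < n$, the globularity clause of Proposition \ref{prop:molecule_properties} pulls the computation back to $V$, whose roundness delivers the required equation. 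The isomorphism $U \cong \compos U$ then follows by feeding Lemma \ref{lem:boundaries_of_rewrite} into the uniqueness clauses (1)--(2) of Proposition \ref{prop:molecule_properties}. The step I expect to demand most care is the identification $\bound{n}{-}U \cap \bound{n}{+}U = \bound{n-1}{}U$: it rides on the fact flagged in Remark \ref{rmk:boundary_of_rewrite_notation} that the pushout in the rewrite construction really computes the $n$\nbd boundary of the resulting atom, and requires tracking how $V, W$ sit inside $U$ through Lemma \ref{lem:properties_of_inclusions}(4).
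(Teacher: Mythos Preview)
Your proposal is correct and follows essentially the same approach as the paper's proof: structural induction on the construction for all three points, using Lemma~\ref{lem:round_is_pure} for the $\top$ case in (2), Lemma~\ref{lem:boundaries_of_rewrite} for the boundary identifications in (3), and globularity from Proposition~\ref{prop:molecule_properties} to reduce roundness at lower levels to that of $V$. Your explicit appeals to Lemma~\ref{lem:properties_of_inclusions} and Remark~\ref{rmk:boundary_of_rewrite_notation} make a couple of steps more precise than the paper's own write-up, but the overall structure is the same.
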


\begin{dfn}
For us, a \emph{cell} is a diagram whose shape is an atom.
\end{dfn}

\begin{dfn}[Diagram isomorphism problem]
The \emph{diagram isomorphism problem} is the following decision problem: given diagrams $t\colon U \to \mathbb{V}$ and $t'\colon U' \to \mathbb{V}$, does there exist an isomorphism $\varphi\colon U \incliso U'$ of their shapes such that $t = \varphi;t'$?

By Proposition \ref{prop:molecule_properties}, if $\varphi$ exists it is unique, and if $\varphi$ is found the labellings $t$ and $\varphi; t'$ can be compared in linear time, so this problem reduces to the isomorphism problem for regular molecules.
One of the main results of \cite{hadzihasanovic2022data} is a polynomial-time solution to this problem, relying on a deterministic \emph{traversal algorithm} for regular molecules:
\begin{enumerate}
    \item all elements of a regular molecule can be traversed in polynomial time in such a way that the traversal order is invariant under isomorphism;
    \item consequently, $U$ and $U'$ are isomorphic if and only if their representations using the traversal order dimension-wise are identical.
\end{enumerate}
The traversal order also gives a \emph{canonical representation} of regular molecules.
The constructors for regular molecules can be implemented in such a way that the elements are rearranged in traversal order after each step, so that regular molecules can be checked for equality rather than isomorphism.

The complexity upper bound given in \cite[Theorem 2.19]{hadzihasanovic2022data} is in fact the result of overcounting.
We describe the traversal algorithm, whose correctness is proved in \cite[Theorem 2.17]{hadzihasanovic2022data}, and give an improved upper bound.
\end{dfn}

\begin{dfn}[Traversal algorithm]
The procedure takes as input a regular molecule $U$ and returns a list of its elements in the order in which they are \emph{marked}.
It uses an auxiliary stack of regular molecules $V \subseteq U$.

At the beginning, only $U$ is on the stack and all elements are unmarked.
We iterate the \emph{main loop} until the stack is empty, at which point the procedure terminates.

At each iteration, suppose $V$ is on top of the stack.
If all elements of $V$ are marked, then we pop $V$ from the stack and iterate.
Else, if any elements of $\bound{}{-}V$ are unmarked, we push $\bound{}{-}V$ to the top of the stack and iterate.
Else, if $V = \clset{x}$ for some $x \in U$, we 
\begin{enumerate}
    \item mark $x$ and pop $V$ from the stack, 
    \item if any elements of $\bound{}{+}V$ are unmarked, we push $\bound{}{+}V$ to the top of the stack, and
    \item we iterate.
\end{enumerate}
Else, we let $y$ be the earliest marked element such that $\dim{y} = \dim{V} - 1$ and there is an unmarked $x \in \cofaces{}{-}y \cap V$.
Such a $y$ always exists, and then $\cofaces{}{-}y \cap V = \set{x}$.
We push $\clset{x}$ to the top of the stack and iterate.
\end{dfn}

\begin{dfn} \label{dfn:parameters}
For a regular molecule $U$, and all $k \in \mathbb{N}$, we let
\begin{align*}
    \edges{k}{U} & \eqdef \coprod_{\mathclap{x \in \grade{k}{U}}} \faces{}{}x = \coprod_{\mathclap{y \in \grade{k-1}{U}}} \cofaces{}{}y, \\
    \size{\grade{\lor}{U}} & \eqdef \max \set{\size{\grade{i}{U}}}_{i\in\mathbb{N}},
    \\
    \size{\edges{\lor}{U}} & \eqdef \max (\set{\size{\edges{i}{U}}}_{i\in\mathbb{N}} \cup \set{1}).
\end{align*}
Note that $\edges{k}{U}$ is the set of edges between $k$ and $(k-1)$\nbd dimensional elements in $\hasse{U}$.
We have $\size{\grade{k}{U}} \leq \size{\edges{k}{U}}$ for all $k > 0$, while $\size{\edges{0}{U}} = 0$.
Since the maximum of the $\size{\edges{k}{U}}$ is 0 only when $U$ is 0\nbd dimensional, in which case $\size{\grade{k}{U}} = 1$, with our definition we always have $\size{\grade{\lor}{U}} \leq \size{\edges{\lor}{U}}$.
\end{dfn}

\begin{thm} \label{thm:traversal_new_bound}
The traversal algorithm admits an implementation running in time $O(\size{U}\,\size{\edges{\lor}{U}}\,\log \size{\edges{\lor}{U}})$.
\end{thm}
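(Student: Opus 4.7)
My plan is to give an explicit implementation of the traversal with suitable bookkeeping, then analyse the total running time by bounding the number of main-loop iterations and the cost of each.

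I would augment the state as follows. Each element carries a Boolean flag \emph{marked} and, once marked, a timestamp recording the position in marking order; markings are appended to per-dimension lists $L_n$ so that marked elements of a fixed dimension can be enumerated in marking order. For each molecule $V$ currently on the stack, I maintain two integer counters, namely the number of unmarked elements of $V$ and the number of unmarked elements of $\bound{}{-}V$. These counters are initialised at push time and then updated in amortised $O(1)$ per marking per stack entry by walking up the stack of bounded depth $O(\dim U)$. For the fourth branch, I keep for each entry $V$ a \emph{cursor} into $L_{\dim V - 1}$ that resumes where it left off whenever $V$ returns to the top, so that each marked element of dimension $\dim V - 1$ is re-examined at most a constant number of times while $V$ lives on the stack.

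For the iteration count, I would model the execution as a rooted ordered tree $T$: the root is the initial push of $U$, and the children of $V$ are the molecules pushed while $V$ was top of the stack. Every non-root node is of exactly one of three types: \textbf{(A)} a push of $\bound{}{-}V$ from its parent $V$, of which there can be at most one per parent (because after the corresponding subtree is processed and popped, all elements of $\bound{}{-}V$ are marked, so the guard of branch (A) fails); \textbf{(B)} a push of some singleton $\clset{x}$, injecting into the subsequent marking of $x$, hence at most $|U|$ such nodes in total; \textbf{(C)} a push of $\bound{}{+}\clset{x}$ produced when the singleton $\clset{x}$ is popped and $x$ marked, hence also at most $|U|$ such nodes. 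Type-(A) chains $V_0, \bound{}{-}V_0, \bound{}{-}\bound{}{-}V_0, \ldots$ descend strictly in dimension by the globularity of regular molecules (Proposition~\ref{prop:molecule_properties}(4)), so each has length at most $\dim U + 1$; amortising chain lengths against the type-(B) or type-(C) nodes at their bases yields $|T| = O(|U|)$, hence $O(|U|)$ iterations of the main loop.

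For the per-iteration cost, counter tests and stack-top manipulations are $O(1)$. Computing $\bound{}{-}V$ at a type-(A) push requires traversing the faces and $+$-cofaces of the top-dimensional elements of $V$, with set membership tests realised by binary search; this is $O(|\edges{\lor}{U}| \log |\edges{\lor}{U}|)$. The branch searching for $y$ costs, thanks to the cached cursor, an amortised $O(|\edges{\lor}{U}| \log |\edges{\lor}{U}|)$ per iteration, since each candidate $y$ is inspected together with its $\cofaces{}{-}y$ at most once per host $V$. Multiplying the iteration count by the per-iteration cost yields the bound $O(|U|\,|\edges{\lor}{U}|\,\log |\edges{\lor}{U}|)$.

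I expect the main obstacle to be the chain-length amortisation for type-(A) pushes, where a naive counting gives only $O(|U|\,\dim U)$ iterations. I would address it by charging each long A-chain to the B/C nodes whose processing it strictly enables, exploiting the fact that the top-dimensional elements of the intermediate boundaries must eventually be marked via descendants in the tree, together with the fact that each element is marked only once.
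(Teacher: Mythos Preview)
Your per-iteration analysis and the tree model of the execution are reasonable, but the proof has a genuine gap precisely where you flag it: the amortisation of type-(A) chains. You assert that ``amortising chain lengths against the type-(B) or type-(C) nodes at their bases yields $|T| = O(|U|)$'', yet you then acknowledge that the naive count gives only $O(|U|\,\dim U)$ and that the charging argument is still to be carried out. The sketch you offer --- charging A-chains to ``the B/C nodes whose processing it strictly enables'' via the top-dimensional elements of the intermediate boundaries --- does not work as stated: when an A-chain $V_0, \bound{}{-}V_0, \bound{}{-}\bound{}{-}V_0, \ldots$ is entered, the top-dimensional elements of the intermediate $\bound{}{-}\cdots\bound{}{-}V_0$ may already be marked (the push is triggered by \emph{some} unmarked element, not necessarily a top-dimensional one), so there is nothing new to charge them to. Since $\dim U$ is not bounded by $\size{\edges{\lor}{U}}\log\size{\edges{\lor}{U}}$ in general (consider an $n$-globe, where $\size{\edges{\lor}{U}}$ is constant while $n$ grows), the unclosed $O(|U|\,\dim U)$ bound does not suffice.

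The paper closes this gap by a different decomposition: instead of tracking A-chains, it partitions iterations by the \emph{dimension} $k$ of the molecule currently on top of the stack. The key lemma, imported from prior work, is that the $k$-dimensional molecules arising ``in the first way'' (as $\bound{}{\alpha}W$ for some higher-dimensional $W$) appear in a sequence $(\order{i}{V})_{i=1}^m$ with the property that $\order{j}{V}$ is \emph{fully marked} before $\order{i}{V}$ first appears, for all $j<i$. Consequently the sets of $k$-dimensional elements that are unmarked when each $\order{i}{V}$ first appears partition $\grade{k}{U}$, and the number of iterations with a $k$-dimensional top is at most $5\size{\grade{k}{U}}$; summing over $k$ gives $5|U|$. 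This sequential-marking property is exactly the structural fact your charging argument is groping for, but it does not fall out of the tree structure alone --- it relies on the specific order in which the traversal visits boundaries. If you want to rescue your approach, you would need to prove an analogue of that lemma and use it to show that each A-node at dimension $k$ can be charged to a distinct element of $\grade{k}{U}$.
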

\begin{proof}
The upper bound of $O(\size{\edges{\lor}{U}}\,\log \size{\edges{\lor}{U}})$ on each loop iteration is derived as in \cite[Theorem 2.19]{hadzihasanovic2022data}, simplified by our modified definition of $\size{\edges{\lor}{U}}$, so we only need to show that there are $O(\size{U})$ loop iterations.

We let $k \leq \dim{U}$ and we count the number of loop iterations where a $k$\nbd dimensional subset $V$ is on top of the stack.
This can happen in two ways:
\begin{itemize}
    \item $V$ is either $U$ or $\bound{}{\alpha}W$ for some $W$ with $\dim{W} > k$, where $W$ was earlier (and may still be) on the stack,
    \item $V$ is $\clset{x}$ for some $x \in W$, where $\dim{W} = k$ and $W$ is below $V$ on the stack.
\end{itemize}
Let $(\order{i}{V})_{i=1}^m$ be the sequence of all $k$\nbd dimensional subsets appearing on the stack \emph{in the first way} during the run, in the order in which they appear.
For all $j < i \in \set{1, \ldots, m}$, by \cite[Lemma 2.14]{hadzihasanovic2022data} $\order{j}{V}$ must be fully marked before $\order{i}{V}$ can appear on the stack. Moreover, $\order{i}{V}$ can be on top at most
\begin{enumerate}
    \item once to push $\bound{}{-}\order{i}{V}$ to the top,
    \item once every time we push $\clset{x}$ to the top for an unmarked $x \in \grade{k}{(\order{i}{V})}$,
    \item once to pop $\order{i}{V}$ from the stack.
\end{enumerate}
Any $k$\nbd dimensional $\clset{x}$ appearing \emph{in the second way} appears while a unique $\order{i}{V}$ is on the stack, and at most
\begin{enumerate}
    \item once to push $\bound{}{-}x$ to the top,
    \item once to mark $x$ and pop $\clset{x}$ from the stack.
\end{enumerate}
Let $\order{i}{\grade{k}{U}} \eqdef \grade{k}{(\order{i}{V})} \setminus \bigcup_{j < i} \grade{k}{(\order{j}{V})}$.
Then $\order{i}{\grade{k}{U}}$ is precisely the set of unmarked $k$\nbd dimensional elements of $\order{i}{V}$ when $\order{i}{V}$ first appears on the stack.
It follows that the number of loop iterations with a $k$\nbd dimensional subset on top of the stack \emph{while $\order{i}{V}$ is on the stack} is at most $2 + 3\size{\order{i}{\grade{k}{U}}}$.

Since at the end of the procedure all $k$\nbd dimensional elements of $U$ are marked, the $(\order{i}{\grade{k}{U}})_{i=1}^m$ form a partition of $\grade{k}{U}$.
Thus, the total number of loop iterations where a $k$\nbd dimensional subset is on top of the stack is bounded above by
\begin{equation*}
    \sum_{i=1}^m \left(2 + 3\size{\order{i}{\grade{k}{U}}}\right) = 2m + 3\sum_{i=1}^m \size{\order{i}{\grade{k}{U}}} = 2m + 3\size{\grade{k}{U}},
\end{equation*}
which is bounded above by $5\size{\grade{k}{U}}$.
Summing over all dimensions, we get an upper bound of $5\size{U}$ iterations.
\end{proof}
\section{The subdiagram matching problem} \label{sec:matching}

\begin{dfn}[Submolecule inclusion]
The class of \emph{submolecule inclusions} is the smallest subclass of inclusions of regular molecules such that
\begin{enumerate}
    \item all isomorphisms are submolecule inclusions,
    \item for all regular molecules $U, V$ and all $k \in \mathbb{N}$ such that $U \cp{k} V$ is defined, $U \incl (U \cp{k} V)$ and $V \incl (U \cp{k} V)$ are submolecule inclusions,
    \item the composite of two submolecule inclusions is a submolecule inclusion.
\end{enumerate}
A closed subset $V \subseteq U$ is a \emph{submolecule} if its inclusion in $U$ is a submolecule inclusion.
In that case we write $V \submol U$.
\end{dfn}

\begin{dfn}
We also let $\varnothing \submol \varnothing$ to take care of some corner cases.
\end{dfn}

\begin{lem} \label{lem:submolecule_properties}
Let $U$ be a regular molecule.
Then
\begin{enumerate}
    \item for all $n \in \mathbb{N}$ and $\alpha \in \set{+,-}$, $\bound{n}{\alpha}U \submol U$;
    \item for all $x \in U$, $\clset{x} \submol U$.
\end{enumerate}
\end{lem}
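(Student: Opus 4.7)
The plan is to handle the two parts separately. Part (1) falls straight out of unitality of pasting (Proposition \ref{prop:unitality_of_pasting}), while part (2) proceeds by induction on the inductive construction of $U$ as a regular molecule, with part (1) invoked at the critical step.

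For part (1), I would note that, for any $n \in \mathbb{N}$ and $\alpha \in \set{+, -}$, Proposition \ref{prop:molecule_properties}(3) makes $\bound{n}{\alpha}U$ a regular molecule, and Proposition \ref{prop:unitality_of_pasting} supplies defined pastings $U \cp{n} \bound{n}{+}U$ and $\bound{n}{-}U \cp{n} U$ that are uniquely isomorphic to $U$. Clause (2) in the definition of submolecule inclusion then identifies each factor of such a pasting as a submolecule of the pasting; composing with the unitality isomorphism (a submolecule inclusion by clause (1)) via closure under composition (clause (3)) yields $\bound{n}{\alpha}U \submol U$.

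For part (2), I would induct on the construction of $U$. The (Point) case is immediate, since $U = 1$ has a unique element and $\clset{x} = U$. In the (Paste) case, $U = V \cp{k} W$ and every $x \in U$ sits in $V$ or in $W$, say in $V$; the inductive hypothesis gives $\clset{x} \submol V$, while clause (2) gives $V \submol U$, so composing yields $\clset{x} \submol U$ after noting that $V$ is a closed subset of $U$ (by the order-preserving and order-reflecting properties from Lemma \ref{lem:properties_of_inclusions}(1)), which makes closure of the singleton inside $V$ agree with closure inside $U$. The (Atom) case $U = V \celto W$ is analogous: if $x = \top$ then $\clset{x} = U$ trivially; otherwise $x$ lies in $V \cup W$, say in $V$, and I combine the inductive hypothesis $\clset{x} \submol V$ with the identification $V \cong \bound{}{-}U$ from Lemma \ref{lem:boundaries_of_rewrite} and the conclusion $\bound{}{-}U \submol U$ already obtained in part (1).

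No step is a genuine obstacle: the proof amounts to unwinding the definitions once unitality is available. The only point that requires care is the standard observation that the closure of a singleton inside a closed subset agrees with its closure in the ambient poset, which is where Lemma \ref{lem:properties_of_inclusions} is quietly doing the work that lets the induction pass from components to composite.
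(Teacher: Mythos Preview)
Your proposal is correct and matches the paper's proof essentially step for step: part (1) via unitality of pasting (Proposition \ref{prop:unitality_of_pasting}) and part (2) by induction on the construction of $U$, with part (1) invoked in the (\textit{Atom}) case to obtain $V, W \submol U$. The only difference is cosmetic: you spell out the identification $V \cong \bound{}{-}U$ via Lemma \ref{lem:boundaries_of_rewrite} and the closure-agreement remark via Lemma \ref{lem:properties_of_inclusions}, whereas the paper leaves these implicit.
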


\begin{dfn}[Substitution]
Let $U, V, W$ be regular molecules with $\dim U = \dim V = \dim W$, $\imath\colon V \incl U$ an inclusion, and suppose that $V \celto W$ is defined.
Consider the pushout
\begin{equation} \label{eq:rewrite_application}
\begin{tikzcd}
	V & V \celto W \\
	U & U \cup (V \celto W)
	\arrow[hook, from=1-1, to=1-2]
	\arrow["\imath", hook', from=1-1, to=2-1]
	\arrow[hook, from=2-1, to=2-2]
	\arrow[hook', from=1-2, to=2-2]
	\arrow["\lrcorner"{anchor=center, pos=0.125, rotate=180}, draw=none, from=2-2, to=1-1]
\end{tikzcd}
\end{equation}
in $\ogpos$.
The \emph{substitution of $W$ for $\imath\colon V \incl U$} is the oriented graded poset $\subs{U}{W}{\imath(V)} \eqdef \bound{}{+} (U \cup (V \celto W))$.
When $\imath$ is the inclusion of a closed subset we write simply $\subs{U}{W}{V}$.
\end{dfn}

\begin{prop} \label{prop:round_submolecule_substitution}
Let $\imath\colon V \incl U$ be an inclusion of regular molecules such that $\dim{V} = \dim{U}$ and $V$ is round.
The following are equivalent:
\begin{enumerate}[label=(\alph*)]
    \item $\imath$ is a submolecule inclusion;
    \item for all regular molecules $W$ such that $V \celto W$ is defined, $U \cup (V \celto W)$ in (\ref{eq:rewrite_application}) is a regular molecule;
    \item for all regular molecules $W$ such that $V \celto W$ is defined, $\subs{U}{W}{\imath(V)}$ is a regular molecule;
    \item $\subs{U}{\compos{V}}{\imath(V)}$ is a regular molecule.
\end{enumerate}
\end{prop}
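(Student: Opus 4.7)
The plan is to prove the cycle (a)$\Rightarrow$(b)$\Rightarrow$(c)$\Rightarrow$(d)$\Rightarrow$(a).

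For (a)$\Rightarrow$(b), I would induct on the generation of $\imath$ as a submolecule inclusion. When $\imath$ is an isomorphism, $U \cup (V \celto W) \cong V \celto W$, a regular molecule by the atom clause. When $\imath$ is a paste-clause generator, say $U = V \cp{k} A$, pushout commutativity yields $U \cup (V \celto W) \cong (V \celto W) \cp{k} A$; the pasting is defined because $\bound{k}{+}(V \celto W) \cong \bound{k}{+}V \cong \bound{k}{-}A$, applying globularity (Proposition \ref{prop:molecule_properties}.4) to the $+$-boundary of $V \celto W$, which is isomorphic to $W$, together with the fact that $V$ and $W$ share all boundaries below their common top dimension. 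For a composite, I would unfold $\imath$ into a finite sequence of atomic submolecule inclusions $V = V_0 \hookrightarrow V_1 \hookrightarrow \ldots \hookrightarrow V_m = U$ and define $X_i \eqdef V_i \cup (V \celto W)$ inductively: each $X_{i+1}$ is either $X_i$ (iso step) or a pasting of $X_i$ with the relevant factor, by the same argument as in the paste case, with the invariant $\bound{k}{+}X_i \cong \bound{k}{+}V_i$ for $k < \dim V$ ensuring each pasting remains defined.

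The implication (b)$\Rightarrow$(c) is immediate from the definition $\subs{U}{W}{\imath(V)} \eqdef \bound{}{+}(U \cup (V \celto W))$ combined with Proposition \ref{prop:molecule_properties}.3, which states that boundaries of regular molecules are regular molecules. For (c)$\Rightarrow$(d), specialise to $W \eqdef \compos{V}$: since $\compos{V}$ is an atom (Proposition \ref{prop:atom_properties}.1), hence round (Proposition \ref{prop:atom_properties}.3), and $\bound{}{}\compos{V} = \bound{}{}V$ by construction of the merger, the rewrite $V \celto \compos{V}$ is well-defined.

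For (d)$\Rightarrow$(a), let $U^* \eqdef \subs{U}{\compos{V}}{\imath(V)}$, a regular molecule by hypothesis. By the rewrite construction, $U^*$ contains a unique top-dimensional cell $x^*$ with $\clset{x^*} \cong \compos{V}$; unwinding the double substitution yields $U = \subs{U^*}{V}{\clset{x^*}}$, since cells of $U^*$ outside $\clset{x^*}$ correspond bijectively to cells of $U$ outside $\imath(V)$, and replacing $\clset{x^*} = \compos{V}$ back with $V$ restores $\imath(V)$. It then suffices to prove the following substitution-transfer lemma: \emph{if $X$ is a regular molecule, $x \in X$ is top-dimensional, and $Y$ is a round regular molecule with $\dim Y = \dim X$ and $\bound{}{}Y \cong \bound{}{}\clset{x}$, then $Y \hookrightarrow \subs{X}{Y}{\clset{x}}$ is a submolecule inclusion.} I would prove this by induction on the generation of $X$: if $X$ is an atom then $x$ is the greatest element, so $\clset{x} = X$ and $\subs{X}{Y}{\clset{x}} \cong Y$; if $X = P \cp{k} Q$, the element $x$ lies in one side, say $P$ (since $\dim x > k$ forces $x \notin P \cap Q$), and by pushout manipulations $\subs{X}{Y}{\clset{x}} \cong \subs{P}{Y}{\clset{x}} \cp{k} Q$; the inductive hypothesis on $P$ gives $Y \hookrightarrow \subs{P}{Y}{\clset{x}}$ as a submolecule inclusion, and composing with the paste-clause inclusion $\subs{P}{Y}{\clset{x}} \hookrightarrow \subs{P}{Y}{\clset{x}} \cp{k} Q$ completes the step. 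Applying the lemma to $X = U^*$, $x = x^*$, $Y = V$ yields (a).

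The main obstacle is the composite case of (a)$\Rightarrow$(b): a direct recursion on $\imath = \imath_2 \circ \imath_1$ through some intermediate $V'$ fails because $V'$ need not be round, so the statement does not apply to $\imath_2\colon V' \hookrightarrow U$; the atomic-decomposition strategy above sidesteps this by tracking the regular-molecule structure of each $X_i$ directly rather than reapplying the equivalence recursively. A secondary technical point, needed both here and in the substitution-transfer lemma, is the compatibility $\subs{P \cp{k} Q}{Y}{\clset{x}} \cong \subs{P}{Y}{\clset{x}} \cp{k} Q$ when $\clset{x} \subseteq P$, which follows from pushout commutativity together with the preservation of lower $k$-boundaries under substitution, but deserves to be isolated as a separate technical lemma.
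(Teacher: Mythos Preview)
Your proposal is correct and follows essentially the same route as the paper: the cycle (a)$\Rightarrow$(b)$\Rightarrow$(c)$\Rightarrow$(d)$\Rightarrow$(a), with (a)$\Rightarrow$(b) by induction on the construction of $\imath$ as a submolecule inclusion, and the technical compatibility $\subs{(P \cp{k} Q)}{Y}{\clset{x}} \cong \subs{P}{Y}{\clset{x}} \cp{k} Q$ isolated as a lemma (the paper's Lemma~\ref{lem:pasting_after_substitution}).

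The one organisational difference worth noting is in (d)$\Rightarrow$(a). The paper does not prove your substitution-transfer lemma separately; instead it observes that since $\compos{V}$ is an atom, its inclusion $j$ into $U^* \eqdef \subs{U}{\compos{V}}{\imath(V)}$ is \emph{automatically} a submolecule inclusion by Lemma~\ref{lem:submolecule_properties} (closures of elements are always submolecules). This lets the paper reuse the already-established direction (a)$\Rightarrow$(b), packaged as Lemma~\ref{lem:submolecule_rewrite}, applied to $j\colon \compos{V} \incl U^*$: it yields that $\subs{U^*}{V}{j(\compos{V})}$ is a regular molecule with $V$ as a submolecule, and a short ``revert substitution'' lemma (Lemma~\ref{lem:revert_substitution}) identifies this with $U$. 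Your substitution-transfer lemma is correct, but its induction on the generation of $X$ is just the (a)$\Rightarrow$(b) induction specialised to the submolecule inclusion $\clset{x} \incl X$, so you are effectively proving the same thing twice. Recognising that atoms are always submolecules lets you close the loop without the extra lemma.
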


\begin{dfn}[Rewritable submolecule]
A submolecule $V \submol U$ is \emph{rewritable} if $\dim{V} = \dim{U}$ and $V$ is round.
\end{dfn}

\begin{dfn}[Rewritable subdiagram]
Let $t\colon U \to \mathbb{V}$ be a diagram.
A \emph{rewritable subdiagram} of $t$ is the restriction of $t$ to a rewritable submolecule $V \submol U$.
\end{dfn}

\begin{dfn}
We extend boundary operations to diagrams $t\colon U \to \mathbb{V}$ by $\bound{n}{\alpha}t \eqdef \restr{t}{\bound{n}{\alpha}U}$ for all $n \in \mathbb{N}$ and $\alpha \in \set{+, -}$.
\end{dfn}

\begin{dfn}
Suppose $t$ is an $n$\nbd dimensional diagram of shape $U$ and $r$ a rewrite rule, in the form of an $(n+1)$\nbd dimensional cell of shape $V \celto W$.
If there is an inclusion $\imath\colon V \incl U$ such that $\bound{}{-}r = \imath;t$, then the application of the rewrite $r$ to $t$ is modelled by an $(n+1)$\nbd dimensional diagram $t \cup r$ of shape $U \cup (V \celto W)$ as in (\ref{eq:rewrite_application}).

By Proposition \ref{prop:round_submolecule_substitution}, $\bound{}{+}(t \cup r)$, which models the substitution of $\bound{}{+}r$ for $\bound{}{-}r$ in $t$, is guaranteed to be an $n$\nbd dimensional diagram precisely when $\imath$ is a submolecule inclusion, that is, $\bound{}{-}r$ is isomorphic to a rewritable subdiagram of $t$.
\end{dfn}

\begin{exm}
The following is a depiction of diagram (\ref{eq:rewrite_application}) when $V \celto W \eqdef \data{cobinary}$, $U \eqdef \data{arrow} \cp{0} \data{arrow}$, and $\imath$ is the inclusion of the second $\data{arrow}$ into the pasting.
\[\begin{tikzcd}[sep=small]
	&&&&&&& \bullet \\
	& \bullet & {} & \bullet &&& \bullet & {} & \bullet \\
	&&&&&&&&& {} \\
	\bullet &&&&& \bullet && \bullet \\
	& \bullet & {} & \bullet &&& \bullet & {} & \bullet
	\arrow[curve={height=6pt}, from=5-2, to=5-4]
	\arrow[curve={height=6pt}, from=4-1, to=5-2]
	\arrow[curve={height=6pt}, from=2-2, to=2-4]
	\arrow[""{name=0, anchor=center, inner sep=0}, curve={height=6pt}, from=2-7, to=2-9]
	\arrow[curve={height=-6pt}, from=2-7, to=1-8]
	\arrow[curve={height=-6pt}, from=1-8, to=2-9]
	\arrow[curve={height=6pt}, from=4-6, to=5-7]
	\arrow[""{name=1, anchor=center, inner sep=0}, curve={height=6pt}, from=5-7, to=5-9]
	\arrow[curve={height=-6pt}, from=5-7, to=4-8]
	\arrow[curve={height=-6pt}, from=4-8, to=5-9]
	\arrow[color=magenta, shorten <=11pt, shorten >=11pt, hook', from=2-3, to=5-3]
	\arrow[color=magenta, shorten <=29pt, shorten >=43pt, hook, from=2-3, to=2-8]
	\arrow[color=magenta, shorten <=11pt, shorten >=27pt, hook', from=2-8, to=5-8]
	\arrow[color=magenta, shorten <=29pt, shorten >=43pt, hook, from=5-3, to=5-8]
	\arrow[shorten <=5pt, Rightarrow, from=0, to=1-8]
	\arrow[shorten <=5pt, Rightarrow, from=1, to=4-8]
\end{tikzcd}\]
The result of the substitution is the output boundary of the bottom right diagram, isomorphic to $\data{arrow} \cp{0} \data{arrow} \cp{0} \data{arrow}$.
\end{exm}

\begin{dfn}[Subdiagram matching problem]
The \emph{subdiagram matching problem} is the following search problem: given diagrams $t\colon U \to \mathbb{V}$ and $s\colon V \to \mathbb{V}$ such that $\dim{U} = \dim{V}$ and $V$ is round, find, if any, the submolecule inclusions $\imath\colon V \incl U$ such that $s = \imath;t$.
This can be split into three subproblems.
\begin{enumerate}
    \item (\emph{Molecule matching problem}). Find, if any, the inclusions $\imath\colon V \incl U$.
    \item (\emph{Rewritable submolecule problem}). Decide if $\imath(V) \submol U$.
    \item Decide if $s = \imath;t$.
\end{enumerate}
In this section, we will focus on the molecule matching problem, and in the next on the rewritable submolecule problem.
The third problem is trivial.
\end{dfn}

\begin{lem} \label{lem:codimension_1_elements}
Let $U$ be a regular molecule, $n \eqdef \dim{U}$, $x \in \grade{n-1}{U}$, and $\alpha \in \set{+, -}$.
Then
\begin{enumerate}
    \item $x \in \maxel{U}$ if and only if $\size{\cofaces{}{}{x}} = 0$,
    \item $x \in \faces{}{\alpha}U \setminus \faces{}{-\alpha}{U}$ if and only if $\size{\cofaces{}{\alpha}x} = 1$ and $\size{\cofaces{}{-\alpha}x} = 0$,
    \item $x \notin \faces{}{}U$ if and only if $\size{\cofaces{}{+}x} = \size{\cofaces{}{-}x} = 1$.
\end{enumerate}
\end{lem}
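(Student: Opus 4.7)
The plan is to reduce all three items to a single structural claim: for every regular molecule $U$ of dimension $n$ and every $x \in \grade{n-1}{U}$, each of $\cofaces{}{-}x$ and $\cofaces{}{+}x$ contains at most one element. Call this the \emph{codimension-1 coface bound}.

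Granted the bound, the lemma unfolds purely from the definitions. Item 1 does not actually require it: $x \in \maxel{U}$ means that no element of $U$ covers $x$, which is literally $\cofaces{}{}x = \varnothing$, i.e.\ $\size{\cofaces{}{}x} = 0$. For item 2, by definition $x \in \faces{}{\alpha}U$ iff $\cofaces{}{-\alpha}x = \varnothing$, while $x \notin \faces{}{-\alpha}U$ iff $\cofaces{}{\alpha}x \neq \varnothing$; combined with the bound on the non-empty side, this sharpens to $\size{\cofaces{}{\alpha}x} = 1$ and $\size{\cofaces{}{-\alpha}x} = 0$. Item 3 is symmetric: $x \notin \faces{}{}U$ iff both $\cofaces{}{+}x$ and $\cofaces{}{-}x$ are non-empty, which together with the bound in both orientations yields the stated cardinalities.

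For the bound itself I would proceed by induction on the inductive generation of regular molecules. The \emph{Point} case is vacuous, since $1$ has no codimension-1 elements. For the \emph{Atom} case $U = V \celto^\varphi W$ with $\dim V = \dim W = n$, the codimension-1 elements of $U$ are exactly the elements of $\grade{n}{V} \sqcup \grade{n}{W}$ (disjoint because $V$ and $W$ are glued only along their common boundary, which has dimension $< n$, so no dimension-$n$ elements are identified), and each of them has $\top$ as its unique coface: with orientation $-$ for those coming from $V$ and $+$ for those from $W$.

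The main obstacle is the \emph{Paste} case $U = U_1 \cpiso{k}{\varphi} U_2$, because when $k$ equals $n-1$ (with $n \eqdef \dim{U}$) the pushout can identify codimension-1 elements across the two summands. If $k < n-1$, the shared boundary has dimension $\leq k < n-1$, so each $x \in \grade{n-1}{U}$ lies in exactly one summand $U_i$ and its cofaces in $U$ coincide with its cofaces in that $U_i$, which is a regular molecule, so the bound follows by induction. If $k = n-1$, then necessarily $\dim{U_1} = \dim{U_2} = n$, and any identified $x$ comes from $x_1 \in \faces{n-1}{+}U_1$ glued to $x_2 \in \faces{n-1}{-}U_2$ via $\varphi$. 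Definitionally this forces $\cofaces{}{-}x_1 \cap U_1 = \varnothing$ and $\cofaces{}{+}x_2 \cap U_2 = \varnothing$, so in $U$ we have $\cofaces{}{-}x = \cofaces{}{-}x_2$ (computed in $U_2$) and $\cofaces{}{+}x = \cofaces{}{+}x_1$ (computed in $U_1$), each of size at most $1$ by the induction hypothesis applied to the respective summand.
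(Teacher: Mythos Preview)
Your proof is correct. The reduction to the codimension-1 coface bound is exactly the right structural claim, the derivation of the three items from it is clean, and the structural induction on the constructors of regular molecules goes through. One small omission: in the \textit{Paste} case with $k = n-1$ you only treat the elements that are identified across the gluing; you should also say a word about $x \in \grade{n-1}{U_1} \setminus \faces{n-1}{+}U_1$ (and symmetrically in $U_2$), where the argument is the same as in the $k < n-1$ subcase since such $x$ lie in only one summand. Likewise, in the $k < n-1$ subcase your appeal to ``induction'' silently assumes $\dim U_i = n$; when $\dim U_i < n$ the coface sets are empty and the bound is trivial, so the conclusion stands, but it is worth noting.

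As for comparison with the paper: the paper does not actually prove this lemma in-text but defers to \cite[Lemma 1.16]{hadzihasanovic2020diagrammatic}. Your argument is therefore genuinely more self-contained than what the paper provides. The structural-induction route you take is the natural one given the inductive definition of regular molecules adopted here, and it has the advantage of not requiring the reader to consult an external source whose setup differs slightly from this paper's.
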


\begin{dfn}[Flow graph]
Let $U$ be a regular molecule, $k \geq -1$.
The \emph{$k$\nbd flow graph of $U$} is the directed graph $\flow{k}{U}$ whose
\begin{itemize}
    \item set of vertices is $\bigcup_{i > k} \grade{i}{U}$, and
    \item for all vertices $x, y$, there is an edge from $x$ to $y$ if and only if $\faces{k}{+}x \cap \faces{k}{-}y$ is non-empty.
\end{itemize}
\end{dfn}

\begin{dfn}[Induced subgraph]
Let $\mathscr{G}$ be a directed graph and let $W$ be a subset of its vertex set.
The \emph{induced subgraph} of $\mathscr{G}$ on $W$ is the directed graph $\restr{\mathscr{G}}{W}$ whose vertex set is $W$, and there is an edge from $x$ to $y$ for every edge from $x$ to $y$ in $\mathscr{G}$.
\end{dfn}

\begin{dfn}[Maximal flow graph] \index{regular molecule!flow graph!maximal}
Let $U$ be a regular molecule, $k \geq -1$.
The \emph{maximal $k$\nbd flow graph of $U$} is the induced subgraph $\maxflow{k}{U}$ of $\flow{k}{U}$ on the vertex set
\begin{equation*}
    \bigcup_{i > k} \grade{i}{(\maxel{U})} \subseteq \bigcup_{i > k} \grade{i}{U}.
\end{equation*}
Note that, if $k = \dim{U} - 1$, then $\flow{k}{U} = \maxflow{k}{U}$.
\end{dfn}

\begin{exm}
If $U$ is the regular molecule of Example \ref{exm:whisker},
\[\flow{0}{U}:
\begin{tikzcd}[sep=tiny]
	{(1,3)} \\
	{(2,0)} && {(1,2)} \\
	{(1,0)} & {(1,1)}
	\arrow[from=2-1, to=2-3]
	\arrow[curve={height=-6pt}, from=1-1, to=2-3]
	\arrow[curve={height=6pt}, from=3-2, to=2-3]
	\arrow[from=3-1, to=3-2]
\end{tikzcd}
\quad \maxflow{0}{U}:
\begin{tikzcd}[sep=tiny]
	{(2,0)} & {(1,2)}
	\arrow[from=1-1, to=1-2]
\end{tikzcd}
\]
\[
\flow{1}{U} = \maxflow{1}{U}: 
\begin{tikzcd}[sep=tiny]
	{(2,0)}
\end{tikzcd}
\]
If $U$ is the round regular molecule of Example \ref{exm:round},
\[
\flow{1}{U} = \maxflow{1}{U}:
\begin{tikzcd}[sep=tiny]
	{(2,0)} & {(2,1)}
	\arrow[from=1-1, to=1-2]
\end{tikzcd}
\]
\end{exm}

\begin{lem} \label{lem:flow_under_inclusion}
Let $\imath\colon V \incl U$ be an inclusion of regular molecules, $k \geq -1$.
Then $\flow{k}{V}$ is isomorphic to the induced subgraph of $\flow{k}{U}$ on the vertices in the image of $\imath$.
\end{lem}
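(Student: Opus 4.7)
The plan is to show that $\imath$ itself, restricted to vertex sets, implements the desired isomorphism of directed graphs. By Lemma \ref{lem:properties_of_inclusions}(2), $\imath$ is injective and preserves dimensions, so it restricts to a bijection from $\bigcup_{i>k}\grade{i}{V}$, the vertex set of $\flow{k}{V}$, onto $\bigcup_{i>k}\grade{i}{\imath(V)}$, the vertex set of the induced subgraph of $\flow{k}{U}$ on the image of $\imath$. All that remains is to verify that this bijection preserves the edge relation.

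The key step is to show that $\imath(\faces{k}{\alpha}x) = \faces{k}{\alpha}\imath(x)$ for every $x \in V$ with $\dim{x} > k$ and every $\alpha \in \set{+, -}$. Directly from the defining equation of a map of oriented graded posets, we have $\imath(\bound{k}{\alpha}x) = \bound{k}{\alpha}\imath(x)$. Now $\clset{x}$ and $\clset{\imath(x)}$ are atoms (Proposition \ref{prop:atom_properties}(2)), with unique maximal elements of dimension $\dim x > k$, so the union over lower-dimensional maximal elements in the definition of $\bound{k}{\alpha}$ vanishes; hence $\bound{k}{\alpha}x = \clos{\faces{k}{\alpha}x}$ and $\bound{k}{\alpha}\imath(x) = \clos{\faces{k}{\alpha}\imath(x)}$. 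The $k$-dimensional elements of such a closure are exactly the corresponding face set. Since $\imath$ preserves dimensions, restricting the equality $\imath(\bound{k}{\alpha}x) = \bound{k}{\alpha}\imath(x)$ to elements of dimension $k$ yields the claimed identity.

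Granted this, edge-preservation is immediate: for $x, y \in V$ with $\dim x, \dim y > k$, injectivity of $\imath$ gives
\[
\imath(\faces{k}{+}x \cap \faces{k}{-}y) = \imath(\faces{k}{+}x) \cap \imath(\faces{k}{-}y) = \faces{k}{+}\imath(x) \cap \faces{k}{-}\imath(y),
\]
so one intersection is non-empty precisely when the other is. By the definition of the flow graph, there is an edge from $x$ to $y$ in $\flow{k}{V}$ if and only if there is an edge from $\imath(x)$ to $\imath(y)$ in $\flow{k}{U}$, which completes the proof. There is no serious obstacle here: the argument unfolds Lemma \ref{lem:properties_of_inclusions} together with the definition of a map, and the only mild subtlety is passing between $\bound{k}{\alpha}$ and $\faces{k}{\alpha}$, which is controlled by the hypothesis that only atoms of dimension strictly greater than $k$ occur as vertices.
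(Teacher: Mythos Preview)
Your proof is correct and takes essentially the same approach as the paper's: both establish that $\imath$ preserves the sets $\faces{k}{\alpha}x$ and then derive the edge correspondence from injectivity. The paper simply cites Lemma~\ref{lem:properties_of_inclusions} to get $\faces{k}{\alpha}x \cong \faces{k}{\alpha}\imath(x)$ without spelling out the passage through $\bound{k}{\alpha}$ and the atom structure, whereas you make that step explicit; this extra detail is fine but not strictly necessary, since Lemma~\ref{lem:maximal_in_boundary} already gives $\grade{k}{(\bound{k}{\alpha}x)} = \faces{k}{\alpha}x$ directly.
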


\begin{prop} \label{prop:round_molecule_connected_flowgraph}
Let $U$ be a regular molecule, $n \eqdef \dim{U}$.
If $U$ is round, then $\flow{n-1}{U}$ is connected.
\end{prop}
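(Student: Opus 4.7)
The plan is to proceed by induction on the dimension $n = \dim U$, relying on the fact that the boundaries of a round molecule are themselves round of one lower dimension (Proposition \ref{prop:molecule_properties}.5). For $n = 0$ the only regular molecule is the point $1$, so $\flow{-1}{U}$ has a single vertex and the claim is immediate. For $n = 1$, roundness together with purity (Lemma \ref{lem:round_is_pure}) forces $U$ to be a linear pasting $\data{arrow} \cp{0} \ldots \cp{0} \data{arrow}$ of arrows, whose $\flow{0}{U}$ is a connected path.

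For $n \geq 2$, the induction hypothesis applied to the round $(n-1)$-dimensional boundaries $\bound{n-1}{-}{U}$ and $\bound{n-1}{+}{U}$ yields that $\flow{n-2}{\bound{n-1}{-}{U}}$ and $\flow{n-2}{\bound{n-1}{+}{U}}$ are connected. The idea is to lift this connectedness upward via the maps $\phi^\alpha\colon \faces{n-1}{\alpha}{U} \to \grade{n}{U}$ sending each $(n-1)$-boundary face to its unique $n$-coface, which is well-defined by Lemma \ref{lem:codimension_1_elements}.2 together with the purity of $U$. The key claim is a compatibility with flow: whenever $z, z' \in \faces{n-1}{\alpha}{U}$ are joined by an edge of $\flow{n-2}{\bound{n-1}{\alpha}{U}}$, the top cells $\phi^\alpha(z)$ and $\phi^\alpha(z')$ lie in the same connected component of $\flow{n-1}{U}$. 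Granting this, each image $\phi^\alpha(\faces{n-1}{\alpha}{U})$ sits in a single component of $\flow{n-1}{U}$; the roundness identity $\bound{n-1}{-}{U} \cap \bound{n-1}{+}{U} = \bound{n-2}{}{U}$ then forces the $\alpha = -$ and $\alpha = +$ components to coincide through a shared $(n-2)$-cell, and any remaining ``interior'' top cell (one with no face in $\faces{n-1}{}{U}$) is reached from them by chaining through its internal $(n-1)$-faces.

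The main obstacle is the compatibility claim for $\phi^\alpha$. Unpacked, it says that if an $(n-2)$-cell $w$ has cofaces $z \in \cofaces{}{+}{w}$ and $z' \in \cofaces{}{-}{w}$ both lying in $\faces{n-1}{\alpha}{U}$, one must exhibit a chain of internal $(n-1)$-cells of $U$ connecting the unique $n$-coface of $z$ to that of $z'$. This amounts to showing that the ``link'' of $w$ in $U$, assembled from its $(n-1)$-cofaces and their $n$-cofaces, is connected --- the combinatorial analogue of the topological fact that the link of a codimension-$2$ cell in an $n$-ball is connected. The structural input available for this step is the roundness of each atom $\clset{x}$ containing $w$ (Proposition \ref{prop:atom_properties}.3), together with the globularity identities of Proposition \ref{prop:molecule_properties}.4, which should sufficiently constrain how the $(n-1)$-cofaces of $w$ pair up through their $n$-cofaces to close the required chain.
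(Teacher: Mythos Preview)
Your high-level plan --- induct on $n$, apply the inductive hypothesis to the round boundary $\bound{}{-}U$, and lift the connectedness of $\flow{n-2}(\bound{}{-}U)$ up to $\flow{n-1}{U}$ via the assignment $\phi^\alpha$ --- is exactly the scaffold the paper uses. The gap is precisely where you flag it: the compatibility claim that adjacent $z,z' \in \faces{n-1}{\alpha}U$ yield $\phi^\alpha(z),\phi^\alpha(z')$ in the same component of $\flow{n-1}{U}$. You offer only that roundness of atoms and globularity ``should sufficiently constrain'' the link of the shared $(n-2)$-cell $w$; this is not an argument, and no short local one exists. This claim is the entire content of the proposition, and the paper devotes real machinery to it. Concretely, the paper argues by contradiction: assuming a bipartition $\grade{n}{U}=A+B$ into components, the inductive hypothesis produces $w\in\bound{}{-}U\setminus\bound{n-2}{}U$ whose two $(n-1)$-cofaces in $\bound{}{-}U$ point into $A$ and $B$ respectively. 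Then an $(n-1)$-layering $(\order{i}{U})_{i=1}^m$ (Theorem~\ref{thm:molecules_admit_layerings}, cited forward) is used to track $w$ through the successive interfaces $\bound{}{+}\order{i}{U}$: each layer contains a single top cell lying in exactly one of $A,B$, so after applying Lemma~\ref{lem:boundary_move} one sees that $w$ always retains a coface on the \emph{other} side and never leaves the interface. Hence $w\in\bound{}{+}U$, contradicting roundness. This layering-and-tracking step is the substitute for your link-connectivity claim; without it or an equivalent, the proof is incomplete.

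A secondary issue: your step connecting $C^-$ and $C^+$ via ``a shared $(n-2)$-cell'' does not typecheck. The roundness identity $\bound{n-1}{-}U\cap\bound{n-1}{+}U=\bound{n-2}{}U$ is $(n-2)$-dimensional, so it contains no $(n-1)$-cell to which both $\phi^-$ and $\phi^+$ apply, and $(n-2)$-cells are not vertices of $\flow{n-1}{U}$. A correct replacement (which also absorbs your step~8) is: by Corollary~\ref{cor:flow_acyclic_in_codimension_1} every vertex of $\flow{n-1}{U}$ has a path back to some source $x$, and a source necessarily has $\faces{}{-}x\subseteq\faces{n-1}{-}U$, so $x\in\phi^-(\faces{n-1}{-}U)\subseteq C^-$. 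But this still presupposes the compatibility claim to know $C^-$ is a single component, so it does not rescue the main gap.
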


\begin{dfn}
We describe an algorithm for the molecule matching problem.
The main idea is the following: if we succeed in matching only \emph{one} top-dimensional atom in $V$ with one top-dimensional atom in $U$, then there is only \emph{one possible matching} of all other top-dimensional atoms of $V$ to atoms in $U$.
This is because, by Proposition \ref{prop:round_molecule_connected_flowgraph}, we can try to match top-dimensional elements of $V$ in such an order that the next element to match is connected by an edge in $\flow{n-1}{V}$ to a previously matched element; that is, it shares a face $z$ with a previously matched element.
In particular, $z$ has already been matched.
By Lemma \ref{lem:codimension_1_elements}, in order for us to continue, the match of $z$ must have exactly two cofaces in $U$, one of which is the previously matched top-dimensional element.
Necessarily, the other coface is the next match.
\end{dfn}

\begin{dfn}[Molecule matching algorithm]
The procedure takes as input two regular molecules $U, V$ such that $\dim{U} = \dim{V}$ and $V$ is round, and it returns all inclusions $V \incl U$.

Let $n \eqdef \dim{U}$.
To begin, we pick an arbitrary ordering $(\order{i}{x})_{i=1}^m$, for example the traversal order, of the elements of $\grade{n}{U}$.
Moreover, we pick an ordering $(\order{j}{y})_{j=1}^p$ of the elements of $\grade{n}{V}$ with the property that, for all $k \in \set{1, \ldots, p}$, the induced subgraph of $\flow{n-1}{V}$ on $(\order{j}{y})_{j=1}^k$ is connected.
This is possible because $\flow{n-1}{V}$ is connected by Proposition \ref{prop:round_molecule_connected_flowgraph}.
For each $k \in \set{1, \ldots, p}$, we let $\order{k}{V} \eqdef \bigcup_{j \leq k} \clset{{\order{j}{y}}}$.
We have $\order{i}{V} \subseteq \order{j}{V}$ whenever $i \leq j$, and $\order{p}{V} = V$ since $V$ is pure by Lemma \ref{lem:round_is_pure}.

For each $i \in \set{1, \ldots, m}$, we attempt to construct a sequence of inclusions $(\order{i, j}{\imath}\colon \order{j}{V} \incl U)_{j=1}^p$ such that the restriction of $\order{i, j'}{\imath}$ to $\order{j}{V}$ is equal to $\order{i, j}{\imath}$ when $j \leq j'$, iterating on $k \in \set{1, \ldots, p}$.
When $k = 1$, if $\order{1}{V} = \clset{{\order{1}{y}}}$ is isomorphic to $\clset{{\order{i}{x}}}$, we let $\order{i, 1}{\imath}$ be the unique isomorphism $\order{1}{V} \incliso \clset{{\order{i}{x}}}$ followed by the inclusion $\clset{{\order{i}{x}}} \subseteq U$, and iterate on $k$.
Else, we iterate on $i$.

When $k > 1$, we let $j$ be the least value such that there exists an edge between $\order{j}{y}$ and $\order{k}{y}$ in $\flow{n-1}{V}$.
Then $j < k$ because of our connectedness assumption, and there exists $z \in \faces{}{\alpha}\order{j}{y} \cap \faces{}{-\alpha}\order{k}{y}$ for some $\alpha \in \set{+, -}$.
We pick the least such $z$ with respect to some ordering of $\grade{n-1}{V}$, for example the traversal order.
By Lemma \ref{lem:properties_of_inclusions}, $\order{i, k-1}{\imath}(\order{j}{y})$ is one coface of $\order{i, k-1}{\imath}(z)$ in $U$.
If $\order{i, k-1}{\imath}(z)$ has no other cofaces, then we iterate on $i$.
Else, by Lemma \ref{lem:codimension_1_elements}, $\order{i, k-1}{\imath}(z)$ has exactly one other coface, call it $x$; note that $x$ cannot be in the image of $\order{i, k-1}{\imath}$, since $\order{j}{y}$ and $\order{k}{y}$ are the only cofaces of $z$ in $V$.
If $\clset{{\order{k}{y}}}$ is isomorphic to $\clset{{x}}$, and the unique isomorphism $\clset{{\order{k}{y}}} \incliso \clset{{x}}$ followed by the inclusion $\clset{{x}} \subseteq U$ matches $\order{i, k-1}{\imath}$ on $\clset{{\order{k}{y}}} \cap \order{k-1}{V}$, then we let $\order{i, k}{\imath}$ be the unique extension of $\order{i, k-1}{\imath}$ that restricts to $\clset{{\order{k}{y}}} \incliso \clset{{x}} \subseteq U$.
Else, we iterate on $i$.

If we succeed to construct $\order{i, p}{\imath}$, we add it to the list of inclusions $V \incl U$, then iterate on $i$.
\end{dfn}

\begin{thm} \label{thm:molecule_matching_bound}
The molecule matching problem in dimension $n$ can be solved in time
\begin{equation*}
    O(\size{\grade{n}{U}}\, \size{\grade{n}{V}} \, \size{V} \, \size{\edges{\lor}{V}}\,\log \size{\edges{\lor}{V}}).
\end{equation*}
\end{thm}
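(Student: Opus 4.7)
The plan is to bound the work per iteration of the algorithm's doubly nested loop (outer index $i \in \set{1, \ldots, \size{\grade{n}{U}}}$, inner index $k \in \set{1, \ldots, \size{\grade{n}{V}}}$) by $O(\size{V}\,\size{\edges{\lor}{V}}\,\log\size{\edges{\lor}{V}})$, then multiply by the iteration count. The per-iteration cost will be dominated by a single atom isomorphism test, accounted for through Theorem \ref{thm:traversal_new_bound}.

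For preprocessing, I would first build the connectedness-respecting enumeration $(\order{j}{y})_{j=1}^p$ of $\grade{n}{V}$ by running a spanning BFS on $\flow{n-1}{V}$, which is constructible from the face data of $V$ in polynomial time; existence of such an ordering is guaranteed by Proposition \ref{prop:round_molecule_connected_flowgraph}. I would then tabulate, for each $k \geq 2$, the least $j < k$ sharing an edge with $\order{k}{y}$ in $\flow{n-1}{V}$, together with the least shared codimension-$1$ face $z$, so that subsequent queries take constant time. Finally I would compute the traversal-order canonical form of each atom $\clset{{\order{j}{y}}}$; by Lemma \ref{lem:submolecule_properties} each such atom is a submolecule of $V$, so by Theorem \ref{thm:traversal_new_bound} each canonical form costs $O(\size{V}\,\size{\edges{\lor}{V}}\,\log\size{\edges{\lor}{V}})$, for a total well within the claimed bound.

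For the loop body, when $k = 1$ the only nontrivial work is to test whether $\clset{{\order{1}{y}}}$ and $\clset{{\order{i}{x}}}$ are isomorphic, which I would do by comparing their canonical forms after a preliminary match of sizes; by Theorem \ref{thm:traversal_new_bound} this costs $O(\size{V}\,\size{\edges{\lor}{V}}\,\log\size{\edges{\lor}{V}})$. When $k > 1$, the lookups of $j$ and $z$ are constant-time by preprocessing; by Lemma \ref{lem:codimension_1_elements}, checking that $\order{i, k-1}{\imath}(z)$ has a unique other coface $x \in U$ reduces to inspecting the coface data of $U$ at that element; the isomorphism test $\clset{{\order{k}{y}}} \incliso \clset{{x}}$ is again bounded by the same expression; and the verification that the unique isomorphism agrees with $\order{i, k-1}{\imath}$ on the already-matched intersection $\clset{{\order{k}{y}}} \cap \order{k-1}{V}$ is a linear scan over the atom, of the same order. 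Summing and multiplying by the $\size{\grade{n}{U}}\,\size{\grade{n}{V}}$ iterations delivers the bound.

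The main obstacle is keeping the atom isomorphism test against $\clset{{x}} \subseteq U$ within the $\size{V}$-scaled budget, since $U$ may be much larger than $V$ and there is no a priori bound on $\size{\clset{{x}}}$. The key observation is that dimension, element-count and edge-count per dimension are isomorphism invariants, so comparing these coarse statistics first lets us reject in constant time whenever $\clset{{x}}$ is larger than $\clset{{\order{k}{y}}}$, restricting the full traversal to the regime where both atoms have size at most $\size{V}$, in which Theorem \ref{thm:traversal_new_bound} delivers the required rate. The uniqueness half of Proposition \ref{prop:molecule_properties}(1) is what makes the talk of \emph{the} isomorphism sensible and the extension $\order{i, k-1}{\imath} \mapsto \order{i, k}{\imath}$ unambiguous.
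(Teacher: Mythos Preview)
Your argument follows essentially the same approach as the paper's: bound each of the $\size{\grade{n}{U}}\cdot\size{\grade{n}{V}}$ inner iterations by the cost of a single atom isomorphism test via Theorem \ref{thm:traversal_new_bound}, and absorb the preprocessing into the same budget. You are in fact more explicit than the paper about the one delicate point, namely why the isomorphism test against $\clset{x} \subseteq U$ stays within a $V$-scaled bound even though $U$ may be large; the paper simply asserts that each test ``can be bounded above by the time complexity of the isomorphism problem for $V$''.

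One minor inaccuracy: your claim that size comparison lets you ``reject in constant time'' is not literally correct, since computing $\size{\clset{x}}$ or its per-dimension statistics is not constant-time and you have not budgeted for precomputing these over all $n$-dimensional atoms of $U$. The fix is the obvious one you are already gesturing at: run the traversal of $\clset{x}$ in lockstep with the precomputed canonical form of $\clset{\order{k}{y}}$, aborting at the first mismatch in face-count, orientation, or total length. The work done before abort is then governed by the $V$-side atom, which restores the bound.
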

\begin{proof}
We suppose $n > 0$ since the case $n = 0$ is trivial.
First of all, with our choice of data structures both $\grade{n}{U}$ and $\grade{n-1}{V}$ already come with a linear order when one is needed.
Moreover, we can both construct $\flow{n-1}{V}$ and order its vertices in the desired way by traversing the ``slice'' of $\hasse{V}$ on the elements of dimension $n$ and $(n-1)$.
Since $\max \set{\size{\grade{n}{V}}, \size{\grade{n-1}{V}}} \leq \size{\edges{n}{V}}$, this can be done in time $O(\size{\edges{n}{V}})$ with a standard traversal algorithm.

In the main part of the algorithm, we have exactly $\size{\grade{n}{U}}$ iterations.
At each iteration, we need to solve at most $\size{\grade{n}{V}}$ isomorphism problems for submolecules of $V$.
The time complexity of each can be bounded above by the time complexity of the isomorphism problem for $V$, which is $O(\size{V} \, \size{\edges{\lor}{V}}\,\log \size{\edges{\lor}{V}})$ by Theorem \ref{thm:traversal_new_bound}.
It is straightforward to verify that all other operations, such as checking that the isomorphisms match on intersections or finding the next match, have lower complexity.
Since $\size{\edges{n}{V}} \leq \size{\edges{\lor}{V}}$, we can ignore the $O(\size{\edges{n}{V}})$ summand, and conclude.
\end{proof}

\section{The rewritable submolecule problem} \label{sec:rewritable}

\begin{dfn}
Our solution to the rewritable submolecule problem requires us to develop new results about \emph{layerings} of diagrams, and their associated \emph{orderings}.
\end{dfn}

\begin{dfn}[Layering of a regular molecule]
Let $U$ be a regular molecule, $-1 \leq k < \dim{U}$, and
\begin{equation*}
    m \eqdef \size{\bigcup_{i > k} \grade{i}{(\maxel{U})}}.
\end{equation*}
A \emph{$k$\nbd layering of $U$} is a sequence $(\order{i}{U})_{i=1}^m$ of regular molecules such that $U$ is isomorphic to $\order{1}{U} \cp{k} \ldots \cp{k} \order{m}{U}$ and $\dim{\order{i}{U}} > k$ for all $i \in \set{ 1, \ldots, m }$.

For $k = -1$, it is implied that $m = 1$, and $U$ is an atom.
We will regularly identify the regular molecules in a layering of $U$ with their isomorphic images in $U$, which are submolecules.
\end{dfn}

\begin{exm} \label{exm:layerings}
The shape of the pasting diagram
\[\begin{tikzcd}[sep=small]
	\bullet && \bullet && \bullet && \bullet
	\arrow[""{name=0, anchor=center, inner sep=0}, curve={height=18pt}, from=1-1, to=1-3]
	\arrow[""{name=1, anchor=center, inner sep=0}, curve={height=-18pt}, from=1-1, to=1-3]
	\arrow[from=1-3, to=1-5]
	\arrow[""{name=2, anchor=center, inner sep=0}, curve={height=18pt}, from=1-5, to=1-7]
	\arrow[""{name=3, anchor=center, inner sep=0}, curve={height=-18pt}, from=1-5, to=1-7]
	\arrow[shorten <=5pt, shorten >=5pt, Rightarrow, from=0, to=1]
	\arrow[shorten <=5pt, shorten >=5pt, Rightarrow, from=2, to=3]
\end{tikzcd}\]
admits no $(-1)$\nbd layerings, a single $0$\nbd layering
\begin{equation*}
    \left(
\begin{tikzcd}[sep=tiny]
	\bullet && \bullet
	\arrow[""{name=0, anchor=center, inner sep=0}, curve={height=12pt}, from=1-1, to=1-3]
	\arrow[""{name=1, anchor=center, inner sep=0}, curve={height=-12pt}, from=1-1, to=1-3]
	\arrow[shorten <=5pt, shorten >=5pt, Rightarrow, from=0, to=1]
\end{tikzcd},
    \begin{tikzcd}[sep=tiny]
	\bullet && \bullet
	\arrow[from=1-1, to=1-3]
\end{tikzcd},
\begin{tikzcd}[sep=tiny]
	\bullet && \bullet
	\arrow[""{name=0, anchor=center, inner sep=0}, curve={height=12pt}, from=1-1, to=1-3]
	\arrow[""{name=1, anchor=center, inner sep=0}, curve={height=-12pt}, from=1-1, to=1-3]
	\arrow[shorten <=5pt, shorten >=5pt, Rightarrow, from=0, to=1]
\end{tikzcd}
    \right),
\end{equation*}
and two $1$\nbd layerings:
\begin{equation*}
    \left(
\begin{tikzcd}[sep=tiny]
	\bullet && \bullet && \bullet && \bullet
	\arrow[""{name=0, anchor=center, inner sep=0}, curve={height=12pt}, from=1-1, to=1-3]
	\arrow[""{name=1, anchor=center, inner sep=0}, curve={height=-12pt}, from=1-1, to=1-3]
	\arrow[from=1-3, to=1-5]
	\arrow[from=1-5, to=1-7]
	\arrow[shorten <=5pt, shorten >=5pt, Rightarrow, from=0, to=1]
\end{tikzcd},
\begin{tikzcd}[sep=tiny]
	\bullet && \bullet && \bullet && \bullet
	\arrow[from=1-3, to=1-5]
	\arrow[""{name=0, anchor=center, inner sep=0}, curve={height=12pt}, from=1-5, to=1-7]
	\arrow[""{name=1, anchor=center, inner sep=0}, curve={height=-12pt}, from=1-5, to=1-7]
	\arrow[from=1-1, to=1-3]
	\arrow[shorten <=5pt, shorten >=5pt, Rightarrow, from=0, to=1]
\end{tikzcd}
\right),
\end{equation*}
\begin{equation*}
\left(
\begin{tikzcd}[sep=tiny]
	\bullet && \bullet && \bullet && \bullet
	\arrow[from=1-3, to=1-5]
	\arrow[""{name=0, anchor=center, inner sep=0}, curve={height=12pt}, from=1-5, to=1-7]
	\arrow[""{name=1, anchor=center, inner sep=0}, curve={height=-12pt}, from=1-5, to=1-7]
	\arrow[from=1-1, to=1-3]
	\arrow[shorten <=5pt, shorten >=5pt, Rightarrow, from=0, to=1]
\end{tikzcd},
\begin{tikzcd}[sep=tiny]
	\bullet && \bullet && \bullet && \bullet
	\arrow[""{name=0, anchor=center, inner sep=0}, curve={height=12pt}, from=1-1, to=1-3]
	\arrow[""{name=1, anchor=center, inner sep=0}, curve={height=-12pt}, from=1-1, to=1-3]
	\arrow[from=1-3, to=1-5]
	\arrow[from=1-5, to=1-7]
	\arrow[shorten <=5pt, shorten >=5pt, Rightarrow, from=0, to=1]
\end{tikzcd}
\right).
\end{equation*}
\end{exm}

\begin{lem} \label{lem:layering_basic_properties}
Let $U$ be a regular molecule, $k < \dim{U}$, and suppose $U$ admits a $k$\nbd layering $(\order{i}{U})_{i=1}^m$.
Then
\begin{enumerate}
    \item for all $i \in \set{ 1, \ldots, m }$, $\order{i}{U}$ contains a single maximal element of dimension $> k$,
    \item for all $k \leq \ell < \dim{U}$, $U$ admits an $\ell$\nbd layering.
\end{enumerate}
\end{lem}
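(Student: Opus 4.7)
The plan is to handle the two parts with different arguments.

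For part 1, the key observation is that the pushouts defining $\order{1}{U} \cp{k} \cdots \cp{k} \order{m}{U}$ glue only along boundaries of dimension $\leq k$, so no elements of dimension $> k$ in distinct layers are ever identified. By Lemma \ref{lem:properties_of_inclusions}, an element $x \in \order{i}{U}$ with $\dim x > k$ keeps exactly its cofaces from $\order{i}{U}$ inside $U$ (any new coface from a different layer would have to be identified across a boundary of dimension $\leq k < \dim x + 1$), hence is maximal in $U$ if and only if it is maximal in $\order{i}{U}$. This gives
\[
    \bigcup_{j > k} \grade{j}{(\maxel{U})} \;\cong\; \coprod_{i=1}^m \bigcup_{j > k} \grade{j}{(\maxel{\order{i}{U}})}
\]
as a disjoint union. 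The left-hand side has size $m$ by definition, and each summand on the right is at least $1$, since $\dim \order{i}{U} > k$ forces the existence of some element of dimension $> k$, which lies under a maximal element of dimension at least as large. Hence each summand equals $1$.

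For part 2, I would induct on $\ell - k$; the base case $\ell = k$ is the given layering. For the inductive step, assume we have an $(\ell - 1)$-layering $(W_i)_{i=1}^{m'}$, and let $x_i$ be the unique maximal element of $W_i$ of dimension $\geq \ell$ guaranteed by part 1. Let $m_\ell$ be the number of indices with $\dim x_i > \ell$ (the \emph{high} indices); then $m_\ell \geq 1$ since $\ell < \dim U$, and I partition $\{1, \ldots, m'\}$ into consecutive blocks $G_1, \ldots, G_{m_\ell}$, each containing exactly one high index, distributing the \emph{low} indices ($\dim x_i = \ell$) to neighbouring blocks in any way that keeps each $G_j$ contiguous.

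The technical core is to rewrite $U \cong W_1 \cp{\ell-1} \cdots \cp{\ell-1} W_{m'}$ as an $\ell$-pasting indexed by the blocks. The identity that unlocks this, applicable to regular molecules $A, B$ of dimension $> \ell$ for which $A \cp{\ell-1} B$ is defined, is
\[
    A \cp{\ell-1} B \;=\; (A \cp{\ell-1} \bound{\ell}{-}B) \cp{\ell} (\bound{\ell}{+}A \cp{\ell-1} B),
\]
obtained by expanding $A = A \cp{\ell} \bound{\ell}{+}A$ and $B = \bound{\ell}{-}B \cp{\ell} B$ via Proposition \ref{prop:unitality_of_pasting} and then swapping the $(\ell-1)$- and $\ell$-pastings via Proposition \ref{prop:interchange_of_pasting}. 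Iterating this lift at adjacent block boundaries produces padded layers $V_j' \supseteq \bigcp{\ell-1}_{i \in G_j} W_i$, each still a regular molecule whose unique maximal element of dimension $> \ell$ is the high $x_i$ of its block, and such that $U \cong V_1' \cp{\ell} \cdots \cp{\ell} V_{m_\ell}'$. The main obstacle is the bookkeeping in this iteration: verifying that at each step the boundaries of the padded blocks line up so that the successive $\ell$-pastings are defined, and that each padded block retains a unique maximal element of dimension $> \ell$. Both points reduce to globularity (Proposition \ref{prop:molecule_properties}) and to the fact that padding by $\bound{\ell}{\pm}$ adds only elements of dimension $\leq \ell$, which cannot compete with the block's high $x_i$.
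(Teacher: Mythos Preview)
Your proposal is correct and uses the same key ingredients as the paper. For part 1, the paper isolates the core fact as a separate lemma (Lemma~\ref{lem:layering_intersections}): for $i < j$, $\order{i}{U} \cap \order{j}{U} = \bound{k}{+}\order{i}{U} \cap \bound{k}{-}\order{j}{U}$, hence has dimension $\leq k$; your argument about cofaces not crossing layers is the same observation phrased in terms of maximality, and both conclude by pigeonhole. For part 2, the paper bypasses your induction on $\ell - k$ by writing down the $\ell$-layers directly from the $k$-layering in one step,
\[
\order{i}{V} \eqdef \bound{\ell}{+}\order{1}{U} \cp{k} \ldots \cp{k} \bound{\ell}{+}\order{i-1}{U} \cp{k} \order{i}{U} \cp{k} \bound{\ell}{-}\order{i+1}{U} \cp{k} \ldots \cp{k} \bound{\ell}{-}\order{m}{U},
\]
then applies interchange and unitality to get $U \cong \order{1}{V} \cp{\ell} \cdots \cp{\ell} \order{m}{V}$, and finally drops the terms of dimension $\leq \ell$. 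Your binary identity $A \cp{\ell-1} B = (A \cp{\ell-1} \bound{\ell}{-}B) \cp{\ell} (\bound{\ell}{+}A \cp{\ell-1} B)$, iterated across block boundaries, unwinds to exactly this formula; the difference is only organizational. The paper's version is a little slicker because it pads every layer uniformly and discards the redundant ones at the end, avoiding the block-grouping and the boundary-alignment bookkeeping you flag as the main obstacle.
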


\begin{dfn}
A regular molecule does not, in general, admit a $k$\nbd layering for each $k < \dim{U}$; however, by Lemma \ref{lem:layering_basic_properties}, when it does admit a $k$\nbd layering, it also admits a layering in dimensions higher than $k$.
The next result shows that every regular molecule does admit a $k$\nbd layering for some $k$, and that the smallest such $k$ falls into a particular range.
\end{dfn}

\begin{dfn}[Layering dimension]
Let $U$ be a regular molecule.
The \emph{layering dimension} of $U$ is the integer
\begin{equation*}
    \lydim{U} \eqdef \min \set{ k \geq -1 \mid \size{\bigcup_{i > k+1} \grade{i}{(\maxel{U})}} \leq 1 }.
\end{equation*}
\end{dfn}

\begin{dfn}[Frame dimension]
Let $U$ be a regular molecule.
The \emph{frame dimension of $U$} is the integer
\begin{equation*}
    \frdim{U} \eqdef \dim{\bigcup \set{\clset{{x}} \cap \clset{{y}} \mid
    x, y \in \maxel{U}, x \neq y } }.
\end{equation*}
\end{dfn}

\begin{thm} \label{thm:molecules_admit_layerings}
Let $U$ be a regular molecule.
Then there exists $k < \dim{U}$ such that $U$ admits a $k$\nbd layering.
Moreover,
\begin{equation*}
    \frdim{U} \leq \min\set{k \mid \text{$U$ admits a $k$\nbd layering}} \leq \lydim{U}.
\end{equation*}
\end{thm}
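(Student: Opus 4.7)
The plan is to establish the two inequalities independently; the existence of a layering at level $\lydim U$ then follows from the upper bound, while Lemma \ref{lem:layering_basic_properties}(2) ensures that a layering produced at any level $k \leq \lydim U$ can be promoted up to any level in the range $k \leq \ell < \dim U$.

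For the lower bound $\frdim U \leq k$ whenever $U$ admits a $k$\nbd layering $\order{1}{U} \cp{k} \ldots \cp{k} \order{m}{U}$, I would show that $\dim(\clset{x} \cap \clset{y}) \leq k$ for every pair of distinct $x, y \in \maxel U$, from which the definition of $\frdim$ gives the bound. If $x$ and $y$ lie in different layers, then by the iterated pushout structure of the pasting, any element shared between two distinct layers sits inside a common $k$\nbd boundary, so $\clset{x} \cap \clset{y}$ has dimension at most $k$. If instead $x$ and $y$ lie in a single layer $\order{i}{U}$, Lemma \ref{lem:layering_basic_properties}(1) forces at least one of them, say $y$, to have $\dim y \leq k$, so $\clset{y}$ itself has dimension at most $k$ and the intersection inherits this bound.

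For existence and the upper bound, I would proceed by structural induction on $U$ as a regular molecule. The Point and Atom base cases are immediate, since $\maxel U$ is a singleton, so $\lydim U = -1$ and $U$ is a trivial $(-1)$\nbd layering of itself. For the paste case $U = V \cp{j} W$, the inductive hypothesis supplies layerings of $V$ and $W$ at levels $\lydim V$ and $\lydim W$. Set $\ell \eqdef \max(j, \lydim V, \lydim W)$. I would first verify $\ell \leq \lydim U$ by observing that any maximal element of $V$ (respectively $W$) of dimension strictly greater than $j$ cannot lie in the glued boundary $\bound{j}{+}V \cong \bound{j}{-}W$, hence survives as a maximal element of $U$; this keeps the count of maximals of dimension $> k$ in $U$ at least as large as in $V$ and in $W$, for any $k \geq j$. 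Then I would assemble an $\ell$\nbd layering of $U$: when $\ell < \min(\dim V, \dim W)$, I promote the component layerings to level $\ell$ via Lemma \ref{lem:layering_basic_properties}(2) and concatenate them, using associativity and interchange (Propositions \ref{prop:associativity_of_pasting} and \ref{prop:interchange_of_pasting}) to rewrite $V \cp{j} W$ as a $\cp{\ell}$-pasting of the resulting pieces. When $\ell \geq \dim V$ (and symmetrically for $W$), the component $V$ has no layers at this level, and I instead merge it wholesale into the first layer of a promoted $W$-layering along $\bound{j}{-}W$.

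The main obstacle is the assembly step when $\ell$ reaches or exceeds the dimension of one of the components, so that the simple concatenation of promoted layerings is unavailable. Verifying that absorbing the lower-dimensional component into a single layer of the other yields a regular molecule with exactly the required number of top pieces, and that the resulting sequence is a genuine $\ell$\nbd layering, will require a delicate decomposition argument. The supporting intuition is that the lower-dimensional component is entirely attached along a $j$\nbd dimensional sub-boundary of a single extremal layer of the taller component, so the merge is effectively a $j$\nbd pasting internal to that layer; turning this into a clean combinatorial argument, compatible with the inductive definition of regular molecules, is the central difficulty.
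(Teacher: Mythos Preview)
Your overall strategy matches the paper's: structural induction on the (\textit{Paste}) clause for the upper bound, and a two-case analysis (both maximal elements of dimension $>k$ versus at least one of dimension $\leq k$) for the lower bound. The paper works directly at level $k = \lydim U$ rather than your $\ell = \max(j,\lydim V,\lydim W)$, but since $\ell \leq k$ (this is exactly Lemma \ref{lem:layering_dimensions_pasting}) the difference is immaterial.

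The gap is in the assembly step. Your ``concatenate via interchange'' needs a device you have not named: to apply interchange to $V \cp{j} W$ with $j < \ell$, both $V$ and $W$ must be written as $\cp{\ell}$\nbd composites \emph{of the same length}. The paper achieves this by padding: it writes
\[
V \cong \order{1}{\tilde V}\cp{\ell}\cdots\cp{\ell}\order{n_V}{\tilde V}\cp{\ell}\underbrace{\bound{\ell}{+}V\cp{\ell}\cdots\cp{\ell}\bound{\ell}{+}V}_{n_W},
\]
and symmetrically for $W$ with $n_V$ leading copies of $\bound{\ell}{-}W$, using Proposition \ref{prop:unitality_of_pasting}. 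Interchange then yields layers of the form $\order{i}{\tilde V}\cp{j}\bound{\ell}{-}W$ or $\bound{\ell}{+}V\cp{j}\order{i}{\tilde W}$, each containing exactly one maximal element of dimension $> \ell$. This single trick handles all cases uniformly, including your degenerate one: when $\dim V \leq \ell$ you simply have $n_V = 0$ and $V = \bound{\ell}{+}V$, so $V$ is absorbed into the padding.

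In particular, ``merge $V$ into the first layer only'' does not work as stated. If you set $\order{1}{U} \eqdef V\cp{j}\order{1}{W}$, then $\bound{\ell}{+}\order{1}{U} = V\cp{j}\bound{\ell}{-}\order{2}{W}$, which is strictly larger than $\bound{\ell}{-}\order{2}{W}$, so $\order{2}{W}$ alone cannot serve as the next layer. The low-dimensional component is forced to appear (as part of the $\ell$\nbd boundary) in \emph{every} layer, not just the first. Once you see that each layer must have the form $V \cp{j} \order{i}{W}$, the ``central difficulty'' you flag dissolves into a routine application of interchange.
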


\begin{cor} \label{cor:codimension_1_layering}
Let $U$ be a regular molecule, $n \eqdef \dim{U}$.
Then $U$ admits an $(n-1)$\nbd layering.
\end{cor}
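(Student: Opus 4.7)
The plan is to derive this as a direct consequence of Theorem \ref{thm:molecules_admit_layerings} combined with part (2) of Lemma \ref{lem:layering_basic_properties}, with no new combinatorial input needed.

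First I would bound $\lydim{U}$ from above by $n-1$. Since $\dim{U} = n$, every element of $U$ has dimension at most $n$, so $\grade{i}{(\maxel{U})} = \varnothing$ whenever $i > n$. Taking $k \eqdef n-1$ in the defining condition for $\lydim{U}$, we get
\begin{equation*}
    \bigcup_{i > k+1} \grade{i}{(\maxel{U})} \;=\; \bigcup_{i > n} \grade{i}{(\maxel{U})} \;=\; \varnothing,
\end{equation*}
whose cardinality is $0 \leq 1$. Hence $\lydim{U} \leq n-1$.

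Next, by the upper bound stated in Theorem \ref{thm:molecules_admit_layerings}, there exists some $k$ with $k \leq \lydim{U} \leq n-1$ such that $U$ admits a $k$-layering. If $k = n-1$ we are done; otherwise $k < n-1$, and I would invoke part (2) of Lemma \ref{lem:layering_basic_properties}, which guarantees that $U$ admits an $\ell$-layering for every $k \leq \ell < \dim{U}$. Specialising to $\ell \eqdef n-1$, valid since $k \leq n-1 < n$, we obtain the desired $(n-1)$-layering.

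There is no real obstacle: the content is entirely front-loaded in Theorem \ref{thm:molecules_admit_layerings} and in the ``upward propagation'' clause of Lemma \ref{lem:layering_basic_properties}. The only small point worth noting is the degenerate case $n = 0$, where $\lydim{U} \leq -1$ forces $k = -1$, and the $(-1)$-layering obtained from the theorem is already the statement.
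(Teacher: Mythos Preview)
Your proof is correct and follows essentially the same route as the paper, which cites Theorem \ref{thm:molecules_admit_layerings} together with Lemma \ref{lem:layering_dimension_smaller_than_dimension} (the latter being exactly your inline computation that $\lydim{U} \leq n-1$). You are slightly more explicit in invoking Lemma \ref{lem:layering_basic_properties}(2) for the upward propagation step, which the paper leaves implicit, but the argument is the same.
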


\begin{dfn}[Ordering of a regular molecule]
Let $U$ be a regular molecule, $k \geq -1$, and suppose $\maxflow{k}{U}$ is acyclic.
A \emph{$k$\nbd ordering of $U$} is a topological sort of $\maxflow{k}{U}$.
\end{dfn}

\begin{prop} \label{prop:if_layering_then_ordering}
Let $U$ be a regular molecule, $k \geq -1$.
If $U$ admits a $k$\nbd layering, then $\maxflow{k}{U}$ is acyclic, hence $U$ admits a $k$\nbd ordering.
\end{prop}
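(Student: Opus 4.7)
The plan is to show that the $k$-layering itself realises a topological sort of $\maxflow{k}{U}$, yielding both conclusions at once. The case $k=-1$ is trivial, since then $U$ is an atom and $\maxflow{-1}{U}$ has a single vertex and no edges; so assume $k\geq 0$. Write the layering as $(\order{i}{U})_{i=1}^m$. By Lemma~\ref{lem:layering_basic_properties}(1), each layer has a unique maximal element $x_i$ of dimension $>k$, and a short check using the pasting construction confirms that each $x_i$ is also maximal in $U$ and that $i\mapsto x_i$ bijects $\{1,\dots,m\}$ with the vertex set of $\maxflow{k}{U}$. It suffices to prove: whenever $x_a\to x_b$ is an edge of $\maxflow{k}{U}$, one has $a<b$.

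The key intermediate fact is a structural description of layer intersections: for $a<b$,
\[
\order{a}{U}\cap\order{b}{U}\subseteq\bound{k}{+}\order{a}{U}\cap\bound{k}{-}\order{b}{U}.
\]
To see this, use associativity (Proposition~\ref{prop:associativity_of_pasting}) to factor $U=A\cp{k}B$ with $A=\order{1}{U}\cp{k}\cdots\cp{k}\order{a}{U}$ and $B=\order{a+1}{U}\cp{k}\cdots\cp{k}\order{m}{U}$. The pasting pushout identifies $A\cap B$ with $\bound{k}{+}A$, which collapses to $\bound{k}{+}\order{a}{U}$ by iterated use of $\bound{k}{+}(P\cp{k}Q)=\bound{k}{+}Q$; since $\order{a}{U}\subseteq A$ and $\order{b}{U}\subseteq B$, the first inclusion follows. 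The second inclusion comes from the symmetric factorisation cutting between layers $b-1$ and $b$.

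Now fix an edge witness $z\in\faces{k}{+}\clset{x_a}\cap\faces{k}{-}\clset{x_b}$, with $\dim z=k$. If $a=b$, then $z$ lies in $\bound{k}{+}\clset{x_a}\cap\bound{k}{-}\clset{x_a}=\bound{k-1}{}\clset{x_a}$ by roundness of the atom (Proposition~\ref{prop:atom_properties}(3)), forcing $\dim z<k$, impossible. If $a>b$, the structural fact gives $z\in\bound{k}{-}\order{a}{U}$, so $z$ has no output $(k+1)$-coface in $\order{a}{U}$. Since $x_a$ is the only maximal element of dimension $>k$ in $\order{a}{U}$, every $(k+1)$-dimensional element of $\order{a}{U}$ lies in $\clset{x_a}$; hence the $(k+1)$-cofaces of $z$ in $\order{a}{U}$ and in $\clset{x_a}$ coincide, and the hypothesis $z\in\faces{k}{+}\clset{x_a}$ kills the input cofaces too. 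So $z$ has no $(k+1)$-coface in $\order{a}{U}$ at all; but $z<x_a$ (as $\dim z<\dim x_a$ and $z\in\clset{x_a}$), and any maximal chain from $z$ to $x_a$ produces a $(k+1)$-coface of $z$ inside $\clset{x_a}\subseteq\order{a}{U}$, a contradiction. Hence $a<b$, completing the argument.

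The main obstacle I anticipate is the structural fact on layer intersections: I need the reassociated pushouts to actually collapse $A\cap B$ to $\bound{k}{+}\order{a}{U}$ and its symmetric counterpart, which requires careful bookkeeping across iterated pushouts in $\ogpos$ together with the pasting identities $\bound{k}{+}(P\cp{k}Q)=\bound{k}{+}Q$ and $\bound{k}{-}(P\cp{k}Q)=\bound{k}{-}P$. Once this is in hand, the remainder---playing roundness of atoms against uniqueness of the top-dimensional element of each layer---is essentially routine.
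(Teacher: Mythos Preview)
Your proof is correct, but it takes a somewhat different route from the paper's. The paper proves acyclicity inductively via a separate lemma stating that if $\maxflow{k}{U}$ and $\maxflow{k}{V}$ are acyclic then so is $\maxflow{k}{(U \cp{k} V)}$; since each layer has a single-vertex maximal flow graph, repeated application gives the result. You instead show directly that the sequence $(x_i)_{i=1}^m$ is a topological sort, which proves something slightly stronger: you are effectively establishing Proposition~\ref{prop:layerings_induce_orderings} (well-definedness of $\lto{k}{U}$) at the same time. Your ``structural fact'' on layer intersections is exactly the paper's Lemma~\ref{lem:layering_intersections}, and the paper later uses that lemma in its proof of Proposition~\ref{prop:layerings_induce_orderings} in much the same way you use it here. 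So your approach collapses two of the paper's steps into one, at the cost of the more detailed element-level contradiction argument in the $a>b$ case; the paper's inductive lemma, by contrast, is reusable and keeps the individual proofs shorter.
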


\begin{cor} \label{cor:flow_acyclic_in_codimension_1}
Let $U$ be a regular molecule, $n \eqdef \dim{U}$.
Then $\flow{n-1}{U}$ is acyclic.
\end{cor}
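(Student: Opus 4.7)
The plan is to combine the two results that immediately precede the statement with the observation, recorded after the definition of $\maxflow{k}{U}$, that $\flow{n-1}{U}$ and $\maxflow{n-1}{U}$ coincide when $n = \dim{U}$.

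First, I would invoke Corollary~\ref{cor:codimension_1_layering} to know that $U$ admits an $(n-1)$-layering. Then, by Proposition~\ref{prop:if_layering_then_ordering} applied to $k \eqdef n-1$, the existence of such a layering implies that $\maxflow{n-1}{U}$ is acyclic.

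To conclude, I would note that every element of $\grade{n}{U}$ is automatically maximal in $U$, since by definition of $n = \dim{U}$ there are no elements of strictly greater dimension that could cover it. Consequently $\grade{n}{U} = \grade{n}{(\maxel{U})}$, and the vertex sets of $\flow{n-1}{U}$ and $\maxflow{n-1}{U}$ are both equal to $\grade{n}{U}$; since the edge relations in the two graphs are defined by the same condition on the same vertices, $\flow{n-1}{U} = \maxflow{n-1}{U}$, exactly as already remarked in the definition of maximal flow graph. Acyclicity of the maximal flow graph thus transfers immediately to $\flow{n-1}{U}$.

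There is no real obstacle here: the corollary is a direct composition of Corollary~\ref{cor:codimension_1_layering} and Proposition~\ref{prop:if_layering_then_ordering}, modulo the trivial identification $\flow{n-1}{U} = \maxflow{n-1}{U}$ at the top dimension. All the substantive work sits in Theorem~\ref{thm:molecules_admit_layerings} and in Proposition~\ref{prop:if_layering_then_ordering}, both of which are available by assumption.
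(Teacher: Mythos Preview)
Your proof is correct and follows exactly the same approach as the paper: invoke Corollary~\ref{cor:codimension_1_layering} and Proposition~\ref{prop:if_layering_then_ordering}, then use the identification $\flow{n-1}{U} = \maxflow{n-1}{U}$ already noted after the definition of the maximal flow graph. The only difference is that you spell out the reason for that identification, whereas the paper simply cites it.
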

\begin{proof}
Follows from Corollary \ref{cor:codimension_1_layering} and Proposition \ref{prop:if_layering_then_ordering} combined with the fact that $\flow{n-1}{U} = \maxflow{n-1}{U}$.
\end{proof}

\begin{dfn}
Let $U$ be a regular molecule, $k \geq -1$.
We let
\begin{align*}
    \layerings{k}{U} & \eqdef \set{\text{$k$\nbd layerings $(\order{i}{U})_{i=1}^m$ of $U$}}, \\
    \orderings{k}{U} & \eqdef \set{\text{$k$\nbd orderings $(\order{i}{x})_{i=1}^m$ of $U$}},
\end{align*}
where layerings are considered up to layer-wise isomorphism.
\end{dfn}

\begin{prop} \label{prop:layerings_induce_orderings}
Let $U$ be a regular molecule, $k \geq -1$.
For each $k$\nbd layering $(\order{i}{U})_{i=1}^m$ of $U$ and each $i \in \set{1, \ldots, m}$, let $\order{i}{x}$ be the only element of $\bigcup_{j > k}\grade{j}{(\maxel{U})}$ in the image of $\order{i}{U}$.
Then the assignment
\begin{equation}
    \lto{k}{U}\colon (\order{i}{U})_{i=1}^m \mapsto (\order{i}{x})_{i=1}^m
\end{equation}
determines an injective function $\layerings{k}{U} \incl \orderings{k}{U}$.
\end{prop}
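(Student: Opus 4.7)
I will prove two things: (a) $\lto{k}{U}$ lands in $\orderings{k}{U}$, i.e.\ the sequence $(\order{i}{x})_{i=1}^m$ is a topological sort of $\maxflow{k}{U}$; (b) $\lto{k}{U}$ is injective.

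For (a), first observe that the $\order{i}{x}$ are distinct elements of $\bigcup_{i>k}\grade{i}{(\maxel{U})}$: uniqueness of $\order{i}{x}$ within each layer is Lemma \ref{lem:layering_basic_properties}(1), and since pasting at dimension $k$ only identifies closed subsets lying in the $k$-boundaries (hence of dimension $\leq k$) and adds no cofaces to a maximal element, elements of dimension $> k$ from distinct layers remain distinct and maximal in $U$. A count against the definition of $m$ shows they exhaust the required set. For the topological sort condition, suppose for contradiction that there is an edge from $\order{j}{x}$ to $\order{i}{x}$ with $j > i$, witnessed by some $z \in \faces{k}{+}\order{j}{x} \cap \faces{k}{-}\order{i}{x}$. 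Because $\order{j}{x}$ is the unique maximal element of dimension $>k$ in $\order{j}{U}$, every $(k+1)$-coface of $z$ in $\order{j}{U}$ lies in $\clset{\order{j}{x}}$, yielding $z \in \bound{k}{+}\order{j}{U} \setminus \bound{k}{-}\order{j}{U}$; symmetrically $z \in \bound{k}{-}\order{i}{U} \setminus \bound{k}{+}\order{i}{U}$. Tracing the pasting identifications in $\order{1}{U} \cp{k} \cdots \cp{k} \order{m}{U}$: a $k$-element that enters $\order{j}{U}$ only via its $(+)$-boundary can only be further identified with elements of $\order{\ell}{U}$ for $\ell > j$, so it cannot coincide with an element of $\order{i}{U}$ for $i < j$, a contradiction.

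For (b), suppose two $k$-layerings $(\order{i}{U})_{i=1}^m$ and $(\order{i}{V})_{i=1}^m$ have the same image $(\order{i}{x})_{i=1}^m$. I prove by induction on $\ell$ that $\order{\ell}{U} \cong \order{\ell}{V}$, coupled with the stronger statement that the cumulative pastings $\order{1}{U} \cp{k} \cdots \cp{k} \order{\ell}{U}$ and $\order{1}{V} \cp{k} \cdots \cp{k} \order{\ell}{V}$, viewed as submolecules of $U$, have equal underlying subsets. The inductive step uses that $\order{\ell+1}{U}$ must be a submolecule of $U$ with $\order{\ell+1}{x}$ as unique max of dimension $>k$ and with $(-)$ $k$-boundary equal to the previously computed cumulative $(+)$ $k$-boundary; the same data characterise $\order{\ell+1}{V}$, so the two must coincide as subsets of $U$, and hence be isomorphic as regular molecules.

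The main obstacle is step (b). Well-definedness is just a careful bookkeeping of which identifications a given $k$-dimensional element can take part in, but injectivity hinges on showing that each successive layer is rigidly determined in $U$ by the cumulative image so far together with the next top element $\order{\ell+1}{x}$; this amounts to showing that there is essentially a unique way to fill in the (possibly non-trivial) dimension-$\leq k$ ``wings'' between the cumulative $(+)$ $k$-boundary and the input $k$-boundary of $\clset{\order{\ell+1}{x}}$, and that this filling is forced to be the same for $L$ and $L'$.
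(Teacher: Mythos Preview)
Your argument for (a) is correct and follows the paper's line; the paper packages the pasting-identification step as Lemma~\ref{lem:layering_intersections} (namely $\order{i}{U} \cap \order{j}{U} = \bound{k}{+}\order{i}{U} \cap \bound{k}{-}\order{j}{U}$ for $i<j$), from which the contradiction with $z \notin \bound{k}{+}\order{i}{U}$ is immediate, whereas you argue the same fact informally via ``tracing the pasting identifications''.

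For (b) you have the right strategy but leave the key step open, and the worry you voice in your last paragraph is unfounded. There is no ``filling'' to choose: a layer is completely determined as a subset of $U$ by its top element together with its input $k$-boundary. Concretely, if $W$ is a regular molecule whose unique maximal element of dimension $>k$ is $x$, then $W = \clset{x} \cup \bound{k}{-}W$. This holds because every element of $W$ lies below some maximal element; maximal elements of dimension $<k$ lie in $\bound{k}{-}W$ by definition, and a maximal element of dimension exactly $k$ has no cofaces and hence lies in $\faces{k}{-}W \subseteq \bound{k}{-}W$. The paper's proof of injectivity is then one line: both $\order{1}{U}$ and $\order{1}{V}$ have input $k$-boundary $\bound{k}{-}U$ and top element $\order{1}{x}$, so both equal $\clset{\order{1}{x}} \cup \bound{k}{-}U$ as subsets of $U$; now recurse on the tail. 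Your inductive scheme is exactly this once you supply the missing equality.
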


\begin{dfn}
In general, the function $\lto{k}{U}$ is not surjective, that is, not every $k$\nbd ordering is induced by a $k$\nbd layering.
The following is a criterion for deciding when a $k$\nbd ordering comes from a $k$\nbd layering.
\end{dfn}

\begin{prop} \label{prop:layering_from_ordering}
Let $U$ be a regular molecule, $k \geq -1$, and let $(\order{i}{x})_{i=1}^m$ be a $k$\nbd ordering of $U$.
Let
\begin{align*}
    \order{0}{U} & \eqdef \bound{k}{-}U, \\
    \order{i}{U} & \eqdef \bound{k}{+}\order{i-1}{U} \cup \clset{{\order{i}{x}}} \quad \text{for $i \in \set{1, \ldots, m}$}.
\end{align*}
The following are equivalent:
\begin{enumerate}[label=(\alph*)]
    \item $(\order{i}{U})_{i=1}^m$ is a $k$\nbd layering of $U$;
    \item for all $i \in \set{1, \ldots, m}$, $\bound{k}{-}\order{i}{x} \submol \bound{k}{-}\order{i}{U}$. \label{cond:inputs_are_submolecules}
\end{enumerate}
Moreover, for all $i \in \set{1, \ldots, m-1}$, if $\bound{k}{-}\order{i}{x} \submol \bound{k}{-}\order{i}{U}$, then $\order{i}{U}$ and $\bound{k}{+}\order{i}{U} = \bound{k}{-}\order{i+1}{U}$ are regular molecules.
\end{prop}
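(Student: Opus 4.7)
The plan is to prove the two implications $(b) \Rightarrow (a)$ and $(a) \Rightarrow (b)$ separately, developing the moreover claim alongside the induction used in $(b) \Rightarrow (a)$. The central tool is Proposition \ref{prop:round_submolecule_substitution}, which characterises precisely when a union of regular molecules along a round submolecule is again a regular molecule.

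For $(b) \Rightarrow (a)$ I would induct on $i \in \set{0, 1, \ldots, m}$, proving simultaneously that (i) $\order{i}{U}$ is a regular molecule with $\bound{k}{-}\order{i}{U} = \bound{k}{+}\order{i-1}{U}$, (ii) $\bound{k}{+}\order{i}{U}$ is a regular molecule of dimension $k$ arising as the substitution $\subs{\bound{k}{+}\order{i-1}{U}}{\bound{k}{+}\order{i}{x}}{\bound{k}{-}\order{i}{x}}$, and (iii) the iterated pasting $\order{1}{U} \cp{k} \ldots \cp{k} \order{i}{U}$ is defined and coincides with the closed subset $\bound{k}{-}U \cup \bigcup_{j \leq i} \clset{\order{j}{x}}$ of $U$. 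The base case $i = 0$ reduces to the fact that $\bound{k}{-}U$ is a regular molecule by Proposition \ref{prop:molecule_properties}(3). In the induction step, the hypothesis provides that $\bound{k}{-}\order{i}{x}$ is round (by Proposition \ref{prop:atom_properties}(3) and Proposition \ref{prop:molecule_properties}(5)) and a submolecule of $\bound{k}{-}\order{i}{U}$, so Proposition \ref{prop:round_submolecule_substitution} yields both that $\order{i}{U}$ is a regular molecule and the specific form of $\bound{k}{+}\order{i}{U}$ required for the moreover statement. Because the $k$-ordering enumerates all of $\bigcup_{j > k} \grade{j}{(\maxel{U})}$, at $i = m$ the closed subset recovered is all of $U$.

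For $(a) \Rightarrow (b)$ the argument runs in reverse: Lemma \ref{lem:layering_basic_properties} identifies $\order{i}{x}$ as the unique maximal element of $\order{i}{U}$ of dimension $> k$; Lemma \ref{lem:submolecule_properties}(2) gives $\clset{\order{i}{x}} \submol \order{i}{U}$; and restricting this submolecule inclusion to $k$-boundaries using Lemma \ref{lem:properties_of_inclusions}(4) to control the image produces the required submolecule inclusion $\bound{k}{-}\order{i}{x} \submol \bound{k}{-}\order{i}{U}$.

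The main technical obstacle I anticipate is in applying Proposition \ref{prop:round_submolecule_substitution}: as stated, it concerns substitutions by $(k+1)$-dimensional rewrites $V \celto W$, whereas $\clset{\order{i}{x}}$ may have dimension strictly greater than $k+1$. The natural remedy is either to invoke a strengthened substitution principle for arbitrary round regular molecules (presumably available in the forthcoming monograph) or to decompose $\clset{\order{i}{x}}$ as an iterated tower of rewrites through its successive boundaries — each of which is round by Proposition \ref{prop:molecule_properties}(5) — and to apply the codimension-one result step by step. A similar issue arises in $(a) \Rightarrow (b)$ when passing from a submolecule inclusion at dimension $\dim\order{i}{x}$ down to one at dimension $k$, and is resolved by the same mechanism.
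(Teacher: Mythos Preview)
Your overall strategy matches the paper's: prove $(b) \Rightarrow (a)$ by induction on $i$, building each $\order{i}{U}$ as a regular molecule from the previous output boundary, and prove $(a) \Rightarrow (b)$ by extracting the unique high-dimensional maximal element of each layer. You have also correctly identified the two real obstacles. However, the workarounds you propose for them do not quite work as stated, and the paper resolves both with dedicated lemmas that you have overlooked.

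For $(b) \Rightarrow (a)$, Proposition~\ref{prop:round_submolecule_substitution} is indeed restricted to the codimension-one situation, and your proposed ``iterated tower of rewrites'' through the successive boundaries of $\clset{\order{i}{x}}$ is not straightforward: an $n$-dimensional atom with $n > k+1$ is not a pasting of $(k{+}1)$-dimensional pieces over its $k$-boundary, so there is no obvious way to feed it to Proposition~\ref{prop:round_submolecule_substitution} one step at a time. The paper instead uses Lemma~\ref{lem:submolecule_rewrite}, which is exactly the generalisation you need: given a regular molecule $V$, an integer $n < \dim V$, and a submolecule inclusion $\bound{n}{\alpha}V \incl U$ into an $n$-dimensional $U$, the pushout $V \cup U$ is a regular molecule with $\bound{n}{\alpha}(V \cup U) = U$. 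Applying this with $V = \clset{\order{i}{x}}$, $n = k$, $\alpha = -$, and $U = \bound{k}{+}\order{i-1}{U}$ gives $\order{i}{U}$ directly, with no dimensional restriction on $\order{i}{x}$.

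For $(a) \Rightarrow (b)$, the step ``restricting the submolecule inclusion $\clset{\order{i}{x}} \submol \order{i}{U}$ to $k$-boundaries'' is a gap: Lemma~\ref{lem:properties_of_inclusions}(4) controls the \emph{image} of boundaries under an inclusion, but nothing in the paper says that the boundary restriction of a submolecule inclusion is again a submolecule inclusion. The paper bypasses this entirely via Lemma~\ref{lem:boundary_move}, which proves directly (by induction on layering dimension) that whenever a regular molecule $W$ has a unique maximal element $x$ of dimension $> k$, one has $\bound{k}{\alpha}x \submol \bound{k}{\alpha}W$. This applies verbatim to $W = \order{i}{U}$ and yields $(b)$ immediately.
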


\begin{dfn}[Path-induced subgraph]
Let $\mathscr{G}$ be a directed graph and $W$ a subset of its vertex set.
We say that $\restr{\mathscr{G}}{W}$ is \emph{path-induced} if, for all $x, y \in W$, every path from $x$ to $y$ in $\mathscr{G}$ is included in $\restr{\mathscr{G}}{W}$.
\end{dfn}

\begin{dfn}
Path-induced subgraphs are also called \emph{convex subgraphs}, for example in \cite{bonchi2022string}.
\end{dfn}

\begin{dfn}[Contraction of a connected subgraph]
Let $\mathscr{G}$ be a directed graph and $W$ a subset of its vertex set such that $\restr{\mathscr{G}}{W}$ is connected.
The \emph{contraction of $\restr{\mathscr{G}}{W}$ in $\mathscr{G}$} is the graph minor $\mathscr{G}/(\restr{\mathscr{G}}{W})$ obtained by contracting every edge in $\restr{\mathscr{G}}{W}$.
\end{dfn}

\begin{lem} \label{lem:connected_subgraph_conditions_path_induced}
Let $\mathscr{G}$ be a directed acyclic graph and $W$ a subset of its vertex set such that $\restr{\mathscr{G}}{W}$ is connected.
The following are equivalent:
\begin{enumerate}[label=(\alph*)]
    \item $\restr{\mathscr{G}}{W}$ is path-induced; \label{cond:path_induced}
    \item $\mathscr{G}/(\restr{\mathscr{G}}{W})$ is acyclic; \label{cond:acyclic_contraction}
    \item there is a topological sort of $\mathscr{G}$ in which vertices of $W$ are consecutive. \label{cond:consecutive_tsort}
\end{enumerate}
Moreover, under any of the equivalent conditions, there is a bijection between
\begin{itemize}
    \item topological sorts of $\mathscr{G}$ in which vertices of $W$ are consecutive,
    \item pairs of a topological sort of $\restr{\mathscr{G}}{W}$ and a topological sort of $\mathscr{G}/(\restr{\mathscr{G}}{W})$.
\end{itemize}
\end{lem}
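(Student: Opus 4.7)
My plan is to establish (a) $\Leftrightarrow$ (b) and (b) $\Leftrightarrow$ (c), and then read off the final bijection from the explicit construction used in the (b) $\Rightarrow$ (c) direction.

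The crux of (a) $\Leftrightarrow$ (b) I would argue as follows. Since $\mathscr{G}$ is acyclic, so is its restriction to the complement of $W$, and hence every cycle in the contraction $\mathscr{G}/(\restr{\mathscr{G}}{W})$ must pass through the contracted vertex $w^*$. Because $\restr{\mathscr{G}}{W}$ is an induced subgraph, every edge with both endpoints in $W$ gets contracted and $w^*$ carries no self-loops. A cycle through $w^*$ therefore has the form $w^* \to u_1 \to \ldots \to u_l \to w^*$ with $u_i \notin W$ and $l \geq 1$; such cycles are in natural bijection with directed paths in $\mathscr{G}$ from some $x \in W$ to some $y \in W$ that leave $W$ in between, which are precisely the obstructions to path-inducedness of $\restr{\mathscr{G}}{W}$.

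For (b) $\Rightarrow$ (c), I would pick a topological sort $\tau_1, \ldots, \tau_s, w^*, \tau_{s+1}, \ldots, \tau_r$ of the contraction together with a topological sort $w_1, \ldots, w_m$ of the (acyclic, since it is a subgraph of $\mathscr{G}$) induced subgraph $\restr{\mathscr{G}}{W}$, and splice them into $\tau_1, \ldots, \tau_s, w_1, \ldots, w_m, \tau_{s+1}, \ldots, \tau_r$. Verifying that this is a topological sort of $\mathscr{G}$ with $W$ consecutive splits into three routine cases depending on whether an edge of $\mathscr{G}$ lies inside $W$, outside $W$, or crosses the boundary, each reducing directly to one of the two input topological sorts. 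Conversely, for (c) $\Rightarrow$ (b), any topological sort of $\mathscr{G}$ with $W$ consecutive collapses to a topological sort of the contraction by replacing the $W$-block with $w^*$, witnessing acyclicity of the contraction.

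The final bijection then falls out automatically: the splicing of paragraph three, and separately the pair obtained by collapsing the $W$-block while also reading it off, are visibly inverse operations. I do not foresee any serious obstacle; the one delicate point worth making explicit is that the induced-subgraph assumption is exactly what rules out self-loops at $w^*$, which ensures the correspondence between cycles in the contraction and bad paths in $\mathscr{G}$ is clean and bijective, and that the splicing preserves the topological-sort property for all boundary-crossing edges.
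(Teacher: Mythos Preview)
Your proposal is correct and follows essentially the same approach as the paper: the paper also argues that any cycle in the contraction must pass through the contracted vertex and hence corresponds to a path in $\mathscr{G}$ violating path-inducedness, and uses the identical splicing construction for (b) $\Rightarrow$ (c). The only cosmetic difference is that the paper closes the cycle of implications via (c) $\Rightarrow$ (a) by contrapositive rather than your (c) $\Rightarrow$ (b), and leaves the inverse of the splicing map slightly more implicit than you do.
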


\begin{lem} \label{lem:flow_of_substitution_is_contraction_of_flow}
Let $\imath\colon V \incl U$ be an inclusion of regular molecules such that $n \eqdef \dim{U} = \dim{V}$ and $V$ is round.
Then $\flow{n-1}{\subs{U}{\compos{V}}{\imath(V)}}$ is isomorphic to $\flow{n-1}{U}/\flow{n-1}{V}$.
\end{lem}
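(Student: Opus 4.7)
The plan is to exhibit a bijection on vertex sets of the two graphs and verify that it preserves the edge relation in each case. Identifying $V$ with $\imath(V)$ as a closed subset of $U$, I first unfold the pushout defining $U \cup (V \celto \compos{V})$: by Lemma \ref{lem:boundaries_of_rewrite}, $V \celto \compos{V}$ is an $(n+1)$-dimensional regular molecule with a single top cell $r$, where $\faces{}{-}r = \grade{n}{V}$ and $\faces{}{+}r = \set{\top}$, with $\top$ the unique top $n$-cell of the merger $\compos{V}$. Computing the output boundary then yields
$$\grade{n}{\subs{U}{\compos{V}}{V}} = (\grade{n}{U} \setminus \grade{n}{V}) \cup \set{\top},$$
which is in bijection with the vertex set of $\flow{n-1}{U}/\flow{n-1}{V}$ under the map sending the contracted vertex to $\top$. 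Invoking Lemma \ref{lem:boundaries_of_rewrite} a second time for $\compos{V} = \bound{}{-}V \celto \bound{}{+}V$ identifies $\faces{n-1}{\alpha}\top$ canonically with $\faces{n-1}{\alpha}V$ as subsets of $V_{n-1} \subseteq U_{n-1}$ for each $\alpha \in \set{+,-}$.

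Next, I verify edges in four cases. For $x, y \in \grade{n}{U} \setminus \grade{n}{V}$, the $(n-1)$-faces of $x$ and of $y$ are unchanged by the substitution, so edges between them agree with those in $\flow{n-1}{U}$, matching the contraction. For an edge from $\top$ to some $y \in \grade{n}{U} \setminus \grade{n}{V}$, the condition is $\faces{n-1}{+}V \cap \faces{n-1}{-}y \neq \varnothing$. By Lemma \ref{lem:codimension_1_elements}, each $(n-1)$-cell $z$ of $U$ has at most one positive and at most one negative $n$-coface in $U$; hence the condition becomes the existence of a $z \in U_{n-1}$ with a positive $n$-coface $v' \in \grade{n}{V}$ and with $y$ as its unique negative $n$-coface. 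Closure of $V$ forces $z \in V$, and since $y \notin V$ we indeed have $z \in \faces{n-1}{+}V$. Such a $z$ is equivalently a witness of an edge $v' \to y$ in $\flow{n-1}{U}$ with $v' \in \grade{n}{V}$, i.e., of an edge from the contracted vertex to $y$ in the minor. The symmetric argument handles edges into $\top$.

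Finally, the would-be self-loop at $\top$ would demand $\faces{n-1}{+}V \cap \faces{n-1}{-}V = \grade{n-1}{(\maxel{V})}$ to be non-empty; but $V$ is round, hence pure of dimension $n$ by Lemma \ref{lem:round_is_pure}, so this set is empty. No self-loop arises, consistent with the absence of self-loops both in flow graphs and in graph contractions. The main obstacle in the argument is the combinatorial bookkeeping through the substitution pushout, namely verifying precisely which $(n-1)$-cells of $U$ survive in $\subs{U}{\compos{V}}{V}$: the interior $(n-1)$-cells of $V$, having both cofaces in $V$, vanish, while the cells of $\bound{}{}V$ become faces of $\top$. This is all controlled by the at-most-one-coface property of Lemma \ref{lem:codimension_1_elements}.
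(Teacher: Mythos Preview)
Your argument is essentially the same as the paper's: both set up the obvious bijection on vertex sets and then run a four-case analysis on pairs $(x,y)$, using that $V$ is pure (hence has no $(n-1)$-dimensional maximal elements) to rule out the self-loop, and using the at-most-one-coface property (Lemma~\ref{lem:codimension_1_elements}) to translate between edges incident to $\top$ and edges incident to some $v' \in \grade{n}{V}$. One small omission: you never check that the contraction $\flow{n-1}{U}/\flow{n-1}{V}$ is well-defined, which by the paper's definition requires $\flow{n-1}{V}$ to be connected as an induced subgraph of $\flow{n-1}{U}$; the paper handles this up front via Lemma~\ref{lem:flow_under_inclusion} and Proposition~\ref{prop:round_molecule_connected_flowgraph}.
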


\begin{prop} \label{prop:round_submolecule_flow_path_induced}
Let $\imath\colon V \incl U$ be an inclusion of regular molecules such that $n \eqdef \dim{U} = \dim{V}$ and $V$ is round.
If $\imath$ is a submolecule inclusion, then $\flow{n-1}{V}$ is a path-induced subgraph of $\flow{n-1}{U}$.
\end{prop}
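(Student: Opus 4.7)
The plan is to combine the characterisation of path-induced subgraphs from Lemma \ref{lem:connected_subgraph_conditions_path_induced} with the identification of the contracted flow graph as the flow graph of a substitution from Lemma \ref{lem:flow_of_substitution_is_contraction_of_flow}. The strategy is to exhibit $\flow{n-1}{V}$, up to the isomorphism of Lemma \ref{lem:flow_under_inclusion}, as a connected subgraph of the acyclic graph $\flow{n-1}{U}$ whose contraction remains acyclic, and then invoke the equivalence \ref{cond:path_induced}\,$\Leftrightarrow$\,\ref{cond:acyclic_contraction}.

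First, I would set $W \eqdef \imath(\bigcup_{i > n-1} \grade{i}{(\maxel V)})$, which by Lemma \ref{lem:flow_under_inclusion} is the vertex set of a subgraph of $\flow{n-1}{U}$ isomorphic to $\flow{n-1}{V}$. Since $V$ is round, Proposition \ref{prop:round_molecule_connected_flowgraph} tells me that $\flow{n-1}{V}$, and hence $\restr{\flow{n-1}{U}}{W}$, is connected. Moreover, $\flow{n-1}{U}$ itself is acyclic by Corollary \ref{cor:flow_acyclic_in_codimension_1}, so Lemma \ref{lem:connected_subgraph_conditions_path_induced} applies to the pair $(\flow{n-1}{U}, W)$.

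Next, I would use the submolecule hypothesis together with Proposition \ref{prop:round_submolecule_substitution}\,(d) to conclude that $\subs{U}{\compos{V}}{\imath(V)}$ is a regular molecule; since the substitution replaces an $n$\nbd dimensional round submolecule by another $n$\nbd dimensional atom, its dimension is still $n$. Then Corollary \ref{cor:flow_acyclic_in_codimension_1} gives that $\flow{n-1}{\subs{U}{\compos{V}}{\imath(V)}}$ is acyclic, and Lemma \ref{lem:flow_of_substitution_is_contraction_of_flow} identifies this graph with the contraction $\flow{n-1}{U}/\flow{n-1}{V}$. Therefore, condition \ref{cond:acyclic_contraction} of Lemma \ref{lem:connected_subgraph_conditions_path_induced} holds, and the equivalence yields condition \ref{cond:path_induced}, that is, $\restr{\flow{n-1}{U}}{W} \cong \flow{n-1}{V}$ is a path-induced subgraph of $\flow{n-1}{U}$.

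The argument is essentially a string of invocations of the machinery developed just above, so no real obstacle stands in the way. The only point that requires a little care is making sure that the identification of $\flow{n-1}{V}$ with the induced subgraph of $\flow{n-1}{U}$ on $W$ (via Lemma \ref{lem:flow_under_inclusion}) is the same as the one implicitly used in Lemma \ref{lem:flow_of_substitution_is_contraction_of_flow} when speaking of the quotient $\flow{n-1}{U}/\flow{n-1}{V}$; this is forced by the construction of the substitution pushout, so I would flag it but not dwell on it.
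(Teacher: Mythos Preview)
Your proposal is correct and follows essentially the same route as the paper's proof: use Proposition \ref{prop:round_submolecule_substitution} to see that $\subs{U}{\compos{V}}{\imath(V)}$ is a regular molecule, apply Corollary \ref{cor:flow_acyclic_in_codimension_1} and Lemma \ref{lem:flow_of_substitution_is_contraction_of_flow} to deduce that $\flow{n-1}{U}/\flow{n-1}{V}$ is acyclic, and conclude via Lemma \ref{lem:connected_subgraph_conditions_path_induced}. Your version is slightly more explicit in checking the connectedness and ambient-acyclicity hypotheses of that lemma, which the paper leaves implicit.
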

\begin{proof}
By Proposition \ref{prop:round_submolecule_substitution}, if $\imath$ is a submolecule inclusion then $\subs{U}{\compos{V}}{\imath(V)}$ is a regular molecule.
By Corollary \ref{cor:flow_acyclic_in_codimension_1} $\flow{n-1}{\subs{U}{\compos{V}}{\imath(V)}}$ is acyclic, and by Lemma \ref{lem:flow_of_substitution_is_contraction_of_flow} it is isomorphic to $\flow{n-1}{U}/\flow{n-1}{V}$.
It follows from Lemma \ref{lem:connected_subgraph_conditions_path_induced} that $\flow{n-1}{V}$ is a path-induced subgraph of $\flow{n-1}{U}$.
\end{proof}

\begin{dfn}
Given an inclusion $\imath\colon V \incl U$ of regular molecules such that $n \eqdef \dim{U} = \dim{V}$ and $V$ is round, the vertices of $\flow{n-1}{U}/\flow{n-1}{V}$ are either
\begin{itemize}
    \item $x \in \grade{n}{U} \setminus \imath(\grade{n}{V})$, or
    \item $x_V$, obtained from contracting all vertices in $\imath(\grade{n}{V})$.
\end{itemize}
The following results will justify our algorithm for the rewritable submolecule problem.
\end{dfn}

\begin{lem} \label{lem:round_submolecules_from_layering}
Let $\imath\colon V \incl U$ be an inclusion of regular molecules such that $n \eqdef \dim{U} = \dim{V}$ and $V$ is round, and let $(\order{i}{y})_{i=1}^p$ be an $(n-1)$\nbd ordering induced by an $(n-1)$\nbd layering of $V$.
The following are equivalent:
\begin{enumerate}[label=(\alph*)]
    \item $\imath$ is a submolecule inclusion;
    \item there exist an $(n-1)$\nbd ordering $(\order{i}{x})_{i=1}^m$ induced by an $(n-1)$\nbd layering $(\order{i}{U})_{i=1}^m$ of $U$, and $q \in \set{1, \ldots, m}$ such that
    \begin{enumerate}[label=\arabic*.]
        \item $(\order{i}{x})_{i=q}^{p+q-1} = (\imath(\order{i}{y}))_{i=1}^p$,
        \item $\imath(\bound{}{-}V) \submol \bound{}{-}\order{q}{U}$.
    \end{enumerate}
\end{enumerate}
\end{lem}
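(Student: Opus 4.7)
The plan is to prove the two implications separately, leveraging the structural results on layerings, orderings, and flow graphs developed in this section.

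For $(a) \Rightarrow (b)$, I would first apply Proposition \ref{prop:round_submolecule_substitution} to deduce that $\subs{U}{\compos{V}}{\imath(V)}$ is a regular molecule. By Corollary \ref{cor:codimension_1_layering}, this molecule admits an $(n-1)$-layering, and exactly one layer — say at position $q$ — has as its top-dimensional element the top cell of $\compos{V}$. The key construction is to \emph{refine} this layering by replacing that single layer with $p$ consecutive layers corresponding to the given $(n-1)$-layering of $V$, effectively substituting $V$ for $\compos{V}$ layer by layer. The resulting sequence is shown to be an $(n-1)$-layering of $U$, with $(\imath(\order{i}{y}))_{i=1}^p$ occupying positions $q$ through $p+q-1$, verifying condition 1. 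Condition 2 follows because $\compos{V} \submol \order{q}{\tilde U}$ (the unrefined layer), so $\bound{}{-}\compos{V} \submol \bound{}{-}\order{q}{\tilde U}$, and the refinement preserves the input boundary of the $q$-th layer, so $\imath(\bound{}{-}V) = \bound{}{-}\compos{V} \submol \bound{}{-}\order{q}{U}$.

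For $(b) \Rightarrow (a)$, set $V' \eqdef \order{q}{U} \cp{n-1} \cdots \cp{n-1} \order{p+q-1}{U}$. By iterated application of Proposition \ref{prop:associativity_of_pasting}, $U \cong A \cp{n-1} V' \cp{n-1} B$ where $A$ pastes the layers strictly before $q$ and $B$ those strictly after $p+q-1$; this directly exhibits $V'$ as a submolecule of $U$. Condition 1 ensures that $\grade{n}{V'} = \imath(\grade{n}{V})$, and since $V$ is pure by Lemma \ref{lem:round_is_pure}, we get $\imath(V) = \clos{\imath(\grade{n}{V})} \subseteq V'$. To conclude via transitivity that $\imath(V) \submol U$, I would establish $\imath(V) \submol V'$: the submolecule decomposition of $\bound{n-1}{-}V' = \bound{n-1}{-}\order{q}{U}$ along $\imath(\bound{n-1}{-}V)$ provided by condition 2, propagated through the layered structure of $V'$ (each layer of which is obtained from the previous output boundary by attaching the closure of a single top-dimensional element), produces a decomposition of $V'$ exhibiting $\imath(V)$ as a submolecule.

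The main obstacle is in $(a) \Rightarrow (b)$: verifying that the refinement step actually produces a valid $(n-1)$-layering of $U$, rather than merely a well-defined ordering. This amounts to checking the submolecule conditions of Proposition \ref{prop:layering_from_ordering} for each newly inserted layer, which requires a careful analysis of how the boundaries of successive layers in the $V$-layering interact with the input boundary of the layer being refined; in particular, one must verify that the condition $\bound{n-1}{-}\order{i}{x} \submol \bound{n-1}{-}\order{i}{U}$ is inherited from the corresponding condition inside $V$ together with the submolecule inclusion $\bound{}{-}\compos{V} \submol \bound{}{-}\order{q}{\tilde U}$. The parallel subtlety in $(b) \Rightarrow (a)$ is tracking how the extra elements in $\bound{n-1}{-}V' \setminus \imath(\bound{n-1}{-}V)$ fit into a submolecule decomposition witnessing $\imath(V) \submol V'$, rather than being an obstruction to it.
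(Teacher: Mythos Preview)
Your proposal is correct and follows essentially the same approach as the paper: for $(a)\Rightarrow(b)$, substitute $\compos{V}$ for $\imath(V)$, layer the result, then locally reverse the substitution in the distinguished layer and splice in the given layering of $V$; for $(b)\Rightarrow(a)$, paste the consecutive layers into a submolecule $W\submol U$ and show $\imath(V)\submol W$ using condition~2. The paper resolves the obstacles you flag via Lemma~\ref{lem:boundary_move} (giving $\bound{}{-}\compos{V}\submol\bound{}{-}\order{q}{\tilde U}$ directly, rather than from $\compos{V}\submol\order{q}{\tilde U}$ as you suggest, which would not suffice on its own), Lemma~\ref{lem:pasting_after_substitution} with Lemma~\ref{lem:revert_substitution} (for the refinement step), and, for $(b)\Rightarrow(a)$, the single observation $W=\imath(V)\cup\bound{}{-}W$ followed by one application of Lemma~\ref{lem:submolecule_rewrite}, which is cleaner than the layer-by-layer propagation you describe.
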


\begin{thm} \label{thm:rewritable_submolecule_criterion}
Let $\imath\colon V \incl U$ be an inclusion of regular molecules such that $n \eqdef \dim{U} = \dim{V}$ and $V$ is round, $m \eqdef \size{\grade{n}{U}}$, $p \eqdef \size{\grade{n}{V}}$.
The following are equivalent:
\begin{enumerate}[label=(\alph*)]
    \item $\imath$ is a submolecule inclusion;
    \item there is a topological sort $((\order{i}{x})_{i=1}^{q-1}, x_V, (\order{i}{x})_{i=q+1}^{m-p+1})$ of $\flow{n-1}{U}/\flow{n-1}{V}$ such that, letting
    \begin{align*}
    \order{0}{U} & \eqdef \bound{}{-}U, \\
    \order{q}{U} & \eqdef \bound{n-1}{+}\order{q-1}{U} \cup \imath(V), \\
    \order{i}{U} & \eqdef \bound{n-1}{+}\order{i-1}{U} \cup \clset{{\order{i}{x}}} \quad \text{for $i \neq q$},
    \end{align*}
    we have $\imath(\bound{}{-}V) \submol \bound{}{-}\order{q}{U}$ and $\bound{}{-}\order{i}{x} \submol \bound{}{-}\order{i}{U}$ for all $i \neq q$.
\end{enumerate}
\end{thm}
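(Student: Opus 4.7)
The plan is to use Lemma \ref{lem:round_submolecules_from_layering} as a bridge between submolecule inclusions and fine $(n-1)$\nbd layerings of $U$ refining a chosen $(n-1)$\nbd layering of $V$, and to translate between such fine layerings and topological sorts of $\flow{n-1}{U}/\flow{n-1}{V}$ via the bijection of Lemma \ref{lem:connected_subgraph_conditions_path_induced}. Since $V$ is round, $\flow{n-1}{V}$ is connected by Proposition \ref{prop:round_molecule_connected_flowgraph}, so that bijection applies whenever $\flow{n-1}{V}$ is path-induced in $\flow{n-1}{U}$. The coarse layering appearing in the statement of the theorem, in which $\imath(V)$ fills a single slot, is related to the fine one by associativity of pasting (Proposition \ref{prop:associativity_of_pasting}): gluing $V$ cell by cell in its intrinsic order on top of $\bound{n-1}{-}\order{q}{U}$ yields exactly $\order{q}{U}$.

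For (a) $\Rightarrow$ (b), I would first apply Proposition \ref{prop:round_submolecule_flow_path_induced} to conclude that $\flow{n-1}{V}$ is path-induced in $\flow{n-1}{U}$. Fixing any $(n-1)$\nbd layering of $V$, available by Corollary \ref{cor:codimension_1_layering}, Lemma \ref{lem:round_submolecules_from_layering} furnishes an $(n-1)$\nbd layering of $U$ whose induced ordering contains $(\imath(\order{j}{y}))_{j=1}^p$ as a consecutive block starting at some position $q$, with $\imath(\bound{}{-}V) \submol \bound{}{-}\order{q}{U}$. Applying the bijection of Lemma \ref{lem:connected_subgraph_conditions_path_induced}, I project the underlying topological sort of $\flow{n-1}{U}$ down to a topological sort of $\flow{n-1}{U}/\flow{n-1}{V}$ of the desired form. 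The boundary conditions $\bound{}{-}\order{i}{x} \submol \bound{}{-}\order{i}{U}$ for $i \neq q$ then follow from Proposition \ref{prop:layering_from_ordering} applied to the fine layering, using that the coarse layer $\order{i}{U}$ coincides with the fine layer for $i < q$ and agrees from the common endpoint $\bound{n-1}{+}\order{q}{U}$ onwards for $i > q$.

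For (b) $\Rightarrow$ (a), the existence of a topological sort of $\flow{n-1}{U}/\flow{n-1}{V}$ certifies via Lemma \ref{lem:connected_subgraph_conditions_path_induced} that $\flow{n-1}{V}$ is path-induced in $\flow{n-1}{U}$. Fixing an $(n-1)$\nbd layering of $V$ with ordering $(\order{j}{y})_{j=1}^p$ and combining it with the given coarse sort through the same bijection, I obtain an $(n-1)$\nbd ordering of $U$ that refines the coarse ordering by expanding $x_V$ into $(\imath(\order{j}{y}))_{j=1}^p$. I would then verify, element by element, the hypothesis of Proposition \ref{prop:layering_from_ordering} for this refined ordering: outside the $V$-block, the coarse and fine recursions agree, so the conditions are handed to me directly by (b); inside the $V$-block, for each $\imath(\order{j}{y})$, the required condition is derived by combining the coarse hypothesis $\imath(\bound{}{-}V) \submol \bound{}{-}\order{q}{U}$ with the intrinsic condition $\bound{}{-}\order{j}{y} \submol \bound{}{-}V_j$ from $V$'s own layering, using closure of submolecule inclusions under composition and preservation under images of inclusions. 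This produces a fine $(n-1)$\nbd layering of $U$ in which $\imath(V)$'s cells are consecutive, and Lemma \ref{lem:round_submolecules_from_layering} concludes.

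The main obstacle is the cell-by-cell bookkeeping inside the $V$-block for (b) $\Rightarrow$ (a): the hypothesis records only that $V$ fits as a single block in the coarse slot, yet Proposition \ref{prop:layering_from_ordering} must be verified for each cell of $V$ once the block has been expanded. Concretely, one has to identify the intermediate input boundary in $U$, built by gluing the first $j-1$ cells of $V$ on top of $\bound{n-1}{-}\order{q}{U}$, with the pushout of $\bound{n-1}{-}\order{q}{U}$ and the intrinsic intermediate input boundary of $V$ along $\imath(\bound{n-1}{-}V)$. This is a pushout-lemma argument exploiting that $\imath$ preserves boundaries (Lemma \ref{lem:properties_of_inclusions}) and that submolecule inclusions glue along shared input and output boundaries; the remainder of the proof is a routine combination of the cited lemmas.
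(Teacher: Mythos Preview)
Your proposal is correct and, for the implication (b) $\Rightarrow$ (a), essentially matches the paper's argument: expand $x_V$ into an $(n-1)$\nbd ordering of $V$, verify the criterion of Proposition \ref{prop:layering_from_ordering} layer by layer, and conclude by Lemma \ref{lem:round_submolecules_from_layering}. The ``pushout-lemma argument'' you describe for the bookkeeping inside the $V$\nbd block is precisely an inductive application of Lemma \ref{lem:submolecule_rewrite}, which propagates the submolecule relation $\imath(\bound{}{-}\order{j}{V}) \submol \bound{}{-}\order{q+j-1}{U}_{\text{fine}}$ from one fine layer to the next.

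For (a) $\Rightarrow$ (b), your route differs from the paper's. You start from Lemma \ref{lem:round_submolecules_from_layering} to obtain a fine $(n-1)$\nbd layering of $U$ with the $V$\nbd cells consecutive, then project to the quotient via Lemma \ref{lem:connected_subgraph_conditions_path_induced}, and finally identify the coarse layers $\order{i}{U}$ of the theorem with the fine ones (equal for $i<q$, equal to the pasting of the $V$\nbd block for $i=q$, shifted by $p-1$ for $i>q$). The paper instead passes through the substitution $\tilde{U} \eqdef \subs{U}{\compos{V}}{\imath(V)}$: it takes an $(n-1)$\nbd layering of $\tilde{U}$, reads it directly as a topological sort of $\flow{n-1}{U}/\flow{n-1}{V}$ via Lemma \ref{lem:flow_of_substitution_is_contraction_of_flow}, derives the submolecule conditions from Lemma \ref{lem:boundary_move}, and then reverts the substitution (Lemmas \ref{lem:pasting_after_substitution} and \ref{lem:revert_substitution}) to identify the layers of $\tilde{U}$ with the coarse $\order{i}{U}$. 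Your approach stays entirely inside $U$ and avoids the substitution machinery, at the cost of the explicit fine-versus-coarse identification; the paper's approach trades that identification for the algebra of substitution. Both are sound and of comparable length.
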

\begin{proof}[Sketch of proof]
By Proposition \ref{prop:round_submolecule_substitution}, if $\imath$ is a submolecule inclusion then $\subs{U}{\compos{V}}{\imath(V)}$ is a regular molecule, so by Corollary \ref{cor:codimension_1_layering} it admits an $(n-1)$\nbd layering inducing an $(n-1)$\nbd ordering.
By Lemma \ref{lem:flow_of_substitution_is_contraction_of_flow} this can be identified with a topological sort of $\flow{n-1}{U}/\flow{n-1}{V}$.
The properties of the $\order{i}{U}$ follow from the properties of the layering of $\subs{U}{\compos{V}}{\imath(V)}$, as established by Proposition \ref{prop:layering_from_ordering}, after we ``reverse'' the substitution of $\compos{V}$ for $\imath(V)$, producing a regular molecule isomorphic to $U$.

Conversely, if $(\order{i}{y})_{i=1}^p$ is an $(n-1)$\nbd ordering induced by an $(n-1)$\nbd layering of $V$, then $((\order{i}{x})_{i=1}^{q-1}, (\order{i}{y})_{i=1}^p, (\order{i}{x})_{i=q+1}^{m-p+1})$ is an $(n-1)$\nbd ordering of $U$, which by the criterion of Proposition \ref{prop:layering_from_ordering} is induced by an $(n-1)$\nbd layering. 
We conclude by Lemma \ref{lem:round_submolecules_from_layering}.
\end{proof}

\begin{dfn}[Rewritable submolecule decision algorithm]
The procedure takes as input an inclusion $V \subseteq U$ of regular molecules such that $n \eqdef \dim{U} = \dim{V}$ and $V$ is round, and it returns whether $V \submol U$.
We let $m \eqdef \size{\grade{n}{U}}$ and $p \eqdef \size{\grade{n}{V}}$.

We construct the graph $\mathscr{G} \eqdef \flow{n-1}{U}/\flow{n-1}{V}$.
Then we start a loop.
At each iteration, we search for a new topological sort of $\mathscr{G}$.
If we cannot find one, we return \emph{false}.
Else, let $((\order{i}{x})_{i=1}^{q-1}, x_V, (\order{i}{x})_{i=q+1}^{m-p+1})$ be the new topological sort, and let $(\order{i}{U})_{i=1}^{m-p+1}$ be as in Theorem \ref{thm:rewritable_submolecule_criterion}.

For each $i \in \set{1, \ldots, m - p + 1}$, we start a recursive call to the algorithm to decide whether $\bound{}{-}\order{i}{x} \submol \bound{}{-}\order{i}{U}$ if $i \neq q$, and $\bound{}{-}V \submol \bound{}{-}\order{q}{U}$ if $i = q$.
If this returns \emph{false}, we break the iteration on $i$ and iterate the main loop.
If this returns \emph{true}, we iterate on $i$.
At the end of the iteration on $i$, we return \emph{true}.
\end{dfn}

\begin{thm} \label{prop:correctness_decision}
The rewritable submolecule decision algorithm is correct: it always terminates, and returns \emph{true} if and only if $V \submol U$.
\end{thm}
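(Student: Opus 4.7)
The plan is to prove the statement by strong induction on $n \eqdef \dim{U}$, establishing termination and correctness simultaneously. The key tool is Theorem \ref{thm:rewritable_submolecule_criterion}, whose condition (b) is exactly what the algorithm is designed to check: the outer loop enumerates topological sorts of $\mathscr{G} \eqdef \flow{n-1}{U}/\flow{n-1}{V}$, and the inner recursive calls verify the submolecule conditions at the input boundaries of each layer. For the base case $n = 0$, both $U$ and $V$ reduce to the point, $\mathscr{G}$ has a unique topological sort of length $1$, and the single recursive call reduces to $\varnothing \submol \varnothing$, which is handled by convention; the answer coincides with the trivial fact that $V \submol U$.

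For the inductive step, assume the algorithm is correct and terminating on all inputs of dimension strictly less than $n$. Since $\mathscr{G}$ is finite, its topological sorts can be enumerated in finite time without repetition, so the outer loop either succeeds or exhausts them and returns \emph{false}. Within each iteration, at most $m - p + 1$ recursive calls are made, each on a pair of dimension $n - 1$, and all of these terminate by the inductive hypothesis. For correctness, Theorem \ref{thm:rewritable_submolecule_criterion} states that $V \submol U$ if and only if some topological sort of $\mathscr{G}$ satisfies the submolecule conditions on the $\bound{}{-}\order{i}{U}$. Since the outer loop exhausts every topological sort and each recursive call returns the correct verdict by the inductive hypothesis, the algorithm returns \emph{true} if and only if at least one topological sort satisfies the criterion, which is equivalent to $V \submol U$.

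The main delicacy is verifying that each recursive call is issued on a well-formed input, namely a pair of regular molecules of equal dimension with the smaller one round. Roundness of $\bound{}{-}\order{i}{x}$ follows from $\clset{{\order{i}{x}}}$ being an atom (Proposition \ref{prop:atom_properties}(2)--(3)) combined with Proposition \ref{prop:molecule_properties}(5), and roundness of $\bound{}{-}V$ is inherited from $V$. More subtly, $\bound{}{-}\order{i}{U}$ is only guaranteed to be a regular molecule provided the preceding submolecule conditions hold: this is granted inductively on $i$ by the \emph{moreover} clause of Proposition \ref{prop:layering_from_ordering}. The sequential break-on-\emph{false} structure of the inner loop ensures that, when the $i$-th recursive call is made, all earlier submolecule checks have already returned \emph{true}, so that clause applies and $\bound{}{-}\order{i}{U}$ is indeed a regular molecule at the moment it is passed to the algorithm.
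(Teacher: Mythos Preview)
Your argument is correct and follows the same inductive strategy as the paper: induction on $n$, termination from finiteness of topological sorts, correctness from Theorem~\ref{thm:rewritable_submolecule_criterion}, and an inner induction on $i$ to ensure each recursive call receives well-formed input.

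One citation needs tightening. You invoke the \emph{moreover} clause of Proposition~\ref{prop:layering_from_ordering} to guarantee that $\bound{}{-}\order{i}{U}$ is a regular molecule. That clause, however, is stated for a $k$\nbd ordering of $U$ (a topological sort of $\maxflow{k}{U}$) with layers of the form $\bound{k}{+}\order{i-1}{U} \cup \clset{\order{i}{x}}$, whereas here the topological sort is of the quotient $\flow{n-1}{U}/\flow{n-1}{V}$ and the $q$\nbd th layer is $\bound{n-1}{+}\order{q-1}{U} \cup \imath(V)$, not the closure of a single element. The paper handles this uniformly by appealing to Proposition~\ref{prop:round_submolecule_substitution}: since both $\clset{\order{i}{x}}$ (an atom) and $V$ are round of dimension $n$, knowing that the input boundary sits as a rewritable submolecule of $\bound{}{-}\order{i}{U}$ yields that $\order{i}{U}$ and hence $\bound{}{+}\order{i}{U} = \bound{}{-}\order{i+1}{U}$ are regular molecules. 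Swapping your reference to Proposition~\ref{prop:layering_from_ordering} for Proposition~\ref{prop:round_submolecule_substitution} (or the underlying Lemma~\ref{lem:submolecule_rewrite}) closes this small gap; the rest of your proof stands.
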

\begin{proof}
We proceed by induction on the dimension $n$ of $U$ and $V$.
If $n = 0$, this is straightforward, so let $n > 0$.

The number of iterations of the main loop is bounded by the number of topological sorts of $\flow{n-1}{U}/\flow{n-1}{V}$, which is finite.
Consider one such iteration, producing a topological sort $((\order{i}{x})_{i=1}^{q-1}, x_V, (\order{i}{x})_{i=q+1}^{m-p+1})$.
Let us write $\order{q}{V} \eqdef V$ and $\order{i}{V} \eqdef \clset{{\order{i}{x}}}$ for $i \neq q$.
For all $i \in \set{1, \ldots, m-p+1}$, we have a call to the decision algorithm with input $\bound{}{-}\order{i}{V} \subseteq \bound{}{-}\order{i}{U}$, assuming that the calls for $j < i$ all returned \emph{true}.

Now, $\bound{}{-}\order{i}{V}$ is round by Proposition \ref{prop:molecule_properties} and Proposition \ref{prop:atom_properties}.
Moreover, $\bound{}{-}\order{1}{U} = \bound{}{-}U$, which is a regular molecule.
For $i > 1$, assuming that $\bound{}{-}\order{i-1}{V} \submol \bound{}{-}\order{i-1}{U}$, we may apply Proposition \ref{prop:round_submolecule_substitution} to derive that $\order{i-1}{U}$ and $\bound{}{+}\order{i-1}{U} = \bound{}{-}\order{i}{U}$ are regular molecules.
Thus
\begin{enumerate}
    \item the input of the first call is well-formed,
    \item for $i > 1$, assuming that the $(i-1)$-th call correctly returned \emph{true}, the input of the $i$-th call is well-formed.
\end{enumerate}
Since all of these are in dimension $(n-1)$, by the inductive hypothesis, each call terminates returning the correct answer.
By Theorem \ref{thm:rewritable_submolecule_criterion}, this proves both correctness and termination in dimension $n$.
\end{proof}

\begin{thm} \label{thm:rewritable_complexity}
The rewritable submolecule problem in dimension $n$ can be solved in time 
\begin{equation*}
    O\left(\prod_{k \leq n} \size{\grade{k}{U}}!\size{\grade{k}{U}}\right).
\end{equation*}
\end{thm}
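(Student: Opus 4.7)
The plan is to proceed by induction on $n \eqdef \dim U$, mirroring the recursive structure of the decision algorithm. For the base case $n = 0$, both $U$ and $V$ consist of a single point, $\size{\grade{0}{U}} = 1$, and the algorithm terminates in constant time, which trivially meets the bound.

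For the inductive step, I would analyse one top-level call separately from its recursive subcalls. Constructing $\mathscr{G} \eqdef \flow{n-1}{U}/\flow{n-1}{V}$ takes polynomial time in $\size{U}$ by a direct traversal of $\hasse{U}$ in dimensions $n$ and $n-1$. The outer loop then enumerates the topological sorts of $\mathscr{G}$; since $\mathscr{G}$ has at most $\size{\grade{n}{U}}$ vertices, there are at most $\size{\grade{n}{U}}!$ such sorts, and a standard enumeration procedure produces the next one with polynomial delay, an overhead that will be absorbed into the main factorial factor.

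For each sort, the inner loop makes $m - p + 1 \leq \size{\grade{n}{U}}$ recursive calls, each on a pair of $(n-1)$\nbd dimensional regular molecules of the form $(\bound{}{-}\order{i}{V}, \bound{}{-}\order{i}{U})$, or $(\bound{}{-}V, \bound{}{-}\order{q}{U})$ when $i = q$. The key observation for the induction is that each $\order{i}{U}$ is built as a union of closed subsets of $U$, so $\order{i}{U} \subseteq U$, and hence $\size{\grade{k}{\bound{}{-}\order{i}{U}}} \leq \size{\grade{k}{U}}$ for every $k \leq n-1$. This lets me invoke the inductive hypothesis with the very same element counts, yielding a cost of $O(\prod_{k \leq n-1} \size{\grade{k}{U}}!\size{\grade{k}{U}})$ per recursive call.

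Multiplying the three contributions gives
\begin{equation*}
O\!\left(\size{\grade{n}{U}}!\,\size{\grade{n}{U}} \cdot \prod_{k \leq n-1} \size{\grade{k}{U}}!\size{\grade{k}{U}}\right) = O\!\left(\prod_{k \leq n} \size{\grade{k}{U}}!\size{\grade{k}{U}}\right),
\end{equation*}
as claimed. The hard part, and the source of the superpolynomial factor, is the enumeration of topological sorts of $\mathscr{G}$ in the outer loop: this is intrinsic to the strategy of Theorem \ref{thm:rewritable_submolecule_criterion}, and is precisely what the acyclicity conditions in the following sections are designed to circumvent.
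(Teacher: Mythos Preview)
Your proof is correct and follows essentially the same route as the paper: induction on $n$, bounding the number of topological sorts of $\flow{n-1}{U}/\flow{n-1}{V}$ by $\size{\grade{n}{U}}!$, bounding the recursive calls per sort by $\size{\grade{n}{U}}$, and applying the inductive hypothesis to the $(n-1)$\nbd dimensional subproblems using that each $\bound{}{-}\order{i}{U}$ is contained in $U$. The paper's version is slightly terser and makes the absorption of the graph-construction and enumeration overhead explicit via the bound $\size{\edges{n}{U}} \leq \size{\grade{n}{U}}\,\size{\grade{n-1}{U}}$, but there is no substantive difference.
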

\begin{proof}
For $n = 0$, this is obvious, so let $n > 0$.
The number of iterations of the main loop is bounded above by the number of topological sorts of $\mathscr{G} \eqdef \flow{n-1}{U}/\flow{n-1}{V}$.
This reaches its maximum when $\mathscr{G}$ is a discrete graph, in which case the number is $(\size{\grade{n}{U}} - \size{\grade{n}{V}} + 1)!$, tightly bounded above by $\size{\grade{n}{U}}!$.

At each iteration of the main loop, we have at most $\size{\grade{n}{U}} - \size{\grade{n}{V}} + 1$ calls to the algorithm on regular molecules of dimension $n-1$ contained in $U$.
By the inductive hypothesis, these take time $O(\prod_{k \leq n-1} \size{\grade{k}{U}}!\size{\grade{k}{U}})$.

All other operations have lower complexity: both finding topological sorts and computing the boundaries of the $\order{i}{U}$ take linear time in $\size{\edges{n}{U}}$, but this can be bounded above by $\size{\grade{n}{U}}\,\size{\grade{n-1}{U}}$, and we conclude.
\end{proof}

\begin{dfn}
The superpolynomial upper bound on the rewritable submolecule problem leaves it inconclusive whether subdiagram matching admits a polynomial-time algorithm in arbitrary dimension.
Nevertheless, we are at least able to prove that the problem is in $\fun{NP}$.
\end{dfn}

\begin{prop} \label{prop:subdiagram_matching_np}
For all $n \in \mathbb{N}$, the $n$\nbd dimensional subdiagram matching problem is in $\fun{NP}$.
\end{prop}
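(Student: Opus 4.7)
The plan is to proceed by induction on the dimension $n$, leveraging Theorem \ref{thm:rewritable_submolecule_criterion} to give a polynomial-size nondeterministic witness whose verification reduces to an instance of lower dimension. For $n = 0$ the problem is trivial: a $0$\nbd dimensional diagram is a set of labelled points, and a match is an injection of labelled points, which is simultaneously a polynomial-size witness and polynomial-time verifiable.

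For $n \geq 1$, given an instance $t\colon U \to \mathbb{V}$ and $s\colon V \to \mathbb{V}$, I would take the certificate to consist of: (i) the candidate inclusion $\imath\colon V \incl U$; (ii) a topological sort of $\flow{n-1}{U}/\flow{n-1}{V}$ together with the position $q$ at which the contracted vertex $x_V$ appears; and (iii) for each $i \in \set{1, \ldots, m-p+1}$, a certificate for the corresponding $(n-1)$\nbd dimensional subproblem prescribed by Theorem \ref{thm:rewritable_submolecule_criterion}, namely $\bound{}{-}\order{i}{x} \submol \bound{}{-}\order{i}{U}$ for $i \neq q$, and $\imath(\bound{}{-}V) \submol \bound{}{-}\order{q}{U}$ for $i = q$.

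The verifier first checks that $\imath$ is an inclusion and that $s = \imath;t$, then constructs $\flow{n-1}{U}/\flow{n-1}{V}$ and confirms the given sequence is a topological sort, then builds the $\order{i}{U}$ as in the statement of Theorem \ref{thm:rewritable_submolecule_criterion}, and finally invokes the inductive verifier on each of the $m - p + 1$ sub-instances. Theorem \ref{thm:rewritable_submolecule_criterion} guarantees acceptance precisely when $\imath$ is a submolecule inclusion, while the ``moreover'' clause of Proposition \ref{prop:layering_from_ordering} ensures that, provided the preceding sub-certificates verify, the intermediate $\bound{}{-}\order{i}{U}$ are themselves regular molecules, so the recursive calls are always on well-formed inputs.

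The main subtlety — and the point that must be checked carefully — is bounding the total certificate size and verification time. At dimension $n$ the recursion branches into at most $\size{\grade{n}{U}}$ subproblems, each supported on a regular molecule of size at most $\size{U}$; unrolling the recursion yields a certificate of size $\size{U}^{O(n)}$ with verification of the same order. For each fixed $n$ this is polynomial in $\size{U}$, which is exactly what is needed for membership in $\fun{NP}$.
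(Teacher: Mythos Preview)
Your proof is correct and follows essentially the same inductive scheme as the paper: reduce to the rewritable submolecule problem, and use Theorem \ref{thm:rewritable_submolecule_criterion} to give a certificate consisting of a topological sort of $\flow{n-1}{U}/\flow{n-1}{V}$ together with recursive certificates for the $(n-1)$\nbd dimensional subproblems. Your explicit $\size{U}^{O(n)}$ size bound is a useful addition that the paper leaves implicit. One small imprecision: the ``moreover'' clause of Proposition \ref{prop:layering_from_ordering} does not literally apply at the step $i = q$, since there $\order{q}{U} = \bound{n-1}{+}\order{q-1}{U} \cup \imath(V)$ rather than the closure of a single element; the well-formedness of $\order{q}{U}$ and $\bound{}{-}\order{q+1}{U}$ follows instead from Proposition \ref{prop:round_submolecule_substitution} (as in the proof of Theorem \ref{prop:correctness_decision}).
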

\begin{proof}
It suffices to prove by induction on $n$ that the rewritable submolecule problem in dimension $n$ is in $\fun{NP}$.
When $n = 0$, the problem is trivial.
In dimension $n > 0$, a polynomial-size certificate that $V \submol U$ is given by
\begin{enumerate}
    \item a topological sort $((\order{i}{x})_{i=1}^{q-1}, x_V, (\order{i}{x})_{i=q+1}^{m-p+1})$ of the graph $\flow{n-1}{U}/\flow{n-1}{V}$, and
    \item polynomial-size certificates that $\bound{}{-}V \submol \bound{}{-}\order{q}{U}$ and $\bound{}{-}\order{i}{x} \submol \bound{}{-}\order{i}{U}$ for all $i \neq q$,
\end{enumerate}
with the notations of Theorem \ref{thm:rewritable_submolecule_criterion}.
By the inductive hypothesis this exists and is verifiable in polynomial time.
\end{proof}

\begin{dfn}
We conclude this section by considering some improvements on the subdiagram matching algorithm conditional on \emph{acyclicity} properties.
\end{dfn}

\begin{dfn}[Frame-acyclic molecule]
Let $U$ be a regular molecule.
We say that $U$ is \emph{frame-acyclic} if for all submolecules $V \submol U$, if $r \eqdef \frdim{V}$, then $\maxflow{r}{V}$ is acyclic.
\end{dfn}

\begin{thm} \label{thm:frame_acyclicity_equivalent_conditions}
Let $U$ be a regular molecule.
The following are equivalent:
\begin{enumerate}[label=(\alph*)]
    \item $U$ is frame-acyclic; \label{cond:frame_acyclic}
    \item for all $V \submol U$ and all $\frdim{V} \leq k < \dim{V}$, $V$ admits a $k$\nbd layering; \label{cond:frdim_layerings}
    \item for all $V \submol U$ and all $\frdim{V} \leq k < \dim{V}$, the sets $\layerings{k}{V}$ and $\orderings{k}{V}$ are non-empty and equinumerous. \label{cond:frdim_bijection}
\end{enumerate}
\end{thm}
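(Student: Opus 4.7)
The plan is to establish the three-way equivalence via the chain (c) $\Rightarrow$ (a) and (b) $\Rightarrow$ (c), both of which I expect to be short, together with the main implication (a) $\Rightarrow$ (b), which I would prove by induction on $\dim V$.

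For (c) $\Rightarrow$ (a), given any $V \submol U$ with $r \eqdef \frdim V$, if $r < \dim V$ the non-emptiness clause of (c) at $k = r$ supplies an $r$-layering of $V$, so Proposition \ref{prop:if_layering_then_ordering} gives acyclicity of $\maxflow{r}{V}$; the edge case $r = \dim V$, where $V$ is an atom and $\maxflow{r}{V}$ is empty, is trivial. For (b) $\Rightarrow$ (c), I would use that $\lto{k}{V}\colon \layerings{k}{V} \incl \orderings{k}{V}$ from Proposition \ref{prop:layerings_induce_orderings} is injective; combined with non-emptiness under (b), both sets are non-empty, and equinumerosity reduces to surjectivity. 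Given an arbitrary $k$-ordering $(\order{i}{x})_{i=1}^m$ of $V$, Proposition \ref{prop:layering_from_ordering} upgrades it to a layering exactly when the submolecule inclusions $\bound{k}{-}\order{i}{x} \submol \bound{k}{-}\order{i}{V}$ hold at each step. These live in dimension $k < \dim V$, so applying the inductive hypothesis on $\dim V$ to the partial molecules $\order{i}{V}$, which are regular and sit inside $V \submol U$ by the ``moreover'' clause of Proposition \ref{prop:layering_from_ordering} and so inherit frame-acyclicity, should yield the required inclusions.

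For the substantive direction (a) $\Rightarrow$ (b), I proceed by induction on $\dim V$, the case $\dim V = 0$ being immediate. In the inductive step, Lemma \ref{lem:layering_basic_properties}(2) reduces the task to exhibiting a single $\frdim V$-layering of each submolecule $V \submol U$. Frame-acyclicity of $V$ supplies a $\frdim V$-ordering by definition, so applying Proposition \ref{prop:layering_from_ordering} reduces the existence of the desired layering to a chain of submolecule inclusions strictly below dimension $\dim V$, where the inductive hypothesis is available to verify them.

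The main obstacle is that not every ordering lifts to a layering, so I cannot simply pick an arbitrary $\frdim V$-ordering and invoke the inductive hypothesis: I must produce one whose submolecule checks will succeed. The expected resolution is to exploit the constructive layering underlying Theorem \ref{thm:molecules_admit_layerings}, which initially yields a layering at some level in $[\frdim V, \lydim V]$, and descend through the layering dimensions from $\lydim V$ down to $\frdim V$, at each step using both frame-acyclicity of the intermediate pieces (which, as submolecules of $U$, inherit the hypothesis) and the inductive hypothesis on smaller dimension to guarantee that the submolecule conditions of Proposition \ref{prop:layering_from_ordering} persist after refining the layering. Once this controlled descent is established, the induction closes and all three implications follow.
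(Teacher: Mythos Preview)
Your (c) $\Rightarrow$ (a) is correct and matches the paper. The substantive gap is in (b) $\Rightarrow$ (c), and the same missing idea undermines the (a) $\Rightarrow$ (b) sketch.

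For (b) $\Rightarrow$ (c) you try to lift an \emph{arbitrary} $k$-ordering directly via Proposition \ref{prop:layering_from_ordering}, arguing that the checks $\bound{k}{-}\order{i}{x} \submol \bound{k}{-}\order{i}{V}$ follow from ``the inductive hypothesis on $\dim V$'' applied to the $\order{i}{V}$. This is circular on two counts. First, the ``moreover'' clause only guarantees that $\order{i}{V}$ is a regular molecule \emph{after} the check $\bound{k}{-}\order{i}{x} \submol \bound{k}{-}\order{i}{V}$ succeeds, which is precisely what you are trying to establish; and even once it is regular, $\order{i}{V}$ is a closed subset of $V$ but not \emph{a priori} a submolecule of $U$, so it does not automatically inherit frame-acyclicity. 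Second, and more fundamentally, neither frame-acyclicity of $\bound{k}{-}\order{i}{V}$ nor the equivalence (a)$\Leftrightarrow$(b)$\Leftrightarrow$(c) in dimensions below $\dim V$ says anything about whether a \emph{particular} inclusion $\bound{k}{-}\order{i}{x} \incl \bound{k}{-}\order{i}{V}$ is a submolecule inclusion: those statements quantify over submolecules, they do not manufacture new ones. You explicitly flag exactly this obstacle in your discussion of (a) $\Rightarrow$ (b), and then commit it in (b) $\Rightarrow$ (c).

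The paper's route is genuinely different. For (b) $\Rightarrow$ (c) it uses an inversion-distance argument (Lemma \ref{lem:frdim_layerings_bijection_with_orderings}): starting from an $r$-ordering already known to arise from a layering, one reaches any other $r$-ordering by a sequence of transpositions of adjacent incomparable pairs $\order{j}{x}, \order{j+1}{x}$; the corresponding two-layer piece $\order{j}{U} \cp{r} \order{j+1}{U}$ then has frame dimension strictly below $r$, so hypothesis (b) supplies a layering of it at that lower level, which via interchange can be re-sliced in the opposite order. This swap mechanism is the key missing idea. For (a) $\Rightarrow$ (b) the paper does descend in layering dimension as you anticipate, but the induction is over proper \emph{submolecules} rather than over $\dim V$ (the intermediate pieces in the descent typically retain the top dimension), and at the leaves of the descent it invokes the swap argument just described (Lemma \ref{lem:frame_acyclic_has_frame_layerings}); your dimension induction cannot stand in for either ingredient.
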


\begin{dfn}[Stably frame-acyclic molecule]
Let $U$ be a regular molecule.
We say that $U$ is \emph{stably frame-acyclic} if for all submolecules $V \submol U$ and all rewritable submolecules $W \submol V$, the regular molecule $\subs{V}{\compos{W}}{W}$ is frame-acyclic.
\end{dfn}

\begin{dfn}
Every stably frame-acyclic regular molecule is frame-acyclic: if we take $V \submol U$ to be an atom, the substitution $\subs{U}{\compos{V}}{V}$ is trivial.
Moreover, every submolecule of a (stably) frame-acyclic regular molecule is (stably) frame-acyclic.

We are not aware of examples of regular molecules that are stably frame-acyclic but not frame-acyclic (as we will see in the next section, any such example is at least 4\nbd dimensional), so we cannot exclude that the two classes coincide, but neither it seems clear that they do.
\end{dfn}

\begin{dfn}
In general, frame-acyclicity seems difficult to check.
However, it is implied by stronger acyclicity conditions that are easier to check.
We do not know any easily verifiable sufficient conditions for stable frame-acyclicity.
\end{dfn}

\begin{dfn}[Acyclic molecule]
Let $U$ be a regular molecule.
We say that $U$ is \emph{acyclic} if $\hasseo{U}$ is acyclic.
\end{dfn}

\begin{dfn}[Dimension-wise acyclic molecule]
Let $U$ be a regular molecule.
We say that $U$ is \emph{dimension-wise acyclic} if, for all $k \in \mathbb{N}$, $\flow{k}{U}$ is acyclic.
\end{dfn}

\begin{lem} \label{lem:acyclicity_implications}
Let $U$ be a regular molecule.
Then
\begin{enumerate}
    \item if $U$ is acyclic, it is dimension-wise acyclic;
    \item if $U$ is dimension-wise acyclic, it is frame-acyclic.
\end{enumerate}
\end{lem}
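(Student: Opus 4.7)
The two parts have quite different flavours. Part~(2) is essentially a statement about induced subgraphs, whereas part~(1) requires us to lift cyclic behaviour between two different graph constructions on $U$.

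For part~(2), I would argue as follows: given $V \submol U$, by Lemma~\ref{lem:flow_under_inclusion} the graph $\flow{k}{V}$ is isomorphic to the induced subgraph of $\flow{k}{U}$ on the vertices in the image of $V$, and induced subgraphs of directed acyclic graphs are themselves acyclic. So dimension-wise acyclicity of $U$ is inherited by every submolecule; in particular, for $r \eqdef \frdim{V}$ we obtain that $\flow{r}{V}$ is acyclic, and $\maxflow{r}{V}$, being an induced subgraph of $\flow{r}{V}$, is acyclic as well.

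For part~(1), I would prove the contrapositive: any cycle in $\flow{k}{U}$ lifts to a closed directed walk in $\hasseo{U}$, which cannot exist in an acyclic graph. It suffices to show that every edge $x \to y$ of $\flow{k}{U}$, witnessed by some $z \in \faces{k}{+}x \cap \faces{k}{-}y$, gives rise to a directed path $x \to \ldots \to z \to \ldots \to y$ in $\hasseo{U}$, built as the concatenation of a \emph{descent} from $x$ to $z$ using edges from cells to their output faces, with an \emph{ascent} from $z$ to $y$ using edges from input faces to cells. Both halves would follow from the following auxiliary claim: \emph{for any round regular molecule $M$ of dimension $n$, any $\alpha \in \set{+, -}$, and any $z \in \faces{k}{\alpha}M$ with $k < n$, there exists a chain $z = z_0 < z_1 < \ldots < z_{n-k}$ in $M$ with $z_{n-k} \in \maxel{M}$ and $z_i \in \faces{}{\alpha}z_{i+1}$ for all $i$.} Applying this claim to $M = \clset{x}$ for $\alpha = +$ (an atom by Proposition~\ref{prop:atom_properties}, hence round) and reversing the chain yields the descent, while applying it to $M = \clset{y}$ for $\alpha = -$ yields the ascent.

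I would prove the chain claim by induction on $n$. The base case $n = 1$ is immediate from Lemma~\ref{lem:round_is_pure}: $z$ has some top-dimensional coface in $M$, and since $\cofaces{}{-\alpha}z \cap M = \varnothing$ it must be an $\alpha$-coface. For the inductive step with $k < n - 1$, I would restrict to $\bound{n-1}{\alpha}M$, which is round of dimension $n - 1$ by Proposition~\ref{prop:molecule_properties}. Globularity gives $z \in \bound{k}{\alpha}M = \bound{k}{\alpha}\bound{n-1}{\alpha}M$, and the inclusion $\bound{n-1}{\alpha}M \subseteq M$ forces $z \in \faces{k}{\alpha}\bound{n-1}{\alpha}M$. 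The inductive hypothesis then yields a chain up to some $(n-1)$-dimensional $z^* \in \faces{}{\alpha}M$, which can be extended by a single $n$-dimensional $\alpha$-coface in $M$ (existing by purity and necessarily $\alpha$-oriented by definition of $\faces{}{\alpha}M$); the case $k = n - 1$ is a one-step version of the same argument. The main obstacle is precisely the chain claim: naively trying to ``walk up $\alpha$-cofaces'' from $z$ fails because intermediate elements in $M$ may have mixed-orientation cofaces. Restricting to the round regular molecule $\bound{n-1}{\alpha}M$ forces the search onto the correct side of the boundary, where the output structure is coherent enough to guarantee each step can be taken.
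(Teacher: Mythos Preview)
Your proof is correct and follows the same overall strategy as the paper. Part~(2) is essentially identical: the paper also observes that $\flow{r}{V}$ and $\maxflow{r}{V}$ are induced subgraphs of the acyclic $\flow{r}{U}$.

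For part~(1), both you and the paper argue by contrapositive, lifting each edge of $\flow{k}{U}$ to a directed path in $\hasseo{U}$ and concatenating around a hypothetical cycle. The only difference is in how the path-lifting step is justified: the paper simply cites \cite[Lemma~1.37]{hadzihasanovic2020diagrammatic} for the existence of paths from $x_{i-1}$ to $y_i$ and from $y_i$ to $x_i$ in $\hasseo{U}$, whereas you supply a self-contained proof via your ``chain claim'' for round regular molecules, using globularity to restrict to $\bound{n-1}{\alpha}M$ and induct on dimension. Your argument is more detailed but proves exactly the content of the cited lemma; neither approach has an advantage beyond the trade-off between self-containment and brevity.
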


\begin{exm}
Both implications are strict.
The 3\nbd dimensional atom
\[\begin{tikzcd}[sep=small]
	{{\scriptstyle 0}\;\bullet} &&&& {{\scriptstyle 2}\;\bullet} && {{\scriptstyle 0}\;\bullet} &&&& {{\scriptstyle 2}\;\bullet} \\
	&& {{\scriptstyle 1}\;\bullet} &&&&&& {{\scriptstyle 1}\;\bullet}
	\arrow[""{name=0, anchor=center, inner sep=0}, "3", curve={height=-18pt}, from=1-1, to=1-5]
	\arrow[""{name=1, anchor=center, inner sep=0}, "3", curve={height=-18pt}, from=1-7, to=1-11]
	\arrow[""{name=2, anchor=center, inner sep=0}, "4", curve={height=-12pt}, from=1-7, to=2-9]
	\arrow[""{name=3, anchor=center, inner sep=0}, "2", curve={height=-12pt}, from=2-3, to=1-5]
	\arrow["0"', shorten <=5pt, shorten >=5pt, Rightarrow, from=1-5, to=1-7]
	\arrow["0"', curve={height=12pt}, from=1-1, to=2-3]
	\arrow[""{name=4, anchor=center, inner sep=0}, "1"', curve={height=12pt}, from=2-3, to=1-5]
	\arrow[""{name=5, anchor=center, inner sep=0}, "0"', curve={height=12pt}, from=1-7, to=2-9]
	\arrow["1"', curve={height=12pt}, from=2-9, to=1-11]
	\arrow["1"', shorten <=3pt, shorten >=3pt, Rightarrow, from=4, to=3]
	\arrow["2", shorten <=3pt, shorten >=3pt, Rightarrow, from=5, to=2]
	\arrow["3"', curve={height=6pt}, shorten >=6pt, Rightarrow, from=2-9, to=1]
	\arrow["0", curve={height=-6pt}, shorten >=6pt, Rightarrow, from=2-3, to=0]
\end{tikzcd}\]
(based on \cite[Fig.\ 2]{steiner1993algebra}) is not acyclic, since its oriented Hasse diagram contains the cycle
\begin{equation*}
    (0, 1) \to (1, 1) \to (2, 1) \to (3, 0) \to (2, 2) \to (1, 4) \to (0, 1),
\end{equation*}
but it is dimension-wise acyclic.
The 3\nbd dimensional atom
\[\begin{tikzcd}[sep=tiny]
	&& {{\scriptstyle 3}\;\bullet} &&&& {{\scriptstyle 3}\;\bullet} \\
	{{\scriptstyle 0}\;\bullet} &&& {{\scriptstyle 2}\;\bullet} && {{\scriptstyle 0}\;\bullet} &&& {{\scriptstyle 2}\;\bullet} \\
	& {{\scriptstyle 1}\;\bullet} &&&&&& {{\scriptstyle 1}\;\bullet}
	\arrow["0"', Rightarrow, from=2-4, to=2-6]
	\arrow["0"', curve={height=6pt}, from=2-1, to=3-2]
	\arrow[""{name=0, anchor=center, inner sep=0}, "1"', curve={height=12pt}, from=3-2, to=2-4]
	\arrow[""{name=1, anchor=center, inner sep=0}, "0"', curve={height=12pt}, from=2-6, to=3-8]
	\arrow["1"', curve={height=6pt}, from=3-8, to=2-9]
	\arrow[""{name=2, anchor=center, inner sep=0}, "4", curve={height=-12pt}, from=1-7, to=2-9]
	\arrow["3", curve={height=-6pt}, from=2-6, to=1-7]
	\arrow[""{name=3, anchor=center, inner sep=0}, "3", curve={height=-12pt}, from=2-1, to=1-3]
	\arrow["4", curve={height=-6pt}, from=1-3, to=2-4]
	\arrow["2", from=3-2, to=1-3]
	\arrow["5"', from=1-7, to=3-8]
	\arrow["0"', curve={height=-6pt}, shorten >=7pt, Rightarrow, from=3-2, to=3]
	\arrow["1", curve={height=6pt}, shorten <=7pt, Rightarrow, from=0, to=1-3]
	\arrow["2"', curve={height=-6pt}, shorten <=7pt, Rightarrow, from=1, to=1-7]
	\arrow["3", curve={height=6pt}, shorten >=7pt, Rightarrow, from=3-8, to=2]
\end{tikzcd}\]
(based on \cite[Fig.\ 4]{steiner1993algebra}) is not dimension-wise acyclic, since its 0\nbd flow graph contains the cycle $(1, 2) \to (1, 5) \to (1, 2)$, but it is frame-acyclic by Theorem \ref{thm:dim3_frame_acyclic}.
\end{exm}

\begin{prop} \label{prop:complexity_frame_acyclic}
If $U$ is guaranteed to be frame-acyclic, the rewritable submolecule problem in dimension $n$ can be solved in time
\begin{equation*}
    O\left(\prod_{k \leq n} \size{\grade{k}{U}}!\right).
\end{equation*}
\end{prop}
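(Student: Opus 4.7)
The plan is to induct on $n$ and refine the per-iteration analysis in the proof of Theorem \ref{thm:rewritable_complexity}, showing that frame-acyclicity of $U$ reduces the number of recursive calls per iteration of the main loop from at most $\size{\grade{n}{U}} - \size{\grade{n}{V}} + 1$ down to a single call. With the base case $n = 0$ trivial, for $n > 0$ the outer loop still ranges over at most $\size{\grade{n}{U}}!$ topological sorts of $\mathscr{G} \eqdef \flow{n-1}{U}/\flow{n-1}{V}$, so the savings must come from within each iteration.

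The core claim is that, under frame-acyclicity of $U$, the side conditions $\bound{}{-}\order{i}{x} \submol \bound{}{-}\order{i}{U}$ for $i \neq q$ in Theorem \ref{thm:rewritable_submolecule_criterion} hold automatically, so that only the single condition $\bound{}{-}V \submol \bound{}{-}\order{q}{U}$ needs to be tested recursively. I would establish this by a lifting argument: the existence of the current topological sort of $\mathscr{G}$ forces $\mathscr{G}$ to be acyclic, which, combined with the acyclicity of $\flow{n-1}{U}$ from Corollary \ref{cor:flow_acyclic_in_codimension_1} and the connectedness of $\flow{n-1}{V}$ from Proposition \ref{prop:round_molecule_connected_flowgraph}, lets Lemma \ref{lem:connected_subgraph_conditions_path_induced} conclude that $\flow{n-1}{V}$ is path-induced in $\flow{n-1}{U}$ and that the given topological sort of $\mathscr{G}$, together with any topological sort of $\flow{n-1}{V}$, lifts to a topological sort of $\flow{n-1}{U}$ with the vertices of $V$ consecutive. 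Since $\frdim{U} \leq n - 1$, frame-acyclicity of $U$ and Theorem \ref{thm:frame_acyclicity_equivalent_conditions} promote this lifted ordering to an $(n-1)$\nbd layering of $U$, and Proposition \ref{prop:layering_from_ordering} yields the submolecule conditions at every position; a reassociation via Proposition \ref{prop:associativity_of_pasting} then identifies the $V$\nbd block of this layering with the algorithm's collapsed $\order{q}{U}$, transferring the submolecule conditions at positions outside the block to the desired $\bound{}{-}\order{i}{x} \submol \bound{}{-}\order{i}{U}$ for $i \neq q$.

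For the complexity accounting, the single recursive call per iteration has input $\bound{}{-}\order{q}{U} = \bound{}{+}\order{q-1}{U}$, which is a submolecule of $U$ by Lemma \ref{lem:submolecule_properties} applied within the valid layering produced by the claim, hence frame-acyclic by the hereditary property noted after Theorem \ref{thm:frame_acyclicity_equivalent_conditions}. The inductive hypothesis then bounds its cost by $O\bigl(\prod_{k \leq n-1} \size{\grade{k}{U}}!\bigr)$, and multiplying by the $\size{\grade{n}{U}}!$ outer iterations and absorbing polynomial overhead for constructing $\mathscr{G}$, enumerating its topological sorts, and computing boundaries yields the stated bound. The main obstacle is the reassociation step itself: I must carefully verify that the collapsed $\order{q}{U}$, built by attaching all of $V$ at once to $\bound{}{+}\order{q-1}{U}$, agrees on the relevant $(n-1)$\nbd boundaries with the pasting of the $V$\nbd block of layers in the lifted layering, so that the submolecule conditions for $i \neq q$ pass through cleanly from the layering's defining properties.
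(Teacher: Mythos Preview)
Your proposal is correct and follows essentially the same route as the paper: lift a topological sort of $\flow{n-1}{U}/\flow{n-1}{V}$ to an $(n-1)$\nbd ordering of $U$ with the $V$\nbd vertices consecutive via Lemma \ref{lem:connected_subgraph_conditions_path_induced}, promote it to an $(n-1)$\nbd layering using frame-acyclicity and Theorem \ref{thm:frame_acyclicity_equivalent_conditions}, and then conclude that only the single check $\bound{}{-}V \submol \bound{}{-}\order{q}{U}$ is needed.

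The one difference is packaging: you verify the side conditions of Theorem \ref{thm:rewritable_submolecule_criterion} for $i \neq q$ via Proposition \ref{prop:layering_from_ordering} and then worry about reassociating the collapsed $\order{q}{U}$, whereas the paper invokes Lemma \ref{lem:round_submolecules_from_layering} directly, whose condition (b) is precisely ``there is a layering with the $V$\nbd elements consecutive and $\bound{}{-}V \submol \bound{}{-}\order{q}{U}$''. This dissolves your flagged obstacle: the layers before position $q$ are identical in the lifted layering and in the algorithm's sequence, so $\bound{}{-}\order{q}{U} = \bound{}{+}\order{q-1}{U}$ coincides in both, and no explicit reassociation is needed. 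Your argument is correct as stated, just slightly longer than necessary.
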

\begin{proof}
Given a topological sort $((\order{i}{x})_{i=1}^{q-1}, x_V, (\order{i}{x})_{i=q+1}^{m-p+1})$ of $\flow{n-1}{U}/\flow{n-1}{V}$, by Lemma \ref{lem:connected_subgraph_conditions_path_induced}, substituting any $(n-1)$\nbd ordering of $V$ for $x_V$ produces an $(n-1)$\nbd ordering of $U$ in which the elements of $V$ are consecutive.
By Theorem \ref{thm:frame_acyclicity_equivalent_conditions}, this is induced by an $(n-1)$\nbd layering of $U$.
By Lemma \ref{lem:round_submolecules_from_layering}, it suffices to check that $\bound{}{-}V \submol \bound{}{-}\order{q}{U}$ to conclude that $V \submol U$, so we have a single recursive call instead of $O(\size{\grade{n}{U}})$ many.
Since $\bound{}{-}\order{q}{U} \submol \order{q}{U} \submol U$, it is frame-acyclic, and we can proceed inductively.
\end{proof}

\begin{prop} \label{prop:complexity_stably_frame_acyclic}
If $U$ is guaranteed to be stably frame-acyclic, the rewritable submolecule problem can be solved in linear time in the size of $\hasse{U}$.
\end{prop}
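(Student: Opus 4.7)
The plan is to prove this by induction on $n \eqdef \dim{U}$, showing that under stable frame-acyclicity the decision algorithm can be further pruned: we compute a single topological sort of $\flow{n-1}{U}/\flow{n-1}{V}$, commit to it without backtracking, and make a single recursive call in dimension $n-1$. The base case $n = 0$ is immediate, so the work is in the inductive step.

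First, I would establish two easy structural facts needed for the recursion to close. (i) Stable frame-acyclicity is inherited by submolecules: if $W \submol V \submol U$ and $X \submol W$ is rewritable, then $W \submol U$ by transitivity of submolecule inclusions, so $\subs{W}{\compos{X}}{X}$ is frame-acyclic by $U$'s stability. (ii) Combining Lemma \ref{lem:submolecule_properties} with the fact that each $\order{i}{U}$ in a layering is a submolecule, we see that $\bound{}{-}\order{q}{U}$ is a submolecule of $U$ and thus stably frame-acyclic. Hence the recursion stays within the stably frame-acyclic regime.

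The enabling claim is: if $V \submol U$, then \emph{every} topological sort of $\mathscr{G} \eqdef \flow{n-1}{U}/\flow{n-1}{V}$ witnesses the criterion of Theorem \ref{thm:rewritable_submolecule_criterion}, so no search is required. By definition of stable frame-acyclicity applied to $V \submol U$ itself, $\subs{U}{\compos{V}}{\imath(V)}$ is frame-acyclic; by Theorem \ref{thm:frame_acyclicity_equivalent_conditions} every $(n-1)$\nbd ordering of this substitution is induced by an $(n-1)$\nbd layering, and by Lemma \ref{lem:flow_of_substitution_is_contraction_of_flow} these orderings are exactly the topological sorts of $\mathscr{G}$. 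Reversing the substitution as in the proof sketch of Theorem \ref{thm:rewritable_submolecule_criterion} turns such a layering into an $(n-1)$\nbd layering of $U$ in which $\imath(V)$ occupies the $q$\nbd th layer, and Proposition \ref{prop:layering_from_ordering} then delivers both $\imath(\bound{}{-}V) \submol \bound{}{-}\order{q}{U}$ and $\bound{}{-}\order{i}{x} \submol \bound{}{-}\order{i}{U}$ for $i \neq q$. Conversely, by Theorem \ref{thm:rewritable_submolecule_criterion}, if the single topological sort we chose verifies these conditions then $V \submol U$; soundness and completeness of the one-shot algorithm follow.

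For the complexity, at the top level we construct $\flow{n-1}{U}$ and contract $\flow{n-1}{V}$ in time $O(\size{\edges{n-1}{U}})$, find a topological sort in linear time in the size of $\mathscr{G}$, and compute $\bound{}{-}\order{q}{U}$ by sweeping through the induced layering, again linear in $\size{\edges{n-1}{U}}$. We then make a single recursive call on $\bound{}{-}V \submol \bound{}{-}\order{q}{U}$, which by the inductive hypothesis runs in linear time in $\size{\hasse{\bound{}{-}\order{q}{U}}}$. Summing over the $n$ levels of recursion, the per-level costs are dominated by the sizes $\size{\edges{k}{U}}$ for $k = 0, \ldots, n-1$, yielding a total of $O(\size{\hasse{U}})$. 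The main obstacle I expect is verifying carefully that the reversal of the substitution $\compos{V} \rightsquigarrow V$ inside a given layering of $\subs{U}{\compos{V}}{\imath(V)}$ genuinely produces a layering of $U$ with the $q$\nbd th layer equal to $\order{q}{U}$ from the criterion; however, this is essentially the content already invoked in the sketch of Theorem \ref{thm:rewritable_submolecule_criterion}, so no new combinatorial argument should be required.
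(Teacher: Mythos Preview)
Your approach is essentially the paper's: use stable frame-acyclicity so that a \emph{single} topological sort of $\flow{n-1}{U}/\flow{n-1}{V}$ suffices, then make a \emph{single} recursive call on $\bound{}{-}V \submol \bound{}{-}\order{q}{U}$, and recurse within the stably frame-acyclic regime. The complexity bookkeeping is also the same, modulo an index slip (the top-level work is $O(\size{\edges{n}{U}})$, not $O(\size{\edges{n-1}{U}})$; this does not affect the final $O(\size{\hasse U})$).

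There is one small gap. Your ``enabling claim'' only shows completeness: \emph{if} $V \submol U$, then the chosen topological sort satisfies all the conditions of Theorem~\ref{thm:rewritable_submolecule_criterion}. But your algorithm checks \emph{only} the condition $\bound{}{-}V \submol \bound{}{-}\order{q}{U}$ and returns its answer; you still need soundness, i.e.\ that this single check passing already forces $V \submol U$. Theorem~\ref{thm:rewritable_submolecule_criterion} alone does not give this, since it requires all the conditions for $i \neq q$ as well. The paper closes this gap by invoking the argument of Proposition~\ref{prop:complexity_frame_acyclic}: since $U$ itself is frame-acyclic (stable frame-acyclicity implies it), any extension of the chosen topological sort by an $(n-1)$\nbd ordering of $V$ is, by Theorem~\ref{thm:frame_acyclicity_equivalent_conditions}, induced by an $(n-1)$\nbd layering of $U$, and then Lemma~\ref{lem:round_submolecules_from_layering} yields $V \submol U$ from the single condition $\bound{}{-}V \submol \bound{}{-}\order{q}{U}$. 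Once you add this sentence, your argument matches the paper's.
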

\begin{proof}
If $V \submol U$, by assumption $\subs{U}{\compos{V}}{V}$ is frame-acyclic.
By Theorem \ref{thm:frame_acyclicity_equivalent_conditions}, \emph{all} its $(n-1)$\nbd orderings are induced by $(n-1)$\nbd layerings, and by Lemma \ref{lem:flow_of_substitution_is_contraction_of_flow} they are in bijection with topological sorts of $\flow{n-1}{U}/\flow{n-1}{V}$.
It follows that, if any topological sort of $\flow{n-1}{U}/\flow{n-1}{V}$ fails to satisfy the conditions of Theorem \ref{thm:rewritable_submolecule_criterion}, then $V$ is not a submolecule of $U$, so in the decision algorithm we can stop after the first iteration of the main loop.

This involves finding a single topological sort and computing $\bound{}{-}\order{q}{U}$, both of which take time $O(\size{\edges{n}{U}})$; then, as in Proposition \ref{prop:complexity_frame_acyclic}, we make a single call to the decision algorithm for $\bound{}{-}V \submol \bound{}{-}\order{q}{U}$.
Since $\bound{}{-}\order{q}{U} \submol \order{q}{U} \submol U$, it is stably frame-acyclic, and we can proceed inductively.
\end{proof}

\begin{cor}
The subdiagram matching problem restricted to diagrams with stably frame-acyclic shape is in $\fun{P}$.
\end{cor}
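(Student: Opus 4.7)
The plan is to break the problem along the three subproblems listed in the definition of the subdiagram matching problem, and to bound each step separately. Given diagrams $t\colon U \to \mathbb{V}$ and $s\colon V \to \mathbb{V}$ with $\dim{U} = \dim{V}$, $V$ round, and $U$ stably frame-acyclic, I would first run the molecule matching algorithm of Section~\ref{sec:matching} to enumerate all inclusions $\imath\colon V \incl U$; then, for each such $\imath$, invoke the rewritable submolecule decision algorithm of Section~\ref{sec:rewritable} to test whether $\imath(V) \submol U$; finally, for each surviving $\imath$, compare $s$ with $\imath;t$ element-wise and keep only those for which the two agree.

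The first step is in $\fun{P}$ unconditionally by Theorem~\ref{thm:molecule_matching_bound}, and by construction it produces at most $\size{\grade{n}{U}}$ candidate inclusions, one per successful initial top-dimensional match. The stable frame-acyclicity of the target shape triggers Proposition~\ref{prop:complexity_stably_frame_acyclic}, so each of the at most $\size{\grade{n}{U}}$ subsequent calls in the second step runs in time linear in $\size{\hasse{U}}$. The third step is trivially linear per surviving candidate. Summing these bounds yields an overall polynomial-time procedure in the size of the input diagrams, which is what is needed to conclude membership in $\fun{P}$.

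The only point that requires a moment's care is that Proposition~\ref{prop:complexity_stably_frame_acyclic} demands stable frame-acyclicity of the ambient molecule passed to the decision algorithm, and that the internal recursion of that algorithm descends to boundaries of the form $\bound{}{-}\order{q}{U}$, which are themselves submolecules of $U$. This is not an obstacle, because the remark following the definition of stable frame-acyclicity records that every submolecule of a stably frame-acyclic regular molecule is itself stably frame-acyclic, so the hypothesis propagates through the recursive calls. Hence the corollary is essentially a composition of the polynomial bounds already in place, and the hardest part of the argument is simply confirming that the stable frame-acyclicity hypothesis on $U$ is what carries us through the only potentially superpolynomial stage of the pipeline.
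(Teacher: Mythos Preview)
Your proposal is correct and matches the paper's intended argument: the corollary is left without explicit proof in the paper, being an immediate consequence of combining the polynomial bound on molecule matching (Theorem~\ref{thm:molecule_matching_bound}) with the linear bound on the rewritable submolecule problem under stable frame-acyclicity (Proposition~\ref{prop:complexity_stably_frame_acyclic}), together with the trivial label comparison. Your observation that stable frame-acyclicity descends to submolecules, needed for the recursion, is exactly the point recorded in the remark following the definition.
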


\begin{dfn}
Proposition \ref{prop:complexity_stably_frame_acyclic} begs the question: why not just develop a higher-dimensional rewriting theory around stably frame-acyclic shapes of diagrams?
The reason is that, in general, acyclicity properties are \emph{global} properties of diagrams, that are not stable under local substitutions, essential to rewriting theory.
Indeed, the inductive definition of regular molecules makes them ``minimal'' for a class of shapes closed under pasting and rewrites of round diagrams, with roundness seemingly the natural condition ensuring both topological soundness and a good combinatorial account of substitution.
Any further restriction would almost certainly be impractical from a rewriting-theoretic perspective.

Nevertheless, the following section will show that \emph{up to dimension 3} there is no restriction at all: all regular molecules are stably frame-acyclic, and in fact we can further simplify our algorithms.
\end{dfn}
\section{In low dimensions} \label{sec:lowdim}

\begin{dfn}
Some results in this section come from \cite[Section 3]{hadzihasanovic2021smash}.
\end{dfn}

\begin{lem} \label{lem:only_1_molecules}
Let $U$ be a 1\nbd dimensional regular molecule, $m \eqdef \size{\grade{1}{U}}$.
Then $\hasseo{U}$ is a linear graph with $(2m+1)$ vertices, and $\flow{0}{U}$ is a linear graph with $m$ vertices.
\end{lem}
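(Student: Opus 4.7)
The plan is to induct on the construction of $U$ as a 1\nbd dimensional regular molecule. First I would observe that the generating rules severely restrict what a 1\nbd dimensional regular molecule can look like: the \textit{Point} rule produces the 0\nbd dimensional molecule $1$; the \textit{Atom} rule in dimension 1 requires two 0\nbd dimensional round regular molecules, the only such being $1$, so the only 1\nbd dimensional atom is $\data{arrow} = 1 \celto 1$; and the \textit{Paste} rule applied to molecules of dimension 1 forces $k = 0$ (since we need $k < \min\set{\dim U, \dim V}$, excluding a 0\nbd dimensional factor). Thus every 1\nbd dimensional regular molecule is either $\data{arrow}$ or a 0\nbd paste $U' \cp{0} V'$ of two 1\nbd dimensional regular molecules with strictly smaller $m$, enabling an induction.

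For the base case $U = \data{arrow}$, we have $m = 1$ with one input 0\nbd cell $p$, one 1\nbd cell $e$ with $\faces{}{-}e = \set{p}$ and $\faces{}{+}e = \set{q}$, and one output 0\nbd cell $q$. By the definition of $\hasseo{U}$, the only edges are $p \to e$ and $e \to q$, giving a directed path on $2m+1 = 3$ vertices; and $\flow{0}{U}$ has vertex set $\grade{1}{U} = \set{e}$ and no edges, hence is a linear graph on $m = 1$ vertex.

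For the inductive step, suppose $U = U' \cp{0} V'$ with $U'$, $V'$ satisfying the claim and $m_{U'}$, $m_{V'}$ their respective numbers of 1\nbd cells. By the inductive hypothesis, $\hasseo{U'}$ and $\hasseo{V'}$ are directed paths, and their endpoints are precisely the input and output 0\nbd cells (as these are the unique 0\nbd cells with exactly one coface with a single orientation, per Lemma \ref{lem:codimension_1_elements}). The pasting pushout at the 0\nbd boundary identifies $\bound{0}{+}U'$ with $\bound{0}{-}V'$, i.e.\ glues the last vertex of $\hasseo{U'}$ to the first vertex of $\hasseo{V'}$; by Lemma \ref{lem:properties_of_inclusions}, the canonical inclusions $U' \incl U$ and $V' \incl U$ preserve the covering relation and orientations, so no new edges arise and no existing edges are lost. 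The concatenation of two directed paths at a shared endpoint is a directed path with $(2m_{U'}+1) + (2m_{V'}+1) - 1 = 2m+1$ vertices, giving the first claim. For $\flow{0}{U}$, by Lemma \ref{lem:flow_under_inclusion} the induced subgraphs on the images of $\grade{1}{U'}$ and $\grade{1}{V'}$ are isomorphic to $\flow{0}{U'}$ and $\flow{0}{V'}$; the only additional edge is from the last 1\nbd cell of $U'$ to the first 1\nbd cell of $V'$, since the identified 0\nbd cell is the output face of the former and the input face of the latter, yielding a directed path on $m_{U'} + m_{V'} = m$ vertices.

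I do not expect any substantive obstacle: the entire argument is structural and proceeds by unpacking the generating rules. The only point requiring a touch of care is verifying that the pasting construction introduces no spurious edges in either $\hasseo{U}$ or $\flow{0}{U}$, which is immediate from the fact that inclusions of regular molecules preserve and reflect faces and cofaces.
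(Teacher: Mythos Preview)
Your proof is correct. The paper takes a different route: rather than inducting directly on the generating rules, it invokes the layering machinery developed in Section~\ref{sec:rewritable}. By Lemma~\ref{lem:layering_dimension_smaller_than_dimension} and Lemma~\ref{lem:layering_dimension_atom}, either $\lydim{U}=-1$ and $U$ is the arrow, or $\lydim{U}=0$; in the latter case Theorem~\ref{thm:molecules_admit_layerings} supplies a $0$\nbd layering $(\order{i}{U})_{i=1}^m$, and Lemma~\ref{lem:lydim_layering_properties} forces each layer to have layering dimension $-1$, hence to be an arrow. The graph claims are then left as evident consequences of the resulting decomposition $U \cong \data{arrow} \cp{0} \cdots \cp{0} \data{arrow}$. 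Your approach is more elementary---it needs nothing beyond the inductive definition and Lemmas~\ref{lem:properties_of_inclusions}, \ref{lem:codimension_1_elements}, and \ref{lem:flow_under_inclusion}---and it makes explicit the graph verification that the paper suppresses. The paper's route is shorter only because the layering results are already available for higher-dimensional purposes.
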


\begin{prop} \label{prop:in_dim1_all_inclusions_are_submolecule}
Let $\imath\colon V \incl U$ be an inclusion of regular molecules, $\dim{V} = \dim{U} = 1$.
Then $\imath$ is a submolecule inclusion.
\end{prop}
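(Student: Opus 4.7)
The plan is to reduce the statement to the combinatorial fact that the image of $\imath$ corresponds to a \emph{contiguous} segment of the linear pasting decomposition of $U$, and then to realize that segment as a submolecule by a direct application of the pasting clause in the definition of submolecule inclusion.

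First, I would apply Lemma \ref{lem:only_1_molecules} to both $U$ and $V$, so that $U$ admits a pasting decomposition $U \cong U_1 \cp{0} \ldots \cp{0} U_m$ with each $U_i = \clset{x_i}$ an atom containing a single 1-cell $x_i$, and analogously $V \cong V_1 \cp{0} \ldots \cp{0} V_p$ with 1-cells $y_1, \ldots, y_p$. The linear graphs $\flow{0}{U}$ and $\flow{0}{V}$ list these 1-cells in their pasting order. By Lemma \ref{lem:flow_under_inclusion}, $\flow{0}{V}$ is isomorphic to the induced subgraph of $\flow{0}{U}$ on $\imath(\grade{1}{V})$; since $\flow{0}{V}$ is connected and $\flow{0}{U}$ is a linear digraph, this image must be a contiguous subsequence $\set{x_q, x_{q+1}, \ldots, x_{q+p-1}}$ for some $q$, with $\imath(y_i) = x_{q+i-1}$ (using that inclusions preserve orientations, Lemma \ref{lem:properties_of_inclusions}).

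Next, I would use associativity of pasting (Proposition \ref{prop:associativity_of_pasting}) to decompose $U$ as $A \cp{0} V' \cp{0} B$, where $V' \eqdef U_q \cp{0} \ldots \cp{0} U_{q+p-1}$ and $A$, $B$ are the complementary prefixes and suffixes (taken to be absent when $q = 1$ or $q+p-1 = m$). Two applications of the second clause of the definition of submolecule inclusion, composed via the third clause, yield $V' \submol U$. By Lemma \ref{lem:properties_of_inclusions}(4), $\imath$ restricts to an isomorphism $V \incliso V' = \imath(V)$ of oriented graded posets; by the first and third clauses, $\imath$ factors as a composite of submolecule inclusions, and is therefore itself a submolecule inclusion.

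The only step requiring care is the identification $\imath(V) = V'$ as closed subsets of $U$: one must check that the 0-cells in $\imath(V)$ coincide with those in $V'$, which follows because both consist exactly of the faces of the common 1-cells $\set{x_q, \ldots, x_{q+p-1}}$, since inclusions are order-reflecting and the closure of a set of 1-cells in a 1-dimensional regular molecule is determined by its faces. Beyond this, the argument is essentially bookkeeping: the real content is the contiguity observation, which follows immediately from Lemmas \ref{lem:only_1_molecules} and \ref{lem:flow_under_inclusion}, and I expect no significant obstacle.
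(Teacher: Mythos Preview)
Your proposal is correct and shares the paper's key observation: both use Lemma~\ref{lem:flow_under_inclusion} together with Lemma~\ref{lem:only_1_molecules} to deduce that the image of $\imath$ on 1-cells is a contiguous segment of the linear graph $\flow{0}{U}$. The difference lies in how you conclude. The paper invokes Lemma~\ref{lem:round_submolecules_from_layering}, noting that the contiguity gives an $(n-1)$-layering of $U$ in which the elements of $\imath(V)$ are consecutive, and that the remaining boundary condition $\imath(\bound{}{-}V) \submol \bound{}{-}\order{q}{U}$ is trivial in dimension~0 (a point in a point). You instead bypass that lemma and work directly from the definition of submolecule inclusion: you bracket $U$ as $A \cp{0} V' \cp{0} B$ via associativity and read off $V' \submol U$ from the pasting clauses. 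Your route is more self-contained and avoids forward reference to the layering machinery of Section~\ref{sec:rewritable}; the paper's route is terser because that machinery is already in place. Both are entirely sound.
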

\begin{proof}
By Lemma \ref{lem:flow_under_inclusion} $\flow{0}{V}$ is an induced subgraph of $\flow{0}{U}$.
By Lemma \ref{lem:only_1_molecules} both of them are linear graphs, and an induced subgraph of a linear graph is a linear graph if and only if its vertices are consecutive in the ambient graph.
All other conditions of Lemma \ref{lem:round_submolecules_from_layering} are trivially satisfied.
\end{proof}

\begin{prop} \label{prop:dim2_acyclic}
Let $U$ be a regular molecule, $\dim{U} \leq 2$.
Then $U$ is acyclic.
\end{prop}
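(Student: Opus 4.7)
The plan is to proceed by induction on $\size{U}$. The base case $\size{U} = 1$ has $U$ a single point and $\hasseo{U}$ a one-vertex graph with no edges. For $\size{U} > 1$, if $\dim{U} \leq 1$ then Lemma \ref{lem:only_1_molecules} immediately gives that $\hasseo{U}$ is a linear graph, hence acyclic. The interesting case is $\dim{U} = 2$, in which $U$ is produced from strictly smaller regular molecules either by (\textit{Atom}) or by (\textit{Paste}), so the inductive hypothesis applies to them.

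In the atom case $U = V \celto W$, both $V$ and $W$ are round 1\nbd molecules: paths from a common source 0\nbd cell $s$ to a common target 0\nbd cell $t$, sharing only those endpoints. I would exhibit a topological sort of $\hasseo{U}$ directly, by concatenating $s$; the alternating sequence of 1\nbd cells and interior 0\nbd cells of $V$ in path order; the unique 2\nbd cell $e$; the alternating sequence of 1\nbd cells and interior 0\nbd cells of $W$ in path order; and $t$. Inspection of the edges of $\hasseo{U}$ --- 0\nbd cells pointing up to the 1\nbd cells of which they are the source, 1\nbd cells pointing down to their target, 1\nbd cells of $V$ pointing up to $e$, and $e$ pointing down to 1\nbd cells of $W$ --- confirms that this order is respected.

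In the paste case $U = V \cp{k} W$ with $k \leq 1$, the inductive hypothesis yields topological sorts $\sigma_V$ and $\sigma_W$ of $\hasseo{V}$ and $\hasseo{W}$. Let $S \eqdef \bound{k}{+}V = \bound{k}{-}W$: since $\dim{S} = k \leq 1$, $\hasseo{S}$ is either a single vertex or a linear graph, so has a unique topological sort, which forces $\restr{\sigma_V}{S}$ and $\restr{\sigma_W}{S}$ to agree as orderings of $S$. I would then merge $\sigma_V$ and $\sigma_W$ into a sort $\sigma_U$ of $\hasseo{U}$ by placing the $S$\nbd elements at their shared positions, and, between each consecutive pair of $S$\nbd elements, listing first the $V \setminus S$ elements appearing between them in $\sigma_V$ and then the $W \setminus S$ elements appearing between them in $\sigma_W$. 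Because the pushout construction implies that every covering relation in $U$ is inherited from $V$ or $W$, every edge of $\hasseo{U}$ is an edge of either $\hasseo{V}$ or $\hasseo{W}$, and there are no covering relations, hence no edges, between $V \setminus S$ and $W \setminus S$.

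The main obstacle is then the case analysis verifying that $\sigma_U$ respects every such edge; the critical cases are edges whose endpoints straddle $S$ and $V \setminus S$ (symmetrically, $S$ and $W \setminus S$). For an edge $y \to x$ in $\hasseo{V}$ with $y \in V \setminus S$ and $x = s_k \in S$, the element $y$ lies in $\sigma_V$ strictly between some $s_j$ and $s_{j+1}$ with $j + 1 \leq k$, and by construction the entire block of $V \setminus S$ elements in that interval is placed before $s_k$ in $\sigma_U$; the remaining sub-cases are analogous. Once this is checked, the existence of $\sigma_U$ implies that $\hasseo{U}$ is acyclic.
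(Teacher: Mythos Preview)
Your argument is correct. In the paper, the proof of this proposition is just a citation to \cite[Proposition~5]{hadzihasanovic2021smash}, but a hint about that proof's structure appears later in the proof of Lemma~\ref{lem:dim2_horizontal_or_vertical_path}: it proceeds by choosing a $1$\nbd ordering $(\order{i}{x})_{i=1}^m$ of $U$ and building an increasing chain $(\order{i}{V})_{i=1}^m$ of submolecules, arguing acyclicity layer by layer. Your route is instead a structural induction on the recursive construction (\textit{Point}/\textit{Atom}/\textit{Paste}), and in the Paste case you merge topological sorts of $\hasseo{V}$ and $\hasseo{W}$ along the shared boundary $S$. This is more elementary in that it does not rely on the layering theory of Section~\ref{sec:rewritable}; it works precisely because in dimension $\leq 2$ the shared boundary $S$ has dimension $\leq 1$, so $\hasseo{S}$ is a linear graph with a \emph{unique} topological sort, forcing $\restr{\sigma_V}{S}$ and $\restr{\sigma_W}{S}$ to agree and making the merge unambiguous. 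Two small points worth making explicit in a polished write-up: the claim that every covering relation in $U = V \cp{k} W$ is inherited from $V$ or $W$ follows from Lemma~\ref{lem:properties_of_inclusions} together with closedness of the images (so any $x < z < y$ with $y \in V$ already lies in $V$); and your merge description should also cover the segments before the first and after the last $S$\nbd element.
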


\begin{lem} \label{lem:dim2_inclusion_is_path_induced}
Let $\imath\colon V \incl U$ be an inclusion of regular molecules, $\dim{U} \leq 2$.
Then $\flow{1}{V}$ is a path-induced subgraph of $\flow{1}{U}$.
\end{lem}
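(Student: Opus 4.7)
The plan is to reduce to the only non-trivial case $\dim V = \dim U = 2$, since $\flow{1}{V}$ has no vertices when $\dim V \leq 1$. By Lemma \ref{lem:flow_under_inclusion}, $\flow{1}{V}$ is isomorphic to the subgraph of $\flow{1}{U}$ induced on $W \eqdef \imath(\grade{2}{V})$, so it suffices to check that every $\flow{1}{U}$-path between two vertices of $W$ stays in $W$.

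I would argue by contradiction, choosing a shortest counterexample $x_0 \to x_1 \to \ldots \to x_n$ in $\flow{1}{U}$ with $x_0, x_n \in W$ and $x_1, \ldots, x_{n-1} \notin W$. For each edge pick a mediator $z_l \in \faces{1}{+}x_l \cap \faces{1}{-}x_{l+1}$; by Lemma \ref{lem:codimension_1_elements}, since $z_l$ has both a positive and a negative coface in $U$, we have $\cofaces{}{+}z_l = \set{x_l}$ and $\cofaces{}{-}z_l = \set{x_{l+1}}$, so every $z_l$ is internal in $U$. By Lemma \ref{lem:properties_of_inclusions}, $\imath$ restricts to a bijection $\faces{}{\alpha}y \to \faces{}{\alpha}\imath(y)$ for every $y \in V$, so $z_0 = \imath(w_0)$ with $w_0 \in \faces{}{+}(\imath^{-1}x_0)$ and $z_{n-1} = \imath(w_{n-1})$ with $w_{n-1} \in \faces{}{-}(\imath^{-1}x_n)$. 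A negative coface of $w_0$ in $V$ would map to the unique element of $\cofaces{}{-}z_0 = \set{x_1}$, forcing $x_1 \in W$ contrary to assumption; so by Lemma \ref{lem:codimension_1_elements} applied inside $V$, $w_0 \in \faces{1}{+}V \setminus \faces{1}{-}V$, and symmetrically $w_{n-1} \in \faces{1}{-}V \setminus \faces{1}{+}V$.

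The final step, which I expect to be the main obstacle, is to turn these boundary conditions into a cycle in $\hasseo{U}$, contradicting Proposition \ref{prop:dim2_acyclic}. The external $\hasseo{U}$-path $\imath(\imath^{-1}x_0) \to z_0 \to x_1 \to \ldots \to z_{n-1} \to \imath(\imath^{-1}x_n)$ is already one arc of a putative cycle; the hard part is to produce a return $\hasseo{U}$-path from $\imath(\imath^{-1}x_n)$ back to $\imath(\imath^{-1}x_0)$ through $\imath(V)$, since no such path exists within $\flow{1}{V}$ alone. I would construct it by routing through the finer structure of $\hasseo{V}$: starting at $\imath^{-1}x_n$, descend to $w_{n-1} \in \bound{1}{-}V$, traverse the linear 1-molecule $\bound{1}{-}V$ (Proposition \ref{prop:molecule_properties}, Lemma \ref{lem:only_1_molecules}) and the shared 0-boundary $\bound{0}{}V$ to reach $\bound{1}{+}V$, then trace $\bound{1}{+}V$ back to $w_0$ and up to $\imath^{-1}x_0$. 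I would verify this routing by induction on the 1\nbd layering of $V$ (Corollary \ref{cor:codimension_1_layering}), checking that each layer supplies the connecting edges between the two boundary chains, and relying throughout on the acyclicity of $\hasseo{V}$ itself to avoid running in circles within $V$.
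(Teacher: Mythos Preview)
Your setup is sound and matches the paper: reduce to $\dim U = \dim V = 2$, take a minimal offending path, pick mediators $z_0,\ldots,z_{n-1}$, and show $w_0 \in \faces{1}{+}V$, $w_{n-1} \in \faces{1}{-}V$. The external path indeed gives a $\hasseo{U}$\nbd path $z_0 \to x_1 \to \cdots \to x_{n-1} \to z_{n-1}$ through cells of dimension $1$ and $2$.

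The gap is in your return path. First, the edges you describe point the wrong way: since $w_{n-1} \in \faces{}{-}(\imath^{-1}x_n)$ the $\hasseo$\nbd edge is $w_{n-1} \to \imath^{-1}x_n$, so you cannot ``descend'' from $x_n$ to $w_{n-1}$; likewise $w_0 \in \faces{}{+}(\imath^{-1}x_0)$ gives $\imath^{-1}x_0 \to w_0$, not the reverse. Second, and more seriously, even if you aim instead for a $\hasseo{V}$\nbd path from $w_{n-1}$ back to $w_0$, this need not exist. Your boundary routing through $\bound{1}{-}V$, $\bound{0}{}V$, $\bound{1}{+}V$ only lets you move \emph{forward} along each linear $1$\nbd molecule, so if $w_0$ sits horizontally \emph{before} $w_{n-1}$ in $V$ you cannot return. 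Concretely, in the shape of Example~\ref{exm:layerings} with $w_0$ the output $1$\nbd cell of the left atom and $w_{n-1}$ the input $1$\nbd cell of the right atom, there is no $\hasseo{V}$\nbd path from $w_{n-1}$ to $w_0$: both the internal $(0,1)$\nbd path and the external $(1,2)$\nbd path run from $\imath(w_0)$ to $\imath(w_{n-1})$, so no cycle in $\hasseo{U}$ is produced.

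What the paper does instead is prove a strictly stronger fact than acyclicity: the \emph{horizontal} order $\prech$ (paths through dimensions $0,1$) and the \emph{vertical} order $\precv$ (paths through dimensions $1,2$) on $\grade{1}{U}$ together form a linear order \emph{and intersect only in the identity}. With this in hand, the parallel\nbd paths case is exactly $y_1 \prech y_m$ together with $y_1 \precv y_m$, whence $y_1 = y_m$, contradicting $z_0 \neq z_{n-1}$. Your appeal to Proposition~\ref{prop:dim2_acyclic} alone cannot supply this step; you need the trivial\nbd intersection clause, which is an independent inductive argument on a $1$\nbd layering of $U$ (not of $V$).
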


\begin{thm} \label{thm:round_submolecule_dim2}
Let $\imath\colon V \incl U$ be an inclusion of regular molecules such that $\dim{U} = \dim{V} = 2$ and $V$ is round.
Then $\imath$ is a submolecule inclusion.
\end{thm}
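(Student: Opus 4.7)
The plan is to apply Theorem \ref{thm:rewritable_submolecule_criterion}, reducing the claim to the existence of a suitable topological sort of $\flow{1}{U}/\flow{1}{V}$ together with certain submolecule conditions, both of which will turn out to be automatic in dimension 2.

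First I would establish the existence of the topological sort. By Proposition \ref{prop:round_molecule_connected_flowgraph}, since $V$ is round and $n = \dim{V} = 2$, the graph $\flow{1}{V}$ is connected. By Proposition \ref{prop:dim2_acyclic} the ambient molecule $U$ is acyclic, so in particular $\flow{1}{U}$ is acyclic (this also follows from Corollary \ref{cor:flow_acyclic_in_codimension_1}). By Lemma \ref{lem:dim2_inclusion_is_path_induced}, $\flow{1}{V}$ is a path-induced subgraph of $\flow{1}{U}$. The three hypotheses of Lemma \ref{lem:connected_subgraph_conditions_path_induced} then yield that the contraction $\flow{1}{U}/\flow{1}{V}$ is acyclic and hence admits a topological sort $((\order{i}{x})_{i=1}^{q-1}, x_V, (\order{i}{x})_{i=q+1}^{m-p+1})$.

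Next I would verify the submolecule conditions $\bound{}{-}\order{i}{x} \submol \bound{}{-}\order{i}{U}$ for $i \neq q$ and $\imath(\bound{}{-}V) \submol \bound{}{-}\order{q}{U}$, where the $\order{i}{U}$ are defined as in the statement of Theorem \ref{thm:rewritable_submolecule_criterion}. The key observation is that all of these involve $1$\nbd dimensional regular molecules. Concretely, I would induct on $i$: starting from $\order{0}{U} \eqdef \bound{}{-}U$, which is a regular molecule by Proposition \ref{prop:molecule_properties}, I assume that $\bound{}{-}\order{i-1}{U}$ is a $1$\nbd dimensional regular molecule and consider the inclusion of $\bound{}{-}\order{i}{x}$ (respectively $\imath(\bound{}{-}V)$ when $i = q$) into $\bound{}{+}\order{i-1}{U}$. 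Being a map of $1$\nbd dimensional regular molecules, Proposition \ref{prop:in_dim1_all_inclusions_are_submolecule} shows it is automatically a submolecule inclusion, and then Proposition \ref{prop:round_submolecule_substitution} ensures that $\order{i}{U}$ and $\bound{}{+}\order{i}{U} = \bound{}{-}\order{i+1}{U}$ are regular molecules, propagating the induction.

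The main subtlety is justifying that each $\bound{}{-}\order{i}{x}$ (and $\imath(\bound{}{-}V)$) is literally included as a closed subset of $\bound{}{-}\order{i}{U}$, so that Proposition \ref{prop:in_dim1_all_inclusions_are_submolecule} applies; this is where the topological sort hypothesis pays off, since it guarantees compatibility between the output boundary of $\order{i-1}{U}$ and the input boundary of $\order{i}{x}$, essentially by the argument already used in the proof of Proposition \ref{prop:layering_from_ordering}. Once this is in hand, Theorem \ref{thm:rewritable_submolecule_criterion} directly gives that $\imath$ is a submolecule inclusion, completing the proof.
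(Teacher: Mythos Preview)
Your proposal is correct and the argument goes through: the ``subtlety'' you flag is exactly the computation carried out in the proof of Theorem~\ref{prop:correctness_decision} (and implicitly in Proposition~\ref{prop:layering_from_ordering}), so invoking it is legitimate.

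That said, your route differs from the paper's. Both proofs begin with Lemma~\ref{lem:dim2_inclusion_is_path_induced} and Lemma~\ref{lem:connected_subgraph_conditions_path_induced} to extract a suitable ordering, and both finish with Proposition~\ref{prop:in_dim1_all_inclusions_are_submolecule}. The divergence is in how the ordering is promoted to a layering. The paper invokes frame-acyclicity: since $U$ is acyclic (Proposition~\ref{prop:dim2_acyclic}), it is frame-acyclic (Lemma~\ref{lem:acyclicity_implications}), so by Theorem~\ref{thm:frame_acyclicity_equivalent_conditions} \emph{every} $1$-ordering of $U$ comes from a $1$-layering; this delivers the layering in one stroke, and only the single condition $\imath(\bound{}{-}V) \submol \bound{}{-}\order{p}{U}$ remains, handled by Proposition~\ref{prop:in_dim1_all_inclusions_are_submolecule} and Lemma~\ref{lem:round_submolecules_from_layering}. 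You instead apply Theorem~\ref{thm:rewritable_submolecule_criterion} directly and verify its hypotheses layer by layer, invoking Proposition~\ref{prop:in_dim1_all_inclusions_are_submolecule} at every step rather than once. Your approach is more elementary in that it sidesteps the frame-acyclicity theory entirely, but it trades this for the inductive bookkeeping. The paper's approach is terser and, more importantly, parallels exactly the structure of the dimension-$3$ argument (Theorem~\ref{thm:round_submolecule_dim3}), where frame-acyclicity is again the key ingredient and cannot be so easily bypassed.
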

\begin{proof}
By Lemma \ref{lem:dim2_inclusion_is_path_induced} combined with Lemma \ref{lem:connected_subgraph_conditions_path_induced}, there exists a 1\nbd ordering $(\order{i}{x})_{i=1}^m$ of $U$ in which the elements of $\imath(V)$ are consecutive, that is, $\order{i}{x} \in \imath(V)$ if and only if $p \leq i \leq q$ for some $p, q \in \set{1, \ldots, m}$.

By Proposition \ref{prop:dim2_acyclic} $U$ is acyclic, so by Lemma \ref{lem:acyclicity_implications} it is frame-acyclic, and by Theorem \ref{thm:frame_acyclicity_equivalent_conditions} the 1\nbd ordering comes from a 1\nbd layering $(\order{i}{U})_{i=1}^m$ such that $\imath(\bound{}{-}V) \subseteq \bound{}{-}\order{p}{U}$.
Since both are 1\nbd dimensional regular molecules, by Proposition \ref{prop:in_dim1_all_inclusions_are_submolecule} $\imath(\bound{}{-}V) \submol \bound{}{-}\order{p}{U}$, and Lemma \ref{lem:round_submolecules_from_layering} allows us to conclude.
\end{proof}

\begin{cor}
The rewritable submolecule problem in dimension $\leq 2$ has a trivial constant-time solution.
\end{cor}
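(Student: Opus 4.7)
The plan is to observe that the corollary is an immediate algorithmic repackaging of the preceding structural results. Recall that the input to the rewritable submolecule problem is an inclusion $\imath \colon V \incl U$ of regular molecules with $\dim{U} = \dim{V}$ and $V$ round; the question is whether $\imath$ is a submolecule inclusion. In dimensions $\leq 2$, the answer is always \emph{true}, so the decision algorithm can simply return \emph{true} without inspecting its input at all, which is plainly constant-time.

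The argument splits into three cases on $n \eqdef \dim{U} = \dim{V}$. For $n = 0$, both $U$ and $V$ are (isomorphic to) the terminal oriented graded poset $1$ by Proposition \ref{prop:molecule_properties}, so $\imath$ is an isomorphism and thus a submolecule inclusion by the first clause of the definition of submolecule inclusions. For $n = 1$, Proposition \ref{prop:in_dim1_all_inclusions_are_submolecule} states that \emph{every} inclusion of $1$\nbd dimensional regular molecules is a submolecule inclusion; in particular the roundness hypothesis on $V$ is not even needed. For $n = 2$, this is exactly the content of Theorem \ref{thm:round_submolecule_dim2}.

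Consequently, the rewritable submolecule decision algorithm of the previous section may be replaced, when $\dim{U} \leq 2$, by the constant-time algorithm that unconditionally returns \emph{true}. There is no real obstacle here: the work has already been carried out in Proposition \ref{prop:in_dim1_all_inclusions_are_submolecule} and Theorem \ref{thm:round_submolecule_dim2}, and the corollary merely records the algorithmic consequence.
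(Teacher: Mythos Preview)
Your proposal is correct and matches the paper's intent: the corollary is stated without proof there, as it is immediate from Proposition \ref{prop:in_dim1_all_inclusions_are_submolecule} and Theorem \ref{thm:round_submolecule_dim2}, exactly as you spell out. One tiny citation nit: the fact that every $0$\nbd dimensional regular molecule is the point is not literally part of Proposition \ref{prop:molecule_properties} but follows from the inductive definition (and is used explicitly in the proof of Lemma \ref{lem:only_1_molecules}); otherwise nothing to fix.
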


\begin{thm} \label{thm:dim3_frame_acyclic}
Let $U$ be a regular molecule, $\dim{U} \leq 3$.
Then $U$ is stably frame-acyclic.
\end{thm}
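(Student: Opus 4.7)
My plan is to prove the stronger statement that every regular molecule of dimension at most $3$ is frame-acyclic. This implies the theorem: for any $V \submol U$ and rewritable $W \submol V$ the substitution $\subs{V}{\compos{W}}{W}$ is a regular molecule (Proposition \ref{prop:round_submolecule_substitution}) of dimension $\dim{V} \leq 3$, so by the reduced claim it is frame-acyclic, and $U$ is stably frame-acyclic. For $\dim{U} \leq 2$, Proposition \ref{prop:dim2_acyclic} gives acyclicity, which upgrades to frame-acyclicity by Lemma \ref{lem:acyclicity_implications}. The nontrivial work is in dimension $3$: I need $\maxflow{r}{V}$ acyclic for every submolecule $V$ of a $3$\nbd dimensional $U$, with $r \eqdef \frdim{V}$. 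Lower-dimensional $V$ fall under the $\dim \leq 2$ case; if $V$ is an atom then $r = -1$ and $\maxflow{-1}{V}$ is a single vertex, trivially acyclic; and if $\dim{V} = 3$ with $r = 2$, then $\maxflow{2}{V}$ is an induced subgraph of $\flow{2}{V}$, which is acyclic by Corollary \ref{cor:flow_acyclic_in_codimension_1}.

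The core of the argument is the case $\dim{V} = 3$ with $r \in \set{0, 1}$ and more than one maximal element. Here I would show that $V$ admits an $r$\nbd layering; Proposition \ref{prop:if_layering_then_ordering} then delivers the required acyclicity of $\maxflow{r}{V}$. Theorem \ref{thm:molecules_admit_layerings} already produces a $k$\nbd layering of $V$ for some $r \leq k < 3$, so the real task is to descend to $k = r$. My strategy is to fix a topological sort of $\maxflow{r}{V}$ and apply the criterion of Proposition \ref{prop:layering_from_ordering}: existence of the $r$\nbd layering reduces to checking, layer by layer, that the input boundary of each $\order{i}{x}$ is a submolecule of the input boundary of the current $\order{i}{U}$. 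Since these boundaries have dimension at most $2$, Theorem \ref{thm:round_submolecule_dim2} and Proposition \ref{prop:in_dim1_all_inclusions_are_submolecule} make every such check automatic: any round $2$\nbd dimensional inclusion is a submolecule inclusion, and any same-dimensional $1$\nbd dimensional inclusion is too.

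The main obstacle will be proving that \emph{some} topological sort of $\maxflow{r}{V}$ actually arises from an $r$\nbd layering; in particular, that $\maxflow{r}{V}$ is nonempty and that a sort compatible with Proposition \ref{prop:layering_from_ordering} exists. The subtlety is that, even when $r \leq 1$, the full flow graphs $\flow{1}{V}$ and $\flow{0}{V}$ on non-maximal elements may contain cycles (witness the second three-dimensional example preceding this theorem), so layerings cannot be transported down by a naive contraction argument. My plan is to exploit the geometric content of $r \leq 1$: distinct maximal $3$\nbd cells of $V$ intersect in cells of dimension at most $1$, so they are pairwise independent at the $2$\nbd cell level, and the $2$\nbd boundaries of successive layers can be assembled without $2$\nbd cell overlap. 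Combined with the dimension-$2$ machinery of this section and the low-dimensional structural results of \cite[Section 3]{hadzihasanovic2021smash}, an induction on the number of maximal $3$\nbd cells of $V$ should close the argument.
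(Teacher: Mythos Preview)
Your reduction is correct and is exactly the paper's argument: once every regular molecule of dimension $\leq 3$ is frame-acyclic, stable frame-acyclicity follows because $\subs{V}{\compos{W}}{W}$ is again a regular molecule of dimension $\leq 3$. The paper then simply invokes \cite[Theorem 1]{hadzihasanovic2021smash} for the reduced claim and stops.

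Where you go beyond the paper and try to sketch that external result, your core step is circular. In the case $\dim V = 3$, $r \in \set{0,1}$, you propose to build an $r$\nbd layering via Proposition \ref{prop:layering_from_ordering} starting from ``a topological sort of $\maxflow{r}{V}$''. But a $k$\nbd ordering is \emph{defined} as a topological sort of $\maxflow{k}{V}$, and such a sort exists only if $\maxflow{r}{V}$ is acyclic --- which is precisely the assertion of frame-acyclicity for $V$ that you are trying to establish. So ``fix a topological sort of $\maxflow{r}{V}$'' assumes the conclusion. Your final paragraph half-recognises this (``that a sort compatible with Proposition \ref{prop:layering_from_ordering} exists'') but does not resolve it: the obstacle is not that some sorts might fail the layering criterion, it is that no sort need exist at all until acyclicity is proved. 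The geometric observation that $r \leq 1$ forces maximal $3$\nbd cells to be disjoint at the $2$\nbd cell level is pertinent, but turning it into an $r$\nbd layering without already assuming acyclicity is the substance of \cite[Theorem 1]{hadzihasanovic2021smash}, and it is not a short induction on the number of maximal $3$\nbd cells.
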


\begin{thm} \label{thm:round_submolecule_dim3}
Let $\imath\colon V \incl U$ be an inclusion of regular molecules such that $\dim{U} = \dim{V} = 3$ and $V$ is round.
The following are equivalent:
\begin{enumerate}[label=(\alph*)]
    \item $\imath$ is a submolecule inclusion;
    \item $\flow{2}{V}$ is a path-induced subgraph of $\flow{2}{U}$.
\end{enumerate}
\end{thm}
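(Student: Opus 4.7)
The forward direction (a) $\Rightarrow$ (b) is precisely Proposition \ref{prop:round_submolecule_flow_path_induced}, so my plan focuses on the converse. Assuming $\flow{2}{V}$ is a path-induced subgraph of $\flow{2}{U}$, I would first record the setup: since $V$ is 3-dimensional and round, Proposition \ref{prop:round_molecule_connected_flowgraph} gives that $\flow{2}{V}$ is connected, and Corollary \ref{cor:flow_acyclic_in_codimension_1} gives that $\flow{2}{U}$ is acyclic, so we are exactly in the situation where Lemma \ref{lem:connected_subgraph_conditions_path_induced} applies.

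The next step is to fix a 2-layering of $V$, which exists by Corollary \ref{cor:codimension_1_layering}, and to let $(\order{i}{y})_{i=1}^{p}$ be the induced 2-ordering via Proposition \ref{prop:layerings_induce_orderings}. Applying the bijection part of Lemma \ref{lem:connected_subgraph_conditions_path_induced} to $\flow{2}{U}$ with its path-induced subgraph identified via Lemma \ref{lem:flow_under_inclusion} with $\flow{2}{V}$, I combine $(\imath(\order{i}{y}))_{i=1}^{p}$ with any topological sort of the contraction $\flow{2}{U}/\flow{2}{V}$ to obtain a 2-ordering $(\order{i}{x})_{i=1}^{m}$ of $U$ whose entries at positions $q,\ldots,q+p-1$ agree with $\imath(\order{1}{y}),\ldots,\imath(\order{p}{y})$.

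Now by Theorem \ref{thm:dim3_frame_acyclic} the molecule $U$ is stably frame-acyclic, in particular frame-acyclic, so by Theorem \ref{thm:frame_acyclicity_equivalent_conditions} this 2-ordering is induced by some 2-layering $(\order{i}{U})_{i=1}^{m}$ of $U$. To conclude via Lemma \ref{lem:round_submolecules_from_layering} that $\imath$ is a submolecule inclusion, it then remains to verify the second clause $\imath(\bound{}{-}V) \submol \bound{}{-}\order{q}{U}$. Both $\imath(\bound{}{-}V)$ and $\bound{}{-}\order{q}{U}$ are 2-dimensional regular molecules (the latter by Propositions \ref{prop:layering_from_ordering} and \ref{prop:molecule_properties}(3)), and $\bound{}{-}V$ is round by Proposition \ref{prop:molecule_properties}(5); so Theorem \ref{thm:round_submolecule_dim2} applies and makes this inclusion automatically a submolecule inclusion.

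The main obstacle I anticipate is the bookkeeping showing that $\imath(\bound{}{-}V)$ really sits inside $\bound{}{-}\order{q}{U}$ as a closed subset, so that the submolecule statement even makes sense before being handed off to Theorem \ref{thm:round_submolecule_dim2}. This should follow from the compatibility of the two layerings on the consecutive $V$-block: unpacking the construction of Proposition \ref{prop:layering_from_ordering}, the input boundary of the $q$-th layer of $U$ must already contain $\imath(\bound{}{-}V)$ because elements of $\imath(V)$ only start appearing at position $q$, and the first layer of any 2-layering of $V$ has input boundary equal to $\bound{}{-}V$. If necessary this containment can be isolated as a short preparatory lemma; otherwise it follows directly from how layerings from orderings are built and how they restrict under an inclusion.
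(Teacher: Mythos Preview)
Your proposal is correct and follows essentially the same route as the paper: use Lemma \ref{lem:connected_subgraph_conditions_path_induced} to get a 2\nbd ordering of $U$ with the $\imath(V)$ block consecutive, invoke frame-acyclicity of $U$ via Theorem \ref{thm:dim3_frame_acyclic} and Theorem \ref{thm:frame_acyclicity_equivalent_conditions} to upgrade it to a 2\nbd layering, then apply Theorem \ref{thm:round_submolecule_dim2} to the 2\nbd dimensional boundary inclusion and conclude by Lemma \ref{lem:round_submolecules_from_layering}. The containment $\imath(\bound{}{-}V) \subseteq \bound{}{-}\order{q}{U}$ you flag as an obstacle is asserted without comment in the paper; your sketch of why it holds is adequate and in fact more explicit than the original.
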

\begin{proof}
One implication is Proposition \ref{prop:round_submolecule_flow_path_induced}, so we only need to prove the converse.
By Lemma \ref{lem:connected_subgraph_conditions_path_induced}, if $\flow{2}{V}$ is path-induced, then there exists a 2\nbd ordering $(\order{i}{x})_{i=1}^m$ of $U$ in which the elements of $\imath(V)$ are consecutive, that is, $\order{i}{x} \in \imath(V)$ if and only if $p \leq i \leq q$ for some $p, q \in \set{1, \ldots, m}$.
By Theorem \ref{thm:dim3_frame_acyclic}, $U$ is frame-acyclic, so by Theorem \ref{thm:frame_acyclicity_equivalent_conditions} the 2\nbd ordering comes from a 2\nbd layering $(\order{i}{U})_{i=1}^m$ such that $\imath(\bound{}{-}V) \subseteq \bound{}{-}\order{p}{U}$.
Since both are 2\nbd dimensional regular molecules and $\bound{}{-}V$ is round, by Theorem \ref{thm:round_submolecule_dim2} $\imath(\bound{}{-}V) \submol \bound{}{-}\order{p}{U}$, and we conclude by Lemma \ref{lem:round_submolecules_from_layering}.
\end{proof}

\begin{thm} \label{thm:3dim_rewritable_submolecule}
The rewritable submolecule problem in dimension 3 can be solved in time $O(\size{\edges{3}{U}})$.
\end{thm}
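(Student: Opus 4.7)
The plan is to reduce the decision to a single graph-theoretic test on the 2\nbd flow graph and then verify that this test runs in time $O(\size{\edges{3}{U}})$. By Theorem \ref{thm:round_submolecule_dim3}, $\imath\colon V \incl U$ is a submolecule inclusion if and only if $\flow{2}{V}$ is a path-induced subgraph of $\flow{2}{U}$. Since $V$ is round of dimension 3, Proposition \ref{prop:round_molecule_connected_flowgraph} ensures that $\flow{2}{V}$ is connected, and by Corollary \ref{cor:flow_acyclic_in_codimension_1} $\flow{2}{U}$ is acyclic. The equivalence of conditions \ref{cond:path_induced} and \ref{cond:acyclic_contraction} in Lemma \ref{lem:connected_subgraph_conditions_path_induced} then replaces the path-inducedness test with an acyclicity test on the contraction $\flow{2}{U}/\flow{2}{V}$.

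The algorithm is therefore: first, construct $\flow{2}{U}$ from the coface data of the 2\nbd dimensional elements of $U$; second, contract the vertex set $\imath(\grade{3}{V})$, which by Lemma \ref{lem:flow_under_inclusion} is the vertex set of $\flow{2}{V}$, to a single vertex; third, run a standard linear-time topological sort to test acyclicity of the result.

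The crucial complexity step is the linear bound on the size of $\flow{2}{U}$. Its vertex set is $\grade{3}{U}$, and $\size{\grade{3}{U}} \leq \size{\edges{3}{U}}$ because every 3\nbd element has at least one 2\nbd face. For edges, Lemma \ref{lem:codimension_1_elements} is exactly what makes dimension 3 easy: every $z \in \grade{2}{U}$ has at most one $+$\nbd coface and one $-$\nbd coface in dimension 3, so the edges of $\flow{2}{U}$ are in bijection with those $z$ having exactly one of each, and each such $z$ contributes $2$ to $\size{\edges{3}{U}}$. Consequently $\flow{2}{U}$ has $O(\size{\edges{3}{U}})$ vertices and edges, can be built by a single pass through the coface lists, and the contraction of a marked subset of vertices and cycle detection both run in linear time in the graph size.

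The main obstacle is not conceptual but bookkeeping: one must verify that reading the inclusion $\imath$, locating the image of $\grade{3}{V}$ inside $\grade{3}{U}$, and performing the vertex contraction can each be done in $O(\size{\edges{3}{U}})$ time with the representation of Definition \ref{data:ogposet}. Beyond this, the theorem is essentially a complexity-theoretic repackaging of Theorem \ref{thm:round_submolecule_dim3} and Lemma \ref{lem:connected_subgraph_conditions_path_induced}, with the sparsity of $\flow{2}{U}$ supplied by the codimension-1 restriction in Lemma \ref{lem:codimension_1_elements}; notably, no recursive call to the decision algorithm is needed in dimension 3.
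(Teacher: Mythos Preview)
Your proof is correct and takes essentially the same approach as the paper: reduce via Theorem \ref{thm:round_submolecule_dim3} and Lemma \ref{lem:connected_subgraph_conditions_path_induced} to an acyclicity test on $\flow{2}{U}/\flow{2}{V}$, then bound the size of that graph by $O(\size{\edges{3}{U}})$. Your write-up is in fact more explicit than the paper's on two points the paper leaves implicit, namely the verification of the hypotheses of Lemma \ref{lem:connected_subgraph_conditions_path_induced} and the use of Lemma \ref{lem:codimension_1_elements} to see that each 2\nbd dimensional element contributes at most one edge to $\flow{2}{U}$.
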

\begin{proof}
By Theorem \ref{thm:round_submolecule_dim3} combined with Lemma \ref{lem:connected_subgraph_conditions_path_induced}, it suffices to construct $\flow{2}{U}/\flow{2}{V}$ and check if it is acyclic.
The first can be done while traversing the induced subgraph of $\hasse{U}$ on $U_3 \cup U_2$, which takes time $O(\size{\edges{3}{U}})$.
Both the number of vertices and the number of edges of $\flow{2}{U}/\flow{2}{V}$ is also $O(\size{\edges{3}{U}})$, and we conclude.
\end{proof}

\begin{dfn}
To match a diagram $s\colon V \to \mathbb{V}$ in $t\colon U \to \mathbb{V}$ in dimension 3, according to our results we need
\begin{itemize}
    \item $O(\size{\grade{3}{U}}\, \size{\grade{3}{V}} \, \size{V} \, \size{\edges{\lor}{V}}\,\log \size{\edges{\lor}{V}})$ time to find all inclusions $V \incl U$, of which there are $O(\size{\grade{3}{U}})$,
    \item $O(\size{\edges{3}{U}})$ time to check whether each of them is a submolecule inclusion,
    \item $O(\size{V})$ time to compare labellings on each, assuming labels can be compared in constant time,
\end{itemize}
leading to an overall 
\begin{equation*}
    O(\size{\grade{3}{U}} (\size{\edges{3}{U}} + \size{\grade{3}{V}} \, \size{V} \, \size{\edges{\lor}{V}}\,\log \size{\edges{\lor}{V}}))
\end{equation*}
upper bound.
Here we used the bound on molecule matching in generic dimension; it is possible that this can be improved by using strategies tailored to dimension 3, as it is certainly the case in dimension $\leq 2$.

If we consider a machine operating by rewriting 3\nbd dimensional diagrams, which has a fixed finite list of rewrite rules, the variables linked to $V$ can be considered as constant parameters of the machine.
Our results then imply that such a machine can be simulated with $O(\size{\grade{3}{U}} \, \size{\edges{3}{U}})$ overhead in a standard model of computation.
\end{dfn}

\begin{dfn}
We leave the existence of a polynomial algorithm for subdiagram matching in dimension 4 or higher as an open problem.
The main obstacle to overcome is the expensive iteration on topological sorts, motivated by the fact that, from dimension 4 onwards, not all of them arise from layerings.
One may hope, perhaps, that this is due to flow graphs ``missing'' some relations, and that it should be possible to supplement them with extra information, in such a way as to restore the bijective correspondence between layerings and topological sorts.
Unfortunately, this cannot be the case in general, as the following counterexample shows.
\end{dfn}

\begin{exm} \label{exm:no_topological_sorts}
\begin{figure}[!t]
\begin{equation*}
	\input{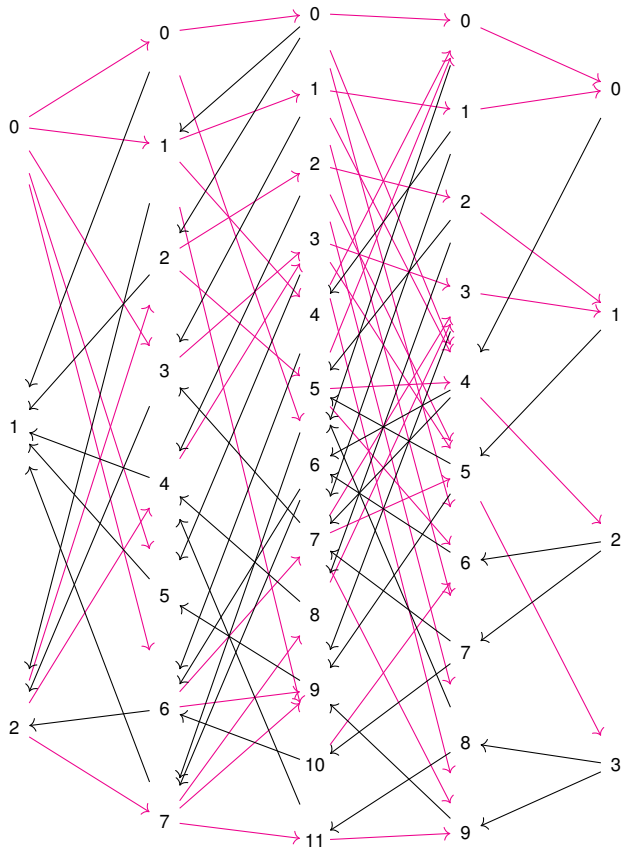}
\end{equation*}
\caption{Oriented Hasse diagram of Example \ref{exm:no_topological_sorts}.}
\label{fig:framecycle}
\end{figure}
This example is a 4\nbd dimensional regular molecule $U$ which is not frame-acyclic.
Its Hasse diagram is given in Figure \ref{fig:framecycle}, with dimensions increasing from left to right.
    
To understand this example, we can picture the 4\nbd dimensional elements of $U$ as rewrites of planar projections of 3\nbd dimensional diagrams, portrayed as string diagrams.
Then $U$ has one 3\nbd layering inducing the 3\nbd ordering
\begin{equation*}
    ((4, 1), (4, 3), (4, 0), (4, 2))
\end{equation*}
corresponding to the sequence of rewrite steps
\begin{align*}
    \begin{tikzpicture}[xscale=2.5, yscale=2, baseline={([yshift=-.5ex]current bounding box.center)}]
\path[fill=gray!10] (0, 0) rectangle (1, 1);
\draw[color=black, opacity=1] (0.438595933733141, 0.31169898098285276) .. controls (0.5052626003998076, 0.31169898098285276) and (0.538595933733141, 0.36725453653840834) .. (0.538595933733141, 0.47836564764951944);
\draw[color=black, opacity=1] (0.75, 0.6666666666666666) .. controls (0.609063955822094, 0.6666666666666666) and (0.538595933733141, 0.6038996603276175) .. (0.538595933733141, 0.47836564764951944);
\draw[color=black, opacity=1] (0.438595933733141, 0.31169898098285276) .. controls (0.37192926706647433, 0.31169898098285276) and (0.338595933733141, 0.36725453653840834) .. (0.338595933733141, 0.47836564764951944);
\draw[color=black, opacity=1] (0.25, 0.6666666666666666) .. controls (0.309063955822094, 0.6666666666666666) and (0.338595933733141, 0.6038996603276175) .. (0.338595933733141, 0.47836564764951944);
\draw[color=black, opacity=1] (0.438595933733141, 0.31169898098285276) .. controls (0.438595933733141, 0.31169898098285276) and (0.438595933733141, 0.2561434254272972) .. (0.438595933733141, 0.14503231431618607);
\draw[color=black, opacity=1] (0.438595933733141, 0) .. controls (0.438595933733141, 0.0) and (0.438595933733141, 0.048344104772062024) .. (0.438595933733141, 0.14503231431618607);
\draw[color=black, opacity=1] (0.75, 0.6666666666666666) .. controls (0.75, 0.6666666666666666) and (0.75, 0.7222222222222222) .. (0.75, 0.8333333333333334);
\draw[color=black, opacity=1] (0.75, 1) .. controls (0.75, 1.0) and (0.75, 0.9444444444444444) .. (0.75, 0.8333333333333334);
\draw[color=black, opacity=1] (0.561404066266859, 0.35496768568381387) .. controls (0.6280707329335257, 0.35496768568381387) and (0.661404066266859, 0.41052324123936945) .. (0.661404066266859, 0.5216343523504806);
\draw[color=black, opacity=1] (0.75, 0.6666666666666666) .. controls (0.690936044177906, 0.6666666666666666) and (0.661404066266859, 0.6183225618946047) .. (0.661404066266859, 0.5216343523504806);
\draw[color=black, opacity=1] (0.25, 0.6666666666666666) .. controls (0.25, 0.6666666666666666) and (0.25, 0.7222222222222222) .. (0.25, 0.8333333333333334);
\draw[color=black, opacity=1] (0.25, 1) .. controls (0.25, 1.0) and (0.25, 0.9444444444444444) .. (0.25, 0.8333333333333334);
\draw[color=black, opacity=1] (0.561404066266859, 0.35496768568381387) .. controls (0.49473739960019236, 0.35496768568381387) and (0.46140406626685904, 0.41052324123936945) .. (0.46140406626685904, 0.5216343523504806);
\draw[color=black, opacity=1] (0.25, 0.6666666666666666) .. controls (0.390936044177906, 0.6666666666666666) and (0.46140406626685904, 0.6183225618946047) .. (0.46140406626685904, 0.5216343523504806);
\draw[color=black, opacity=1] (0.75, 0.6666666666666666) .. controls (0.7833333333333333, 0.6666666666666666) and (0.8, 0.5277777777777778) .. (0.8, 0.25);
\draw[color=black, opacity=1] (0.8, 0) .. controls (0.8, 0.0) and (0.8, 0.08333333333333333) .. (0.8, 0.25);
\draw[color=black, opacity=1] (0.561404066266859, 0.35496768568381387) .. controls (0.561404066266859, 0.35496768568381387) and (0.561404066266859, 0.2994121301282583) .. (0.561404066266859, 0.18830101901714724);
\draw[color=black, opacity=1] (0.561404066266859, 0) .. controls (0.561404066266859, 0.0) and (0.561404066266859, 0.06276700633904908) .. (0.561404066266859, 0.18830101901714724);
\draw[color=black, opacity=1] (0.25, 0.6666666666666666) .. controls (0.21666666666666667, 0.6666666666666666) and (0.2, 0.5277777777777778) .. (0.2, 0.25);
\draw[color=black, opacity=1] (0.2, 0) .. controls (0.2, 0.0) and (0.2, 0.08333333333333333) .. (0.2, 0.25);
\node[circle, fill=black, draw=black, inner sep=1pt] at (0.25, 0.6666666666666666) {};
\node[circle, fill=black, draw=black, inner sep=1pt] at (0.561404066266859, 0.35496768568381387) {};
\node[circle, fill=black, draw=black, inner sep=1pt] at (0.438595933733141, 0.31169898098285276) {};
\node[circle, fill=black, draw=black, inner sep=1pt] at (0.75, 0.6666666666666666) {};
\node[text=black, font={\scriptsize \sffamily}, xshift=4pt, yshift=4pt] at (0.25, 0.6666666666666666) {0};
\node[text=black, font={\scriptsize \sffamily}, xshift=4pt, yshift=-6pt] at (0.561404066266859, 0.35496768568381387) {1};
\node[text=black, font={\scriptsize \sffamily}, xshift=-4pt, yshift=-6pt] at (0.438595933733141, 0.31169898098285276) {2};
\node[text=black, font={\scriptsize \sffamily}, xshift=4pt, yshift=4pt] at (0.75, 0.6666666666666666) {3};
\end{tikzpicture} & \stackrel{1}{\celto} \begin{tikzpicture}[xscale=2.5, yscale=2, baseline={([yshift=-.5ex]current bounding box.center)}]
\path[fill=gray!10] (0, 0) rectangle (1, 1);
\draw[color=black, opacity=1] (0.75, 0.5) .. controls (0.5833333333333333, 0.5) and (0.5, 0.5416666666666666) .. (0.5, 0.625);
\draw[color=black, opacity=1] (0.25, 0.75) .. controls (0.41666666666666663, 0.75) and (0.5, 0.7083333333333334) .. (0.5, 0.625);
\draw[color=black, opacity=1] (0.75, 0.5) .. controls (0.5423972891554274, 0.5) and (0.438595933733141, 0.3834803008012687) .. (0.438595933733141, 0.15044090240380623);
\draw[color=black, opacity=1] (0.438595933733141, 0) .. controls (0.438595933733141, 0.0) and (0.438595933733141, 0.050146967467935406) .. (0.438595933733141, 0.15044090240380623);
\draw[color=black, opacity=1] (0.75, 0.5) .. controls (0.75, 0.5) and (0.75, 0.6111111111111112) .. (0.75, 0.8333333333333334);
\draw[color=black, opacity=1] (0.75, 1) .. controls (0.75, 1.0) and (0.75, 0.9444444444444444) .. (0.75, 0.8333333333333334);
\draw[color=black, opacity=1] (0.5, 0.25) .. controls (0.5666666666666667, 0.25) and (0.6, 0.29166666666666663) .. (0.6, 0.375);
\draw[color=black, opacity=1] (0.75, 0.5) .. controls (0.6499999999999999, 0.5) and (0.6, 0.4583333333333333) .. (0.6, 0.375);
\draw[color=black, opacity=1] (0.25, 0.75) .. controls (0.25, 0.75) and (0.25, 0.7916666666666667) .. (0.25, 0.875);
\draw[color=black, opacity=1] (0.25, 1) .. controls (0.25, 1.0) and (0.25, 0.9583333333333333) .. (0.25, 0.875);
\draw[color=black, opacity=1] (0.5, 0.25) .. controls (0.43333333333333335, 0.25) and (0.4, 0.3333333333333333) .. (0.4, 0.5);
\draw[color=black, opacity=1] (0.25, 0.75) .. controls (0.35, 0.75) and (0.4, 0.6666666666666666) .. (0.4, 0.5);
\draw[color=black, opacity=1] (0.75, 0.5) .. controls (0.7833333333333333, 0.5) and (0.8, 0.38888888888888884) .. (0.8, 0.16666666666666666);
\draw[color=black, opacity=1] (0.8, 0) .. controls (0.8, 0.0) and (0.8, 0.05555555555555555) .. (0.8, 0.16666666666666666);
\draw[color=black, opacity=1] (0.5, 0.25) .. controls (0.540936044177906, 0.25) and (0.561404066266859, 0.21374192142095347) .. (0.561404066266859, 0.14122576426286043);
\draw[color=black, opacity=1] (0.561404066266859, 0) .. controls (0.561404066266859, 0.0) and (0.561404066266859, 0.04707525475428681) .. (0.561404066266859, 0.14122576426286043);
\draw[color=black, opacity=1] (0.25, 0.75) .. controls (0.21666666666666667, 0.75) and (0.2, 0.5833333333333334) .. (0.2, 0.25);
\draw[color=black, opacity=1] (0.2, 0) .. controls (0.2, 0.0) and (0.2, 0.08333333333333333) .. (0.2, 0.25);
\node[circle, fill=black, draw=black, inner sep=1pt] at (0.25, 0.75) {};
\node[circle, fill=black, draw=black, inner sep=1pt] at (0.5, 0.25) {};
\node[circle, fill=black, draw=black, inner sep=1pt] at (0.75, 0.5) {};
\node[text=black, font={\scriptsize \sffamily}, xshift=4pt, yshift=4pt] at (0.25, 0.75) {0};
\node[text=black, font={\scriptsize \sffamily}, xshift=2pt, yshift=4pt] at (0.5, 0.25) {1};
\node[text=black, font={\scriptsize \sffamily}, xshift=4pt, yshift=4pt] at (0.75, 0.5) {5};
\end{tikzpicture} \stackrel{3}{\celto} \begin{tikzpicture}[xscale=2.5, yscale=2, baseline={([yshift=-.5ex]current bounding box.center)}]
\path[fill=gray!10] (0, 0) rectangle (1, 1);
\draw[color=black, opacity=1] (0.438595933733141, 0.31169898098285276) .. controls (0.5052626003998076, 0.31169898098285276) and (0.538595933733141, 0.36725453653840834) .. (0.538595933733141, 0.47836564764951944);
\draw[color=black, opacity=1] (0.75, 0.6666666666666666) .. controls (0.609063955822094, 0.6666666666666666) and (0.538595933733141, 0.6038996603276175) .. (0.538595933733141, 0.47836564764951944);
\draw[color=black, opacity=1] (0.438595933733141, 0.31169898098285276) .. controls (0.37192926706647433, 0.31169898098285276) and (0.338595933733141, 0.36725453653840834) .. (0.338595933733141, 0.47836564764951944);
\draw[color=black, opacity=1] (0.25, 0.6666666666666666) .. controls (0.309063955822094, 0.6666666666666666) and (0.338595933733141, 0.6038996603276175) .. (0.338595933733141, 0.47836564764951944);
\draw[color=black, opacity=1] (0.438595933733141, 0.31169898098285276) .. controls (0.438595933733141, 0.31169898098285276) and (0.438595933733141, 0.2561434254272972) .. (0.438595933733141, 0.14503231431618607);
\draw[color=black, opacity=1] (0.438595933733141, 0) .. controls (0.438595933733141, 0.0) and (0.438595933733141, 0.048344104772062024) .. (0.438595933733141, 0.14503231431618607);
\draw[color=black, opacity=1] (0.75, 0.6666666666666666) .. controls (0.75, 0.6666666666666666) and (0.75, 0.7222222222222222) .. (0.75, 0.8333333333333334);
\draw[color=black, opacity=1] (0.75, 1) .. controls (0.75, 1.0) and (0.75, 0.9444444444444444) .. (0.75, 0.8333333333333334);
\draw[color=black, opacity=1] (0.561404066266859, 0.35496768568381387) .. controls (0.6280707329335257, 0.35496768568381387) and (0.661404066266859, 0.41052324123936945) .. (0.661404066266859, 0.5216343523504806);
\draw[color=black, opacity=1] (0.75, 0.6666666666666666) .. controls (0.690936044177906, 0.6666666666666666) and (0.661404066266859, 0.6183225618946047) .. (0.661404066266859, 0.5216343523504806);
\draw[color=black, opacity=1] (0.25, 0.6666666666666666) .. controls (0.25, 0.6666666666666666) and (0.25, 0.7222222222222222) .. (0.25, 0.8333333333333334);
\draw[color=black, opacity=1] (0.25, 1) .. controls (0.25, 1.0) and (0.25, 0.9444444444444444) .. (0.25, 0.8333333333333334);
\draw[color=black, opacity=1] (0.561404066266859, 0.35496768568381387) .. controls (0.49473739960019236, 0.35496768568381387) and (0.46140406626685904, 0.41052324123936945) .. (0.46140406626685904, 0.5216343523504806);
\draw[color=black, opacity=1] (0.25, 0.6666666666666666) .. controls (0.390936044177906, 0.6666666666666666) and (0.46140406626685904, 0.6183225618946047) .. (0.46140406626685904, 0.5216343523504806);
\draw[color=black, opacity=1] (0.75, 0.6666666666666666) .. controls (0.7833333333333333, 0.6666666666666666) and (0.8, 0.5277777777777778) .. (0.8, 0.25);
\draw[color=black, opacity=1] (0.8, 0) .. controls (0.8, 0.0) and (0.8, 0.08333333333333333) .. (0.8, 0.25);
\draw[color=black, opacity=1] (0.561404066266859, 0.35496768568381387) .. controls (0.561404066266859, 0.35496768568381387) and (0.561404066266859, 0.2994121301282583) .. (0.561404066266859, 0.18830101901714724);
\draw[color=black, opacity=1] (0.561404066266859, 0) .. controls (0.561404066266859, 0.0) and (0.561404066266859, 0.06276700633904908) .. (0.561404066266859, 0.18830101901714724);
\draw[color=black, opacity=1] (0.25, 0.6666666666666666) .. controls (0.21666666666666667, 0.6666666666666666) and (0.2, 0.5277777777777778) .. (0.2, 0.25);
\draw[color=black, opacity=1] (0.2, 0) .. controls (0.2, 0.0) and (0.2, 0.08333333333333333) .. (0.2, 0.25);
\node[circle, fill=black, draw=black, inner sep=1pt] at (0.25, 0.6666666666666666) {};
\node[circle, fill=black, draw=black, inner sep=1pt] at (0.561404066266859, 0.35496768568381387) {};
\node[circle, fill=black, draw=black, inner sep=1pt] at (0.438595933733141, 0.31169898098285276) {};
\node[circle, fill=black, draw=black, inner sep=1pt] at (0.75, 0.6666666666666666) {};
\node[text=black, font={\scriptsize \sffamily}, xshift=4pt, yshift=4pt] at (0.25, 0.6666666666666666) {0};
\node[text=black, font={\scriptsize \sffamily}, xshift=4pt, yshift=-6pt] at (0.561404066266859, 0.35496768568381387) {1};
\node[text=black, font={\scriptsize \sffamily}, xshift=-4pt, yshift=-6pt] at (0.438595933733141, 0.31169898098285276) {8};
\node[text=black, font={\scriptsize \sffamily}, xshift=4pt, yshift=4pt] at (0.75, 0.6666666666666666) {9};
\end{tikzpicture} \\
    & \stackrel{0}{\celto} \begin{tikzpicture}[xscale=2.5, yscale=2, baseline={([yshift=-.5ex]current bounding box.center)}]
\path[fill=gray!10] (0, 0) rectangle (1, 1);
\draw[color=black, opacity=1] (0.5, 0.25) .. controls (0.5666666666666667, 0.25) and (0.6, 0.3333333333333333) .. (0.6, 0.5);
\draw[color=black, opacity=1] (0.75, 0.75) .. controls (0.6499999999999999, 0.75) and (0.6, 0.6666666666666666) .. (0.6, 0.5);
\draw[color=black, opacity=1] (0.5, 0.25) .. controls (0.43333333333333335, 0.25) and (0.4, 0.29166666666666663) .. (0.4, 0.375);
\draw[color=black, opacity=1] (0.25, 0.5) .. controls (0.35, 0.5) and (0.4, 0.4583333333333333) .. (0.4, 0.375);
\draw[color=black, opacity=1] (0.5, 0.25) .. controls (0.459063955822094, 0.25) and (0.438595933733141, 0.2029247452457132) .. (0.438595933733141, 0.10877423573713957);
\draw[color=black, opacity=1] (0.438595933733141, 0) .. controls (0.438595933733141, 0.0) and (0.438595933733141, 0.036258078579046525) .. (0.438595933733141, 0.10877423573713957);
\draw[color=black, opacity=1] (0.75, 0.75) .. controls (0.75, 0.75) and (0.75, 0.7916666666666667) .. (0.75, 0.875);
\draw[color=black, opacity=1] (0.75, 1) .. controls (0.75, 1.0) and (0.75, 0.9583333333333333) .. (0.75, 0.875);
\draw[color=black, opacity=1] (0.25, 0.5) .. controls (0.41666666666666663, 0.5) and (0.5, 0.5416666666666666) .. (0.5, 0.625);
\draw[color=black, opacity=1] (0.75, 0.75) .. controls (0.5833333333333333, 0.75) and (0.5, 0.7083333333333334) .. (0.5, 0.625);
\draw[color=black, opacity=1] (0.25, 0.5) .. controls (0.25, 0.5) and (0.25, 0.6111111111111112) .. (0.25, 0.8333333333333334);
\draw[color=black, opacity=1] (0.25, 1) .. controls (0.25, 1.0) and (0.25, 0.9444444444444444) .. (0.25, 0.8333333333333334);
\draw[color=black, opacity=1] (0.75, 0.75) .. controls (0.7833333333333333, 0.75) and (0.8, 0.5833333333333334) .. (0.8, 0.25);
\draw[color=black, opacity=1] (0.8, 0) .. controls (0.8, 0.0) and (0.8, 0.08333333333333333) .. (0.8, 0.25);
\draw[color=black, opacity=1] (0.25, 0.5) .. controls (0.45760271084457266, 0.5) and (0.561404066266859, 0.394297476976509) .. (0.561404066266859, 0.1828924309295271);
\draw[color=black, opacity=1] (0.561404066266859, 0) .. controls (0.561404066266859, 0.0) and (0.561404066266859, 0.0609641436431757) .. (0.561404066266859, 0.1828924309295271);
\draw[color=black, opacity=1] (0.25, 0.5) .. controls (0.21666666666666667, 0.5) and (0.2, 0.38888888888888884) .. (0.2, 0.16666666666666666);
\draw[color=black, opacity=1] (0.2, 0) .. controls (0.2, 0.0) and (0.2, 0.05555555555555555) .. (0.2, 0.16666666666666666);
\node[circle, fill=black, draw=black, inner sep=1pt] at (0.25, 0.5) {};
\node[circle, fill=black, draw=black, inner sep=1pt] at (0.5, 0.25) {};
\node[circle, fill=black, draw=black, inner sep=1pt] at (0.75, 0.75) {};
\node[text=black, font={\scriptsize \sffamily}, xshift=4pt, yshift=4pt] at (0.25, 0.5) {4};
\node[text=black, font={\scriptsize \sffamily}, xshift=0pt, yshift=-6pt] at (0.5, 0.25) {8};
\node[text=black, font={\scriptsize \sffamily}, xshift=4pt, yshift=4pt] at (0.75, 0.75) {9};
\end{tikzpicture} \stackrel{2}{\celto} \begin{tikzpicture}[xscale=2.5, yscale=2, baseline={([yshift=-.5ex]current bounding box.center)}]
\path[fill=gray!10] (0, 0) rectangle (1, 1);
\draw[color=black, opacity=1] (0.438595933733141, 0.31169898098285276) .. controls (0.5052626003998076, 0.31169898098285276) and (0.538595933733141, 0.36725453653840834) .. (0.538595933733141, 0.47836564764951944);
\draw[color=black, opacity=1] (0.75, 0.6666666666666666) .. controls (0.609063955822094, 0.6666666666666666) and (0.538595933733141, 0.6038996603276175) .. (0.538595933733141, 0.47836564764951944);
\draw[color=black, opacity=1] (0.438595933733141, 0.31169898098285276) .. controls (0.37192926706647433, 0.31169898098285276) and (0.338595933733141, 0.36725453653840834) .. (0.338595933733141, 0.47836564764951944);
\draw[color=black, opacity=1] (0.25, 0.6666666666666666) .. controls (0.309063955822094, 0.6666666666666666) and (0.338595933733141, 0.6038996603276175) .. (0.338595933733141, 0.47836564764951944);
\draw[color=black, opacity=1] (0.438595933733141, 0.31169898098285276) .. controls (0.438595933733141, 0.31169898098285276) and (0.438595933733141, 0.2561434254272972) .. (0.438595933733141, 0.14503231431618607);
\draw[color=black, opacity=1] (0.438595933733141, 0) .. controls (0.438595933733141, 0.0) and (0.438595933733141, 0.048344104772062024) .. (0.438595933733141, 0.14503231431618607);
\draw[color=black, opacity=1] (0.75, 0.6666666666666666) .. controls (0.75, 0.6666666666666666) and (0.75, 0.7222222222222222) .. (0.75, 0.8333333333333334);
\draw[color=black, opacity=1] (0.75, 1) .. controls (0.75, 1.0) and (0.75, 0.9444444444444444) .. (0.75, 0.8333333333333334);
\draw[color=black, opacity=1] (0.561404066266859, 0.35496768568381387) .. controls (0.6280707329335257, 0.35496768568381387) and (0.661404066266859, 0.41052324123936945) .. (0.661404066266859, 0.5216343523504806);
\draw[color=black, opacity=1] (0.75, 0.6666666666666666) .. controls (0.690936044177906, 0.6666666666666666) and (0.661404066266859, 0.6183225618946047) .. (0.661404066266859, 0.5216343523504806);
\draw[color=black, opacity=1] (0.25, 0.6666666666666666) .. controls (0.25, 0.6666666666666666) and (0.25, 0.7222222222222222) .. (0.25, 0.8333333333333334);
\draw[color=black, opacity=1] (0.25, 1) .. controls (0.25, 1.0) and (0.25, 0.9444444444444444) .. (0.25, 0.8333333333333334);
\draw[color=black, opacity=1] (0.561404066266859, 0.35496768568381387) .. controls (0.49473739960019236, 0.35496768568381387) and (0.46140406626685904, 0.41052324123936945) .. (0.46140406626685904, 0.5216343523504806);
\draw[color=black, opacity=1] (0.25, 0.6666666666666666) .. controls (0.390936044177906, 0.6666666666666666) and (0.46140406626685904, 0.6183225618946047) .. (0.46140406626685904, 0.5216343523504806);
\draw[color=black, opacity=1] (0.75, 0.6666666666666666) .. controls (0.7833333333333333, 0.6666666666666666) and (0.8, 0.5277777777777778) .. (0.8, 0.25);
\draw[color=black, opacity=1] (0.8, 0) .. controls (0.8, 0.0) and (0.8, 0.08333333333333333) .. (0.8, 0.25);
\draw[color=black, opacity=1] (0.561404066266859, 0.35496768568381387) .. controls (0.561404066266859, 0.35496768568381387) and (0.561404066266859, 0.2994121301282583) .. (0.561404066266859, 0.18830101901714724);
\draw[color=black, opacity=1] (0.561404066266859, 0) .. controls (0.561404066266859, 0.0) and (0.561404066266859, 0.06276700633904908) .. (0.561404066266859, 0.18830101901714724);
\draw[color=black, opacity=1] (0.25, 0.6666666666666666) .. controls (0.21666666666666667, 0.6666666666666666) and (0.2, 0.5277777777777778) .. (0.2, 0.25);
\draw[color=black, opacity=1] (0.2, 0) .. controls (0.2, 0.0) and (0.2, 0.08333333333333333) .. (0.2, 0.25);
\node[circle, fill=black, draw=black, inner sep=1pt] at (0.25, 0.6666666666666666) {};
\node[circle, fill=black, draw=black, inner sep=1pt] at (0.561404066266859, 0.35496768568381387) {};
\node[circle, fill=black, draw=black, inner sep=1pt] at (0.438595933733141, 0.31169898098285276) {};
\node[circle, fill=black, draw=black, inner sep=1pt] at (0.75, 0.6666666666666666) {};
\node[text=black, font={\scriptsize \sffamily}, xshift=4pt, yshift=4pt] at (0.25, 0.6666666666666666) {6};
\node[text=black, font={\scriptsize \sffamily}, xshift=4pt, yshift=-6pt] at (0.561404066266859, 0.35496768568381387) {7};
\node[text=black, font={\scriptsize \sffamily}, xshift=-4pt, yshift=-6pt] at (0.438595933733141, 0.31169898098285276) {8};
\node[text=black, font={\scriptsize \sffamily}, xshift=4pt, yshift=4pt] at (0.75, 0.6666666666666666) {9};
\end{tikzpicture}
\end{align*}
where nodes represent 3\nbd dimensional elements and incoming and outgoing wires represent their input and output faces. It has one other 3\nbd layering inducing the 3\nbd ordering
\begin{equation*}
    ((4, 0), (4, 2), (4, 1), (4, 3)).
\end{equation*}
These are the only two 3\nbd layerings of $U$.
Indeed, the application of the rewrite $(4, 1)$ creates a ``non-convexity'' in the input boundary of $(4, 0)$, in the form of a path $(3, 1) \to (3, 5) \to (3, 0)$ in $\flow{2}{U}$, which can also be spotted as the upward path $1 \to 5 \to 0$ in the second string diagram.
Before $(4, 0)$ can be applied, this needs to be resolved by the application of $(4, 3)$, which removes the non-convexity.
Similarly, if $(4, 0)$ is applied first, it creates a non-convexity in the input boundary of $(4, 1)$, a path $(3, 2) \to (3, 4) \to (3, 3)$ in $\flow{2}{U}$, which needs to be resolved by the application of $(4, 2)$.

Now we have examples of all the following.
\begin{enumerate}
    \item \emph{A 3\nbd ordering that is not induced by a 3\nbd layering.}

The graph $\flow{3}{U}$ is simply
\[\begin{tikzcd}[sep=tiny]
	{(4, 0)} && {(4, 2)} \\
	{(4, 1)} && {(4, 3)}
	\arrow[from=1-1, to=1-3]
	\arrow[from=2-1, to=2-3]
\end{tikzcd}\]
so $U$ admits four other 3\nbd orderings which do not determine 3\nbd layerings; for example, $((4, 0), (4, 1), (4, 2), (4, 3))$.
What is more, there is no extension of $\flow{3}{U}$, and more in general no graph whose vertex set is $\grade{4}{U}$, whose topological sorts correspond to the 3\nbd layerings of $U$.

\item \emph{A regular molecule that is not frame-acyclic.}

We can deduce that $U$ is not frame-acyclic using Theorem \ref{thm:frame_acyclicity_equivalent_conditions}.
More directly, $V \eqdef \clset{{(4, 0), (4, 3)}}$ is a submolecule of $U$ such that $\frdim{V} = 2$, but $\maxflow{2}{V}$ contains a cycle $(4, 3) \to (4, 0) \to (4, 3)$, since 
\begin{align*}
    (2, 5) & \in \faces{2}{+}(4, 3) \cap \faces{2}{-}(4, 0), \\
    (2, 7) & \in \faces{2}{+}(4, 0) \cap \faces{2}{-}(4, 3).
\end{align*}

\item \emph{An inclusion of a round regular molecule that is not a submolecule inclusion.}

We have that $\bound{}{-}(4, 0) = \clset{(3, 0), (3, 1)}$ is a round 3\nbd dimensional regular molecule, included in the 3\nbd dimensional regular molecule $W \eqdef \clset{(3, 0), (3, 1), (3, 5)}$.
However, it is \emph{not} a submolecule of $W$, due to the presence of the path $(3, 1) \to (3, 5) \to (3, 0)$ in $\maxflow{2}{W}$.
\end{enumerate}
\end{exm}
\section*{Conclusions and outlook}

We have taken the first steps into the study of machines based on higher-dimensional rewriting in all dimensions.
We have presented algorithms by which they could be simulated in standard models of computation, and shown that this requires only low-degree polynomial time overhead in dimension $\leq 3$.

Feasibility in dimension $> 3$ is the most obvious open question.
We hope that a deeper understanding of cases like Example \ref{exm:no_topological_sorts} will lead either to an improved algorithm, or to a proof of $\fun{NP}$\nbd completeness.
The way in which 4\nbd dimensional rewrites can introduce obstructions to ``disjoint'' rewrites, in a non-local way, may be a hint that the latter is more likely.
In either case, we are actively working on the problem.

Beyond the more immediate questions, we hope to have laid the groundwork for an approach to complexity theory based on higher-dimensional rewriting, that leverages its unique characteristics as described in the introduction.
For instance, we believe that the coexistence of higher algebraic structures and rewrite systems within the same category, as made possible by the theory of diagrammatic sets or their variants, may lead to a unified and compositional understanding of interpretations of rewrite systems, such as polynomial and matrix interpretations, which are one of the key techniques in implicit computational complexity.
We plan to develop various aspects of this programme in future work.

\subsection*{Acknowledgements}
This work was supported by the ESF funded Estonian IT Academy research measure (project 2014-2020.4.05.19-0001) and by the Estonian Research Council grant PSG764.
We thank the anonymous referees for their helpful comments on an earlier draft.

%\IEEEtriggeratref{8}

\bibliographystyle{IEEEtran}
\bibliography{IEEEabrv,main}

\clearpage 

\appendix[Additional proofs]

\subsection{Proofs for Section \ref{sec:structures}}

\begin{dfn}
Most of the proofs are taken from a forthcoming monograph on the combinatorics of pasting diagrams \cite{hadzihasanovicpasting}.
To avoid making this appendix longer than it already is, we cite results of \cite{steiner1993algebra, hadzihasanovic2020diagrammatic, hadzihasanovic2021smash} whenever possible, even though they use slightly different definitions; all the results used have been independently reproved.
\end{dfn}

\begin{proof}[Proof of Lemma \ref{lem:properties_of_inclusions}]
This is a combination of \cite[Lemma 1.9 and Lemma 1.11]{hadzihasanovic2020diagrammatic}.
\end{proof}

\begin{proof}[Proof of Proposition \ref{prop:ogpos_limits_and_colimits}]
By \cite[Lemma 1.9]{hadzihasanovic2020diagrammatic}, there is a forgetful functor $\fun{U}$ from $\ogpos$ to the category $\poscat$ of posets and order-preserving maps.
All these limits and colimits exist in $\poscat$, so it suffices to prove that they can be lifted to $\ogpos$.

The terminal poset is the poset with a single element, and the initial poset is the empty poset.
Both of them admit a unique orientation.
Let $P$ be an oriented graded poset.
Both the unique map from $\fun{U}P$ to the terminal poset and the unique map from the initial poset trivially preserve boundaries, so they lift to maps of oriented graded posets.

Let $\imath_1\colon Q \incl P_1$ and $\imath_2\colon Q \incl P_2$ be inclusions of oriented graded posets.
Computing their pushout in $\poscat$ determines two order-preserving maps 
\[
    j_1\colon \fun{U}P_1 \to \fun{U}P_1 \cup \fun{U}P_2, \quad \quad j_2\colon \fun{U}P_2 \to \fun{U}P_1 \cup \fun{U}P_2.
\]
Since $\fun{U}\imath_1$ and $\fun{U}\imath_2$ are closed embeddings, it is an exercise to show that $j_1$ and $j_2$ are also closed embeddings, and deduce that $\fun{U}P_1 \cup \fun{U}P_2$ is a graded poset. 
Since $j_1$ and $j_2$ preserve the covering relation and are jointly surjective, we can put a unique orientation on $\fun{U}P_1 \cup \fun{U}P_2$ in such a way that $j_1$ and $j_2$ both preserve orientations; overlaps are resolved by the fact that $(\fun{U}\imath_1);j_1 = (\fun{U}\imath_2);j_2$ and $\imath_1$ and $\imath_2$ preserve orientations.
This choice of orientation determines a unique lift of the pushout to $\ogpos$.
\end{proof}

\begin{lem} \label{lem:maximal_in_boundary}
Let $U$ be a closed subset of an oriented graded poset, $n \in \mathbb{N}$, and $\alpha \in \set{ +, - }$.
Then
\begin{enumerate}
    \item $\grade{n}{(\bound{n}{\alpha}U)} = \faces{n}{\alpha}U$,
    \item $\grade{k}{( \maxel{(\bound{n}{\alpha}U)} )} = \grade{k}{(\maxel{U})}$ for all $k < n$.
\end{enumerate}
\end{lem}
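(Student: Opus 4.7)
The plan is to unfold the definition of $\bound{n}{\alpha}U$ as the union $\clos{(\faces{n}{\alpha}U)} \cup \bigcup_{k<n}\clos{\grade{k}{(\maxel{U})}}$ and track, for each $k$, which elements of a given dimension can contribute to each piece.

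For part 1, I would observe that closures only add strictly lower-dimensional elements. Consequently, the $n$-dimensional elements of $\clos{(\faces{n}{\alpha}U)}$ are precisely the elements of $\faces{n}{\alpha}U$ itself (which are already $n$-dimensional by definition), while the second summand $\bigcup_{k<n}\clos{\grade{k}{(\maxel{U})}}$ contributes only elements of dimension $<n$ and so no $n$-dimensional element at all. This gives $\grade{n}{(\bound{n}{\alpha}U)} = \faces{n}{\alpha}U$.

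For part 2, the forward inclusion is easy: if $x \in \grade{k}{(\maxel{U})}$ with $k<n$, then $x \in \clos{\grade{k}{(\maxel{U})}} \subseteq \bound{n}{\alpha}U$ and has dimension $k$; and because $\bound{n}{\alpha}U \subseteq U$ by Lemma \ref{lem:boundary_inclusion}, any $y > x$ in $\bound{n}{\alpha}U$ would also lie in $U$, contradicting the maximality of $x$ in $U$. Hence $x \in \grade{k}{(\maxel{(\bound{n}{\alpha}U)})}$.

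The converse is the step I expect to require the most care. Suppose $x \in \grade{k}{(\maxel{(\bound{n}{\alpha}U)})}$ with $k<n$. By the defining union, either $x \in \clos{(\faces{n}{\alpha}U)}$ or $x \in \clos{\grade{j}{(\maxel{U})}}$ for some $j<n$. In the first case there exists $z \in \faces{n}{\alpha}U$ with $x \leq z$; since $\dim{z}=n>k=\dim{x}$ we have $x<z$, and as $z \in \bound{n}{\alpha}U$ this contradicts the maximality of $x$ in $\bound{n}{\alpha}U$. So the second case must hold: there exists $y \in \grade{j}{(\maxel{U})}$ with $x \leq y$, and $y \in \bound{n}{\alpha}U$ by construction; maximality of $x$ forces $x=y$, whence $x \in \grade{k}{(\maxel{U})}$. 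The main subtlety is ensuring that the element $y$ (respectively $z$) witnessing non-maximality actually lies in $\bound{n}{\alpha}U$; this is immediate once we note that $\grade{j}{(\maxel{U})} \subseteq \clos{\grade{j}{(\maxel{U})}}$ and $\faces{n}{\alpha}U \subseteq \clos{(\faces{n}{\alpha}U)}$, so both pieces of the union contain their "generators".
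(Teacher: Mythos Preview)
Your proof is correct and follows essentially the same approach as the paper's: both arguments unfold the definition of $\bound{n}{\alpha}U$ and observe that any element of the boundary lies below some ``generator'' $y \in \faces{n}{\alpha}U \cup \bigcup_{j<n}\grade{j}{(\maxel{U})}$, so that maximal elements must coincide with such a $y$. The paper's proof is simply a terser version of what you wrote, dispatching the converse inclusions as ``evident'' where you spell them out.
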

\begin{proof}
Let $x \in \bound{n}{\alpha}U$.
Then by definition there exists $y$ such that $x \leq y$ and either $y \in \faces{n}{\alpha}U$ or $y \in \grade{k}{(\maxel{U})}$ for some $k < n$.
If $x$ is maximal, necessarily $x = y$, and we obtain one inclusion.
The converse inclusions are evident.
\end{proof}

\begin{lem} \label{lem:maximal_vs_faces}
Let $U$ be a closed subset in an oriented graded poset, $n \in \mathbb{N}$, and $\alpha \in \set{ +, - }$.
Then 
\begin{enumerate}
    \item $\grade{n}{(\maxel{U})} = \faces{n}{+} U \cap \faces{n}{-} U$,
    \item if $n = \dim{U}$ then $\grade{n}{(\maxel{U})} = \faces{n}{\alpha} U = \grade{n}{U}$.
\end{enumerate}
\end{lem}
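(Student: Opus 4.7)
The plan is to unwind the definitions directly; both parts reduce to the observation that an element of $U$ is maximal precisely when it has no coface belonging to $U$, combined with the constraint that cofaces live one dimension up.

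For part (1), I would read off from the definition that $x \in \faces{n}{+}U \cap \faces{n}{-}U$ iff $x \in \grade{n}{U}$ and both $\cofaces{}{-}x \cap U = \varnothing$ and $\cofaces{}{+}x \cap U = \varnothing$. The two emptiness conditions combine into $\cofaces{}{}x \cap U = \varnothing$, which, using that $U$ is closed (hence downward-closed under $\leq$), is equivalent to $x$ being maximal in $U$: any $y \in U$ strictly above $x$ would force, via a chain of covers $x = z_0 < z_1 < \ldots < z_k = y$ all lying in $U$ by closure, the existence of some $z_1 \in \cofaces{}{}x \cap U$. Coupled with $\dim{x} = n$, this is exactly the membership condition for $\grade{n}{(\maxel{U})}$, and every step is reversible.

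For part (2), I would use the hypothesis $n = \dim{U}$ to note that $U$ contains no element of dimension $n+1$. Since every element of $\cofaces{}{\alpha}x$ has dimension $\dim{x} + 1 = n + 1$, we automatically have $\cofaces{}{\alpha}x \cap U = \varnothing$ for both signs whenever $x \in \grade{n}{U}$. This immediately yields $\grade{n}{U} \subseteq \faces{n}{\alpha}U$ for each $\alpha$, while the reverse inclusion is built into the definition. The same vacuous-coface argument gives $\grade{n}{U} \subseteq \grade{n}{(\maxel{U})}$, with the reverse inclusion trivial.

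There is no real obstacle here: the entire argument is bookkeeping with the definitions of cofaces, maximality, and the boundary operators on closed subsets, and could just as well have been folded into the remark already appearing after the definition of $\faces{n}{\alpha}U$.
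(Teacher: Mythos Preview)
Your argument is correct and follows essentially the same route as the paper's proof: both reduce part (1) to the equivalence ``$x$ maximal in $U$ $\Leftrightarrow$ $\cofaces{}{+}x \cap U = \cofaces{}{-}x \cap U = \varnothing$'' and derive part (2) from the absence of $(n+1)$\nbd dimensional elements. You spell out the chain-of-covers justification for that equivalence a bit more explicitly than the paper does, but there is no substantive difference.
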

\begin{proof}
Let $x \in U$, $\dim{x} = n$.
Then $x$ is maximal if and only if it has no cofaces in $U$, if and only if $\cofaces{}{-\alpha}x \cap U = \cofaces{}{\alpha}x \cap U = \varnothing$, if and only if $x \in \faces{n}{+} U \cap \faces{n}{-} U$.
If $n = \dim{U}$, then every element of $\grade{n}{U}$ is maximal in $U$, so 
\[\grade{n}{U} = \grade{n}{(\maxel{U})} \subseteq \faces{n}{\alpha} U \subseteq \grade{n}{U}\]
using the first part of the proof, and we conclude that they are all equal.
\end{proof}

\begin{lem} \label{lem:faces_of_union}
Let $U, V$ be closed subsets of an oriented graded poset, $n \in \mathbb{N}$, and $\alpha \in \set{ +, - }$.
Then
\begin{enumerate}
    \item $\maxel{(U \cup V)} = (\maxel{U} \cap \maxel{V}) + (\maxel{U} \setminus V) +$ $(\maxel{V} \setminus U)$,
    \item $\faces{n}{\alpha}(U \cup V) = (\faces{n}{\alpha}U \cap \faces{n}{\alpha}V) + (\faces{n}{\alpha}U \setminus V) + (\faces{n}{\alpha}V \setminus U)$.
\end{enumerate}
\end{lem}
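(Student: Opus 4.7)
The plan is to reduce both parts to a single clean observation: since $U$ and $V$ are closed, for any $x \in U \setminus V$, every coface $y$ of $x$ lies entirely outside $V$ (otherwise, closure of $V$ together with $x \leq y \in V$ would force $x \in V$). The dual statement holds for $x \in V \setminus U$. This means that the predicate ``$x$ has no (or no $(-\alpha)$\nbd) cofaces in $U \cup V$'' simplifies differently depending on which of the three parts of the partition $U \cup V = (U \cap V) + (U \setminus V) + (V \setminus U)$ the element $x$ belongs to.

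For part (1), I would rewrite $\maxel{W} = \set{x \in W \mid \cofaces{}{}x \cap W = \varnothing}$ and intersect with each block of the partition. For $x \in U \cap V$, the condition becomes $\cofaces{}{}x \cap U = \cofaces{}{}x \cap V = \varnothing$, giving $\maxel{U} \cap \maxel{V}$. For $x \in U \setminus V$, the closure observation reduces $\cofaces{}{}x \cap (U \cup V) = \varnothing$ to $\cofaces{}{}x \cap U = \varnothing$, giving $\maxel{U} \setminus V$. Symmetrically for $x \in V \setminus U$. The three blocks are disjoint by construction and their union is $U \cup V$, so the decomposition of $\maxel{(U \cup V)}$ follows.

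For part (2), the same recipe applies, using the definition $\faces{n}{\alpha}W = \set{x \in W_n \mid \cofaces{}{-\alpha}x \cap W = \varnothing}$ instead. The three cases proceed verbatim, with $\cofaces{}{-\alpha}$ in place of $\cofaces{}{}$, yielding the corresponding decomposition into $\faces{n}{\alpha}U \cap \faces{n}{\alpha}V$, $\faces{n}{\alpha}U \setminus V$, and $\faces{n}{\alpha}V \setminus U$.

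There is no real obstacle here; the only subtle step is the closure argument ensuring that elements of $U \setminus V$ cannot have cofaces in $V$, and once that is in hand both statements are case analyses on the tripartition of $U \cup V$. I would state the closure observation as a small preliminary remark and then run the two decompositions in parallel to avoid repetition.
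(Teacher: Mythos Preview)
Your proposal is correct and follows exactly the approach the paper indicates: the paper's proof is a one-liner stating that the result ``follows straightforwardly from the definitions using the decomposition $U \cup V = (U \cap V) + (U \setminus V) + (V \setminus U)$'', and you have simply spelled out the details of that case analysis, including the closure observation that makes the $U \setminus V$ and $V \setminus U$ cases work.
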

\begin{proof}
Follows straightforwardly from the definitions using the decomposition $U \cup V = (U \cap V) + (U \setminus V) + (V \setminus U)$.
\end{proof}

\begin{proof}[Proof of Lemma \ref{lem:boundaries_of_rewrite}]
Identifying $U$ and $V$ with their isomorphic images, we will prove that $\bound{}{-}(U \celto^\varphi V) = U$ and $\bound{}{+}(U \celto^\varphi V) = V$.
Let $n \eqdef \dim{U} = \dim{V}$.
By construction, we have $\faces{n}{-}(U \celto^\varphi V) = \grade{n}{U}$ and $\faces{n}{+}(U \celto^\varphi V) = \grade{n}{V}$.

For all $k < n$, the set $\grade{k}{ ( \maxel{(U \celto^\varphi V)} ) }$ is equal to $\grade{k}{ ( \maxel{(U \cup V)} ) }$.
We claim that this is equal to both $\grade{k}{ ( \maxel{U} ) }$ and $\grade{k}{ ( \maxel{V} ) }$.
For $k < n-1$,
\begin{equation*}
    \grade{k}{ ( \maxel{U} ) } = \grade{k}{ ( \maxel{\bound{}{\alpha}U} ) } = \grade{k}{ ( \maxel{\bound{}{\alpha}V} ) } = \grade{k}{ ( \maxel{V} ) }
\end{equation*}
by Lemma \ref{lem:maximal_in_boundary}.
For $k = n-1$, by Lemma \ref{lem:maximal_vs_faces}
\begin{equation*}
    \grade{n-1}{ ( \maxel{U} ) } \!=\! \faces{}{-}U \cap \faces{}{+}U \!=\! \faces{}{-}V \cap \faces{}{+}V \!=\! \grade{n-1}{ ( \maxel{V} ) }.
\end{equation*}
We then conclude by Lemma \ref{lem:faces_of_union}.
\end{proof}

\begin{proof}[Proof of Lemma \ref{lem:round_is_pure}]
We will prove the contrapositive.
Suppose that $U$ is not pure.
Then there exists a maximal element $x$ in $U$ with $k \eqdef \dim{x} < \dim{U}$.

Since $\grade{k}{(\maxel{U})} = \faces{k}{-}{U} \cap \faces{k}{+}{U}$, we have $x \in \bound{k}{-}U \cap \bound{k}{+}U$.
Then $\bound{k}{-}U \cap \bound{k}{+}U$ is $k$\nbd dimensional and cannot be equal to $\bound{k-1}{}U$, which is $(k-1)$\nbd dimensional.
It follows that $U$ is not round.
\end{proof}

\begin{proof}[Proof of Proposition \ref{prop:molecule_properties}]
The first point follows from \cite[Proposition 1.38]{hadzihasanovic2020diagrammatic}, the third and fourth point from \cite[Proposition 1.23]{hadzihasanovic2020diagrammatic}.
The second point follows from the first and the third for the pasting construction, and \cite[Lemma 2.2]{hadzihasanovic2020diagrammatic} for the rewrite construction.
The fifth point is an immediate consequence of the definition of roundness combined with globularity.
\end{proof}

\begin{proof}[Proof of Propositions \ref{prop:associativity_of_pasting}, \ref{prop:unitality_of_pasting}, \ref{prop:interchange_of_pasting}]
These all follow from \cite[Proposition 1.23]{hadzihasanovic2020diagrammatic} in conjunction with uniqueness of isomorphisms of regular molecules.
\end{proof}

\begin{proof}[Proof of Proposition \ref{prop:atom_properties}]
We prove the first point by induction.
If $U$ was produced by (\textit{Point}), then $U$ is the terminal oriented graded poset, which trivially has a greatest element.
If $U$ was produced by (\textit{Paste}), then $U$ splits into a union $V \cup W$, where $V \cap W = \bound{k}{+}V = \bound{k}{-}W$ and $k < \max\set{ \dim{V}, \dim{W} }$.
Then there exist elements $x_1 \in V$ and $x_2 \in W$ such that
\begin{enumerate}
    \item $x_1$ is maximal in $V$ and $x_2$ is maximal in $W$,
    \item $\dim{x_1} > k$ and $\dim{x_2} > k$.
\end{enumerate}
We have $\dim{(V \cap W)} \leq k$, so neither $x_1$ nor $x_2$ are contained in $V \cap W$.
It follows that $x_1$ and $x_2$ are distinct maximal elements of $U$, so $U$ does not have a greatest element.
If $U$ was produced by (\textit{Atom}), then $U$ splits into $(U_{-} \cup U_{+}) + \set{ \top }$, where $U_{-}$ and $U_{+}$ are round regular molecules of dimension $n$, and $\faces{}{\alpha}\top = \grade{n}{(U_\alpha)}$ for each $\alpha \in \set{ +, - }$.
By Lemma \ref{lem:round_is_pure}, we have $U_\alpha = \clos{\grade{n}{(U_\alpha)}}$, so $U_\alpha = \bound{}{\alpha}\top \subseteq \clset{ \top }$.
It follows that all elements of $U$ are in the closure of $x$, that is, $x$ is the greatest element of $U$.

We prove the second point also by induction.
If $U$ was produced by (\textit{Point}), then $x$ must be the unique element of $U$ whose closure is $U$ itself.
If $U$ was produced by (\textit{Paste}), it splits into $V \cup W$, and $x \in V$ or $x \in W$; the inductive hypothesis applies.
If $U$ was produced by (\textit{Atom}), it is equal to $(V \cup W) + \set{ \top }$, and either $x \in V$ or $x \in W$, in which case the inductive hypothesis applies, or $x = \top$, and $\clset{ x } = U$ is an atom by definition.

For the third point, if $U$ was produced by (\textit{Point}), it is trivially round.
If it was produced by (\textit{Atom}), it is of the form $V \celto W$ where $\bound{}{-}U$ is isomorphic to $V$ and $\bound{}{+}U$ to $W$, and by definition of the rewrite construction their intersection is uniquely isomorphic to $\bound{}{}V$ and $\bound{}{}W$; we conclude by globularity.
If $\dim{U} > 0$, then $U$ was produced by (\textit{Atom}), so it is of the form $V \celto W$, with $\bound{}{-}U$ isomorphic to $V$ and $\bound{}{+}U$ to $W$.
By uniqueness of these isomorphisms, $U$ is isomorphic to $\bound{}{-}U \celto \bound{}{+}U$.
\end{proof}

%%%%%%%%%%%%%%%%%%%%%%%%

\subsection{Proofs for Section \ref{sec:matching}}

\begin{proof}[Proof of Lemma \ref{lem:submolecule_properties}]
For the first point, $\bound{n}{\alpha} U$ is a regular molecule by Proposition \ref{prop:molecule_properties}.
By Proposition \ref{prop:unitality_of_pasting}, the pastings $U \cp{n} \bound{n}{+}U$ and $\bound{n}{-}U \cp{n} U$ are both defined and uniquely isomorphic to $U$.
The inclusion of $\bound{n}{-}U$ into $U$ factors as the inclusion $\bound{n}{-}U \incl (\bound{n}{-}U \cp{n} U)$ followed by an isomorphism, and the inclusion of $\bound{n}{+}U$ factors as the inclusion $\bound{n}{+}U \incl (U \cp{n} \bound{n}{+}U)$ followed by an isomorphism.

For the second point, $\clset{ x }$ is an atom by Proposition \ref{prop:atom_properties}.
We proceed by induction on the construction of $U$.
If $U$ was produced by (\textit{Point}), then $x$ must be the unique element of $U$, so $\clset{ x } = U$.
If $U$ was produced by (\textit{Paste}), it splits into $V \cup W$ with $V, W \submol U$, and $x \in V$ or $x \in W$.
By the inductive hypothesis, $\clset{ x } \submol V$ or $\clset{ x } \submol W$.
If $U$ was produced by (\textit{Atom}), it splits into $(V \cup W) + \set{ \top }$ with $V, W \submol U$, and either $x \in V$ or $x \in W$, in which case the inductive hypothesis applies since $V, W \submol U$ by Lemma \ref{lem:submolecule_properties}, or $x = \top$, and $\clset{ x } = U$.
\end{proof}

\begin{lem} \label{lem:submolecule_rewrite}
Let $V$ be a regular molecule, $n < \dim{V}$, $\alpha \in \set{ +, - }$.
Consider a pushout diagram of the form
\[\begin{tikzcd}
	{\bound{n}{\alpha} V} && V \\
	U && {V \cup U}
	\arrow[hook, from=1-1, to=1-3]
	\arrow["\imath", hook', from=1-1, to=2-1]
	\arrow["{j_U}", hook, from=2-1, to=2-3]
	\arrow["{j_V}", hook', from=1-3, to=2-3]
	\arrow["\lrcorner"{anchor=center, pos=0.125, rotate=180}, draw=none, from=2-3, to=1-1]
\end{tikzcd}\]
in $\ogpos$.
If $\dim{U} = n$ and $\imath$ is a submolecule inclusion, then
\begin{enumerate}
    \item $V \cup U$ is a regular molecule,
    \item $j_U$ maps $U$ onto $\bound{n}{\alpha}(V \cup U)$,
    \item $j_V(V) \submol V \cup U$ and $j_V(\bound{n}{-\alpha}V) \submol \bound{n}{-\alpha}(V \cup U)$.
\end{enumerate}
\end{lem}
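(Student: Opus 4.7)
The plan is to induct on the derivation of the submolecule inclusion $\imath$ in terms of isomorphisms and pasting factor inclusions, treating $\alpha = -$ (the case $\alpha = +$ is strictly dual). The base case where $\imath$ is an isomorphism collapses the pushout to $V \cup U \cong V$, and all three conclusions are immediate via Lemma \ref{lem:submolecule_properties}.

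For the inductive step, suppose $\imath = \jmath; \kappa$ with $\kappa\colon U_0 \incl U_0 \cp{k} U_1 = U$ a pasting factor inclusion and $\jmath\colon \bound{n}{-}V \submol U_0$ a shorter submolecule inclusion (the case factoring through $U_1$ is symmetric), so $k < \min\{\dim U_0, \dim U_1\} \leq n$. The inductive hypothesis applied to $\jmath$ yields a regular molecule $V \cup U_0$ together with $\bound{n}{-}(V \cup U_0) \cong U_0$, $V \submol V \cup U_0$, and $\bound{n}{+}V \submol \bound{n}{+}(V \cup U_0)$. By globularity (Proposition \ref{prop:molecule_properties}(4)) and the inductive identification, $\bound{k}{+}(V \cup U_0) = \bound{k}{+}U_0 \cong \bound{k}{-}U_1$, so the pasting $(V \cup U_0) \cp{k} U_1$ is defined.

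The key identification $V \cup U \cong (V \cup U_0) \cp{k} U_1$ follows by recognising both sides as the colimit over the span $V \leftarrow \bound{n}{-}V \hookrightarrow U_0 \hookleftarrow \bound{k}{+}U_0 \hookrightarrow U_1$ in $\ogpos$; this establishes (1). For (2), the standard boundary rule for pasting below the pasting dimension (valid since $k < n$) combined with $\bound{n}{-}U_i = U_i$ for $i = 0, 1$ (because $\dim U_i \leq n$) gives $\bound{n}{-}(V \cup U) = \bound{n}{-}(V \cup U_0) \cp{k} \bound{n}{-}U_1 = U_0 \cp{k} U_1 = U$. For (3), $V \submol V \cup U_0$ composes with the pasting factor inclusion $V \cup U_0 \incl V \cup U$ to give $V \submol V \cup U$; the analogous boundary computation applied to $\bound{n}{+}$ yields $\bound{n}{+}(V \cup U) = \bound{n}{+}(V \cup U_0) \cp{k} U_1$, into which $\bound{n}{+}V$ submolecule-includes by composing the inductive conclusion with a pasting factor inclusion. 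The main technical point is the dimension bookkeeping: we must verify that $\dim U_0 = n$ so that the inductive hypothesis applies with $U_0$ in place of $U$, which reduces to the standard fact that $\dim \bound{n}{-}V = n$ for all $n < \dim V$, provable by a straightforward induction on the construction of $V$.
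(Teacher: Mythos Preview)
Your proof is correct and takes essentially the same route as the paper: both induct on the construction of the submolecule inclusion $\imath$, identify the pushout $V \cup U$ with a pasting, and then read off the three conclusions from the boundary identities for pastings in strict $\omega$\nbd categories. The paper organises the induction slightly differently---it handles a single pasting-factor generator directly (showing $V \cup (\bound{n}{\alpha}V \cp{k} W) \cong V \cp{k} W$ via globularity) and then closes under composition in one line via the pasting law for pushout squares---whereas you fold these two steps into a single strong-induction step, applying the hypothesis to the shorter prefix $\jmath$ and obtaining $V \cup U \cong (V \cup U_0) \cp{k} U_1$. The content is the same.

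One small imprecision: you assert $k < \min\{\dim U_0, \dim U_1\}$ as if it were forced by $\kappa$ being a pasting factor inclusion, but the paper's definition of submolecule inclusion only asks that $U_0 \cp{k} U_1$ be defined, not that $k$ lie strictly below both dimensions. When $k \geq n$ the pasting is degenerate by unitality (Proposition~\ref{prop:unitality_of_pasting}) and $\kappa$ becomes an isomorphism, so the step collapses into the inductive hypothesis; the paper's proof dispatches this case explicitly before assuming $k < n$, and strictly speaking you should do the same rather than silently excluding it.
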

\begin{proof}
By induction on the construction of $\imath$.
If $\imath$ is an isomorphism, then $j_V$ is also an isomorphism, and all the statements are trivially satisfied.

Suppose $U$ is of the form $\bound{n}{\alpha}V \cp{k} W$ for some $k \in \mathbb{N}$, and $\imath$ is the inclusion of $\bound{n}{\alpha}V$ into the pasting.
Since $\dim{U} = n$, necessarily $\dim{W} \leq n$, so $\bound{n}{\alpha}W = W$.
If $k \geq n$, then also $k \geq \dim{W}$, and in this case $\imath$ and $j_V$ are again isomorphisms.
Suppose that $k < n$.
Identifying $V$ with its image through $j_V$, $V \cup U$ splits into $V \cup W$ with
\begin{equation*}
    V \cap W = \bound{n}{\alpha}V \cap W = \bound{k}{-}W = \bound{k}{+}(\bound{n}{\alpha}V) = \bound{k}{+} V
\end{equation*}
where the final equation uses globularity of $V$.
This exhibits $V \cup U$ as $V \cp{k} W$, with $j_V$ the inclusion of $V$ into the pasting, and $j_U$ maps $\bound{n}{\alpha}V \cp{k} W$ onto $\bound{n}{\alpha}(V \cp{k} W)$ by \cite[Proposition 1.23]{hadzihasanovic2020diagrammatic} and the axioms of strict $\omega$\nbd categories.
Similarly, $\bound{n}{-\alpha}V \submol \bound{n}{-\alpha}(V \cp{k} W)$.
The case where $U$ is of the form $W \cp{k} \bound{n}{\alpha}V$ is dual.

By the pasting law for pushout squares, if the statement is true of two submolecule inclusions, it is also true of their composite.
\end{proof}

\begin{lem} \label{lem:substitution_direct_pushout}
Let $U$ be an oriented graded poset, $V, W$ round regular molecules, and $\imath\colon V \incl U$ an inclusion such that $\subs{U}{W}{\imath(V)}$ is defined.
Let $\varphi\colon \bound{}{}V \incliso \bound{}{}W$ be the isomorphism used in the construction of $V \celto W$.
Then $\subs{U}{W}{\imath(V)}$ can be constructed as the pushout
\begin{equation} \label{eq:substitution_direct_pushout}
\begin{tikzcd}
	\bound{}{}V & \bound{}{}W & W \\
	U \setminus (\imath(V) \setminus \imath(\bound{}{} V)) && \subs{U}{W}{\imath(V)}.
	\arrow["\varphi", hook, from=1-1, to=1-2]
	\arrow[hook, from=1-2, to=1-3]
	\arrow[hook', from=1-1, to=2-1]
	\arrow[hook, from=2-1, to=2-3]
	\arrow["j", hook', from=1-3, to=2-3]
	\arrow["\lrcorner"{anchor=center, pos=0.125, rotate=180}, draw=none, from=2-3, to=1-1]
\end{tikzcd}
\end{equation}
\end{lem}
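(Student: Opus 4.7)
The plan is to unfold $\subs{U}{W}{\imath(V)} \eqdef \bound{}{+}(U \cup (V \celto W))$ explicitly as an oriented graded poset, and match it to the pushout on the right-hand side of (\ref{eq:substitution_direct_pushout}).

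First I would unpack the two nested pushouts. Let $n \eqdef \dim V = \dim W$. By the rewrite construction, $V \celto W$ has underlying set $(V \cup W) + \set{\top}$ where $V$ and $W$ are glued along $\varphi\colon \bound{}{}V \incliso \bound{}{}W$, and $\top$ is the unique $(n+1)$\nbd dimensional element with $\faces{}{-}\top = \grade{n}{V}$ and $\faces{}{+}\top = \grade{n}{W}$. Composing with the pushout defining $U \cup (V \celto W)$, and using that pushouts compose, I deduce that $U \cup (V \celto W)$ has underlying set $(U \cup W) + \set{\top}$, where $U$ and $W$ meet exactly in $\imath(\bound{}{}V) = \bound{}{}W$ (identified via $\varphi$), and $\top$ covers $\imath(\grade{n}{V}) \cup \grade{n}{W}$ as its input and output faces respectively.

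Next I would compute $\bound{}{+}(U \cup (V \celto W))$. Since $\top$ is the only $(n+1)$\nbd element and $\faces{}{-}\top = \imath(\grade{n}{V})$, an $n$\nbd element $x$ satisfies $\cofaces{}{-}x \cap (U \cup (V \celto W)) = \varnothing$ iff $x \notin \imath(\grade{n}{V})$, hence $\faces{n}{+}(U \cup (V \celto W)) = (\grade{n}{U} \setminus \imath(\grade{n}{V})) \cup \grade{n}{W}$. Because $W$ is round, hence pure by Lemma \ref{lem:round_is_pure}, we have $\clos{\grade{n}{W}} = W$. Moreover, the lower-dimensional maximal elements of $U \cup (V \celto W)$ coincide with those of $U$ outside $\imath(\bound{}{}V)$: purity of $W$ rules out maximal elements of $W$ of dimension less than $n$, and purity of $V$ forces every lower-dimensional element of $\imath(V)$ to lie below some element of $\imath(\grade{n}{V})$, hence not to be maximal in $U$.

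Finally I would identify $\bound{}{+}(U \cup (V \celto W))$ with $P \cup W$ glued along $\imath(\bound{}{}V) = \bound{}{}W$, where $P \eqdef U \setminus (\imath(V) \setminus \imath(\bound{}{}V))$. One inclusion is direct: no face of an element of $\grade{n}{U} \setminus \imath(\grade{n}{V})$, nor any lower-dimensional maximal element of $U$ outside $\imath(\bound{}{}V)$, can lie in the interior of $\imath(V)$. For the reverse inclusion, any $y \in P$ either belongs to $\imath(\bound{}{}V) \subseteq W$, or else lies outside $\imath(V)$, and I would show such a $y$ is either below some $x \in \grade{n}{U} \setminus \imath(\grade{n}{V})$ or, following a chain of cofaces inside $U$, reaches a lower-dimensional maximal element of $U$ outside $\imath(\bound{}{}V)$. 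The pushout property of (\ref{eq:substitution_direct_pushout}) then follows from the description of colimits in $\ogpos$ in Proposition \ref{prop:ogpos_limits_and_colimits}: on underlying posets both sides agree, and orientations on the two legs inherited from $U$ and $V \celto W$ patch together to give exactly the orientation on $\bound{}{+}(U \cup (V \celto W))$. The main obstacle will be the reverse inclusion, specifically tracing a $y \in U \setminus \imath(V)$ not below any $n$\nbd element to a lower-dimensional maximal element of $U$ (care is needed because $U$ is not assumed pure), and ruling out the possibility that an interior $(n-1)$\nbd cell of $\imath(V)$ has a coface outside $\imath(V)$; this uses Lemma \ref{lem:codimension_1_elements}, which forces both its cofaces to land inside $\imath(\grade{n}{V})$.
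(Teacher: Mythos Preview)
Your approach is essentially the paper's: unfold $U \cup (V \celto W)$, compute $\faces{n}{+}$ and the lower-dimensional maximal elements of the union, and recognise $\bound{}{+}(U \cup (V \celto W))$ as the union of $W$ with $P \eqdef U \setminus (\imath(V) \setminus \imath(\bound{}{}V))$ glued along $\bound{}{}V = \bound{}{}W$. The paper's execution is more compact: it first records that $P$ is closed in $U$ (``complement of the complement of a closed subset in a closed subset''), then invokes Lemma~\ref{lem:faces_of_union} once to get $\faces{n}{+}(U \cup (V \celto W)) = \grade{n}{W} + (\grade{n}{U} \setminus \grade{n}{V})$, and uses purity of $V$ and of $V \celto W$ to identify the lower-dimensional maximal elements of the union with those of $U$ and of $P$. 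After that the conclusion is immediate, with no separate ``two inclusions'' argument.

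There is a genuine issue in your plan. You propose to invoke Lemma~\ref{lem:codimension_1_elements} to rule out an interior $(n-1)$-cell of $\imath(V)$ acquiring a coface in $U \setminus \imath(V)$, but that lemma requires the ambient to be a regular molecule, whereas here $U$ is only assumed to be an oriented graded poset; applied to $V$ the lemma only bounds cofaces \emph{within} $V$, not in $U$. Even granting its hypothesis, it would only treat $(n-1)$-cells, not lower-dimensional interior cells of $V$. You have also inverted where the difficulty lies: the inclusion $P \subseteq \bound{}{+}(U \cup (V \celto W))$ is the easy one, since any $y \in U \setminus \imath(V)$ lies below a maximal element of $U$ which cannot be in $\imath(V)$ (as $\imath(V)$ is closed), while any $y \in \imath(\bound{}{}V)$ already lies in $W$. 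The delicate direction is showing that $\clos{(\grade{n}{U} \setminus \imath(\grade{n}{V}))}$ and the closures of lower-dimensional maximal elements avoid the interior of $\imath(V)$---and that is precisely the content of the closedness of $P$ which the paper establishes up front rather than via coface-counting.
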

\begin{proof}
We can safely identify $V$ with its image through $\imath$, and treat it as a closed subset of $U$.
First of all, observe that $U \setminus (V \setminus \bound{}{}V)$ is the complement of the complement of a closed subset in a closed subset, so it is closed in $U$, and well-defined as an oriented graded poset.

Let $n \eqdef \dim U$, so $\dim (U \cup (V \celto W))$ and $\dim (V \celto W)$ are both equal to $n + 1$.
Then
\begin{equation*}
    \faces{n}{+}(V \celto W) = \grade{n}{W}, \quad \quad \faces{n}{+}U = \grade{n}{U}
\end{equation*}
and since $U \cap (V \celto W) = V$, by Lemma \ref{lem:faces_of_union}
\begin{equation*}
    \faces{n}{+}(U \cup (V \celto W)) = \grade{n}{W} + (\grade{n}{U} \setminus \grade{n}{V}) = \grade{n}{W} + \grade{n}{(U \setminus (V \setminus \bound{}{}V))},
\end{equation*}
while for all $k < n$
\begin{align*}
    \grade{k}{(\maxel{(U \cup (V \celto W))})} & = \grade{k}{(\maxel{U})} = \\
    & = \grade{k}{(\maxel{(U \setminus (V \setminus \bound{}{}V)}))}
\end{align*}
because both $V$ and $V \celto W$ are round, hence pure, and do not contain any maximal elements of dimension $k$.

It follows that $\bound{}{+}(U \cup (V \celto W))$ is the union of $W$ and $(U \setminus (V \setminus \bound{}{}V))$, with intersection $\bound{}{}W = \bound{}{}V$.
\end{proof}

\begin{lem} \label{lem:revert_substitution}
Let $U$ be an oriented graded poset, $V, W$ round regular molecules, and $\imath\colon V \incl U$ an inclusion such that $\subs{U}{W}{\imath(V)}$ is defined.
Let $j\colon W \incl \subs{U}{W}{\imath(V)}$ be the right side of (\ref{eq:substitution_direct_pushout}).
Then $\subs{(\subs{U}{W}{\imath(V)})}{V}{j(W)}$ is defined and isomorphic to $U$.
\end{lem}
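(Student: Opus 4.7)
My approach is to apply Lemma \ref{lem:substitution_direct_pushout} twice and exhibit both $\subs{T}{V}{j(W)}$ and $U$ as the same pushout, where I write $T \eqdef \subs{U}{W}{\imath(V)}$. Identifying $V$ with $\imath(V)$, set $U' \eqdef U \setminus (V \setminus \bound{}{}V)$, and let $\varphi\colon \bound{}{}V \incliso \bound{}{}W$ be the isomorphism used in the construction of $V \celto W$.

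First I would verify that $\subs{T}{V}{j(W)}$ is defined: since $\varphi^{-1}\colon \bound{}{}W \incliso \bound{}{}V$ restricts to input and output boundary isomorphisms (because $\varphi$ does), the rewrite $W \celto V$ is defined, so the substitution makes sense. Next, applying Lemma \ref{lem:substitution_direct_pushout} to $T$ exhibits it as the pushout of the span $U' \hookleftarrow \bound{}{}V \stackrel{\varphi}{\incliso} \bound{}{}W \hookrightarrow W$. Consequently $T$ decomposes as $U' \cup j(W)$ with intersection $j(\bound{}{}W)$, so $T \setminus (j(W) \setminus j(\bound{}{}W)) = U'$, and under this identification the subset inclusion $j(\bound{}{}W) \hookrightarrow U'$ agrees with the boundary inclusion $\bound{}{}V \hookrightarrow U'$ precomposed with $\varphi$.

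Then I apply Lemma \ref{lem:substitution_direct_pushout} again to $\subs{T}{V}{j(W)}$, using the inclusion $j$ and the isomorphism $\varphi^{-1}$: this exhibits it as the pushout of $U' \hookleftarrow \bound{}{}W \stackrel{\varphi^{-1}}{\incliso} \bound{}{}V \hookrightarrow V$, where by the previous paragraph the left leg is $\varphi^{-1}$ followed by $\bound{}{}V \hookrightarrow U'$. Precomposing both legs with $\varphi$ converts this span into $U' \hookleftarrow \bound{}{}V \hookrightarrow V$, whose pushout is canonically the same. But the latter is precisely the pushout exhibiting the decomposition $U = U' \cup V$ with intersection $\bound{}{}V$, so $\subs{T}{V}{j(W)} \cong U$ by the universal property.

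The main subtlety I expect is the bookkeeping of the identifications via $\varphi$ and $\varphi^{-1}$: one must carefully verify that the inclusion $\bound{}{}W \hookrightarrow U'$ arising in the second pushout is precisely $\varphi^{-1}$ composed with the boundary inclusion of $\bound{}{}V \hookrightarrow U'$ read off from the first pushout, so that precomposition with $\varphi$ collapses all identifications to the identity on $\bound{}{}V$ and the two spans coincide.
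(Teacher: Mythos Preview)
Your proposal is correct and follows essentially the same approach as the paper: the paper's proof simply says that $W \celto V$ is defined whenever $V \celto W$ is, and that the isomorphism with $U$ is ``straightforward algebra of closed subsets using Lemma~\ref{lem:substitution_direct_pushout} twice.'' You have carried out exactly that algebra, including the bookkeeping with $\varphi$ and $\varphi^{-1}$ that the paper leaves implicit.
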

\begin{proof}
Since $W \celto V$ is defined whenever $V \celto W$ is defined, it follows that $\subs{(\subs{U}{W}{\imath(V)})}{V}{j(W)}$ is defined.
The isomorphism with $U$ is straightforward algebra of closed subsets using Lemma \ref{lem:substitution_direct_pushout} twice.
\end{proof}

\begin{proof}[Proof of Proposition \ref{prop:round_submolecule_substitution}]
If $\imath$ is a submolecule inclusion, by Lemma \ref{lem:submolecule_rewrite} $U \cup (V \celto W)$ and its output boundary $U[W/\imath(V)]$ are regular molecules, and the inclusion of $W$ into $U[W/\imath(V)]$ is a submolecule inclusion.

If $V$ is a round regular molecule, then $\compos{V}$ is an atom, which is round by Proposition \ref{prop:atom_properties}, and has boundaries isomorphic to those of $V$ by Lemma \ref{lem:boundaries_of_rewrite}.
Then $V \celto \compos{V}$ is defined, so the fourth condition is a special case of the third one.

Finally, suppose $\subs{U}{\compos{V}}{\imath(V)}$ is a regular molecule.
By Lemma \ref{lem:submolecule_properties}, since $\compos{V}$ is an atom, its inclusion $j$ into $\subs{U}{\compos{V}}{\imath(V)}$ is a submolecule inclusion.

Using Lemma \ref{lem:submolecule_rewrite} as in the first part, we deduce that $\subs{(\subs{U}{\compos{V}}{\imath(V)})}{V}{j(\compos{V})}$ is a regular molecule, and the inclusion of $V$ into it is a submolecule inclusion.
By Lemma \ref{lem:revert_substitution}, $\subs{(\subs{U}{\compos{V}}{\imath(V)})}{V}{j(\compos{V})}$ is isomorphic to $U$, and $\imath$ factors as this submolecule inclusion followed by an isomorphism.
\end{proof}

\begin{proof}[Proof of Lemma \ref{lem:codimension_1_elements}]
This is \cite[Lemma 1.16]{hadzihasanovic2020diagrammatic}.
\end{proof}

\begin{proof}[Proof of Lemma \ref{lem:flow_under_inclusion}]
It follows from Lemma \ref{lem:properties_of_inclusions} that, for all $x \in V$ and $\alpha \in \set{ +,- }$, the set $\faces{k}{\alpha}x$ is isomorphic to $\faces{k}{\alpha}\imath(x)$.
It follows that, for all $x, y \in \bigcup_{i > k} \grade{i}{V}$, there is an edge between $x$ and $y$ in $\flow{k}{V}$ if and only if there is an edge between $\imath(x)$ and $\imath(y)$ in $\flow{k}{U}$.
\end{proof}

\begin{dfn}
The following proof uses results proved in the following sections; none of their proofs use it, so there is no circularity.
\end{dfn}

\begin{proof}[Proof of Proposition \ref{prop:round_molecule_connected_flowgraph}]
First of all, if $U$ is round, then it is pure, so the vertices of $\flow{n-1}{U}$ are the elements of $\grade{n}{U}$.
If $U$ is an atom, then $\flow{n-1}{U}$ consists of a single vertex and no edges, so it is trivially connected.
In particular this is true when $n = 0$ since $U$ is then the point, so we can proceed by induction on $n$.

Suppose $n > 0$ and $\size{\grade{n}{U}} > 1$, which by Lemma \ref{lem:layering_dimension_smaller_than_dimension} implies $\lydim{U} = n - 1$.
Assume by way of contradiction that $\flow{n-1}{U}$ is not connected.
Then there is a bipartition $\grade{n}{U} = A + B$ such that there are no edges in $\flow{n-1}{U}$ between vertices in $A$ and vertices in $B$.
By Lemma \ref{lem:codimension_1_elements}, no element of $\grade{n-1}{U}$ can be covered by two elements with the same orientation, so this implies that $\dim{(\clos{A} \cap \clos{B})} < n-1$.
Let 
\begin{align*}
    A' & \eqdef \set{x \in \faces{}{-}U \mid \cofaces{}{-}x \subseteq A}, \\
    B' & \eqdef \set{x \in \faces{}{-}U \mid \cofaces{}{-}x \subseteq B}.
\end{align*}
Then $A' + B'$ is a bipartition of $\faces{}{-}U$.
By Proposition \ref{prop:molecule_properties}, $\bound{}{-}U$ is round, so by the inductive hypothesis $\flow{n-2}{(\bound{}{-}U)}$ is connected.
It follows that there exist $\alpha \in \set{+, -}$, $x \in A'$, $y \in B'$, and $z \in \grade{n-2}{U}$ such that $z \in \faces{}{\alpha}x \cap \faces{}{-\alpha}y$.
Then $z$ has two distinct cofaces in $\bound{}{-}U$, so by Lemma \ref{lem:codimension_1_elements} $z \notin \bound{}{}(\bound{}{-}U) = \bound{n-2}{}U$.
We claim that $z \in \bound{}{+}U$, contradicting the roundness of $U$.

By Theorem \ref{thm:molecules_admit_layerings}, there exists an $(n-1)$\nbd layering $(\order{i}{U})_{i=1}^m$ of $U$; we will identify the $\order{i}{U}$ with their isomorphic images in $U$.
Let $V_0 \eqdef \bound{}{-}U$ and $V_i \eqdef \bound{}{+}\order{i}{U}$ for each $i \in \set{1, \ldots, m}$.
We will prove that, for all $i \in \set{0, \ldots, m}$,
\begin{enumerate}
    \item $z \in V_i$,
    \item there exist elements $x_i \in \clos{A}$ and $y_i \in \clos{B}$ such that $\cofaces{}{\alpha} z \cap V_i = \set{x_i}$ and $\cofaces{}{-\alpha} z \cap V_i = \set{y_i}$.
\end{enumerate}
For $i = 0$, we have already established this with $x_0 \eqdef x$, $y_0 \eqdef y$.
Let $i \geq 0$, and assume this holds for $i - 1$.
By Lemma \ref{lem:layering_basic_properties}, there is a single $n$\nbd dimensional element $\order{i}{x}$ in $\order{i}{U}$, and by Lemma \ref{lem:boundary_move}
\begin{equation*}
    V_i = \subs{\bound{}{-}\order{i}{U}}{\bound{}{+}\order{i}{x}}{\bound{}{-}\order{i}{x}} = \subs{V_{i-1}}{\bound{}{+}\order{i}{x}}{\bound{}{-}\order{i}{x}}.
\end{equation*}
Suppose $\order{i}{x} \in A$.
Then $y_{i-1} \notin \clset{\order{i}{x}}$, so $y_{i-1} \in V_i$, and we let $y_i \eqdef y_{i-1}$.
If $x_{i-1} \notin \clset{\order{i}{x}}$ then also $x_{i-1} \in V_i$, and we let $x_i \eqdef x_{i-1}$.
Otherwise, $x_{i-1}$ is the only coface of $z$ in $\bound{}{-}\order{i}{x}$, so by Lemma \ref{lem:codimension_1_elements} we have $z \in \bound{}{\alpha}(\bound{}{-}\order{i}{x}) = \bound{}{\alpha}(\bound{}{+}\order{i}{x})$.
It follows that $z \in V_i$ and there exists a unique $x_i$ such that $\cofaces{}{\alpha}z \cap \bound{}{+}\order{i}{x} = \set{x_i}$.
The case $\order{i}{x} \in B$ is analogous.

Since $V_m = \bound{}{+}U$, we have proved that $z \in \bound{}{+}U$, a contradiction.
\end{proof}

%%%%%%%%%%%%%%%%%%%%%%%%

\subsection{Proofs for Section \ref{sec:rewritable}}

\begin{lem} \label{lem:layering_intersections}
Let $U$ be a regular molecule, $-1 \leq k < \dim{U}$, and $(\order{i}{U})_{i=1}^m$ a $k$\nbd layering of $U$.
For all $i < j \in \set{1, \ldots, m}$,
\begin{equation*}
    \order{i}{U} \cap \order{j}{U} = \bound{k}{+}\order{i}{U} \cap \bound{k}{-}\order{j}{U}.
\end{equation*}
\end{lem}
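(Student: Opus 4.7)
The plan is to reduce the general case to a single associativity split. By iterated application of Proposition \ref{prop:associativity_of_pasting}, I would set
\begin{equation*}
V \eqdef \order{1}{U} \cp{k} \ldots \cp{k} \order{i}{U}, \quad W \eqdef \order{i+1}{U} \cp{k} \ldots \cp{k} \order{m}{U},
\end{equation*}
so that $U \cong V \cp{k} W$, and identify $V$, $W$, and each $\order{\ell}{U}$ with their images in $U$. From the pushout construction of pasting and Proposition \ref{prop:ogpos_limits_and_colimits}, we have $V \cap W = \bound{k}{+}V = \bound{k}{-}W$ as subsets of $U$. Since $i \leq i$ and $j \geq i+1$, we get $\order{i}{U} \subseteq V$ and $\order{j}{U} \subseteq W$, hence
\begin{equation*}
\order{i}{U} \cap \order{j}{U} \subseteq V \cap W = \bound{k}{+}V.
\end{equation*}

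Next, I would invoke the strict $\omega$-categorical identity $\bound{k}{+}(A \cp{k} B) = \bound{k}{+}B$ for regular molecules $A$, $B$ of dimension $> k$ pasted at the $k$-boundary; an easy induction on the number of factors of $V$ then gives $\bound{k}{+}V = \bound{k}{+}\order{i}{U}$, and dually $\bound{k}{-}W = \bound{k}{-}\order{i+1}{U}$. The identity itself can be extracted from the pushout description combined with Lemma \ref{lem:faces_of_union} and Lemma \ref{lem:maximal_in_boundary}: the only $k$-dimensional elements of $A \cup B$ with no $(-)$-cofaces in $A \cup B$ are either in $\faces{k}{+}B \setminus A$ or in the overlap $\faces{k}{+}A \cap \faces{k}{+}B \subseteq \bound{k}{+}A = \bound{k}{-}B$, and by Lemma \ref{lem:codimension_1_elements} the latter are precisely the $k$-dimensional maximal elements of $B$, which are also in $\faces{k}{+}B$. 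Combining this with a symmetric split at the $(j-1, j)$ position yields
\begin{equation*}
\order{i}{U} \cap \order{j}{U} \subseteq \bound{k}{+}\order{i}{U} \cap \bound{k}{-}\order{j}{U}.
\end{equation*}

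The reverse inclusion is immediate: by Lemma \ref{lem:boundary_inclusion}, $\bound{k}{+}\order{i}{U} \subseteq \order{i}{U}$ and $\bound{k}{-}\order{j}{U} \subseteq \order{j}{U}$, so their intersection lies in $\order{i}{U} \cap \order{j}{U}$. I expect the main obstacle to be the technical verification of $\bound{k}{+}(A \cp{k} B) = \bound{k}{+}B$: although it is a familiar $\omega$-categorical axiom and intuitively clear, giving a clean self-contained derivation from the pushout definition requires some care in disentangling the cases of Lemma \ref{lem:faces_of_union} and handling maximal elements of dimension exactly $k$ in each factor.
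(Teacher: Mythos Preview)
Your proposal is correct and follows essentially the same approach as the paper: both split the layering at position $i$ (and, dually, at position $j$) to trap $\order{i}{U} \cap \order{j}{U}$ inside the overlap of the two halves, then identify that overlap with $\bound{k}{+}\order{i}{U}$ (respectively $\bound{k}{-}\order{j}{U}$). The paper packages the two splits into a single three-piece decomposition $V$, $W$, $Z$ and takes the identity $\bound{k}{+}(\order{1}{U} \cp{k} \ldots \cp{k} \order{i}{U}) = \bound{k}{+}\order{i}{U}$ for granted via \cite[Proposition~1.23]{hadzihasanovic2020diagrammatic}, whereas you spell out a direct argument for it; the paper also leaves the reverse inclusion implicit, while you state it.
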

\begin{proof}
Let $i < j \in \set{1, \ldots, m}$, and
\begin{align*}
    V & \eqdef \order{1}{U} \cp{k} \ldots \cp{k} \order{i}{U}, \\
    W & \eqdef \bound{k}{+}\order{i}{U} \cp{k} \order{i+1}{U} \cp{k} \ldots \cp{k} \order{j-1}{U}, \\
    Z & \eqdef \order{j}{U} \cp{k} \ldots \cp{k} \order{m}{U}.
\end{align*}
Then $U$ splits into $V \cup (W \cp{k} Z)$ along the $k$\nbd boundary, so
\begin{equation*}
    \bound{k}{+}\order{i}{U} = \bound{k}{+}V = \bound{k}{-}(W \cp{k} Z) = V \cap (W \cp{k} Z).
\end{equation*}
Since $\order{i}{U} \subseteq V$ and $\order{j}{U} \subseteq (W \cp{k} Z)$, it follows that $\order{i}{U} \cap \order{j}{U} \subseteq \bound{k}{+}\order{i}{U}$.
Dually, from the fact that $U$ splits into $(V \cp{k} W) \cup Z$ along the $k$\nbd boundary, we derive $\order{i}{U} \cap \order{j}{U} \subseteq \bound{k}{-}\order{j}{U}$.
\end{proof}

\begin{proof}[Proof of Lemma \ref{lem:layering_basic_properties}]
For the first point, since $\dim{\order{i}{U}} > k$ each $\order{i}{U}$ contains at least one maximal element of dimension $> k$, and because
\begin{equation*}
    \dim{( \order{i}{U} \cap \order{j}{U} )} = \dim{(\bound{k}{+}\order{i}{U} \cap \bound{k}{-}\order{j}{U})} \leq k
\end{equation*}
by Lemma \ref{lem:layering_intersections}, no such maximal element is contained in two of them.
Since there are exactly $m$ maximal elements of dimension $> k$, it follows that each $\order{i}{U}$ contains exactly one of them.

For the second point, for all $i \in \set{1, \ldots, m}$, let
\begin{align*}
    \order{i}{V} \eqdef \bound{\ell}{+}\order{1}{U} & \cp{k} \ldots \cp{k} \bound{\ell}{+}\order{i-1}U \cp{k} \\
    & \cp{k} \order{i}{U} \cp{k} \bound{\ell}{-}\order{i+1}{U} \cp{k} \ldots \cp{k} \bound{\ell}{-}\order{m}U.
\end{align*}
By repeated applications of Proposition \ref{prop:interchange_of_pasting} followed by Proposition \ref{prop:unitality_of_pasting}, $U$ is isomorphic to
\begin{equation*}
    \order{1}{V} \cp{\ell} \ldots \cp{\ell} \order{m}{V}.
\end{equation*}
Restricting to the subsequence of $(\order{i}{V})_{i=1}^m$ on those $i$ such that $\dim{\order{i}{V}} > \ell$, which does not change the result by Proposition \ref{prop:unitality_of_pasting}, we obtain an $\ell$\nbd layering of $U$.
\end{proof}

\begin{lem} \label{lem:layering_dimension_smaller_than_dimension}
Let $U$ be a regular molecule, $n \eqdef \dim{U}$.
Then
\begin{enumerate}
    \item $\lydim{U} \leq n - 1$,
    \item $\lydim{U} = n - 1$ if and only if $\size{\grade{n}{U}} > 1$.
\end{enumerate}
\end{lem}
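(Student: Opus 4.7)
The plan is to unfold the definition of $\lydim{U}$ and exploit the fact that $\grade{i}{U} = \varnothing$ whenever $i > n = \dim{U}$. Both parts reduce to a direct computation of the set whose size is being bounded in the definition of layering dimension.

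For (1), since $\grade{i}{U} = \varnothing$ for every $i > n$, we have $\grade{i}{(\maxel{U})} = \varnothing$ for every $i > n$ as well, so
\begin{equation*}
    \bigcup_{i > n} \grade{i}{(\maxel{U})} = \varnothing,
\end{equation*}
which has size $0 \leq 1$. Thus $k \eqdef n-1$ satisfies the condition in the definition of $\lydim{U}$, and we conclude that $\lydim{U} \leq n-1$.

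For (2), I would argue that $\lydim{U} = n-1$ if and only if $k = n-2$ fails to satisfy the defining condition (the case $n = 0$ being degenerate, with $U$ the point). For $k = n-2$, dimensions strictly above $n$ again contribute nothing, so the relevant union collapses to $\grade{n}{(\maxel{U})}$. Because $\dim{U} = n$, every element of $\grade{n}{U}$ is automatically maximal in $U$, hence $\grade{n}{(\maxel{U})} = \grade{n}{U}$. Therefore the condition at $k = n-2$ fails precisely when $\size{\grade{n}{U}} > 1$, which combined with (1) gives the stated equivalence.

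This is essentially a bookkeeping unfolding of the definitions; the only mild subtlety is keeping track of the shift by one in the index (the condition for $\lydim{U}$ at $k$ concerns dimensions $> k+1$), which I would make explicit in the writeup to avoid any off-by-one confusion. No substantive obstacle is anticipated.
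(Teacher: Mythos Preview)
Your proposal is correct and follows essentially the same approach as the paper: both argue that the union at $k = n-1$ is empty, then reduce the union at $k = n-2$ to $\grade{n}{(\maxel{U})} = \grade{n}{U}$. The only cosmetic difference is that the paper cites Lemma~\ref{lem:maximal_vs_faces} for the equality $\grade{n}{(\maxel{U})} = \grade{n}{U}$, whereas you argue it directly from maximality of top-dimensional elements.
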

\begin{proof}
We have 
\begin{equation*}
    \size{\bigcup_{i > n} \grade{i}{(\maxel{U})}} = \size{\varnothing} = 0,
\end{equation*}
so $\lydim{U} \leq n - 1$, with equality if and only if
\begin{equation*}
    \size{\bigcup_{i > n-1} \grade{i}{(\maxel{U})}} = \size{\grade{n}{(\maxel{U})}} = \size{\grade{n}{U}} > 1,
\end{equation*}
where we used Lemma \ref{lem:maximal_vs_faces}.
\end{proof}

\begin{lem} \label{lem:layering_dimension_atom}
Let $U$ be a regular molecule.
Then $\lydim{U}$ is $-1$ if and only if $U$ is an atom.
\end{lem}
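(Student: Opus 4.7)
The plan is to prove the two implications separately. For $(\Leftarrow)$, if $U$ is an atom then by definition it has a greatest element, so $\maxel{U}$ is a singleton $\set{x}$; then $\bigcup_{i > 0}\grade{i}{(\maxel{U})}$ is empty when $\dim{x} = 0$ and equal to $\set{x}$ when $\dim{x} \geq 1$, so in both cases its size is at most $1$. Hence $k = -1$ qualifies in the minimum defining $\lydim{U}$, and since $\lydim{U} \geq -1$ by construction we conclude $\lydim{U} = -1$.

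For $(\Rightarrow)$, the cleanest route is to invoke Theorem \ref{thm:molecules_admit_layerings}: if $\lydim{U} = -1$ then
\begin{equation*}
    \min\set{k \mid \text{$U$ admits a $k$\nbd layering}} \leq \lydim{U} = -1,
\end{equation*}
so $U$ admits a $(-1)$\nbd layering, which the definition of a $k$\nbd layering records as forcing $m = 1$ and $U$ to be an atom. Alternatively, one can argue by induction on the construction of $U$: the generators (\textit{Point}) and (\textit{Atom}) give atoms directly by Proposition \ref{prop:atom_properties}, while in the (\textit{Paste}) case $U \simeq V \cp{k} W$ with $k < \min\set{\dim{V}, \dim{W}}$ one has $k \geq 0$ and $\dim{V}, \dim{W} \geq 1$, so each factor contributes a top-dimensional element of dimension $> k$ that remains maximal in $U$. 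This exhibits two distinct maximal elements of dimension $\geq 1$ and contradicts $\lydim{U} = -1$.

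The only step needing care, in the inductive variant, is the survival-under-pasting claim: a top-dimensional element $y \in V$ cannot sit below any element $z$ of $W \setminus V$, because any chain $y \leq \ldots \leq z$ in the pushout must pass through $V \cap W = \bound{k}{+}V$, which has dimension at most $k$, while inclusions preserve dimension by Lemma \ref{lem:properties_of_inclusions} and so an element of dimension $> k$ cannot be dominated by one of dimension $\leq k$. This is the sole mild obstacle, and it is entirely bypassed by the Theorem \ref{thm:molecules_admit_layerings} route, which I would favour for concision.
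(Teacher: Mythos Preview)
Your $(\Leftarrow)$ direction is correct and matches the paper's.

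For $(\Rightarrow)$, however, your favoured route via Theorem \ref{thm:molecules_admit_layerings} is \emph{circular}: the paper's proof of that theorem explicitly invokes Lemma \ref{lem:layering_dimension_atom} in both of its opening case-splits (``If $k = -1$, then $U$ is an atom \ldots'' and ``If $k \geq 0$, by Lemma \ref{lem:layering_dimension_atom} $U$ is not an atom \ldots''). So you cannot appeal to the existence of a $(-1)$\nbd layering here without begging the question.

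Your alternative inductive argument is sound and salvages the proof. It is essentially the (\textit{Paste}) case of Proposition \ref{prop:atom_properties}: the two factors each contribute a maximal element of dimension $> k \geq 0$, and these survive in the pushout because $V \cap W$ has dimension $\leq k$ (your chain argument is a bit informal; a cleaner justification is Lemma \ref{lem:faces_of_union}, which gives $\maxel{V} \setminus W \subseteq \maxel{(V \cup W)}$, and any maximal $y \in V$ with $\dim y > k$ lies outside $W$). This differs from the paper's own proof, which argues directly from the definition of $\lydim$: if $\size{\bigcup_{i>0}\grade{i}{(\maxel U)}} \leq 1$, either the union is empty and $U$ is the point, or it has one element and one uses that a regular molecule of positive dimension has no $0$\nbd dimensional maximal elements to conclude $\size{\maxel U} = 1$. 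The paper's route is shorter and avoids structural induction; yours works but duplicates reasoning already packaged in Proposition \ref{prop:atom_properties}.
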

\begin{proof}
Suppose $\lydim{U} = -1$.
If $\size{\bigcup_{i > 0} \grade{i}{(\maxel{U})}} = 0$, then $\dim{U} = 0$ and $U$ is the point.

Otherwise, $1 = \size{\bigcup_{i > 0} \grade{i}{(\maxel{U})}} = \size{\maxel{U}}$ because a regular molecule which is not 0\nbd dimensional cannot have a 0\nbd dimensional maximal element.
In either case, $U$ has a greatest element.
Conversely, if $U$ has a greatest element, $\size{\bigcup_{i > 0} \grade{i}{(\maxel{U})}} \leq \size{\maxel{U}} = 1$.
\end{proof}

\begin{lem} \label{lem:layering_dimensions_pasting}
Let $U, V$ be regular molecules, and suppose $U \cp{k} V$ is defined for some $k < \min \set{ \dim{U}, \dim{V} }$.
Then
\begin{equation*}
    \lydim{(U \cp{k} V)} \geq \max \set{ \lydim{U}, \lydim{V}, k }.
\end{equation*}
\end{lem}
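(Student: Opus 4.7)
The plan is to reduce the claim to counting maximal elements of sufficiently large dimension. Since $k \in \mathbb{N}$, the target $M \eqdef \max\set{\lydim U, \lydim V, k}$ is non-negative, and unwinding the definition of layering dimension shows that $\lydim(U \cp{k} V) \geq M$ is equivalent to
\begin{equation*}
    \size{\bigcup_{i > M} \grade{i}{(\maxel{(U \cp{k} V)})}} \geq 2.
\end{equation*}
So the whole problem is to produce two witnesses.

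The first step is to establish, for every $i > k$, the identity
\begin{equation*}
    \grade{i}{(\maxel{(U \cp{k} V)})} = \grade{i}{(\maxel U)} + \grade{i}{(\maxel V)}
\end{equation*}
as a disjoint union. This rests on the fact that in the defining pushout the intersection $U \cap V = \bound{k}{+}U = \bound{k}{-}V$ has dimension at most $k$: this immediately disjoints the $i$-grades of $U$ and $V$ for $i > k$, and, since any coface of an $i$-dimensional element has dimension $i+1 > k+1$, it forces the cofaces of any such element in $U \cp{k} V$ to coincide with its cofaces in whichever of $U$ or $V$ contains it. Consequently, maximality in $U \cp{k} V$ and in $U$ (respectively, $V$) agree in every dimension above $k$.

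Since $M \geq k$, applying this identity to each $i > M$ gives
\begin{equation*}
    \size{\bigcup_{i > M} \grade{i}{(\maxel{(U \cp{k} V)})}} = \size{\bigcup_{i > M} \grade{i}{(\maxel U)}} + \size{\bigcup_{i > M} \grade{i}{(\maxel V)}}.
\end{equation*}
The last step is a case split on which of $\lydim U$, $\lydim V$, or $k$ realises the maximum. If $M = \lydim U$, the first summand is already $\geq 2$ directly by the definition of $\lydim U$, and symmetrically for $M = \lydim V$. If $M = k$, each summand is $\geq 1$, since $\dim U > k$ and $\dim V > k$ force each of $U$, $V$ to contain at least one maximal element of dimension strictly greater than $k$; the total is then $\geq 2$.

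The one non-mechanical ingredient is the max-elements identity in dimensions above $k$, which I expect to be the only place where any real care is needed; everything else amounts to bookkeeping with the definitions, so I do not anticipate a serious obstacle.
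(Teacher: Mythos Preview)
Your proof is correct and follows essentially the same approach as the paper: both hinge on the identity $\grade{i}{(\maxel{(U \cp{k} V)})} = \grade{i}{(\maxel U)} + \grade{i}{(\maxel V)}$ for $i > k$, which the paper obtains via Lemma~\ref{lem:faces_of_union}. The only difference is organisational: the paper first shows $\lydim(U \cp{k} V) \geq k$ and then, setting $n \eqdef \lydim(U \cp{k} V)$, derives $\lydim U \leq n$ and $\lydim V \leq n$ from the same identity, whereas you bundle all three inequalities into a single target $M$ and do a case split on which term realises the maximum; both arguments are equally direct.
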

\begin{proof}
Identifying $U$ and $V$ with their isomorphic images, $U \cp{k} V$ splits into $U \cup V$ with $\dim{(U \cap V)} = \dim{\bound{k}{+}U} = k$.
By Lemma \ref{lem:faces_of_union}, for all $i > k$,
\begin{equation*}
    \grade{i}{ ( \maxel{(U \cp{k} V)} ) } = \grade{i}{(\maxel{U})} + \grade{i}{(\maxel{V})},
\end{equation*}
and since $k < \min \set{ \dim{U}, \dim{V} }$, both $U$ and $V$ have at least one maximal element of dimension strictly larger than $k$.
It follows that
\begin{align*}
    & \size{ \bigcup_{i > k} \grade{i}{ ( \maxel{ (U \cp{k} V) } ) } } = \\
    & \quad \quad = \size{ \bigcup_{i > k} \grade{i}{ ( \maxel{U} ) } } + \size{ \bigcup_{i > k} \grade{i}{ ( \maxel{V} ) } } \geq 2,
\end{align*}
so $k - 1 < \lydim{(U \cp{k} V)}$, that is, $k \leq \lydim{(U \cp{k} V)}$.
Furthermore, letting $n \eqdef \lydim{(U \cp{k} V)}$, since $n + 1 > k$,
\begin{align*}
    \size{ \bigcup_{i > n+1} \grade{i}{ ( \maxel{U} ) } } & + \size{ \bigcup_{i > n+1} \grade{i}{ ( \maxel{V} ) } } = \\
    & = \size{ \bigcup_{i > n+1} \grade{i}{ ( \maxel{ (U \cp{k} V) } ) } } \leq 1,
\end{align*}
which implies that 
\begin{equation*}
    \size{ \bigcup_{i > n+1} \grade{i}{ ( \maxel{U} ) } } \leq 1, \size{ \bigcup_{i > n+1} \grade{i}{ ( \maxel{V} ) } } \leq 1.
\end{equation*}
Then $\max \set{\lydim{U}, \lydim{V}} \leq \lydim{(U \cp{k} V)}$.
\end{proof}

\begin{lem} \label{lem:lydim_layering_properties}
Let $U$ be a regular molecule, $k \eqdef \lydim{U}$.
Suppose $k \geq 0$, and let $(\order{i}{U})_{i=1}^m$ be a $k$\nbd layering of $U$.
Then
\begin{enumerate}
    \item $m > 1$,
    \item for each $i \in \set{1, \ldots, m}$, $\lydim{\order{i}{U}} < k$,
    \item at most one of the $\order{i}{U}$ contains an element of dimension $> k + 1$.
\end{enumerate}
\end{lem}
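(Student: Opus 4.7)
The three statements all follow from unpacking the definitions of layering dimension and of $k$-layering, combined with one structural observation bridging maximality in a layer to maximality in $U$.

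Parts (1) and (2) are essentially formal. For (1), since $k = \lydim{U} \geq 0$, minimality forces $k - 1$ to fail the defining inequality, which gives $\size{\bigcup_{i > k} \grade{i}{(\maxel{U})}} > 1$; but this cardinality is exactly $m$. For (2), Lemma \ref{lem:layering_basic_properties}(1) guarantees that each $\order{i}{U}$ contains a single maximal element of dimension $> k$, so $\size{\bigcup_{j > k} \grade{j}{(\maxel{\order{i}{U}})}} = 1$; substituting $k - 1$ into the definition of $\lydim{\order{i}{U}}$ then yields $\lydim{\order{i}{U}} \leq k - 1 < k$.

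Part (3) is the main content, and I would base it on the following auxiliary claim: any $y \in \order{i}{U}$ with $\dim y > k$ that is maximal in $\order{i}{U}$ is already maximal in $U$. I would argue by contradiction: a strict coface $z$ of $y$ in $U$ must belong to some $\order{j}{U}$; the case $j = i$ breaks maximality of $y$ in $\order{i}{U}$, while for $j \neq i$ the closure of $\order{j}{U}$ forces $y$ into $\order{i}{U} \cap \order{j}{U}$, which by Lemma \ref{lem:layering_intersections} sits inside $\bound{k}{+}\order{i}{U}$ and therefore has dimension at most $k$, contradicting $\dim y > k$. Granting the claim, suppose some $\order{i}{U}$ contains an element of dimension $> k + 1$; then a maximal element of $\order{i}{U}$ above it is maximal in $U$ of dimension $> k + 1$. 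The definition of $\lydim{U} = k$ bounds the number of such maximal elements of $U$ by one, and Lemma \ref{lem:layering_basic_properties}(1) shows that each maximal element of $U$ of dimension $> k$ belongs to exactly one layer, so at most one $\order{i}{U}$ can contain an element of dimension $> k + 1$.

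The only step requiring care beyond unpacking definitions is the auxiliary claim, where the essential dimension bound on the intersection of distinct layers is already furnished by Lemma \ref{lem:layering_intersections}; otherwise the argument is a straightforward manipulation of the defining inequality of $\lydim{U}$, and I do not anticipate any serious obstacle.
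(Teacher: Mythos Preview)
Your proof is correct and follows essentially the same approach as the paper: parts (1) and (2) are identical to the paper's argument, and for part (3) you make explicit the auxiliary claim (that a maximal element of a layer of dimension $> k$ is maximal in $U$, via Lemma \ref{lem:layering_intersections}) which the paper's terse proof leaves implicit. If anything, your treatment of (3) is more careful, since the paper's phrase ``$U$ contains at most one element of dimension $> k+1$'' should really read ``at most one \emph{maximal} element,'' and your auxiliary claim is exactly what bridges that to the desired conclusion about layers.
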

\begin{proof}
By definition of $\lydim{U}$, if $k \geq 0$ and a $k$\nbd layering exists, then $m > 1$, for otherwise $k - 1 \leq \lydim{U}$, a contradiction.
Moreover, $U$ contains at most one element of dimension $> k + 1$, which can be contained at most in one of the $\order{i}{U}$.
Finally, by Lemma \ref{lem:layering_basic_properties}, we have $\size{ \bigcup_{j > k} \grade{j}{ ( \maxel{\order{i}{U}} ) } } = 1$, so $\lydim{\order{i}{U}} \leq k - 1 < k$.
\end{proof}

\begin{proof}[Proof of Theorem \ref{thm:molecules_admit_layerings}]
Let $k \eqdef \lydim{U}$.
If $k = -1$, then $U$ is an atom and admits the trivial layering $U = \order{1}{U}$.
If $k \geq 0$, by Lemma \ref{lem:layering_dimension_atom} $U$ is not an atom, so we can assume that $U$ was produced by (\textit{Paste}).
Then $U$ is equal to $V \cp{\ell} W$ for some regular molecules $V, W$ and $\ell < \min \set{ \dim{V}, \dim{W} }$.
By the inductive hypothesis, we have layerings
\begin{equation*}
    \order{1}{V} \cp{k_V} \ldots \cp{k_V} \order{m_V}{V}, \quad \order{1}{W} \cp{k_W} \ldots \cp{k_W} \order{m_W}{W}
\end{equation*}
of $V$ and $W$, where $k_V \eqdef \lydim{V}$ and $k_W \eqdef \lydim{W}$.
Furthermore, by Lemma \ref{lem:layering_dimensions_pasting}, $k \geq \max \set{ k_V, k_W, \ell }$.
Let
\begin{align*}
    n_V & \eqdef 
    \begin{cases}
        m_V & \text{if $k_V = k$,} \\
        1 & \text{if $k_V < k$ and $\dim{V} > k$,} \\
        0 & \text{if $k_V < \dim{V} < k$},
    \end{cases}
    \\
    n_W & \eqdef 
    \begin{cases}
        m_W & \text{if $k_W = k$,} \\
        1 & \text{if $k_W < k$ and $\dim{W} > k$,} \\
        0 & \text{if $k_W < \dim{W} < k$}.
    \end{cases}
\end{align*}
Notice that it can never be the case that $n_V = n_W = 0$.
We claim that we can decompose $V$ as
\begin{equation} \label{eq:padded_decomposition_1}
    \order{1}{\tilde{V}} \cp{k} \ldots \cp{k} \order{n_V}{\tilde{V}} \cp{k} 
    \underbrace{\bound{k}{+}V \cp{k} \ldots \cp{k} \bound{k}{+}V}_{\text{$n_W$ times}},
\end{equation}
where each $\order{i}{\tilde{V}}$ is a regular molecule containing exactly one maximal element of dimension $> k$.
If $k_V = k$, we let $\order{i}{\tilde{V}} \eqdef \order{i}{V}$ for all $i \in \set{1, \ldots, m_V}$.
If $k_V < k$, then $V$ contains at most one maximal element of dimension $> k_V + 1$, hence at most one maximal element of dimension $> k$.
If $\dim{V} > k$, it contains exactly one, and we let $\order{1}{\tilde{V}} \eqdef V$.
If $\dim{V} < k$, then $V = \bound{k}{+}V$.
By Proposition \ref{prop:unitality_of_pasting}, pasting copies of $\bound{k}{+}V$ does not change the result up to unique isomorphism.
Similarly, we can decompose $W$ as
\begin{equation} \label{eq:padded_decomposition_2}
    \underbrace{\bound{k}{-}W \cp{k} \ldots \cp{k} \bound{k}{-}W}_{\text{$n_V$ times}} \cp{k}
    \order{1}{\tilde{W}} \cp{k} \ldots \cp{k} \order{n_W}{\tilde{W}}
\end{equation}
where each $\order{i}{\tilde{W}}$ contains exactly one maximal element of dimension $> k$.

If $\ell = k$, since $\ell < \min \set{ \dim{V}, \dim{W} }$, we have $0 < \min \set{ n_V, n_W }$.
Then
\begin{equation*}
    \order{1}{\tilde{V}} \cp{k} \ldots \cp{k} \order{n_V}{\tilde{V}} \cp{k} 
    \order{1}{\tilde{W}} \cp{k} \ldots \cp{k} \order{n_W}{\tilde{W}}
\end{equation*}
is a $k$\nbd layering of $U$.
If $\ell < k$, let
\begin{equation*}
    \order{i}{U} \eqdef 
    \begin{cases}
        \order{i}{\tilde{V}} \cp{\ell} \bound{k}{-}W & \text{if $i \leq n_V$,} \\
        \bound{k}{+}V \cp{\ell} \order{i-n_V}{\tilde{W}} & \text{if $n_V < i \leq n_V+n_W$}.
    \end{cases}
\end{equation*}
Since $\dim{\bound{k}{-}V} = \dim{\bound{k}{+}W} = k$, each $\order{i}{U}$ still contains exactly one maximal element of dimension $> k$.
Plugging (\ref{eq:padded_decomposition_1}) and (\ref{eq:padded_decomposition_2}) in $V \cp{\ell} W$ and using Proposition \ref{prop:interchange_of_pasting} repeatedly, we deduce that $V \cp{\ell} W$ is isomorphic to
\begin{equation*}
    \order{1}{U} \cp{k} \ldots \cp{k} \order{n_V + n_W}{U},
\end{equation*}
which has the desired properties.
Necessarily, $n_V + n_W = m$.

This proves that $U$ has a $k$\nbd layering.
It remains to show that $\frdim{U} \leq k$.
Let $x, y$ be distinct maximal elements of $U$.
If $\min \set{\dim{x}, \dim{y}} \leq k$, then $\dim{(\clset{x} \cap \clset{y})} < k$.

Suppose that $k < \min \set{\dim{x}, \dim{y}}$, and let $(\order{i}{U})_{i=1}^m$ be a $k$\nbd layering of $U$.
By Lemma \ref{lem:layering_basic_properties} there exist $i \neq j$ such that $x \in \order{i}{U}$ and $y \in \order{j}{U}$.
By Lemma \ref{lem:layering_intersections}, there exists $\alpha \in \set{+, -}$ such that $\order{i}{U} \cap \order{j}{U} = \bound{k}{\alpha}\order{i}{U} \cap \bound{k}{-\alpha}\order{j}{U}$.
Then $\clset{x} \cap \clset{y} \subseteq \bound{k}{\alpha}\order{i}{U} \cap \bound{k}{-\alpha}\order{j}{U}$, which is at most $k$\nbd dimensional.
\end{proof}

\begin{proof}[Proof of Corollary \ref{cor:codimension_1_layering}]
Follows from Theorem \ref{thm:molecules_admit_layerings} together with Lemma \ref{lem:layering_dimension_smaller_than_dimension}.
\end{proof}

\begin{lem} \label{lem:maxflow_acyclic_pasting}
Let $U, V$ be regular molecules and suppose $U \cp{k} V$ is defined for some $k < \min \set {\dim{U}, \dim{V}}$.
If $\maxflow{k}{U}$ and $\maxflow{k}{V}$ are acyclic, then $\maxflow{k}{(U \cp{k} V)}$ is acyclic.
\end{lem}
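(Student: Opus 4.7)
The plan is to decompose $\maxflow{k}{(U \cp{k} V)}$ along the pasting and show that all cross-edges between the $U$-side and the $V$-side point in a single direction, so that cycles must lie entirely on one side.

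First I would identify $U$ and $V$ with their images in $U \cp{k} V$. Since $U \cap V = \bound{k}{+}U = \bound{k}{-}V$ is $k$-dimensional, every maximal element of $U \cp{k} V$ of dimension $> k$ lies in exactly one of $U$ or $V$. By Lemma \ref{lem:properties_of_inclusions}, the inclusions $U, V \hookrightarrow U \cp{k} V$ preserve the covering relation, so such an element is maximal in $U \cp{k} V$ if and only if it is maximal in whichever side contains it. Furthermore, for each maximal $x$ of dimension $> k$, the sets $\faces{k}{\pm}\clset{x}$ are intrinsic to $\clset{x}$, so the induced subgraphs of $\maxflow{k}{(U \cp{k} V)}$ on the $U$-side vertices and on the $V$-side vertices coincide with $\maxflow{k}{U}$ and $\maxflow{k}{V}$ respectively.

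The crux will be to show that no cross-edge goes from a $V$-side vertex $y$ to a $U$-side vertex $x$. Suppose for contradiction that $z \in \faces{k}{+}\clset{y} \cap \faces{k}{-}\clset{x}$; then $z \in U \cap V = \bound{k}{-}V$, and since $\dim z = k$, Lemma \ref{lem:maximal_in_boundary} places $z$ in $\faces{k}{-}V$, so $z$ has no $+$-oriented coface in $V$. On the other hand, since $\dim z = k < \dim y$, $z$ is not maximal in $\clset{y}$, hence has at least one coface in $\clset{y}$; and since $z \in \faces{k}{+}\clset{y}$, every such coface must be $+$-oriented. This coface lies in $V$, contradicting $z \in \faces{k}{-}V$.

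With all cross-edges pointing from $U$-side to $V$-side, any directed cycle in $\maxflow{k}{(U \cp{k} V)}$ would have to remain inside one side, and therefore correspond to a cycle in $\maxflow{k}{U}$ or in $\maxflow{k}{V}$, contradicting the acyclicity hypothesis. I expect the subtlety to lie in the cross-edge argument, where one must distinguish between the orientation of cofaces in the local atom $\clset{y}$ and the orientation of cofaces in the ambient $V$ — but it is precisely this mismatch that delivers the contradiction.
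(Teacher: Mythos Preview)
The proposal is correct and follows essentially the same approach as the paper: decompose the vertex set of $\maxflow{k}{(U \cp{k} V)}$ along $U$ and $V$, identify the induced subgraphs with $\maxflow{k}{U}$ and $\maxflow{k}{V}$, and rule out cross-edges from the $V$-side to the $U$-side. The paper uses Lemma~\ref{lem:faces_of_union} for the vertex decomposition and phrases the cross-edge contradiction as $z \in U \cap V$ while $z \notin \bound{k}{-}V$ and $z \notin \bound{k}{+}U$, but this is the same argument you spell out via cofaces.
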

\begin{proof}
Suppose that $\maxflow{k}{U}$ and $\maxflow{k}{V}$ are acyclic.
We may identify $U$ and $V$ with their images in $U \cp{k} V$.
By Lemma \ref{lem:faces_of_union}, since $\dim{(U \cap V)} = k$,
\begin{equation*}
    \bigcup_{i > k} \grade{i}{ ( \maxel{ (U \cp{k} V) } ) }  = 
    \bigcup_{i > k} \grade{i}{ ( \maxel{U} ) } + \bigcup_{i > k} \grade{i}{ ( \maxel{V} ) },
\end{equation*}
so $\maxflow{k}{U}$ and $\maxflow{k}{V}$ are isomorphic to the induced subgraphs of $\maxflow{k}{(U \cp{k} V)}$ on the vertices in $U$ and $V$, respectively.
It follows that a cycle in $\maxflow{k}{(U \cp{k} V)}$ cannot remain in $U$ or $V$, but has to visit vertices in both.
In particular, such a cycle has to go through an edge from $x \in V$ to $y \in U$, induced by the existence of $z \in \faces{k}{+}x \cap \faces{k}{-}y$.
But then $z \notin \bound{k}{-}V$ and $z \notin \bound{k}{+}U$, yet $z \in U \cap V$, a contradiction.
\end{proof}

\begin{proof}[Proof of Proposition \ref{prop:if_layering_then_ordering}]
Let $(\order{i}{U})_{i=1}^m$ be a $k$\nbd layering of $U$.
For each $i \in \set{1, \ldots, m}$, the graph $\maxflow{k}{\order{i}{U}}$ is trivially acyclic by Lemma \ref{lem:layering_basic_properties}.
We conclude by applying Lemma \ref{lem:maxflow_acyclic_pasting} repeatedly.
\end{proof}

\begin{proof}[Proof of Proposition \ref{prop:layerings_induce_orderings}]
The function $(\order{i}{U})_{i=1}^m \mapsto (\order{i}{x})_{i=1}^m$ is well-defined by Lemma \ref{lem:layering_basic_properties}.
Let $i, j \in \set{1, \ldots, m}$, and suppose that there is an edge from $\order{i}{x}$ to $\order{j}{x}$ in $\maxflow{k}{U}$, that is, there exists $z \in \faces{k}{+}\order{i}{x} \cap \faces{k}{-}\order{j}{x}$.
By Proposition \ref{prop:if_layering_then_ordering}, $\maxflow{k}{U}$ is acyclic, so necessarily $i \neq j$.
If $j < i$, then $\order{j}{U} \cap \order{i}{U} \subseteq \bound{k}{+}\order{j}{U} \cap \bound{k}{-}\order{i}{U}$ by Lemma \ref{lem:layering_intersections}, contradicting the existence of $z$.
It follows that $i < j$, so $(\order{i}{x})_{i=1}^m$ is a $k$\nbd ordering of $U$.

Let $(\order{i}{V})_{i=1}^m$ be another $k$\nbd layering, and suppose it determines the same $k$\nbd ordering as $(\order{i}{U})_{i=1}^m$.
Then the image of both $\order{1}{U}$ and $\order{1}{V}$ in $U$ is
\begin{equation*}
\clos\set{\order{1}{x}} \cup \bound{}{-}U,
\end{equation*}
so $\order{1}{U}$ is isomorphic to $\order{1}{V}$.
If $m = 1$ we are done.
Otherwise, $(\order{i}{U})_{i=2}^m$ and $(\order{i}{V})_{i=2}^m$ are $k$\nbd layerings inducing the same $k$\nbd ordering on their image.
By recursion, we conclude that they are layer-wise isomorphic.
\end{proof}

\begin{lem} \label{lem:substitution_preserves_boundaries}
Let $U, V, W$ be regular molecules, $k < \dim{U}$, $\alpha \in \set{+, -}$, and let $\imath\colon V \incl U$ be a submolecule inclusion such that $\subs{U}{W}{\imath(V)}$ is defined.
Then $\bound{k}{\alpha}U$ is isomorphic to $\bound{k}{\alpha}(\subs{U}{W}{\imath(V)})$.
\end{lem}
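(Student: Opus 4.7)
The plan is to realize both $U$ and $\subs{U}{W}{\imath(V)}$ as the input and output boundaries of a single $(n+1)$\nbd dimensional regular molecule, after which the statement reduces to a mechanical application of globularity. Let $n \eqdef \dim{U} = \dim{V} = \dim{W}$ and $X \eqdef U \cup (V \celto W)$, i.e.\ the pushout appearing in the definition of substitution.

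The key step is to apply Lemma \ref{lem:submolecule_rewrite} with its ``$V$'' played by the $(n+1)$\nbd dimensional rewrite $V \celto W$ and with $\alpha = -$. The hypotheses match: by Lemma \ref{lem:boundaries_of_rewrite}, $\bound{n}{-}(V \celto W)$ is canonically isomorphic to $V$, so up to this identification the inclusion $\imath$ is exactly the map along which the pushout forming $X$ is taken; by assumption $\imath$ is a submolecule inclusion and $\dim{U} = n$. The lemma then yields that $X$ is a regular molecule of dimension $n+1$ and that the pushout injection $j_U$ realises $U$ isomorphically as $\bound{n}{-}X$. By definition, $\subs{U}{W}{\imath(V)} = \bound{}{+}X = \bound{n}{+}X$.

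With both sides of the desired isomorphism placed as opposite boundaries of the same molecule $X$, for any $k < n$ the globularity clause of Proposition \ref{prop:molecule_properties} gives
\begin{equation*}
    \bound{k}{\alpha}U \cong \bound{k}{\alpha}(\bound{n}{-}X) = \bound{k}{\alpha}X = \bound{k}{\alpha}(\bound{n}{+}X) = \bound{k}{\alpha}(\subs{U}{W}{\imath(V)}),
\end{equation*}
which closes the argument. No real obstacle arises; the only point requiring a moment of care is checking that the identification $V \cong \bound{n}{-}(V \celto W)$ supplied by Lemma \ref{lem:boundaries_of_rewrite} is compatible with the map $\imath$ used in the substitution pushout, so that the instance of Lemma \ref{lem:submolecule_rewrite} we invoke is genuinely computing the object $X$.
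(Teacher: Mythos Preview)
Your proof is correct and follows essentially the same route as the paper: invoke Lemma \ref{lem:submolecule_rewrite} (with the role of its ``$V$'' played by $V \celto W$, using Lemma \ref{lem:boundaries_of_rewrite} to identify $\bound{n}{-}(V \celto W)$ with $V$) to see that $X = U \cup (V \celto W)$ is a regular molecule with $\bound{n}{-}X \cong U$, and then apply globularity. The paper's version is just terser, but the argument is the same.
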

\begin{proof}
By Lemma \ref{lem:submolecule_rewrite}, $U \cup (V \celto W)$ is a regular molecule and $U$ is isomorphic to its input boundary.
By globularity, $\bound{k}{\alpha}U$ is isomorphic to 
\begin{equation*}
    \bound{k}{\alpha}(\bound{}{+}(U \cup (V \celto W))) = \bound{k}{\alpha}(\subs{U}{W}{\imath(V)}). \qedhere
\end{equation*}
\end{proof}

\begin{lem} \label{lem:pasting_after_substitution}
Let $U, V, W, U', U''$ be regular molecules, let $k < \dim{U}$, and let $\imath\colon V \incl U$ be a submolecule inclusion such that 
\begin{equation*}
    U \cp{k} U', U'' \cp{k} U, \subs{U}{W}{\imath(V)}
\end{equation*}
are defined.
Then 
\begin{enumerate}
    \item $\subs{U}{W}{\imath(V)} \cp{k} U'$ and $U'' \cp{k} \subs{U}{W}{\imath(V)}$ are defined,
    \item if $\dim{U'} \leq \dim{U}$, then $\subs{(U \cp{k} U')}{W}{\imath_U(\imath(V))}$ is defined and isomorphic to $\subs{U}{W}{\imath(V)} \cp{k} U'$,
    \item if $\dim{U''} \leq \dim{U}$, then $\subs{(U'' \cp{k} U)}{W}{\imath_U(\imath(V))}$ is defined and isomorphic to $U'' \cp{k} \subs{U}{W}{\imath(V)}$.
\end{enumerate}
\end{lem}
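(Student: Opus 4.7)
Part~1 follows from Lemma~\ref{lem:substitution_preserves_boundaries}: applied to the submolecule inclusion $\imath$, it gives an isomorphism $\bound{k}{+}\subs{U}{W}{\imath(V)} \incliso \bound{k}{+}U$ (using $k < \dim U$), and composing with the unique isomorphism $\bound{k}{+}U \incliso \bound{k}{-}U'$ witnessing $U \cp{k} U'$ produces the isomorphism required for $\subs{U}{W}{\imath(V)} \cp{k} U'$ to be defined. The substitution $\subs{U}{W}{\imath(V)}$ is itself a regular molecule by Lemma~\ref{lem:submolecule_rewrite} applied to the boundary inclusion $V = \bound{n}{-}(V \celto W) \incl U$ followed by Proposition~\ref{prop:molecule_properties}, so the pasting lands back in the class of regular molecules. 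The argument for $U'' \cp{k} \subs{U}{W}{\imath(V)}$ is entirely symmetric, using input boundaries instead.

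For Part~2 (Part~3 being symmetric), let $n \eqdef \dim U$. By Lemma~\ref{lem:submolecule_rewrite}, $U \cup (V \celto W)$ is a regular molecule of dimension $n+1$, with $U$ identified under $j_U$ with $\bound{n}{-}(U \cup (V \celto W))$. Globularity (Proposition~\ref{prop:molecule_properties}) then gives $\bound{k}{+}(U \cup (V \celto W)) = \bound{k}{+}U$, so the pasting $(U \cup (V \celto W)) \cp{k} U'$ is defined. Now both $(U \cp{k} U') \cup (V \celto W)$ and $(U \cup (V \celto W)) \cp{k} U'$ are iterated pushouts, computed in opposite orders, of the same diagram of inclusions in $\ogpos$; by associativity of colimits they are canonically isomorphic. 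Applying $\bound{n}{+}$ to both sides identifies the left-hand side with $\subs{(U \cp{k} U')}{W}{\imath_U(\imath(V))}$ by definition of substitution.

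It then remains to show that
\begin{equation*}
    \bound{n}{+}((U \cup (V \celto W)) \cp{k} U') \;=\; \bound{n}{+}(U \cup (V \celto W)) \cp{k} U' \;=\; \subs{U}{W}{\imath(V)} \cp{k} U'.
\end{equation*}
This reduces to the general identity $\bound{n}{+}(A \cp{k} B) = \bound{n}{+}A \cp{k} B$ for a regular molecule $A$ of dimension $n+1$, a regular molecule $B$ of dimension $\leq n$, and $k < n$, and is the main obstacle. It is one of the strict $\omega$-category axioms that regular molecules satisfy up to unique isomorphism (as alluded to around Propositions~\ref{prop:associativity_of_pasting}--\ref{prop:interchange_of_pasting}) but is not stated explicitly in the excerpt. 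I would prove it directly from the characterisation of the output boundary: the $(n+1)$-dimensional elements of $A \cp{k} B$ all lie in $A$, so the closures of their output $n$-faces sit inside $A$; and by Lemma~\ref{lem:faces_of_union}, combined with $\dim(A \cap B) = k < n$, the maximal elements of $A \cp{k} B$ of dimension $< n$ split cleanly between those in $A \setminus \bound{k}{+}A$ and those in $B \setminus \bound{k}{-}B$, reproducing via Lemma~\ref{lem:maximal_in_boundary} the description of $\bound{n}{+}A \cp{k} B$.
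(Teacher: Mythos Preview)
Your proof is correct and follows essentially the same strategy as the paper's: Part~1 via Lemma~\ref{lem:substitution_preserves_boundaries}, and Parts~2--3 via the pasting law for pushout squares. The paper's own proof is extremely terse (``the isomorphisms follow straightforwardly from the definitions using the pasting law for pushout squares''), and what you have written is a careful unpacking of that sentence.

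There is one minor route difference worth noting. By going through the $(n+1)$\nbd dimensional object $U \cup (V \celto W)$ and then taking $\bound{n}{+}$, you create the extra obligation $\bound{n}{+}(A \cp{k} B) \cong \bound{n}{+}A \cp{k} B$, which you correctly identify and handle. The paper's intended shortcut is likely to use Lemma~\ref{lem:substitution_direct_pushout} instead: it expresses $\subs{U}{W}{\imath(V)}$ \emph{directly} as a pushout of $W$ and $U \setminus (\imath(V) \setminus \imath(\bound{}{}V))$ along $\bound{}{}V$, so both sides of the desired isomorphism are already colimits of the same span-of-spans diagram in $\ogpos$, and the pasting law applies immediately without any boundary computation. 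Your route is slightly longer but entirely sound, and your direct verification of the boundary identity via Lemmas~\ref{lem:faces_of_union} and~\ref{lem:maximal_in_boundary} is the right way to do it if one does not want to invoke the full $\omega$\nbd category structure.
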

\begin{proof}
It follows from Lemma \ref{lem:substitution_preserves_boundaries} that $\subs{U}{W}{\imath(V)} \cp{k} U'$ and $U'' \cp{k} \subs{U}{W}{\imath(V)}$ are defined.

The substitution $\subs{(U \cp{k} U')}{W}{\imath_U(\imath(V))}$ is then defined if and only if $\dim{(U \cp{k} U')} = \dim{U}$, equivalently, if and only if $\dim{U'} \leq \dim{U}$.
Similarly, $\subs{(U'' \cp{k} U)}{W}{\imath_U(\imath(V))}$ is defined if and only if $\dim{U''} \leq \dim{U}$.
The isomorphisms follow straightforwardly from the definitions using the pasting law for pushout squares.
\end{proof}

\begin{dfn}
Theorem \ref{thm:molecules_admit_layerings} in conjunction with Lemma \ref{lem:lydim_layering_properties} and Lemma \ref{lem:layering_dimension_atom} allows us to prove properties of regular molecules \emph{by induction on their layering dimension}.
That is, to prove that a property holds of all regular molecules $U$, it suffices to
\begin{itemize}
    \item prove that it holds when $\lydim{U} = -1$, that is, when $U$ is an atom,
    \item prove that it holds when $k \eqdef \lydim{U} \geq 0$, assuming that it holds of all the $(\order{i}{U})_{i=1}^m$ in a $k$\nbd layering of $U$.
\end{itemize}
\end{dfn}

\begin{lem} \label{lem:boundary_move}
Let $U$ be a regular molecule, $k \in \mathbb{N}$, and suppose
\begin{equation*}
    \bigcup_{i > k} \grade{i}{ ( \maxel{U} ) } = \set{x}.
\end{equation*}
Then, for all $\alpha \in \set { +, - }$,
\begin{enumerate}
    \item $\bound{k}{\alpha}x \submol \bound{k}{\alpha}U$, 
    \item $\bound{k}{\alpha}U$ is isomorphic to $\subs{\bound{k}{-\alpha}U}{\bound{k}{\alpha}x}{\bound{k}{-\alpha}x}$.
\end{enumerate}
\end{lem}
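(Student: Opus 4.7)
The plan is to prove both parts simultaneously by induction on the construction of $U$ as a regular molecule, covering the clauses (\textit{Point}), (\textit{Paste}), and (\textit{Atom}). First note that the hypothesis $\bigcup_{i>k}\grade{i}{(\maxel U)} = \set{x}$ forces $\dim U > k$, so the (\textit{Point}) case is vacuous.

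For the (\textit{Atom}) case $U = V \celto W$, I will argue that $x$ must be the top element $\top$ and $\clset{x} = U$. Indeed, $V$ and $W$ are round, hence pure by Lemma \ref{lem:round_is_pure}, so their only maximal elements lie in top dimension $\dim V = \dim U - 1$ and become faces of $\top$ once $\top$ is adjoined. Thus $\maxel U = \set{\top}$, and uniqueness of $x$ gives $x = \top$; since $\clset{\top} = U$, we get $\bound{k}{\alpha} x = \bound{k}{\alpha} U$ for both signs $\alpha$, making (1) the identity inclusion and (2) the tautological substitution that replaces $\bound{k}{-\alpha} x$ inside itself by $\bound{k}{\alpha} x$.

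The substantive case is (\textit{Paste}) $U = V \cp{\ell} W$ with $\ell < \min\set{\dim V, \dim W}$. The first step is to note that the top-dimensional elements of $V$ and of $W$ lie outside the $\ell$-dimensional intersection $V \cap W$ and so remain maximal in $U$ at dimensions strictly greater than $\ell$; uniqueness of $x$ therefore forces $\ell < k$, and, swapping $V$ with $W$ if necessary, I may assume $x \in V$. The same argument applied to the top of $W$ then forces $\dim W \leq k$, so $\bound{k}{\alpha} W = W$ by Lemma \ref{lem:boundary_inclusion}. Using the strict $\omega$-categorical identity $\bound{k}{\alpha}(V \cp{\ell} W) \cong \bound{k}{\alpha} V \cp{\ell} \bound{k}{\alpha} W$, valid for $k > \ell$, I obtain $\bound{k}{\alpha} U \cong \bound{k}{\alpha} V \cp{\ell} W$, and $x$ is now the unique maximal element of $V$ of dimension $> k$. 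The inductive hypothesis applied to $V$ yields $\bound{k}{\alpha} x \submol \bound{k}{\alpha} V$ and $\bound{k}{\alpha} V \cong \subs{\bound{k}{-\alpha} V}{\bound{k}{\alpha} x}{\bound{k}{-\alpha} x}$. Composing the first inclusion with $\bound{k}{\alpha} V \incl \bound{k}{\alpha} V \cp{\ell} W$ establishes (1); and Lemma \ref{lem:pasting_after_substitution}, which commutes pasting-with-$W$ past the substitution, together with the analogous decomposition $\bound{k}{-\alpha} U \cong \bound{k}{-\alpha} V \cp{\ell} W$, delivers (2).

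The main obstacle is justifying the $\omega$-categorical boundary formula for pastings, which is not stated as an explicit lemma in this excerpt. It is precisely the sort of strict $\omega$-category axiom that the proof of Lemma \ref{lem:submolecule_rewrite} invokes via Proposition 1.23 of \cite{hadzihasanovic2020diagrammatic}, and the same reference will suffice here. For a self-contained treatment one could instead unpack the pushout defining the pasting and perform an elementary computation of faces and cofaces, exploiting that $\dim(V \cap W) = \ell < k$ and that $W$ contains no elements of dimension above $k$.
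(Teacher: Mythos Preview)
Your proof is correct. The main difference from the paper's argument is the induction scheme: you use structural induction on the clauses (\textit{Point}), (\textit{Paste}), (\textit{Atom}) defining regular molecules, whereas the paper inducts on the \emph{layering dimension} $\lydim{U}$. In the paper's version, the base case $\lydim{U} = -1$ coincides with your Atom/Point cases (since $U$ is then an atom and $\clset{x} = U$), while the inductive step takes an $\ell$\nbd layering $(\order{i}{U})_{i=1}^m$ with $\ell = \lydim{U}$, observes that the hypothesis forces $\ell < k$, decomposes $\bound{k}{\alpha}U$ as $\bound{k}{\alpha}\order{1}{U} \cp{\ell} \ldots \cp{\ell} \bound{k}{\alpha}\order{m}{U}$ via the same $\omega$\nbd categorical boundary formula you invoke, locates $x$ in a single layer $\order{i}{U}$, and finishes with Lemma \ref{lem:pasting_after_substitution} exactly as you do. So the inductive steps are essentially identical; the difference is that the paper decomposes into $m$ layers at once (using Theorem \ref{thm:molecules_admit_layerings} to guarantee the layering exists), while your (\textit{Paste}) case is binary. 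Your approach is slightly more elementary in that it does not appeal to Theorem \ref{thm:molecules_admit_layerings}; the paper's fits the section's theme of layerings. One small presentational point: ``swapping $V$ with $W$'' is not literal since $\cp{\ell}$ is not symmetric; the case $x \in W$ is handled by the dual argument, using the $U'' \cp{k} U$ clause of Lemma \ref{lem:pasting_after_substitution} instead.
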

\begin{proof}
We proceed by induction on $\lydim{U}$.

If $\lydim{U} = -1$, then $U$ is an atom and equal to $\clset{x}$.
It follows that $\bound{k}{\alpha}x = \bound{k}{\alpha}U$, which is trivially a submolecule, and is isomorphic to $\subs{\bound{k}{-\alpha}U}{\bound{k}{\alpha}x}{\bound{k}{-\alpha}{x}}$.

Suppose $\ell \eqdef \lydim{U} \geq 0$, and let $(\order{i}{U})_{i = 1}^m$ be an $\ell$\nbd layering of $U$.
Then $\ell \leq k-1 < k$ because $\size{\bigcup_{i > k} \grade{i}{ ( \maxel{U} ) }} = 1$.
By \cite[Proposition 1.23]{hadzihasanovic2020diagrammatic} and the axioms of strict $\omega$\nbd categories, $\bound{k}{\alpha}U$ is isomorphic to
\begin{equation*}
    \bound{k}{\alpha}\order{1}{U} \cp{\ell} \ldots \cp{\ell} \bound{k}{\alpha}\order{m}{U}.
\end{equation*}
Now $x$ is contained in a single $\order{i}{U}$.
By the inductive hypothesis, $\bound{k}{\alpha}x \submol \bound{k}{\alpha}\order{i}{U}$, and the latter is isomorphic to $\subs{\bound{k}{-\alpha}\order{i}{U}}{\bound{k}{\alpha}x}{\bound{k}{-\alpha}{x}}$.
We conclude by Lemma \ref{lem:pasting_after_substitution}.
\end{proof}

\begin{proof}[Proof of Proposition \ref{prop:layering_from_ordering}]
Suppose $(\order{i}{U})_{i=1}^m$ is a $k$\nbd layering.
Then, for all $i \in \set{1, \ldots, m}$, $\order{i}{U}$ is a regular molecule, and by Proposition \ref{lem:layering_basic_properties} $\order{i}{x}$ is the only element of dimension $> k$ in $\order{i}{U}$.
By Lemma \ref{lem:boundary_move}, $\bound{k}{-}\order{i}{x} \submol \bound{k}{-}\order{i}{U}$.

Conversely, it follows from Lemma \ref{lem:submolecule_rewrite} that for all $i$, if $\bound{k}{-}\order{i}{U}$ is a regular molecule and $\bound{k}{-}\order{i}{x}$ is its submolecule, then $\order{i}{U}$ is a regular molecule, hence $\bound{k}{+}\order{i}{U}$ is a regular molecule.
Moreover, since $(\order{i}{x})_{i=1}^m$ is a $k$\nbd ordering, it is straightforward to prove that $\order{i}{U} \cap \order{i+1}{U} = \bound{k}{+}\order{i}{U} = \bound{k}{-}\order{i+1}{U}$ for all $i \in \set{1, \ldots, m-1}$.
Since $\bound{}{-}\order{1}{U} = \bound{}{-}U$ is a regular molecule, it follows by induction, assuming condition \ref{cond:inputs_are_submolecules}, that $\order{i}{U}$ is a regular molecule for all $i \in \set{1, \ldots, m}$.
This proves that $(\order{i}{U})_{i=1}^m$ is a $k$\nbd layering of $U$.
\end{proof}

\begin{dfn}
In the following, we use the following explicit construction of $\mathscr{G}/(\restr{\mathscr{G}}{W})$.
Its set of vertices  is $(V_\mathscr{G} \setminus W) + \set{x_W}$, and for all pair of vertices $x, y$,
\begin{itemize}
    \item if $x, y \neq x_W$, there is an edge between $x$ and $y$ for each edge between $x$ and $y$ in $\mathscr{G}$,
    \item if $x = x_W$ and $y \neq x_W$, there is an edge from $x$ to $y$ for each pair of a vertex $z \in W$ and an edge from $x$ to $y$ in $\mathscr{G}$,
    \item if $x \neq x_W$ and $y = x_W$, there is an edge from $x$ to $y$ for each pair of a vertex $z \in W$ and an edge from $x$ to $z$ in $\mathscr{G}$,
    \item there are no edges from $x_W$ to $x_W$.
\end{itemize}
\end{dfn}

\begin{proof}[Proof of Lemma \ref{lem:connected_subgraph_conditions_path_induced}]
We prove the contrapositive of the implication from \ref{cond:path_induced} to \ref{cond:acyclic_contraction}.
Suppose $\mathscr{G}/(\restr{\mathscr{G}}{W})$ has a cycle.
If the cycle does not pass through $x_W$, then it lifts to a cycle in $\mathscr{G}$, contradicting the assumption that $\mathscr{G}$ is acyclic.
It follows that the cycle contains a segment of the form $x_W \to x_1 \to \ldots \to x_m \to x_W$, where $m > 0$ and $x_i \neq x_W$ for all $i \in \set{1, \ldots, m}$.
Then there exist $y, z \in W$ and a path $y \to x_1 \to \ldots \to x_m \to z$ in $\mathscr{G}$, so $\restr{\mathscr{G}}{W}$ is not path-induced.

Next, suppose that $\mathscr{G}/(\restr{\mathscr{G}}{W})$ is acyclic.
Then both the graphs $\mathscr{G}/(\restr{\mathscr{G}}{W})$ and $\restr{\mathscr{G}}{W}$ are acyclic, so they admit topological sorts $(\order{i}{x})_{i=1}^m$ and $(\order{j}{y})_{j=1}^p$, respectively.
For exactly one $q \in \set{1, \ldots, m}$, $\order{i}{x} = x_W$.
We claim that 
\begin{equation*}
    ((\order{i}{x})_{i=1}^{q-1}, (\order{j}{y})_{j=1}^p, (\order{i}{x})_{i=q+1}^m)
\end{equation*}
is a topological sort of $\mathscr{G}$.
Indeed, for all edges from $x$ to $x'$ in $\mathscr{G}$,
\begin{itemize}
    \item if $x, x' \notin W$, then $x = \order{i}{x}$, $x' = \order{i'}{x}$ for some $i, i' \in \set{1, \ldots, m} \setminus \set{q}$, and there is an edge from $x$ to $x'$ in $\mathscr{G}/(\restr{\mathscr{G}}{W})$, so $i < i'$;
    \item if $x, x' \in W$, then $x = \order{j}{y}$, $x' = \order{j'}{y}$ for some $j, j' \in \set{1, \ldots, p}$, and there is an edge from $x$ to $x'$ in $\restr{\mathscr{G}}{W}$, so $j < j'$;
    \item if $x \in W$, $x' \notin W$, then $x = \order{j}{y}$, $x' = \order{i}{x}$ for some $i \in \set{1, \ldots, m} \setminus \set{q}$, $j \in \set{1, \ldots, p}$, and there is an edge from $x_W$ to $x'$ in $\mathscr{G}/(\restr{\mathscr{G}}{W})$, so $q < i$;
    \item if $x \notin W$, $x' \in W$, then $x = \order{i}{x}$, $x' = \order{j}{y}$ for some $i \in \set{1, \ldots, m} \setminus \set{q}$, $j \in \set{1, \ldots, p}$, and there is an edge from $x$ to $x_W$ in $\mathscr{G}/(\restr{\mathscr{G}}{W})$, so $i < q$.
\end{itemize}
This proves the implication from \ref{cond:acyclic_contraction} to \ref{cond:consecutive_tsort}.
Moreover, it defines an injection from pairs of a topological sort of $\restr{\mathscr{G}}{W}$ and a topological sort of $\mathscr{G}/(\restr{\mathscr{G}}{W})$ to topological sorts of $\mathscr{G}$ in which the vertices of $W$ are consecutive.
This will prove to be a bijection as soon as we have proven the converse implication.

Finally, we prove the contrapositive of the implication from \ref{cond:consecutive_tsort} to \ref{cond:path_induced}.
Suppose $\restr{\mathscr{G}}{W}$ is not path-induced, that is, there is a path $x \to x_1 \to \ldots \to x_m \to y$ in $\mathscr{G}$ such that $m > 0$, $x, y \in W$, and $x_i \notin W$ for all $i \in \set{1, \ldots, m}$.
It follows that the $x_i$ must come between $x$ and $y$ in every topological sort of $\mathscr{G}$, so the vertices of $W$ can never be consecutive.
\end{proof}

\begin{proof}[Proof of Lemma \ref{lem:flow_of_substitution_is_contraction_of_flow}]
By Lemma \ref{lem:flow_under_inclusion} combined with Proposition \ref{prop:round_molecule_connected_flowgraph}, $\flow{n-1}{V}$ is a connected induced subgraph of $\flow{n-1}{U}$, so its contraction is well-defined.
Now, the vertices of the graph $\flow{n-1}{\subs{U}{\compos{V}}{\imath(V)}}$ are either
\begin{itemize}
    \item $x \in \grade{n}{U} \setminus \grade{n}{V}$, or
    \item $x_V$ such that the image of $\compos{V}$ in $\subs{U}{\compos{V}}{\imath(V)}$ is $\clset{x_V}$.
\end{itemize}
Let $x, y$ be two vertices of $\flow{n-1}{\subs{U}{\compos{V}}{\imath(V)}}$.
\begin{itemize}
\item If $x, y \in \grade{n}{U} \setminus \grade{n}{V}$, then $\faces{}{+}x \cap \faces{}{-}y$ is the same in $\subs{U}{\compos{V}}{\imath(V)}$ as in $U$, so there is an edge from $x$ to $y$ in $\flow{n-1}{U}$ if and only if there is an edge in $\flow{n-1}{\subs{U}{\compos{V}}{\imath(V)}}$.
\item If $x = x_V$ then $\faces{}{+}x_V \cap \faces{}{-}y$ is in bijection with $\faces{}{+}V \cap \faces{}{-}y$ in $U$.
For all $z \in \faces{}{+}V$, since $V$ is pure and $n$\nbd dimensional, there exists $w \in \cofaces{}{+}z$.
If $\faces{}{+}x_V \cap \faces{}{-}y$ is non-empty, it follows that $\faces{}{+}z \cap \faces{}{-}y$ is non-empty in $U$ for some $z \in \grade{n}{\imath(V)}$.
Thus there exist $z \in \grade{n}{\imath(V)}$ and an edge from $z$ to $y$ in $\flow{n-1}{U}$.
\item Dually, if $y = x_V$, there is an edge from $x$ to $y$ in $\flow{n-1}{\subs{U}{\compos{V}}{\imath(V)}}$ if and only if there exist $z \in \grade{n}{\imath(V)}$ and an edge from $x$ to $z$ in $\flow{n-1}{U}$.
\item Finally, $\faces{}{+}V \cap \faces{}{-}V = \varnothing$ because $V$ is pure, so $\faces{}{+}x_V \cap \faces{}{-}x_V$ and there is no edge from $x_V$ to $x_V$.
\end{itemize}
It is then straightforward to establish an isomorphism with the explicit description of $\flow{n-1}{U}/\flow{n-1}{V}$.
\end{proof}

\begin{proof}[Proof of Lemma \ref{lem:round_submolecules_from_layering}]
Identify $V$ with its isomorphic image through $\imath$, and suppose that $\imath$ is a submolecule inclusion.
Then $\tilde{U} \eqdef \subs{U}{\compos{V}}{V}$ is a regular molecule by Proposition \ref{prop:round_submolecule_substitution}, and admits an $(n-1)$\nbd layering $(\order{i}{\tilde{U}})_{i=1}^{m-p+1}$ by Theorem \ref{thm:molecules_admit_layerings}.
Let $\clset{x}$ be the image of $\compos{V}$ in $\tilde{U}$; then $x \in \order{q}{\tilde{U}}$ for exactly one $q \in \set{1, \ldots, m-p+1}$.
Then $W \eqdef \subs{\order{q}{\tilde{U}}}{V}{\clset{x}}$ is defined, and by Lemma \ref{lem:pasting_after_substitution} combined with Lemma \ref{lem:revert_substitution}, $U$ is isomorphic to
\begin{align*}
    \order{1}{\tilde{U}} & \cp{n-1} \ldots \cp{n-1} \order{q-1}{\tilde{U}} \cp{n-1} \\
    & \cp{n-1} W \cp{n-1} \order{q+1}{\tilde{U}} \cp{n-1} \ldots \cp{n-1} \order{m-p+1}{\tilde{U}}.
\end{align*}
By Lemma \ref{lem:boundary_move}, $\bound{}{-}x \submol \bound{}{-}\order{q}{\tilde{U}}$, so by Lemma \ref{lem:substitution_preserves_boundaries} $\bound{}{-}V \submol \bound{}{-}W$.
We can apply the criterion of Proposition \ref{prop:layering_from_ordering} to deduce that $(\order{i}{y})_{i=1}^p$ is an $(n-1)$\nbd ordering of $W$ induced by an $(n-1)$\nbd layering $(\order{i}{W})_{i=1}^p$.
Letting 
\begin{equation*}
    (\order{i}{U})_{i=1}^m \eqdef ((\order{i}{\tilde{U}})_{i=1}^{q-1}, (\order{i}{W})_{i=1}^p, (\order{i}{\tilde{U}})_{i=q+1}^{m-p+1}),
\end{equation*}
produces an $(n-1)$\nbd layering, hence an $(n-1)$\nbd ordering $(\order{i}{x})_{i=1}^m$ of $U$, with the property that $(\order{i}{x})_{i=q}^{p+q-1} = (\order{i}{y})_{i=1}^p$.

Conversely, let $(\order{i}{U})_{i=1}^m$ be an $(n-1)$\nbd layering of $U$ satisfying the properties in the statement, and let $W \submol U$ be the image of $\order{q}{U} \cp{n-1} \ldots \cp{n-1} \order{p+q-1}{U}$ in $U$.
Then $\grade{n}{W} = \grade{n}{V}$, so 
\begin{equation*}
    W = V \cup \bound{}{-}W.
\end{equation*}
Because $\bound{}{-}V \submol \bound{}{-}\order{q}{U}$ which is equal to $\bound{}{-}W$, by Lemma \ref{lem:submolecule_rewrite} $V \submol W \submol U$.
\end{proof}

\begin{proof}[Full proof of Theorem \ref{thm:rewritable_submolecule_criterion}]
Identify $V$ with its isomorphic image through $\imath$, and suppose that $\imath$ is a submolecule inclusion.
Then $\tilde{U} \eqdef \subs{U}{\compos{V}}{V}$ is a regular molecule by Proposition \ref{prop:round_submolecule_substitution}, so it admits an $(n-1)$\nbd layering $(\order{i}{\tilde{U}})_{i=1}^{m-p+1}$, which induces an $(n-1)$\nbd ordering.
By Lemma \ref{lem:flow_of_substitution_is_contraction_of_flow}, this $(n-1)$\nbd ordering can be identified with a topological sort $((\order{i}{x})_{i=1}^{q-1}, x_V, (\order{i}{x})_{i=q+1}^{m-p+1})$ of $\flow{n-1}{U}/\flow{n-1}{V}$.
By Lemma \ref{lem:boundary_move}, we have $\bound{}{-}x_V \submol \bound{}{-}\order{q}{\tilde{U}}$ and $\bound{}{-}\order{i}{x} \submol \bound{}{-}\order{i}{\tilde{U}}$ for $i \neq q$.
By Lemma \ref{lem:pasting_after_substitution} combined with Lemma \ref{lem:revert_substitution}, letting $W \eqdef \subs{\order{q}{\tilde{U}}}{V}{\clset{x_V}}$, $U$ is isomorphic to
\begin{align*}
    \order{1}{\tilde{U}} & \cp{n-1} \ldots \cp{n-1} \order{q-1}{\tilde{U}} \cp{n-1} \\
    & \cp{n-1} W \cp{n-1} \order{q+1}{\tilde{U}} \cp{n-1} \ldots \cp{n-1} \order{m-p+1}{\tilde{U}}.
\end{align*}
and $W$ is isomorphic to $\order{q}{U}$, while $\order{i}{\tilde{U}}$ is isomorphic to $\order{i}{U}$ for all $i \neq q$.
We conclude by Lemma \ref{lem:substitution_preserves_boundaries}.

The converse implication has already been fully proved.
\end{proof}

\begin{lem} \label{lem:lto_bijection_above_layering}
Let $U$ be a regular molecule, $\ell \geq -1$.
If $U$ has an $\ell$\nbd layering, then for all $k > \ell$ the function $\lto{k}{U}\colon \layerings{k}{U} \incl \orderings{k}{U}$ is a bijection.
\end{lem}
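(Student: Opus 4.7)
The plan is to leverage Proposition \ref{prop:layerings_induce_orderings}, which gives injectivity of $\lto{k}{U}$ for free, and concentrate entirely on surjectivity: every $k$\nbd ordering of $U$ must be realised by some $k$\nbd layering. Given an arbitrary $k$\nbd ordering $(\order{i}{x})_{i=1}^m$ of $U$, I would try to build a $k$\nbd layering using the recipe of Proposition \ref{prop:layering_from_ordering}: set $\order{0}{U} \eqdef \bound{k}{-}U$ and $\order{i}{U} \eqdef \bound{k}{+}\order{i-1}{U} \cup \clset{\order{i}{x}}$, reducing the problem to verifying $\bound{k}{-}\order{i}{x} \submol \bound{k}{-}\order{i}{U}$ for every $i$. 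The assumption that $U$ admits an $\ell$\nbd layering with $\ell < k$ guarantees, via Lemma \ref{lem:layering_basic_properties}(2), that at least \emph{one} $k$\nbd layering of $U$ already exists, inducing some $k$\nbd ordering $(\order{i}{y})_{i=1}^m$; the goal reduces to reaching the target $k$\nbd ordering from this distinguished one without losing the layering property.

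To bridge the two $k$\nbd orderings, I would prove a \emph{transposition lemma}: if $(\order{i}{U})_{i=1}^m$ is a $k$\nbd layering and there is no edge in $\maxflow{k}{U}$ between $\order{j}{x}$ and $\order{j+1}{x}$ (in either direction), then the sequence obtained by swapping the $j$-th and $(j+1)$-th layers (with appropriately updated boundaries) is again a $k$\nbd layering. The proof would hinge on the observation that the absence of a $\maxflow{k}{U}$-edge forces $\clset{\order{j}{x}} \cap \clset{\order{j+1}{x}}$ to have dimension strictly below $k$, so that the sub-pasting $\order{j}{U} \cp{k} \order{j+1}{U}$ can be re-expressed as $(\order{j+1}{U})' \cp{k} (\order{j}{U})'$ using Proposition \ref{prop:interchange_of_pasting} combined with Proposition \ref{prop:unitality_of_pasting} to absorb the ``padding'' that appears in the interchange. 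The $\ell$\nbd layering hypothesis enters crucially here, because $\ell < k$ ensures that the $k$-directional structure is flat enough for such a parallelisation to be meaningful.

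Once the transposition lemma is in hand, the proof of surjectivity is a standard combinatorial step: any two topological sorts of the DAG $\maxflow{k}{U}$ are connected by a finite sequence of swaps of adjacent incomparable pairs. Starting from the $k$\nbd layering associated to $(\order{i}{y})_{i=1}^m$ and iterating the transposition lemma along such a sequence, we arrive at a $k$\nbd layering inducing $(\order{i}{x})_{i=1}^m$, which by Proposition \ref{prop:layerings_induce_orderings} must be the image of $(\order{i}{x})_{i=1}^m$ under $\lto{k}{U}$.

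The main obstacle will be making the transposition lemma fully rigorous inside the oriented graded poset framework. While the statement is morally an instance of the strict $\omega$\nbd category interchange law, verifying that the swap preserves the submolecule condition of Proposition \ref{prop:layering_from_ordering} at both the new $j$-th and $(j+1)$-th positions requires careful tracking of how the intermediate boundary $\bound{k}{+}\order{j-1}{U}$ splits into the contributions from $\clset{\order{j}{x}}$ and $\clset{\order{j+1}{x}}$, and of how their orientations interact. I expect that this bookkeeping is where most of the work lies, whereas the outer induction and the reduction to the transposition lemma are comparatively routine.
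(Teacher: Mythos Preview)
Your approach via adjacent transpositions is different from the paper's, which is entirely direct: given an $\ell$\nbd layering $(\order{j}{U})_{j=1}^q$ of $U$ and an arbitrary $k$\nbd ordering $(\order{i}{y})_{i=1}^p$, the paper writes down the $i$\nbd th $k$\nbd layer explicitly as
\[
\order{i}{V} \eqdef \bound{k}{\alpha(i,1)}\order{1}{U} \cp{\ell} \ldots \cp{\ell} \order{\fun{j}(i)}{U} \cp{\ell} \ldots \cp{\ell} \bound{k}{\alpha(i,q)}\order{q}{U},
\]
where $\fun{j}$ locates $\order{i}{y}$ inside the $\ell$\nbd ordering and $\alpha(i,j)$ is $+$ or $-$ according to whether the $j$\nbd th $\ell$\nbd layer's top element has already appeared among $\order{1}{y},\ldots,\order{i-1}{y}$. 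A single application of interchange and unitality then shows $(\order{i}{V})_{i=1}^p$ is a $k$\nbd layering with the right induced ordering. No induction, no transpositions.

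Your route can be made to work, but the transposition lemma as you state it has a gap. Knowing only that $\dim(\clset{\order{j}{x}} \cap \clset{\order{j+1}{x}}) < k$ does \emph{not} by itself let you invoke Proposition~\ref{prop:interchange_of_pasting}: interchange needs each of $\order{j}{U}$, $\order{j+1}{U}$ to come already decomposed as an $\ell'$\nbd pasting for some $\ell' < k$, and the hypothesis ``$U$ has an $\ell$\nbd layering'' does not hand you such a decomposition of an arbitrary $k$\nbd layer. (If it did, Example~\ref{exm:no_topological_sorts} would be impossible.) What actually makes the swap go through is maintaining, as an invariant across all transpositions, that every current $k$\nbd layer has the explicit $\ell$\nbd pasting form above; the swap then just flips two of the $\alpha$\nbd signs. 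Once you carry that invariant, you have essentially rediscovered the paper's closed formula, and the transposition scaffolding becomes redundant.

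It is worth noting that the paper \emph{does} use your transposition strategy, but for a different statement (Lemma~\ref{lem:frdim_layerings_bijection_with_orderings}), under the stronger hypothesis that every submolecule admits a layering at its frame dimension. That hypothesis directly supplies a sub\nbd $k$ layering of the two\nbd layer pasting $\order{j}{U} \cp{k} \order{j+1}{U}$, which is exactly what the swap needs; no invariant has to be carried. So your instinct about the shape of the argument is sound, but it fits the other lemma better than this one.
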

\begin{proof}
Let $(\order{i}{U})_{i=1}^m$ be an $\ell$\nbd layering of $U$, and let $(\order{i}{x})_{i=1}^m$ be its image through $\lto{\ell}{U}$.
For $k > \ell$, let $(\order{i}{y})_{i=1}^p$ be a $k$\nbd ordering of $U$.
Then there exists a unique injection $\fun{j}\colon \set{1, \ldots, p} \incl \set{1, \ldots, m}$ such that $\order{i}{y} = \order{\fun{j}(i)}{x}$ for all $i \in \set{1, \ldots, p}$.
Let
\begin{align*}
	\order{i}{V} & \eqdef \bound{k}{\alpha(i,1)}\order{1}{U} \cp{\ell} \ldots \cp{\ell} \order{\fun{j}(i)}{U} \cp{\ell} \ldots \cp{\ell} \bound{k}{\alpha(i,m)}\order{m}{U}, \\
	& \alpha(i,j) \eqdef \begin{cases} + & \text{if $j = \fun{j}(i')$ for some $i' < i$,} \\ 
	- & \text{otherwise}.
	\end{cases}
\end{align*}
Applying Proposition \ref{prop:unitality_of_pasting} and Proposition \ref{prop:interchange_of_pasting} repeatedly, we find that $(\order{i}{V})_{i=1}^p$ is a $k$\nbd layering of $U$ and $(\order{i}{y})_{i=1}^p$ is its image through $\lto{k}{U}$.
This proves that $\lto{k}{U}$ is surjective, and we conclude by Proposition \ref{prop:layerings_induce_orderings}.
\end{proof}

\begin{lem} \label{lem:frdim_layerings_bijection_with_orderings}
Let $U$ be a regular molecule.
Suppose that for all submolecules $V \submol U$, if $r \eqdef \frdim{V}$, then $V$ admits an $r$\nbd layering.
Then for all $k \geq \frdim{U}$ the function $\lto{k}{U}\colon \layerings{k}{U} \incl \orderings{k}{U}$ is a bijection.
\end{lem}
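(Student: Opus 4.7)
Injectivity is already given by Proposition \ref{prop:layerings_induce_orderings}, so the task is surjectivity. For $k > r \eqdef \frdim{U}$ this is immediate: by the hypothesis applied to $V = U$, $U$ admits an $r$\nbd layering, and Lemma \ref{lem:lto_bijection_above_layering} with $\ell = r$ then yields bijectivity of $\lto{k}{U}$ for every $k > r$. What remains is the borderline case $k = r$.

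For $k = r$, I would proceed by induction on $m \eqdef \size{\bigcup_{i > r}\grade{i}{(\maxel{U})}}$. The base case $m \leq 1$ is trivial. In the inductive step, fix an $r$\nbd ordering $(\order{i}{x})_{i=1}^m$ of $U$; the hypothesis supplies an $r$\nbd layering $(\order{i}{W})_{i=1}^m$ with induced ordering $(\order{i}{y})_{i=1}^m$, and there is a unique $j$ with $\order{j}{y} = \order{1}{x}$. Since $\order{1}{x}$ is a source of $\maxflow{r}{U}$, each of $\order{1}{y}, \ldots, \order{j-1}{y}$ is independent from $\order{j}{y}$. The plan is to shuttle $\order{j}{y}$ to the first position via $j - 1$ adjacent transpositions, each producing a fresh $r$\nbd layering of $U$ whose induced ordering is the transposition of the previous one. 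Once $\order{1}{x}$ sits in position $1$, Proposition \ref{prop:layering_from_ordering} forces the first layer to be $\clset{\order{1}{x}} \cup \bound{r}{-}U$, and the suffix $U' \eqdef \order{2}{W} \cp{r} \ldots \cp{r} \order{m}{W}$ is a submolecule of $U$ with $m - 1$ maximal elements of dimension $> r$; it inherits the hypothesis, $(\order{i}{x})_{i=2}^m$ is an $r$\nbd ordering of $U'$ by Lemma \ref{lem:flow_under_inclusion}, and the inductive hypothesis furnishes a matching $r$\nbd layering of $U'$, which pasted with the first layer produces the required $r$\nbd layering of $U$.

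The work concentrates in the swap sub-lemma: an $r$\nbd layering of $U$ whose induced ordering has two adjacent independent elements $\order{\ell}{y}, \order{\ell+1}{y}$ can be replaced by an $r$\nbd layering inducing the ordering with those two transposed. Set $W \eqdef \order{\ell}{W} \cp{r} \order{\ell+1}{W} \submol U$; it has exactly $\order{\ell}{y}, \order{\ell+1}{y}$ as maximal elements of dimension $> r$, which remain independent in $\maxflow{r}{W}$ by Lemma \ref{lem:flow_under_inclusion}. If $\frdim{W} < r$, Theorem \ref{thm:molecules_admit_layerings} yields a $\frdim{W}$\nbd layering of $W$, so Lemma \ref{lem:lto_bijection_above_layering} makes $\lto{r}{W}$ a bijection, and pasting the corresponding swapped layering of $W$ back into the layering of $U$ completes the transposition.

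The main obstacle is the case $\frdim{W} = r$, where the hypothesis guarantees only one $r$\nbd layering of $W$ and the reverse ordering $(\order{\ell+1}{y}, \order{\ell}{y})$ must be shown to come from one. By Proposition \ref{prop:layering_from_ordering}, this reduces to verifying $\bound{r}{-}\order{\ell+1}{y} \submol \bound{r}{-}W$. By Lemma \ref{lem:boundary_move}, $\bound{r}{-}\order{\ell+1}{W} = \bound{r}{+}\order{\ell}{W}$ is obtained from $\bound{r}{-}W$ by substituting $\bound{r}{+}\order{\ell}{y}$ for $\bound{r}{-}\order{\ell}{y}$; the independence of $\order{\ell}{y}, \order{\ell+1}{y}$ in $\maxflow{r}{W}$ forces $\faces{r}{-}\order{\ell+1}{y}$ to be disjoint from the symmetric difference $\faces{r}{+}\order{\ell}{y} \triangle \faces{r}{-}\order{\ell}{y}$, so that $\bound{r}{-}\order{\ell+1}{y}$ embeds as the same closed subset of both $\bound{r}{-}W$ and $\bound{r}{+}\order{\ell}{W}$. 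The submolecule status $\bound{r}{-}\order{\ell+1}{y} \submol \bound{r}{+}\order{\ell}{W}$ known from Lemma \ref{lem:boundary_move} must then be transported across the substitution to $\bound{r}{-}W$, which I expect will rely on Proposition \ref{prop:round_submolecule_substitution} together with the hypothesis applied to suitable submolecules of $\bound{r}{-}W$, inherited through $\bound{r}{-}W \submol W \submol U$. Making this transfer rigorous, so that independence at level $r$ genuinely suffices to swap a pair of layers whenever the hypothesis holds, is the delicate point on which the whole argument hinges.
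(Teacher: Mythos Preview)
Your overall strategy — reduce to $k = r$, then realise an arbitrary $r$\nbd ordering from a known $r$\nbd layering by a sequence of adjacent transpositions — is exactly the paper's. The paper inducts on the inversion count $d$ between the target ordering and one already known to come from a layering, rather than on $m$; this is a cosmetic difference that leads to the same swap sub\nbd lemma while avoiding the passage to a suffix $U'$ and the attendant re\nbd verification of hypotheses.

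The crucial point you miss is that your ``main obstacle'', the case $\frdim{W} = r$, \emph{never occurs}. The two adjacent elements $\order{\ell}{y}, \order{\ell+1}{y}$ lie in consecutive layers of an $r$\nbd layering, so
\[
\clset{\order{\ell}{y}} \cap \clset{\order{\ell+1}{y}} \subseteq \order{\ell}{W} \cap \order{\ell+1}{W} = \bound{r}{+}\order{\ell}{W} = \bound{r}{-}\order{\ell+1}{W}.
\]
Any $r$\nbd dimensional $w$ in this intersection then lies in $\faces{r}{+}\order{\ell}{W}$ and in $\clset{\order{\ell}{y}}$, hence in $\faces{r}{+}\order{\ell}{y}$; symmetrically $w \in \faces{r}{-}\order{\ell+1}{y}$, which would produce an edge from $\order{\ell}{y}$ to $\order{\ell+1}{y}$ in $\maxflow{r}{U}$, contradicting their independence. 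Thus $\dim(\clset{\order{\ell}{y}} \cap \clset{\order{\ell+1}{y}}) < r$. For any other pair of distinct maximal elements of $W$, at least one has dimension $\leq r$, and the intersection of its closure with any other closure omits it and therefore has strictly smaller dimension. Hence $\frdim{W} < r$ always, and your difficult case is vacuous.

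A secondary issue: in the case $\frdim{W} < r$ you invoke Theorem \ref{thm:molecules_admit_layerings} to obtain a $\frdim{W}$\nbd layering of $W$, but that theorem only yields a layering at some level between $\frdim{W}$ and $\lydim{W}$, and here $\lydim{W} \geq r$ since $W$ has two maximal elements of dimension $> r$. What you must invoke instead is the lemma's own hypothesis, applied to the submolecule $W \submol U$; this is precisely what the hypothesis is there for, and the paper uses it at the same spot.
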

\begin{proof}
Let $r \eqdef \frdim{U}$.
By assumption, there exists an $r$\nbd layering of $U$, so by Lemma \ref{lem:lto_bijection_above_layering} it suffices to show that $\lto{r}{U}$ is a bijection.

Given two $r$\nbd orderings $(\order{i}{x})_{i=1}^m$ and $(\order{i}{y})_{i=1}^m$, there exists a unique permutation $\sigma$ such that $\order{i}{x} = \order{\sigma(i)}{y}$ for all $i \in \set{1, \ldots, m}$.
Let $d((\order{i}{x})_{i=1}^m, (\order{i}{y})_{i=1}^m)$ be the number of pairs $(j, j')$ such that $j < j'$ but $\sigma(j') < \sigma(j)$.
Under the assumption that $(\order{i}{x})_{i=1}^m$ is in the image of $\lto{r}{U}$, we will prove that $(\order{i}{y})_{i=1}^m$ is also in the image of $\lto{r}{U}$ by induction on $d((\order{i}{x})_{i=1}^m, (\order{i}{y})_{i=1}^m)$.
Since the image of $\lto{r}{U}$ is not empty, this will suffice to prove that $\lto{r}{U}$ is surjective, hence bijective by Proposition \ref{prop:layerings_induce_orderings}.

If $d((\order{i}{x})_{i=1}^m, (\order{i}{y})_{i=1}^m) = 0$, then $\order{i}{x} = \order{i}{y}$ for all $i \in \set{1, \ldots, m}$, and there is nothing left to prove.

Suppose $d((\order{i}{x})_{i=1}^m, (\order{i}{y})_{i=1}^m) > 0$.
Then there exists $j < m$ such that $\sigma(j+1) < \sigma(j)$.
Suppose $(\order{i}{x})_{i=1}^m$ is the image of the $r$\nbd layering $(\order{i}{U})_{i=1}^m$.
Let $V \submol U$ be the image of $\order{j}{U} \cp{r} \order{j+1}{U}$ in $U$, and let 
\begin{equation*}
    z_1 \eqdef \order{j}{x} = \order{\sigma(j)}{y}, \quad z_2 \eqdef \order{j+1}{x} = \order{\sigma(j+1)}{y}.
\end{equation*}
Because $z_1$ comes before $z_2$ in one $r$\nbd ordering, but after in another, there can be no edge between them in $\maxflow{r}{U}$, so
\begin{equation*}
    \dim{ (\clset{z_1} \cap \clset{z_2}) } < r.
\end{equation*}
Since $z_1, z_2$ are the only maximal elements of dimension $> r$ in $V$, we deduce that $\ell \eqdef \frdim{V} < r$.
By assumption, there exists an $\ell$\nbd layering of $V$.
In particular, there exist regular molecules $\order{1}{V}, \order{2}{V}$ such that
\begin{enumerate}
    \item $z_i$ is in the image of $\order{i}{V}$ for all $i \in \set{1, 2}$, and
    \item $V$ is isomorphic to $\order{1}{V} \cp{\ell} \order{2}{V}$ or to $\order{2}{V} \cp{\ell} \order{1}{V}$.
\end{enumerate}
Without loss of generality suppose that $V$ is isomorphic to $\order{1}{V} \cp{\ell} \order{2}{V}$.
By Proposition \ref{prop:unitality_of_pasting} and Proposition \ref{prop:interchange_of_pasting}, letting
\begin{align*}
    \order{j}{\tilde{U}} & \eqdef \bound{r}{-}\order{1}{V} \cp{\ell} \order{2}{V}, \\
    \order{j+1}{\tilde{U}} & \eqdef \order{1}{V} \cp{\ell} \bound{r}{+} \order{2}{V},
\end{align*}
we have that $V$ is isomorphic to $\order{j}{\tilde{U}} \cp{r} \order{j+1}{\tilde{U}}$.
Letting $\order{i}{\tilde{U}} \eqdef \order{i}{U}$ for $i \notin \set{j, j+1}$, we have that $(\order{i}{\tilde{U}})_{i=1}^m$ is an $r$\nbd layering of $U$, and
\begin{align*}
    \lto{r}{U}\colon (\order{i}{\tilde{U}})_{i=1}^m & \mapsto (\order{i}{\tilde{x}})_{i=1}^m = \\
    & = (\order{1}{x}, \ldots, \order{j+1}{x}, \order{j}{x}, \ldots, \order{m}{x}).
\end{align*}
Then $d((\order{i}{\tilde{x}})_{i=1}^m, (\order{i}{y})_{i=1}^m) < d((\order{i}{x})_{i=1}^m, (\order{i}{y})_{i=1}^m)$ and $(\order{i}{\tilde{x}})_{i=1}^m$ is in the image of $\lto{r}{U}$.
We conclude by the inductive hypothesis.
\end{proof}

\begin{lem} \label{lem:frame_acyclic_has_frame_layerings}
Let $U$ be a regular molecule, $r \eqdef \frdim{U}$.
If $U$ is frame-acyclic, then $U$ admits an $r$\nbd layering.
\end{lem}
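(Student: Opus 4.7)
The plan is to induct on $\lydim{U}$. For the base case, $\lydim{U} = -1$ implies by Lemma \ref{lem:layering_dimension_atom} that $U$ is an atom; then $\maxel{U}$ is a singleton, so $\frdim{U} = -1$, and the trivial one-layer decomposition $(U)$ is the required $(-1)$\nbd layering.

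For the inductive step, set $r \eqdef \frdim{U}$ and invoke Theorem \ref{thm:molecules_admit_layerings} to obtain a minimal $k^* \in [r, \lydim{U}]$ admitting a $k^*$\nbd layering $(\order{i}{U})_{i=1}^m$ of $U$. If $k^* = r$ we are done, so assume $k^* > r$. Each $\order{i}{U}$ is a submolecule of $U$, hence frame-acyclic (as noted in the discussion preceding this lemma), and by Lemma \ref{lem:layering_basic_properties} it contains exactly one maximal element of dimension $> k^*$, forcing $\lydim{\order{i}{U}} < k^*$. The inductive hypothesis then yields a $\frdim{\order{i}{U}}$\nbd layering of each $\order{i}{U}$.

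The first combinatorial point is to show $\frdim{\order{i}{U}} \leq r$: given distinct maximal elements $x, y$ of $\order{i}{U}$, I would argue that either they remain distinct maximal elements of $U$, in which case frame-acyclicity of $U$ bounds $\dim(\clset{x} \cap \clset{y})$ by $r$, or at least one of them is absorbed into the boundary of $\order{i}{U}$, in which case the intersection lies inside $\bound{k^*}{\pm}\order{i}{U}$, a piece of $U$ at dimension $\leq k^*$, and a careful analysis via Lemma \ref{lem:layering_intersections} should trace the intersection back to one involving a pair of maximal elements of $U$ of the same dimension. Granted this, Lemma \ref{lem:layering_basic_properties} refines each $\order{i}{U}$ to an $r$\nbd layering $\order{i,1}{U} \cp{r} \ldots \cp{r} \order{i,m_i}{U}$.

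The remaining step is to splice these refinements into a single $r$\nbd layering of $U$. A direct appeal to the interchange law of Proposition \ref{prop:interchange_of_pasting} does not apply, because it converts $r$-pastings of $k^*$-pastings into $k^*$-pastings of $r$-pastings, not the reverse, and the putative inner $k^*$-pastings of $r$-layers generally have mismatched boundaries. My approach is instead to build the $r$\nbd layering cell by cell via Proposition \ref{prop:layering_from_ordering}: extract an $r$\nbd ordering of $U$ by concatenating the $r$\nbd orderings of the $\order{i}{U}$ in the sequence imposed by the $k^*$\nbd layering, and verify the submolecule condition $\bound{r}{-}\order{j}{x} \submol \bound{r}{-}\order{j}{U'}$ inductively. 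Within a single $k^*$\nbd layer the condition holds by the already-certified $r$\nbd layering of that $\order{i}{U}$; the delicate point, and the principal obstacle, is the transition from one $k^*$\nbd layer to the next, where the accumulated input boundary $\bound{r}{-}\order{j}{U'}$ must be related to $\bound{r}{-}\order{i+1}{U}$. Here I would use Proposition \ref{prop:molecule_properties} (globularity) together with the identification $\bound{r}{+}\order{i}{U} = \bound{r}{-}\order{i+1}{U}$ arising from the $k^*$\nbd pasting, so that the submolecule condition at the transition step reduces to the first submolecule condition of the $r$\nbd layering of $\order{i+1}{U}$, which is already known.
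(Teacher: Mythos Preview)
Your splicing step contains a genuine error. You claim that globularity plus the $k^*$\nbd pasting gives $\bound{r}{+}\order{i}{U} = \bound{r}{-}\order{i+1}{U}$, but this is false: what the $k^*$\nbd pasting gives is $\bound{k^*}{+}\order{i}{U} = \bound{k^*}{-}\order{i+1}{U}$, and since $r < k^*$, globularity then yields $\bound{r}{\alpha}\order{i}{U} = \bound{r}{\alpha}(\bound{k^*}{+}\order{i}{U}) = \bound{r}{\alpha}U$ for \emph{every} $i$ and every $\alpha$. In other words, each $\order{i}{U}$ already has the same input and output $r$\nbd boundaries as $U$ itself, so its $r$\nbd layering runs all the way from $\bound{r}{-}U$ to $\bound{r}{+}U$. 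You therefore cannot chain the $r$\nbd layering of $\order{i}{U}$ into that of $\order{i+1}{U}$ along an $r$\nbd pasting. Relatedly, the ``concatenation of $r$\nbd orderings'' is ill-defined: the vertices of $\maxflow{r}{\order{i}{U}}$ are the maximal elements of $\order{i}{U}$ of dimension $> r$, which need not be maximal in $U$ and, conversely, a maximal element of $U$ of dimension in $(r, k^*]$ lies in \emph{every} $\order{i}{U}$ and would appear $m$ times in your list.

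The paper's proof avoids this by never trying to concatenate per-layer data. Instead it fixes once and for all a global $r$\nbd ordering $(\order{i}{x})_{i=1}^m$ of $U$ (this is where frame-acyclicity is used directly), builds a tree by iterating $\lydim$\nbd layerings down to leaves with $\lydim \leq r$, and then shows by backward induction on the tree that every node $V$ decomposes as $\order{1}{V} \cp{r} \ldots \cp{r} \order{m}{V}$ where the maximal elements of dimension $> r$ in $\order{i}{V}$ are exactly those lying in $\clset{\order{i}{x}}$. Because all children of a node are decomposed with respect to the \emph{same} index set $\{1,\ldots,m\}$, interchange (Proposition~\ref{prop:interchange_of_pasting}) applies and pushes the $\cp{r}$ to the outside. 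Your approach lacks this global alignment, which is the missing idea. (Your claim $\frdim{\order{i}{U}} \leq r$ is also underspecified---the case where two non-$U$-maximal elements $x,y$ of $\order{i}{U}$ sit below a common maximal element of $U$ is not handled---but this is secondary to the structural problem above.)
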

\begin{proof}
Since submolecules of frame-acyclic regular molecules are frame-acyclic, we can proceed by induction on submolecules.
For all $x \in \grade{0}{U}$, we have $\frdim{\set{x}} = -1$, and $\set{x}$ admits a trivial $(-1)$\nbd layering, proving the base case.

We construct an ordered tree of submolecules $\order{j_1, \ldots, j_p}{U}$ of $U$, as follows:
\begin{itemize}
    \item the root is $\order{}{U} \eqdef U$;
    \item if $\lydim{\order{j_1, \ldots, j_p}{U}} \leq r$, then we let $\lydim{\order{j_1, \ldots, j_p}{U}}$ be a leaf;
    \item if $k \eqdef \lydim{\order{j_1, \ldots, j_p}{U}} > r$, then we pick a $k$\nbd layering $(\order{i}{V})_{i=1}^q$ of $\order{j_1, \ldots, j_p}{U}$, which is possible by Theorem \ref{thm:molecules_admit_layerings}, and for each $i \in \set{1, \ldots, q}$, we let the image of $\order{i}{V}$ be a child $\order{j_1, \ldots, j_p, i}{U}$ of $\order{j_1, \ldots, j_p}{U}$.
\end{itemize}
By Lemma \ref{lem:lydim_layering_properties}, the layering dimension of the children of a node is strictly smaller than that of the node, so the procedure terminates.

Fix an $r$\nbd ordering $(\order{i}{x})_{i=1}^m$ of $U$; this is possible because $\maxflow{r}{U}$ is acyclic.
Let $V \eqdef \order{j_1, \ldots, j_p}{U}$ be a node of the tree.
We have
\begin{align*}
    \bigcup_{j > r} \grade{j}{(\maxel{V})} & = 
    \sum_{i = 1}^m \bigcup_{j > r} 
    \left(\grade{j}{(\maxel{V})} \cap \clset{\order{i}{x}}\right) \\
    & \eqqcolon \sum_{i = 1}^m \order{i}{M};
\end{align*}
the $\order{i}{M}$ form a partition because $\frdim{U} = r$, so every element of dimension $> r$ is in the closure of $\order{i}{x}$ for a unique $i \in \set{1, \ldots, m}$.
We claim that $V$ is isomorphic to
\begin{equation*}
    \order{1}{V} \cp{r} \ldots \cp{r} \order{m}{V}
\end{equation*}
for some regular molecules $(\order{i}{V})_{i=1}^m$ with the following property: for each $i \in \set{1, \ldots, m}$, identifying $\order{i}{V}$ with its image in $V$, we have
\begin{equation*}
    \bigcup_{j > r} \grade{j}{(\maxel{\order{i}{V}})} = 
    \order{i}{M}.
\end{equation*}
We will prove this by backward induction on the tree $\order{j_1, \ldots, j_p}{U}$.

Suppose $V$ is a leaf, so $\lydim{V} \leq r$.
Then $V$ admits an $r$\nbd layering.
For each $i \in \set{1, \ldots, m}$, fix a topological sort $(\order{i, j}{y})_{j=1}^{p_i}$ of the induced subgraph $\restr{\maxflow{r}{V}}{\order{i}{M}}$.
We claim that $((\order{i,j}{y})_{j=1}^{p_i})_{i=1}^m$ is an $r$\nbd ordering of $V$.

Suppose there is an edge from $x$ to $x'$ in $\maxflow{r}{V}$.
Then $x \in \order{i}{M}$, $x' \in \order{i'}{M}$ for a unique pair $i, i' \in \set{1, \ldots, m}$.
If $i = i'$, then $x = \order{i, j}{y}$ and $x' = \order{i, j'}{y}$ for some $j, j' \in \set{1, \ldots, p_i}$, and $j < j'$ because $(\order{i, j}{y})_{j=1}^{p_i}$ is a topological sort of $\restr{\maxflow{r}{V}}{\order{i}{M}}$.
If $i \neq i'$, then there exists 
\begin{equation*}
    z \in \faces{r}{+}x \cap \faces{r}{-}x' \subseteq \clset{\order{i}{x}} \cap \clset{\order{i'}{x}}.
\end{equation*}
Since $\bound{r}{\alpha}\order{i}{x}$ and $\bound{r}{\alpha}\order{i'}{x}$ is pure and $r$\nbd dimensional for all $\alpha \in \set{+, -}$, by \cite[Proposition 6.4]{steiner1993algebra}
\begin{equation*}
    z \in (\faces{r}{+}\order{i}{x} \cap \faces{r}{-}\order{i'}{x}) \cup (\faces{r}{-}\order{i}{x} \cap \faces{r}{+}\order{i'}{x}),
\end{equation*}
and $\faces{r}{-}\order{i}{x} \cap \clset{x} \subseteq \faces{r}{-}x$ which is disjoint from $\faces{r}{+}x$, so $z \in \faces{r}{+}\order{i}{x} \cap \faces{r}{-}\order{i'}{x}$.
It follows that there is an edge from $\order{i}{x}$ to $\order{i'}{x}$ in $\maxflow{r}{U}$, so $i < i'$ because $(\order{i}{x})_{i=1}^m$ is a topological sort of $\maxflow{r}{U}$.
This proves that $((\order{i,j}{y})_{j=1}^{p_i})_{i=1}^m$ is an $r$\nbd ordering of $V$.

Let $W \submol V$, $\ell \eqdef \frdim{W}$.
If $V \neq U$ or $W \neq U$, then $W$ admits an $\ell$\nbd layering by the inductive hypothesis on proper submolecules of $U$.
If $W = V = U$ then $\ell = r$ and $W$ admits an $\ell$\nbd layering by Theorem \ref{thm:molecules_admit_layerings}.
In either case, $V$ satisfies the conditions of Lemma \ref{lem:frdim_layerings_bijection_with_orderings}, and since $r \geq \lydim{V} \geq \frdim{V}$, every $r$\nbd ordering of $V$ comes from an $r$\nbd layering of $V$.

It follows that $((\order{i,j}{y})_{j=1}^{p_i})_{i=1}^m$ comes from an $r$\nbd layering $((\order{i,j}{W})_{j=1}^{p_i})_{i=1}^m$, and we can define
\begin{equation*}
    \order{i}{V} \eqdef \order{i,1}{W} \cp{r} \ldots \cp{r} \order{i, p_i}{W}
\end{equation*}
for each $i \in \set{1, \ldots, m}$, satisfying the desired condition.
    
Now, suppose that $V$ is not a leaf, so $k \eqdef \lydim{V} > r$, and $V$ has children $(\order{j}{W})_{j=1}^q$ forming a $k$\nbd layering of $V$.
By the inductive hypothesis, each of the $\order{j}{W}$ has a decomposition
\begin{equation*}
    \order{j, 1}{W} \cp{r} \ldots \cp{r} \order{j, m}{W}
\end{equation*}
such that the maximal elements of dimension $> r$ in the image of $\order{j, i}{W}$ are contained in $\clset{\order{i}{x}}$.
Then, for each $i \in \set{1, \ldots, m}$ and $j, j' \in \set{1, \ldots, q}$,
\begin{equation*}
    \order{j, i}{W} \cap \order{j'}{W} \subseteq \order{j', i}{W},
\end{equation*}
so $\order{i}{V} \eqdef \order{1, i}{W} \cp{k} \ldots \cp{k} \order{q, i}{W}$ is defined.
Using Proposition \ref{prop:interchange_of_pasting} repeatedly, we conclude that $V$ is isomorphic to $\order{1}{V} \cp{r} \ldots \cp{r} \order{m}{V}$.

This concludes the induction on $\order{j_1, \ldots, j_p}{U}$.
In particular, for the root $\order{}{U} = U$, the decomposition $\order{1}{U} \cp{r} \ldots \cp{r} \order{m}{U}$ satisfies
\begin{equation*}
    \bigcup_{j > r} \grade{j}{(\maxel{\order{i}{U}})} = 
    \set{\order{i}{x}},
\end{equation*}
that is, $(\order{i}{U})_{i=1}^m$ is an $r$\nbd layering of $U$.
\end{proof}

\begin{proof}[Proof of Theorem \ref{thm:frame_acyclicity_equivalent_conditions}]
The implication from \ref{cond:frame_acyclic} to \ref{cond:frdim_layerings} is a consequence of Lemma \ref{lem:frame_acyclic_has_frame_layerings} together Lemma \ref{lem:layering_basic_properties}.
The implication from \ref{cond:frdim_layerings} to \ref{cond:frdim_bijection} is Lemma \ref{lem:frdim_layerings_bijection_with_orderings}.
Finally, the implication from \ref{cond:frdim_bijection} to \ref{cond:frame_acyclic} follows from Proposition \ref{prop:if_layering_then_ordering}.
\end{proof}

\begin{proof}[Proof of Lemma \ref{lem:acyclicity_implications}]
Suppose $U$ is acyclic.
Let $k \in \mathbb{N}$ and suppose there is a cycle $x_0 \to x_1 \to \ldots \to x_m = x_0$ in $\flow{k}{U}$.
By definition, for all $i \in \set{1, \ldots, m}$ there exists $y_i \in \faces{k}{+}x_{i-1} \cap \faces{k}{-}x_i$.
By \cite[Lemma 1.37]{hadzihasanovic2020diagrammatic}, there exist paths from $x_{i-1}$ to $y_i$ and then from $y_i$ to $x_i$ in $\hasseo{U}$.
Concatenating all these paths, we obtain a cycle in $\hasseo{U}$.

Suppose $U$ is dimension-wise acyclic, and let $V \submol U$ be a submolecule inclusion, $r \eqdef \frdim{V}$.
Then $\flow{r}{U}$ is acyclic, hence so are its induced subgraphs $\flow{r}{V}$ and $\maxflow{r}{V}$.
\end{proof}

%%%%%%%%%%%%%%%%%%%%%%%%

\subsection{Proofs for Section \ref{sec:lowdim}}

\begin{proof}[Proof of Lemma \ref{lem:only_1_molecules}]
We will show that $U$ is isomorphic to 
\begin{equation*}
    \underbrace{\thearrow{} \cp{0} \ldots \cp{0} \thearrow{}}_{\text{$m$ times}}.
\end{equation*}
By Lemma \ref{lem:layering_dimension_smaller_than_dimension}, either $\lydim{U} = -1$ or $\lydim{U} = 0$.
In the first case, $U$ is an atom by Lemma \ref{lem:layering_dimension_atom}.
Because the point is the only 0\nbd dimensional molecule up to isomorphism, the arrow is the only 1\nbd dimensional atom, so $U$ is isomorphic to $\thearrow{}$.
In the second case, $U$ admits a 0\nbd layering $(\order{i}{U})_{i=1}^m$ by Theorem \ref{thm:molecules_admit_layerings}, and by Lemma \ref{lem:lydim_layering_properties}, for each $i \in \set{1, \ldots, m}$, necessarily $\lydim{\order{i}{U}} = -1$.
By the first part, $\order{i}{U}$ is isomorphic to $\thearrow{}$.
\end{proof}

\begin{proof}[Proof of Proposition \ref{prop:dim2_acyclic}]
This is \cite[Proposition 5]{hadzihasanovic2021smash}.
\end{proof}

\begin{dfn}[Horizontal and vertical order]
Let $U$ be a regular molecule, $\dim{U} \leq 2$.
The \emph{horizontal order} $\prech$ and the \emph{vertical order} $\precv$ on the set $\grade{1}{U}$ of 1\nbd dimensional elements of $U$ are defined by
\begin{itemize}
    \item $x \prech y$ if and only if there is a path from $x$ to $y$ in $\hasseo{U}$ only passing through elements of dimension $0$ and $1$,
    \item $x \precv y$ if and only if there is a path from $x$ to $y$ in $\hasseo{U}$ only passing through elements of dimension $1$ and $2$.
\end{itemize}
\end{dfn}

\begin{lem} \label{lem:dim2_horizontal_or_vertical_path}
Let $U$ be a regular molecule, $\dim{U} \leq 2$.
Then
\begin{enumerate}
    \item the union of $\prech$ and $\precv$ is a linear order on $\grade{1}{U}$,
    \item the intersection of $\prech$ and $\precv$ is the identity relation on $\grade{1}{U}$.
\end{enumerate}
\end{lem}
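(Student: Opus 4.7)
The plan is to proceed by induction on the construction of $U$ as a regular molecule, following the three clauses (\textit{Point}), (\textit{Paste}), and (\textit{Atom}). Reflexivity and transitivity of each of $\prech$ and $\precv$ are immediate from their definition as reachability in subgraphs of $\hasseo{U}$, and antisymmetry follows from the acyclicity of $\hasseo{U}$ in dimension $\leq 2$, given by Proposition \ref{prop:dim2_acyclic}. Thus each is a partial order, and the substantive tasks are to establish totality of $\prech \cup \precv$, transitivity of the union, and that $\prech \cap \precv$ is the identity.

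For the base cases, I would first dispose of $\dim{U} \leq 1$, where Lemma \ref{lem:only_1_molecules} gives that $U$ is a linear chain of arrows, so $\prech$ is a linear order on $\grade{1}{U}$ and $\precv$ is the identity; both claims hold. For a $2$\nbd dimensional atom $U$ of the form $V \celto W$, the set $\grade{1}{U}$ partitions disjointly as $\grade{1}{V} \sqcup \grade{1}{W}$ (the input-side $1$\nbd cells are distinct from the output-side ones in $U$). Each side is a linear chain under $\prech$ by Lemma \ref{lem:only_1_molecules}, with no horizontal path crossing between them; the unique $2$\nbd cell relates every element of $\grade{1}{V}$ to every element of $\grade{1}{W}$ via $\precv$ and introduces no new relations within either side. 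The intersection is then trivially the identity, while the union is the total order placing all of $V$ before all of $W$.

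For the inductive step, let $U = U_1 \cp{k} U_2$ with $k \in \set{0, 1}$. If $k = 0$, then $U_1 \cap U_2$ is $0$\nbd dimensional, so $\grade{1}{U_1}$ and $\grade{1}{U_2}$ are disjoint; a path in $\hasseo{U}$ confined to dimensions $1$ and $2$ cannot cross between the two sides, so $\precv$ on $U$ is the disjoint union of the vertical orders on $U_1$ and $U_2$, while $\prech$ gains new comparisons crossing the shared $0$\nbd cell. If $k = 1$, then $U_1 \cap U_2$ is $1$\nbd dimensional and consists of shared boundary $1$\nbd cells; the roles are then reversed, with $\prech$ confined to each side and $\precv$ possibly crossing through shared $1$\nbd cells to $2$\nbd cells on the other side. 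Combining these structural observations with the inductive hypotheses on $U_1$ and $U_2$ will yield both statements. The main obstacle is verifying the intersection-is-identity condition after pasting: I must rule out, for distinct $x \neq y$, the coexistence of a horizontal and a vertical path from $x$ to $y$. In the atomic case this is blocked by the input/output partition; after pasting, any such pair would have to traverse the shared boundary in two incompatible ways, and this will require a careful case analysis depending on the dimension of the shared boundary and on which side each of $x$ and $y$ lies.
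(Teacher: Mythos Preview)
Your structural induction on (\textit{Point}), (\textit{Paste}), (\textit{Atom}) is a genuinely different route from the paper's, which instead inducts along a $1$\nbd layering: it starts from $\bound{}{-}U$ and attaches a single $2$\nbd cell $\clset{\order{i}{x}}$ at each stage, so that the ``new'' part is always one atom glued along its input boundary. The detailed case analysis then compares a fresh $y \in \faces{}{+}\order{i}{x}$ against an old element via the already linearly ordered $\faces{}{-}\order{i}{x}$.

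Your (\textit{Paste}) analysis at $k = 1$ contains an error that breaks the argument. You assert that $\prech$ is ``confined to each side'', but the shared boundary $\bound{1}{+}U_1 = \bound{1}{-}U_2$ contains $0$\nbd cells through which horizontal paths can cross between $U_1 \setminus U_2$ and $U_2 \setminus U_1$. Concretely, take $U_1 \eqdef (a \celto b) \cp{0} (c \celto d)$ and $U_2 \eqdef (b \celto e) \cp{0} d$, where each parenthesised pair is a $2$\nbd atom and $d$ in $U_2$ is a bare arrow; then $U_1 \cp{1} U_2$ is defined, $e \in U_2 \setminus U_1$, $c \in U_1 \setminus U_2$, yet $e \prech c$ via the shared $0$\nbd cell $\bound{0}{+}e = \bound{0}{-}c$. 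So neither order decomposes cleanly across a $1$\nbd pasting, and ``combining these structural observations with the inductive hypotheses'' does not go through: for totality you would need to compare an arbitrary $x \in U_1 \setminus U_2$ with an arbitrary $y \in U_2 \setminus U_1$, and neither inductive hypothesis speaks to that pair. The $k = 0$ case is closer to working but still incomplete: to get $x \prech y$ across the two sides you need that every $1$\nbd cell of $U_1$ reaches $\bound{0}{+}U_1$ by a horizontal path (and dually for $U_2$), which is true but requires an argument using $\faces{0}{+}U_1 = \bound{0}{+}U_1$ and acyclicity. The paper's layering induction sidesteps all of this because only one atom is added at a time.
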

\begin{proof}
If $\dim{U} < 2$, then $\precv$ is trivially the identity relation, and $\prech$ is a linear order by \ref{lem:only_1_molecules}.
If $U$ is a 2\nbd dimensional atom, then $\grade{1}{U} = \faces{}{-}U + \faces{}{+}U$, and that $\prech$ is a linear order on $\faces{}{\alpha}U$ for each $\alpha \in \set{+, -}$ separately, so we have $x \precv y$ for all $x \in \faces{}{-}U$ and $y \in \faces{}{+}U$, and no other relations exist.

Otherwise, we proceed exactly as in the proof of Proposition \ref{prop:dim2_acyclic}, defining a 1\nbd ordering $(\order{i}{x})_{i=1}^m$ and a sequence $(\order{i}{V})_{i=1}^m$ of increasing submolecules of $U$.
We let $\order{i}{\prech}$ and $\order{i}{\precv}$ be the orders determined by paths in $\hasseo{\order{i}{V}}$, which are increasing in $i$, and proceed by induction.
Since $\dim{\order{0}{V}} = 1$, we have already proved the base case.

Let $i > 0$, assume that the statement holds of the orders $\order{i-1}{\prech}$ and $\order{i-1}{\precv}$ on $\grade{1}{(\order{i-1}{V})}$.
Let $x, y \in \grade{1}{(\order{i}{V})}$; we will show that $x$ and $y$ are comparable via $\order{i}{\prech}$ or $\order{i}{\precv}$.
If $x, y \in \order{i-1}{V}$ or $x, y \in \clset{\order{i}{x}}$, we can apply the 
inductive hypothesis or the atom case, so it suffices to consider the case 
\begin{equation*}
    x \in \order{i-1}{V} \setminus \faces{}{-}\order{i}{x}, \quad y \in \faces{}{+}\order{i}{x}.
\end{equation*}
Let $(\order{j}{z})_{j=1}^p$ be the unique linear 0\nbd ordering on $\bound{}{-}\order{i}{x}$, so $\order{j}{z} \prech \order{j'}{z}$ if $j \leq j'$.
For all $j \in \set{1, \ldots, p}$, we have $x \neq \order{j}{z}$, and by the inductive hypothesis $x$ and $\order{j}{z}$ are comparable via $\order{i-1}{\prech}$ or they are comparable via $\order{i-1}{\precv}$.
\begin{itemize}
    \item Suppose there exists $j$ such that $x$ and $\order{j}{z}$ are comparable via $\order{i-1}{\precv}$.
    Then necessarily $x \order{i-1}{\precv} \order{j}{z}$, because $\faces{}{-}\order{i}{x} \subseteq \faces{}{+}\order{i-1}{V}$.
    Since $\order{j}{z} \order{i}{\precv} y$, we have $x \order{i}{\precv} y$.
    \item Otherwise, $x$ and $\order{j}{z}$ are comparable via $\order{i-1}{\prech}$ for all $j \in \set{1, \ldots, p}$.
    Suppose that $x \order{i-1}{\prech} \order{1}{z}$, in which case $x \order{i-1}{\prech} \order{j}{z}$ for all $j \in \set{1, \ldots, p}$.
    Then the path from $x$ to $\order{1}{z}$ through elements of dimension 0 and 1 must enter $\order{1}{z}$ from $\bound{}{-}\order{1}{z} = \bound{0}{-}\order{i}{x}$.
    Since there is a path in $\hasseo{\grade{i}{V}}$ from $\bound{0}{-}\order{i}{x}$ to $y$, we have $x \order{i}{\prech} y$.
    \item Otherwise, there is a greatest $j$ such that $\order{j}{z} \order{i-1}{\prech} x$.
    If $j < p$, then $\order{j}{z} \order{i-1}{\prech} x \order{i-1}{\prech} \order{j+1}{z}$. Because all three are distinct, letting $\bound{}{+}\order{j}{z} = \bound{}{-}\order{j+1}{z} = \set{w}$, there is a non-trivial cycle in $\hasseo{\order{i-1}{V}}$ from $w$ to $x$ and back to $w$, a contradiction to Proposition \ref{prop:dim2_acyclic}.
    It follows that $\order{p}{z} \order{i-1}{\prech} x$, and the path between the two must leave $\order{p}{z}$ through $\bound{0}{+}\order{i}{x}$, so $y \order{i}{\prech} x$.
\end{itemize}
This proves that the union of $\order{i}{\prech}$ and $\order{i}{\precv}$ is a linear order on $\grade{1}{(\order{i}{V})}$.

Suppose that $x \order{i}{\prech} y$ and $x \order{i}{\precv} y$; we will prove that $x = y$.
If $x \in \clset{\order{i}{x}}$, then $x \order{i}{\precv} y$ implies that $y \in \clset{\order{i}{x}}$, and any path from $x$ to $y$ in $\hasseo{\order{i}{V}}$ is entirely contained in $\clset{\order{i}{x}}$, so we can apply the atom case.
Suppose that $x \notin \clset{\order{i}{x}}$.

If $y \in \faces{}{+}\order{i}{x}$, then any path from $x$ to $y$ through elements of dimension 1 and 2 consists of a path contained in $\order{i-1}{V}$ to some $z \in \faces{}{-}\order{i}{x}$ followed by the path $z \to \order{i}{x} \to y$; while any path through elements of dimension 0 and 1 consists of a path contained in $\order{i-1}{V}$ to $\bound{0}{-}\order{i}{x}$ followed by a path contained in $\bound{}{+}\order{i}{x}$.
Since there is a path from $\bound{0}{-}\order{i}{x}$ to any $z \in \faces{}{-}\order{i}{x}$ through elements of dimension 0 and 1 in $\order{i-1}{V}$, we have that $x \order{i-1}{\prech} z$ and $x \order{i-1}{\precv} z$ for some $z \in \faces{}{-}\order{i}{x}$.
By the inductive hypothesis, $x = z$, a contradiction since $z \in \clset{\order{i}{x}}$.

It follows that $y \in \order{i-1}{V}$.
Then any path from $x$ to $y$ through elements of dimension 1 and 2 is entirely contained in $\order{i-1}{V}$, so $x \order{i-1}{\precv} y$; while a path through elements of dimension 0 and 1 is either contained in $\order{i-1}{V}$, or it enters $\bound{}{+}\order{i}{x}$ through $\bound{0}{-}\order{i}{x}$, traverses it in its entirety, and leaves from $\bound{0}{+}\order{i}{x}$.
Such a path segment can be replaced with the one that traverses $\bound{}{-}\order{i}{x}$ in its entirety, so in either case $x \order{i-1}{\prech} y$, and we conclude that $x = y$ by the inductive hypothesis.
This concludes the proof of the statement for $\order{i}{V}$.
Since $\order{m}{V} = U$, we conclude.
\end{proof}

\begin{proof}[Proof of Lemma \ref{lem:dim2_inclusion_is_path_induced}]
Both $U$ and $\imath(V)$ are regular molecules of dimension $\leq 2$.
Let $\prech$, $\precv$ be the horizontal and vertical order on $U$, and $\prech^V$, $\precv^V$ those on $\imath(V)$, which are subsets of those on $U$.

Suppose by way of contradiction that $\flow{1}{V}$ is not path-induced. 
Then there exists a path $x_0 \to \ldots \to x_m$ in $\flow{1}{U}$ such that $m > 1$, $x_0, x_m \in \imath(V)$, and $x_i \notin \imath(V)$ for all $i \in \set{1, \ldots, m-1}$.
By definition, there exist 1\nbd dimensional elements $y_i \in \faces{}{+}x_{i-1} \cap \faces{}{-}x_{i}$ for all $i \in \set{1, \ldots, m}$.
Then $y_1 \precv y_m$ and $y_1 \neq y_m$.
By Lemma \ref{lem:codimension_1_elements}, $x_{i-1}$ and $x_i$ are the only cofaces of $y_i$ for all $i \in \set{1, \ldots, m}$.
Necessarily, then, $y_1 \in \faces{}{+}\imath(V)$ and $y_m \in \faces{}{-}\imath(V)$, 
so it is not possible that $y_1 \precv^V y_m$.
Then by Lemma \ref{lem:dim2_horizontal_or_vertical_path} applied to $\imath(V)$, one of $y_1 \prech^V y_m$, $y_m \prech^V y_1$, or $y_m \precv^V y_1$ must hold.
Combined with $y_1 \precv y_m$, each one of these implies $y_1 = y_m$ by Lemma \ref{lem:dim2_horizontal_or_vertical_path} applied to $U$, a contradiction.
\end{proof}

\begin{proof}[Proof of Theorem \ref{thm:dim3_frame_acyclic}]
By \cite[Theorem 1]{hadzihasanovic2021smash} $U$ is frame-acyclic.
Since for all $V \submol U$ and all rewritable $W \submol V$, $\subs{V}{\compos{W}}{W}$ is still a regular molecule of dimension $\leq 3$, it is frame-acyclic.
Hence $U$ is stably frame-acyclic.
\end{proof}

\end{document}